\definecolor{truepurp}{RGB}{160,15,55}
\tikzstyle{vertex}=[circle, draw, inner sep=0pt, minimum size=6pt]
\newtheorem{theorem}{Theorem}[section]
\patchcmd{\ttlh@hang}{\parindent\z@}{\parindent\z@\leavevmode}{}{}
\patchcmd{\ttlh@hang}{\noindent}{}{}{}
\titleformat*{\section}{\large\bfseries}
\titleformat*{\subsection}{\small\bfseries}
\titleformat*{\subsubsection}{\small\bfseries}
\titleformat*{\paragraph}{\small\bfseries}
\titleformat*{\subparagraph}{\small\bfseries}
\newcommand{\N}{\mathbb{N}}
\newcommand{\R}{\mathbb{R}}
\newcommand{\Z}{\mathbb{Z}}
\newcommand{\E}{\mathbb{E}}
\newcommand{\p}{\mathbb{P}}
\newcommand{\md}{\ensuremath{\mathrm{d}}}
\newcommand{\dxp}{\theta}
\newcommand{\e}{\mathcal{E}}
\newcommand{\eps}{\varepsilon}
\newcommand{\dia}{\text{Diam}}
\newcommand{\mo}{\mathbf{1}}
\newcommand{\mz}{\mathbf{0}}
\newcommand{\cC}{\mathcal{C}}
\newcommand{\cN}{\mathcal{N}}
\newcommand{\cG}{\mathcal{G}}
\newcommand{\cH}{\mathcal{H}}
\newtheorem{lemma}[theorem]{Lemma}
\newtheorem{remark}[theorem]{Remark}
\newtheorem{definition}[theorem]{Definition}
\newtheorem{corollary}[theorem]{Corollary}
\newtheorem{claim}[theorem]{Claim}
\begin{document}

	\title{\vspace{-1cm} Behavior of the distance exponent for $\frac{1}{|x-y|^{2d}}$ long-range percolation}

	\author{Johannes B\"aumler\footnote{ \textsc{Department of Mathematics, University of California, Los Angeles}.\\ \text{  \ \ \ \ \ \ } E-Mail: \href{mailto:jbaeumler@math.ucla.edu}{jbaeumler@math.ucla.edu}
	}
	}

	\maketitle
	
	\vspace{-3mm}
	
	\begin{center}
		\parbox{13cm}{ \textbf{Abstract.} 
			We study independent long-range percolation on $\mathbb{Z}^d$ where the vertices $u$ and $v$ are connected with probability asymptotic to $\frac{\beta}{\|u-v\|^{2d}}$ for $\|u-v\|_\infty\geq 2$ and with probability 1 for $\|u-v\|_\infty=1$, where $\beta \geq 0$ is a parameter. It is proven in \cite{baeumler2022distances} that there exists an exponent $\theta=\theta(d,\beta) \in \left(0,1\right]$ such that the graph distance between the origin $\mathbf{0}$ and $x \in \mathbb{Z}^d$ scales like $\|x\|^{\theta}$. We prove that this exponent $\theta(d,\beta)$ is continuous and strictly decreasing as a function in $\beta$. Furthermore, we show that $\theta(d,\beta)=1-\beta+o(\beta)$ for small $\beta$ in dimension $d=1$.
	}
	\end{center}
	
	\let\thefootnote\relax\footnotetext{{\sl MSC Class}: 05C12 ; 60K35 ; 82B43 ; 82B27 }
	\let\thefootnote\relax\footnotetext{{\sl Keywords}: Long-range percolation ; graph distance ; distance exponent ; renormalization}

	\hypersetup{linkcolor=black}
	\tableofcontents
	\hypersetup{linkcolor=blue}

\section{Introduction}

The study of the chemical distance, also called graph distance or hop-count distance, for long-range percolation is a topic of extensive research \cite{baeumler2022distances,benjamini2001diameter,benjamini2011geometry,biskup2004scaling,biskup2011graph,biskup2019sharp,coppersmith2002diameter,wu2022sharp}. In this paper, we study the so-called {\sl distance exponent} for $\frac{1}{\|x-y\|^{2d}}$-long-range percolation. Our setup is as follows.
Given $\beta \geq 0$ and a function $q:\Z^d \to \left[0,+\infty\right]$ satisfying $q(x)=q(-x)$ for all $x\in \Z^d$ and $q(x)=+\infty$ for $x\in \Z^d$ with $\|x\|=1$, we consider the random graph with vertex set $\Z^d$ where there is an undirected edge between $u$ and $v$ with probability 
\begin{equation*}
	\p(u\sim v) = 1 - \exp(-\beta q(u-v)),
\end{equation*}
independent of all other edges. Here, we use the conventions $0\cdot \infty = \infty$ and $e^{-\infty}=0$. If there is an edge between $u$ and $v$, we also say that $\{u,v\}$ is open, and closed otherwise. The chemical distance $D(u,v)$ between two points $u,v\in \Z^d$ is defined as the minimal number of edges that must be traversed to reach $v$ from $u$. The condition that $q(x)=+\infty$ for all $x\in \Z^d$ with $\|x\|=1$ directly guarantees that $D(u,v)\leq \|u-v\|_1$ almost surely, as all nearest-neighbor edges $\{x,y\}$ are open with probability $1-e^{-\beta \cdot \infty}=1$. In particular, this implies that the resulting random graph is almost surely connected. The asymptotic scaling of the chemical distance is a central characteristic of a random graph and has also been examined for many different models of percolation, see for example \cite{addario2012continuum, antal1996chemical, benjamini2001diameter, berger2004lower, biskup2004scaling, biskup2011graph, biskup2019sharp, coppersmith2002diameter, dembin2022variance, ding2018chemical, drewitz2014chemical, garet2007large, hao2021graph, hernandez2021chemical, nachmias2008critical}. We are interested in the case where the function $q$ decays polynomially, say $q(x)\sim \|x\|^{-s}$. The asymptotics of the chemical distance $D(u,v)$ depends heavily on the value $s$, with the transitions between different regimes happening at $s=d$ and $s=2d$:
\begin{itemize}
	\item For $s<d$, the graph distance between two points is at most $\lceil \frac{d}{d-s} \rceil$ \cite{benjamini2011geometry}.
	\item For $s=d$, the typical distance between two points $u,v$ with $\|u-v\|=n$ is of order $\frac{d \log(n)}{\log(\log(n))}$ \cite{coppersmith2002diameter, wu2022sharp}.
	\item For $s\in (d,2d)$, the typical distance between two points $u,v$ with $\|u-v\|=n$ grows poly-logarithmically like $\log(n)^\Delta$, where $\Delta^{-1} = \log_2 \left(\frac{2d}{s}\right)$ \cite{benjamini2001diameter, biskup2004scaling, biskup2011graph, biskup2019sharp}.
	\item For $s=2d$, the typical distance between two points $u,v$ grows polynomially like $\|u-v\|^\theta$, with distance exponent $\theta=\theta(d,\beta)\in (0,1)$ \cite{baeumler2022distances,coppersmith2002diameter,ding2013distances}.
	\item For $s>2d$, the typical distance between two points grows linearly in the Euclidean distance between these points \cite{baumler2023continuity, berger2004lower}.
\end{itemize}

In this paper, we are interested in the case $s=2d$. Here we assume that, given a parameter $\beta \geq 0$, there is an edge between $u$ and $v$ with probability
\begin{equation}\label{asymptotics}
	\p(u\sim v) = \frac{\beta}{\|u-v\|^{2d}} + \mathcal{O} \left(\frac{1}{\|u-v\|^{2d+1}}\right)
\end{equation}
for $\|u-v\|_\infty \geq 2$ and with probability $1$ for $\|u-v\|_\infty = 1$. As the nearest-neighbor edges are always open, the resulting graph is clearly connected. This simplifies the analysis of the metric structure of the graph, as one does not need to worry about connectivity. For all cases $s\neq 2d$, at least partial results are known when one removes this assumption and looks at a supercritical percolation cluster instead \cite{baumler2023continuity,benjamini2011geometry,biskup2004scaling,coppersmith2002diameter}. Contrary, for $s=2d$, there are no results known so far about the metric structure of the infinite cluster in the supercritical regime.

In \cite[Theorem 7.1]{baeumler2022distances} it is proven that the typical graph distance between the origin $\mz$ and a point $v \in \Z^d$ grows like $\|v\|^\dxp$ for some $\dxp = \dxp(d,\beta) \in \left(0,1\right]$, where $\dxp=1$ if and only if $\beta=0$. See also \cite{ding2013distances} for a proof of this fact in one dimension. Furthermore, the diameter of the box $\{0,\ldots,n\}^d$ also has the same asymptotic growth. More precisely, it is shown that 
\begin{align}\label{eq:baeumler1}
	\|x\|^\dxp \approx_P D(\mz,x) \approx_P \E_\beta\left[D(\mz,x)\right] 
\end{align}
and that
\begin{align}\label{eq:baeumler2}
	n^\dxp \approx_P \dia \left(\{0, \ldots ,n\}^d \right) \approx_P \E_\beta \left[\dia \left(\{0, \ldots ,n\}^d\right) \right]
\end{align}
where the notation $A(n)\approx_P B(n)$ means that for all $\eps >0 $ there exist $0<c<C<\infty$ such that $\p\left(cB(n) \leq A(n) \leq CB(n)\right)> 1-\eps$ for all $n \in \N$, respectively the same for all $x \in \Z^d$. The parameter $\dxp=\dxp(d,\beta)$ is called the {\sl distance exponent}. This quantity is an important characteristic of the random graph, as it is a universal object that only depends on the dimension $d$ and the parameter $\beta$, but not on the precise specifications of the model. Despite the distance exponent being a universal object, we mostly focus on the case where
\begin{align*}
	q(u-v)= \int_{u+\left[0,1\right)^d} \int_{v+\left[0,1\right)^d} \frac{1}{\|t-s\|^{2d}} \md t \md s .
\end{align*}
We explain in section \ref{subsec:cts} below why this is a particular natural choice for the function $q$. For a parameter $\beta \geq 0$, we write $\p_\beta$ for the measure of long-range percolation with respect to this function and we denote its expectation by $\E_\beta$.

The distance exponent is also directly related to the geometry of the limiting long-range percolation metric.
It is proven in \cite{ding2023uniqueness} that, under proper normalization, the set $\{0,\ldots,n\}^d$ considered as a metric space converges to a self-similar random metric space, and the geometry of the limiting metric space also depends on $\dxp(d,\beta)$. For example, the Hausdorff-dimension of this limiting metric space equals $\tfrac{d}{\theta(d,\beta)}$ \cite{baumler2023polynomial,ding2023uniqueness}. So Theorems \ref{theo:strictmonotonicity}, \ref{theo:continuity} and \eqref{eq:largebetabehavior} below show that the Hausdorff dimension of these metric spaces interpolates continuously between $d$ and $+\infty$ as $\beta$ goes from $0$ to $+\infty$. Determining the exact relation between $\beta$ and $\theta(d,\beta)$ is still a major open problem.

\subsection{Main results}
In this paper, we study how the distance exponent $\theta(d,\beta)$ depends on $\beta$. As we consider the dimension as fixed, while varying $\beta$, we also write $\dxp(\beta)$ instead of $\theta(d,\beta)$.
It is clear that the function $\dxp(\beta)$ is monotonically decreasing in $\beta$, as for $\beta_1 < \beta_2$ we can couple the respective measures in such a way that the set of edges resulting from $\p_{\beta_1}$ is a subset of the edge-set sampled from $\p_{\beta_2}$. This directly implies that $\E_{\beta_1} \left[D(u,v)\right] \geq \E_{\beta_2} \left[D(u,v)\right]$ for every $u,v \in \Z^d$, which, in turn, implies $\dxp(\beta_1) \geq \dxp(\beta_2)$. In section \ref{section:monotone}, we show that $\dxp(\beta)$ is even strictly decreasing. 
\begin{theorem}\label{theo:strictmonotonicity}
	The distance exponent $\dxp : \R_{\geq 0} \rightarrow \left(0,1\right]$ is strictly monotonically decreasing.
\end{theorem}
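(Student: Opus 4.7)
The case $\beta_1=0$ is immediate: $\dxp(0)=1$ while $\dxp(\beta_2)<1$ for every $\beta_2>0$ by the results cited from \cite{baeumler2022distances}. So fix $0<\beta_1<\beta_2$ and set $\delta := \beta_2-\beta_1$; the goal is to prove $\dxp(\beta_1+\delta) < \dxp(\beta_1)$. The crucial preliminary observation is a decomposition specific to this model: because $1-e^{-(\beta_1+\delta)c}$ equals the probability of at least one success among two independent Bernoulli trials with parameters $1-e^{-\beta_1 c}$ and $1-e^{-\delta c}$, a configuration with law $\p_{\beta_2}$ may be realized as $\omega^{(1)}\cup\omega^{(2)}$ with $\omega^{(1)}\sim\p_{\beta_1}$ and $\omega^{(2)}\sim\p_\delta$ independent.

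I would then set up a two-scale renormalization. Fix large integers $M,N$ and partition $\Z^d$ into boxes $B_i := iN + \{0,\ldots,N-1\}^d$. Use the $\omega^{(2)}$-edges as long jumps between boxes, and the $\omega^{(1)}$-edges for local travel inside each box. The coarse-grained graph (boxes as vertices, with $B_i\sim B_j$ whenever some $\omega^{(2)}$-edge connects $B_i$ and $B_j$) is distributionally essentially a $\delta$-long-range percolation on $\Z^d$, so \eqref{eq:baeumler1} yields that the coarse distance between $B_{\mz}$ and $B_{Me_1}$ is at most $CM^{\dxp(\delta)+\eps}$ with high probability. By \eqref{eq:baeumler2} and a union bound over the $\mathrm{poly}(M)$ boxes a coarse geodesic might visit, each such box has $\omega^{(1)}$-diameter at most $CN^{\dxp(\beta_1)+\eps}$ with high probability. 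Concatenating gives $D_{\beta_2}(\mz,MNe_1) \le C\, M^{\dxp(\delta)+\eps}\, N^{\dxp(\beta_1)+\eps}$ with high probability. Combined with the lower bound $D_{\beta_2}(\mz,MNe_1)\ge c(MN)^{\dxp(\beta_2)-\eps}$ from \eqref{eq:baeumler1}, taking logarithms, letting $\frac{\log N}{\log(MN)} \to \alpha \in (0,1)$, and sending $\eps \to 0$, I would obtain
\begin{equation}\label{eq:planineq}
\dxp(\beta_2) \;\le\; \alpha\, \dxp(\beta_1) + (1-\alpha)\, \dxp(\delta) \qquad \text{for every } \alpha \in (0,1).
\end{equation}

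The main obstacle is that \eqref{eq:planineq} alone only yields the non-strict bound $\dxp(\beta_2)\le\min(\dxp(\beta_1),\dxp(\delta))$, which is just monotonicity. It does become strict whenever $\dxp(\delta)<\dxp(\beta_1)$, and since $\dxp(\beta)\to 0$ as $\beta\to\infty$ by \eqref{eq:largebetabehavior}, there exists $\beta^*=\beta^*(\beta_1)$ with $\dxp(\beta^*)<\dxp(\beta_1)$, immediately settling the range $\delta\ge\beta^*$. The delicate range is $\delta\in(0,\beta^*)$, where $\dxp(\delta)$ is close to $1$ and \eqref{eq:planineq} is vacuous. I expect to handle this regime by a bootstrap: supposing for contradiction that $\dxp$ were constant on some interval $[\beta_1,\beta_1+\delta_0]$ with $\delta_0<\beta^*$, iterating \eqref{eq:planineq} with suitably chosen splits (so as to ``move'' the hypothetical plateau into the region $\beta\ge\beta_1+\beta^*$ where strictness has already been established) should produce a contradiction. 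This bootstrap step is where I expect the main technical difficulty of the proof to lie.
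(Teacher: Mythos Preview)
Your superposition decomposition and the two-scale argument leading to \eqref{eq:planineq} are correct; this is a clean way to see non-strict monotonicity and it is genuinely different from the paper's route. However, the bootstrap you outline for the strict inequality does not work as stated, and this is a real gap rather than a technicality. The inequality \eqref{eq:planineq} only produces \emph{upper} bounds on $\dxp$ at larger arguments. Suppose $\dxp$ were constant, equal to $c$, on a maximal interval $[\beta_1,\beta_1+\delta_0]$. Applying \eqref{eq:planineq} with any base point in this interval and any increment $\delta$ yields $\dxp(\beta_1+\delta_0+\delta)\le c$, which is already implied by monotonicity; and applying it with the base point $\beta_1$ and increment $\delta\le\delta_0$ yields only the triviality $c\le\dxp(\delta)$. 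There is no mechanism here to ``move the plateau to the right'' into the region where you already know strictness, because that would require a matching \emph{lower} bound on $\dxp(\beta_1+\delta_0+\delta)$, which \eqref{eq:planineq} cannot give. In effect, your renormalization uses the extra $\omega^{(2)}$-edges only at a single coarse scale, and this cannot detect whether they provide a strictly positive gain once $\dxp(\delta)$ is no smaller than $\dxp(\beta_1)$.

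The paper's proof proceeds by an entirely different, differential argument. It applies a Russo-type formula (Lemma~\ref{lem:russoexp}) to $\log \E_\beta[D_{V_\mz^{2^n}}(\mz,(2^n-1)\mo)]/\log(2^n)$ and shows that this derivative is bounded above by a constant $c(\beta)<0$ uniformly in large $n$. The mechanism is a multi-scale analysis: for each dyadic scale $2^k$, one exhibits along the geodesic a positive density of ``$\eps$-influential'' sub-blocks where inserting a single edge of length $\sim 2^k$ would shorten the distance by $\gtrsim \eps\Lambda(2^k,\beta)$, and there are $\sim 2^{kd}\cdot 2^{kd}$ candidate edges each with $p'(\beta,e)\sim 2^{-2kd}$. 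Summing over the $\sim n$ scales cancels the $\log(2^n)$ in the denominator and yields the uniform negative bound. The essential input your approach is missing is precisely this quantitative per-edge improvement summed across scales; the coarse-graining in \eqref{eq:planineq} collapses all of this into a single-scale statement and loses the strictness.
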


Theorem \ref{theo:strictmonotonicity} shows a unique feature of the case $s=2d$. For all other exponents $s\in (0,+\infty)\setminus\{2d\}$ of the decay of $q(x)\sim \|x\|^{-s}$, the first order of the chemical distance does not depend on $\beta$. Contrary to that, for $s=2d$ each positive $\beta$ influences the first-order growth of the chemical distance uniquely. This variation between different values of the distance exponent $\theta$ happens in a continuous way, which is the statement of the next theorem.

\begin{theorem}\label{theo:continuity}
	The distance exponent $\dxp : \R_{\geq 0} \rightarrow \left(0,1\right]$ is continuous in $\beta$.
\end{theorem}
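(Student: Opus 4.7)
Since $\dxp$ is monotone decreasing by Theorem~\ref{theo:strictmonotonicity}, continuity at $\beta_0 \ge 0$ reduces to ruling out a jump discontinuity, i.e.\ to showing that $\dxp(\beta_0^-)$ and $\dxp(\beta_0^+)$ both equal $\dxp(\beta_0)$. I sketch right-continuity; left-continuity is obtained by a symmetric argument with the inclusions reversed. The basic tool is the standard monotone coupling: sample independent uniforms $U_e \in (0,1)$ indexed by edges, and declare $e$ open under $\p_\gamma$ iff $U_e \le p(\gamma, e)$. Because $p(\gamma, e)$ is continuous in $\gamma$, any sequence $\beta_n \downarrow \beta_0$ produces nested edge sets $E_{\beta_n} \downarrow E_{\beta_0}$ pointwise, and consequently $D_{\beta_n}(\mz, x) \uparrow D_{\beta_0}(\mz, x)$ for every $x \in \Z^d$. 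Combined with the trivial bound $D(\mz, x) \le \|x\|_1$, monotone convergence yields
\begin{equation*}
T_{\beta_n}(x) \longrightarrow T_{\beta_0}(x) \qquad \text{for every fixed } x \in \Z^d, \ \text{where } T_\beta(x) := \E_\beta[D(\mz, x)].
\end{equation*}

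The next step is to upgrade this pointwise continuity of $\beta \mapsto T_\beta(x)$ into continuity of the exponent. Suppose, for contradiction, that $\dxp(\beta_0^+) = L < \dxp(\beta_0)$ with $\Delta := \dxp(\beta_0) - L > 0$; by monotonicity $\dxp(\beta') \le L$ for every $\beta' > \beta_0$. The scaling~\eqref{eq:baeumler1}, combined with the deterministic bound $D \le \|x\|_1$ and the tail estimates used in~\cite{baeumler2022distances} to establish it, provides
\begin{equation*}
T_{\beta_0}(N e_1) \ge c\,N^{\dxp(\beta_0)} \qquad\text{and}\qquad T_{\beta'}(N e_1) \le C_{\beta'}\,N^{\dxp(\beta_0) - \Delta/2}
\end{equation*}
for all $N$ large and all $\beta' \in (\beta_0, \beta_0 + \delta)$ for a sufficiently small $\delta$. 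Fix $N$ large enough that the lower bound at $\beta_0$ applies. By the convergence $T_{\beta'}(N e_1) \to T_{\beta_0}(N e_1)$ from the first step, $T_{\beta'}(N e_1) \ge c\,N^{\dxp(\beta_0)}/2$ once $\beta'$ is sufficiently close to $\beta_0$. Combining with the upper bound forces $c N^{\Delta/2}/2 \le C_{\beta'}$, which produces the desired contradiction as soon as $N$ is taken larger still, \emph{provided} $C_{\beta'}$ is uniformly bounded in $\beta'$ over the neighborhood.

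The main obstacle is exactly this local uniformity of the scaling constants: while~\eqref{eq:baeumler1} yields a constant $C_{\beta'}$ at each fixed $\beta'$, the contradiction above needs $\sup_{\beta' \in (\beta_0, \beta_0 + \delta)} C_{\beta'} < \infty$ together with a threshold $N(\beta')$ that stays bounded. I would address this by re-examining the renormalization and tail arguments behind~\eqref{eq:baeumler1} in~\cite{baeumler2022distances} to track their $\beta$-dependence, so that constants can be taken uniform over compact ranges of $\beta$; monotonicity of the coupling, which forces $T_{\beta'} \le T_{\beta_0}$ and $T_{\beta'} \ge T_{\beta^*}$ for any reference $\beta^* > \beta'$, should supply the necessary quantitative control. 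Once uniformity is in place the contradiction closes, and the symmetric argument for $\beta_n \uparrow \beta_0$ (now with $D_{\beta_n} \downarrow D_{\beta_0}$ and the roles of upper and lower bounds swapped) delivers left-continuity.
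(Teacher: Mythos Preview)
Your proposal has a genuine gap, and it is precisely the one you flag: the uniformity of the upper bound constant $C_{\beta'}$ as $\beta' \downarrow \beta_0$. Unfortunately, the fix you suggest does not close it. The monotone sandwich $T_{\beta^*}(x) \le T_{\beta'}(x) \le T_{\beta_0}(x)$ for $\beta_0 < \beta' < \beta^*$ only yields an upper bound with exponent $\dxp(\beta_0)$ and a lower bound with exponent $\dxp(\beta^*)$; neither produces an upper bound on $T_{\beta'}$ with exponent strictly below $\dxp(\beta_0)$, which is exactly what your contradiction needs. Worse, the uniformity you seek is essentially equivalent to the conclusion: your own argument, read contrapositively, shows that if a jump of size $\Delta$ exists at $\beta_0$ then $C_{\beta'} \ge (c/2) N^{\Delta/2}$ whenever $\beta'$ is close enough to $\beta_0$ for the given $N$, i.e.\ $C_{\beta'} \to \infty$. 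So ``tracking constants'' in the companion paper cannot succeed without an independent input that already rules out the jump.

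There is also an asymmetry you miss. For \emph{left}-continuity your scheme actually works, because the required uniform bound is a \emph{lower} bound on $\Lambda(n,\beta')$ for $\beta' < \beta_0$, and submultiplicativity (Lemma~\ref{lem:submultiplicativity}) gives $\Lambda(n,\beta') \ge n^{\dxp(\beta')}$ with constant~$1$, uniformly in $\beta'$. Indeed the paper handles left-continuity in one line via the formula $\dxp(\beta) = \inf_{n\ge 2} \frac{\log \Lambda(n,\beta)}{\log n}$ together with continuity and monotonicity of $\beta \mapsto \Lambda(n,\beta)$ for each fixed $n$. It is \emph{right}-continuity that is hard, and for this the paper takes a completely different route: it interpolates between $\p_\beta$ and $\p_{\beta+\eps}$ via the mixed measures $\p_{\beta\le k}^{\beta+\eps>k}$ (edges shorter than $2^k$ sampled with parameter $\beta$, longer ones with $\beta+\eps$), writes $\dxp(\beta)-\dxp(\beta+\eps)$ as a Ces\`aro average of the increments $\log \big( \E_{\beta\le k}^{\beta+\eps>k}[D] / \E_{\beta\le k-1}^{\beta+\eps>k-1}[D] \big)$, and then proves via a renormalization scheme with ``good'' blocks that each increment tends to $0$ as $\eps\to 0$ (uniformly in $k$ and $n-k$ large). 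This bypasses any need for uniform scaling constants.
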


The existence of the distance exponent was shown in \cite{baeumler2022distances} using a submultiplicative structure of the expected chemical distance, see Lemma \ref{lem:submultiplicativity} below. This submultiplicativity also directly implies the left-continuity of the distance exponent, see Lemma \ref{lem:leftcontinuity}.
The proof of the right-continuity is more difficult. For this, we introduce new measures that interpolate between $\p_{\beta_1}$ and $\p_{\beta_2}$ for $\beta_1,\beta_2 \geq 0$. The (left-)continuity of the distance exponent was also used in \cite[Section 7]{baeumler2022distances} to get the up-to-constants results of \eqref{eq:baeumler1} for all functions $(q(x))_{x\in \Z^d}$ that satisfy \eqref{asymptotics} and to understand the polynomial volume growth of balls in the long-range percolation metric \cite{baumler2023polynomial}.

In \cite{baeumler2022distances} it is also proven that for all dimensions $d$ there exist constants $0<c<C<\infty$ such that 
\begin{align}\label{eq:largebetabehavior}
	\frac{c}{\log(\beta)} \leq \dxp(\beta) \leq \frac{C}{\log(\beta)}
\end{align}
for all $\beta \geq 2$.
So in particular, Theorem \ref{theo:continuity} together with the facts that $\lim_{\beta \to 0} \dxp(\beta) = \dxp(0) = 1$ and $\lim_{\beta \to \infty} \dxp(\beta)=0$ 
show that $\dxp(\beta)$ interpolates continuously between $0$ and $1$, as $\beta$ goes from $+\infty$ to $0$. This also shows that the long-range percolation model with $s=2d$ interpolates continuously between the power-logarithmic growth observed for $s\in (d,2d)$ \cite{biskup2004scaling} and the linear growth observed for $s>2d$ \cite{baumler2023continuity,berger2004lower}. \\

In dimension $d=1$, it is well-known that $\dxp(\beta) \geq 1-\beta$, see for example \cite{coppersmith2002diameter,ding2013distances}. This lower bound arises by considering {\sl cutpoints}  for long-range percolation on the one-dimensional integer line $\Z$. We also review this inequality in section \ref{section:smallbeta} below.
In section \ref{section:smallbeta}, we also show that for small $\beta>0$ this lower bound is indeed a good approximation for $\dxp(\beta)$.

\begin{theorem}\label{theo:smallbeta}
	In dimension $d=1$, the right-hand derivative of the distance exponent $\frac{\md }{\md \beta}\dxp(\beta)$ exists at $\beta=0$ and furthermore one has $\frac{\md }{\md \beta}\dxp(\beta)\Big\vert_{\beta=0} = -1$. This yields that $\dxp(\beta)=1-\beta + o(\beta)$ as $\beta \to 0$.
\end{theorem}

\subsection{The continuous model}\label{subsec:cts}

One particularly interesting case where \eqref{asymptotics} is satisfied is the case when
$q(u-v)= \int_{u+\left[0,1\right)^d} \int_{v+\left[0,1\right)^d} \frac{1}{\|t-s\|^{2d}} \md t \md s$. The function $q$ defined like this satisfies $q(x)=q(-x)$ for all $x\in \Z^d$ and $q(x)=+\infty$ for all $x\in \Z^d$ with $\|x\|_\infty = 1$. We denote the measure of long-range percolation with respect to this function and parameter $\beta > 0$ by $\p_\beta$ and its expectation by $\E_\beta$. We write $p(\beta,\{u,v\})$ for the probability that there is an edge between $u$ and $v$, i.e., 
\begin{equation*}
	p(\beta,\{u,v\}) \coloneqq \p_\beta(u\sim v)  =  1- \exp \left( - \beta \int_{u+\left[0,1\right)^d} \int_{v+\left[0,1\right)^d} \frac{1}{\|t-s\|^{2d}} \md t \md s \right).
\end{equation*}
First note that the function $p(\beta,\{u,v\})$ decays asymptotically like $\frac{\beta}{\|u-v\|^{2d}}$ and satisfies 
\begin{equation*}
	p(\beta,\{u,v\}) = \frac{\beta}{\|u-v\|^{2d}} + \mathcal{O} \left(\frac{1}{\|u-v\|^{2d+1}}\right),
\end{equation*}
see \cite[Example 7.2]{baeumler2022distances}. The reason why the model with inclusion probability $p(\beta,\{u,v\})$ is particularly interesting is because the described discrete percolation model has a self-similarity for all $\beta \geq 0$. This self-similarity comes from a coupling with an underlying continuous model, that we will now describe for any dimension. This will also explain the, at first sight complicated, choice of the function $(q(x))_{x\in \Z^d}$. Consider a Poisson point process $\tilde{\e}$ on $\R^d \times \R^d$ with intensity $\frac{\beta}{2 \|t-s\|_2^{2d}}$. Define the symmetrized version $\e$ by $\e \coloneqq \{(t,s) \in \R^d \times \R^d : (s,t) \in \tilde{\e}\} \cup \tilde{\e}$. We define $\cC\coloneqq \left[0,1\right)^d$. We now define a random graph with vertex set $\Z^d$. For $u,v \in \Z^d$ with $\|u-v\|_\infty \geq 1$ we put an undirected edge between $u$ and $v$ if and only if $\left(\left(u+\cC\right) \times \left(v+\cC\right)\right) \cap \e \neq \emptyset$. The cardinality of $\left(\left(u+\cC\right) \times \left(v+\cC\right)\right) \cap \tilde{\e}$ is a random variable that follows a Poisson distribution with parameter $\int_{u+\cC} \int_{v+\cC} \frac{\beta}{2 \|t-s\|^{2d}} \md s \md t$. As the random variables $\left|\left(\left(u+\cC\right) \times \left(v+\cC\right)\right) \cap \tilde{\e}\right|$ and $\left|\left(\left(v+\cC\right) \times \left(u+\cC\right)\right) \cap \tilde{\e}\right|$ are independent and identically distributed, we get that
\begin{align*}
	&\p\big(\left(\left(u+\cC\right) \times \left(v+\cC\right)\right) \cap \e = \emptyset\big) = \p\left(\left(\left(u+\cC\right) \times \left(v+\cC\right)\right) \cap \tilde{\e} = \emptyset\right)^2\\
	& = \left(e^{-\int_{u+\cC} \int_{v+\cC} \frac{\beta}{2 \|t-s\|^{2d}} \md s \md t}\right)^2
	= e^{-\int_{u+\cC} \int_{v+\cC} \frac{\beta}{ \|t-s\|^{2d}} \md s \md t}
	= 1 - p(\beta, \{u,v\})
\end{align*}
which is exactly the probability that $u \nsim v$ under the measure $\p_\beta$. Note that for $\{u,v\}$ with $\|u-v\|_\infty=1$ we have $\int_{u+\cC} \int_{v+\cC} \frac{\beta}{ \|t-s\|^{2d}} \md s \md t = \infty$. So we really get that all edges of the form $\{u,v\}$ with $\|u-v\|_\infty=1$ are present almost surely. The construction with the Poisson process also implies that the presence of different edges is independent and thus the resulting measure of the random graph constructed above equals $\p_\beta$.
The chosen connection probabilities, respectively the function $q$, have many advantages. First of all, the resulting model is invariant under translation and invariant under the reflection of coordinates, i.e., $q((x_1,\ldots,x_d)) = q((y_1,\ldots,y_d))$ for all $x_1,\ldots,x_d,y_1,\ldots,y_d \in \Z$ with $|x_i|=|y_i|$ for all $i\in \{1,\ldots,d\}$. Additionally, the model is invariant under the permutation of the different coordinate directions, i.e., $q((x_1,\ldots,x_d)) = q((x_{\sigma(1)},\ldots,x_{\sigma(d)}))$ for all permutations $\sigma:\{1,\ldots,d\} \to \{1,\ldots,d\}$. However, the unique characteristic of this model as defined above is the following self-similarity: 
The integral of the form $\int_{A} \int_{B} \frac{1}{ \|t-s\|^{2d}} \md s \md t$ is invariant under scaling of the space $A\times B \subseteq \R^d \times \R^d$ in the sense that for any $\alpha>0$ one has
\begin{align*}
	& \int_{\alpha A} \int_{\alpha B} \frac{1}{ \|t-s\|^{2d}} \md s \md t
	=
	\int_{\alpha A} \int_{B} \alpha^d \frac{1}{ \|t-\alpha s\|^{2d}} \md s \md t\\
	&
	=
	\int_{A} \alpha^d \int_{B} \alpha^d \frac{1}{ \|\alpha t - \alpha s\|^{2d}} \md s \md t
	=
	\int_{A} \int_{B}  \frac{1}{ \| t -  s\|^{2d}} \md s \md t,
\end{align*}
where the first and the second equality follow via integration by substitution. Note that the exponent $2d$ in the denominator of $\frac{1}{ \| t -  s\|^{2d}}$ is the only exponent for which this holds. This is also one of the reasons why one transition between different regimes in the metric structure for long-range percolation happens at $s=2d$.

For some vector $u= (u_1,\ldots,u_d)\in \Z^d$ and $n\in \N_{>0}$ we define the translated box  $V_u^n \coloneqq \prod_{i=1}^{d} \{u_i n , \ldots, (u_i+1)n-1\} = nu  + \prod_{i=1}^{d} \{0,\ldots, n-1\}$.
Using the scaling-invariance of the integral, we see that for all points $u,v \in \Z^d$, and all $n\in \N_{>0}$ one has
\begin{align}\label{scale invariance}
	\notag &\p_\beta\left(V_u^n \nsim V_v^n\right) = \prod_{x\in V_u^n} \prod_{ y \in V_v^n} \p_\beta \left( x \nsim y \right)
	= \prod_{x\in V_u^n} \prod_{ y \in V_v^n} 
	e^{-\int_{x+\cC} \int_{y+\cC} \frac{\beta}{ \|t-s\|^{2d}} \md s \md t}\\
	\notag &
	= e^{- \sum_{x\in V_u^n} \sum_{ y \in V_v^n} \int_{x+\cC} \int_{y+\cC} \frac{\beta}{ \|t-s\|^{2d}} \md s \md t}
	= e^{-\int_{nu+\left[0,n\right)^d} \int_{nv+\left[0,n\right)^d} \frac{\beta}{ \|t-s\|^{2d}} \md s \md t}\\
	&
	= e^{-\int_{u+\cC} \int_{v+\cC} \frac{\beta}{ \|t-s\|^{2d}} \md s \md t}
	= \p_\beta \left( u \nsim v \right)
\end{align}
which shows the self-similarity of the model. Also observe that for any $\alpha \in \R_{> 0}$ the process $\alpha \tilde{\e} \coloneqq \left\{(x,y) \in \R^d \times \R^d : \left(\frac{1}{\alpha}x , \frac{1}{\alpha}y \right)\in \tilde{\e} \right\}$ is again a Poisson point process with intensity $\frac{\beta}{2\|x-y\|^{2d}}$.

\subsection{Notation} 

We use the notation $\mz$ and $\mo$ for the vectors in $\R^d$ that contain only zeros, respectively ones. For $x\in \Z^d$ and $r\geq 0$ we write $B_r(x)=\left\{y\in \Z^d : \|x-y\|_\infty \leq r\right\}$ for the ball of radius $r$ in the $\infty$-norm around $x$. We use the symbol $\cC$ for the box $\left[0,1\right)^d$.
For a vector $u= (u_1,\ldots,u_d)\in \Z^d$ and $n\in \N_{>0}$, we define the box 
\begin{align*}
	V_u^n \coloneqq \prod_{i=1}^{d} \{u_i n , \ldots, (u_i+1)n-1\} = nu  + \prod_{i=1}^{d} \{0,\ldots, n-1\} \text .
\end{align*}For a graph $G=(V,E)$ and $u,v\in V$, we write $D(u,v)$ for the graph distance between $u$ and $v$. 
For some subset $A\subset \Z^d$, we write $D_{A}(u,v)$ for the distance between $u$ and $v$ when we restrict to the edges with both endpoints inside $A$. When it is not clear, we will often write on which graph we work. When we write $\dia\left(A\right) $ we always mean the diameter inside this set, i.e., $\dia(A)=\max_{u,v \in A} D_{A}(u,v)$. We write $\p_\beta$ for the measure of independent long-range percolation with inclusion probabilities $p(\beta,\{u,v\}) = 1-e^{-\int_{u+\cC} \int_{v+\cC} \frac{\beta}{\|x-y\|^{2d} } \md y \md x}$, and $\E_\beta$ for the expectation under this measure. For some edge $e=\{u,v\}$, we write $|e|=|\{u,v\}|=\|u-v\|_\infty$ for the distance in the $\infty$-norm between the endpoints. We use the notation $\log(x)$ for the logarithm with base $e$. We identify the percolation configuration with a random element $\omega \in \{0,1\}^E$, where we say that the edge $e$ exists or is open or present if $\omega(e)=1$. For $\omega \in \left\{0,1\right\}^E$ and $e \in E$, we define the elements $\omega^{e+}, \omega^{e-} \in \{0,1\}^E$ by
\begin{equation*}
	\omega^{e+}(\tilde{e}) = \begin{cases}
		1 & \text{if } \tilde{e} = e \\
		\omega(e) & \text{if }\tilde{e} \neq e
	\end{cases}  \ \text{ and } \ 
	\omega^{e-}(\tilde{e}) = \begin{cases}
		0 & \text{if } \tilde{e} = e \\
		\omega(e) & \text{if } \tilde{e} \neq e
	\end{cases} \text ,
\end{equation*}
so this are the edge sets when we insert, respectively delete, the edge $e$. When we consider the chemical distance in the percolation environment defined by $\omega \in \{0,1\}^E$, we also write $D(u,v;\omega)$ for the graph distance between $u$ and $v$ in the environment represented by $\omega$.  We define the indirect distance $D^\star(A,B)$ between the sets $A,B\subset \Z^d$ as the graph distance in the environment where we removed all edges between $A$ and $B$, which is the minimal length of a path between $A$ and $B$ that does not use an edge $e=\{u,v\}$ with $u\in A, v\in B$.

\subsection{Related work}

Another line of research is to investigate the random graph when one drops the assumption that $q(u-v)=\infty$ for all nearest neighbor edges $\{u,v\}$. For the metric structure of the graph, at least partial results are known for all cases $s\neq 2d$ \cite{baumler2023continuity,benjamini2011geometry,biskup2004scaling,coppersmith2002diameter}. For $s=2d$, it is not known up to now how the typical distance in long-range percolation grows, when $q(\{u,v\})<\infty$ for nearest neighbor edges $\{u,v\}$. We also conjecture a polynomial growth in the Euclidean distance, whenever an infinite cluster exists.

Regarding the existence of an infinite cluster in dimension $d=1$, there is a change of behavior at $s=2$. As proven by Aizenman, Newman, and Schulman in \cite{aizenman1986discontinuity, schulman1983long, newman1986one}, an infinite cluster cannot emerge for $s > 2$ and for $s=2, \beta \leq 1$, no matter how small $\p\left(k \nsim k+1\right)$ is \cite{aizenman1986discontinuity}. On the other hand, an infinite cluster can emerge for $s<2$ and $s=2,\beta > 1$ \cite{newman1986one}. See also \cite{duminil2020long} for a new proof of these results. In \cite{aizenman1986discontinuity} the authors proved that there is a discontinuity in the percolation density for $s=2$, contrary to the situation for $s<2$, as proven in \cite{berger2002transience,hutchcroft2021power}. For models, for which an infinite cluster exists, the behavior of the percolation model at and near criticality is also a well-studied question (cf. \cite{baumler2022isoperimetric, barsky1991percolation, berger2002transience, borgs2005random, damron2017chemical, hutchcroft2021power, hutchcroft2021critical,hutchcroft2022sharp}). 

Also the Ising model on $\Z$ with interaction energy $J\left(\{x,y\}\right)=|x-y|^{-s}$ is a well-studied object, and several results regarding the existence and continuity of phase transitions are known \cite{aizenman1988discontinuity,dyson1969existence,frohlich1982phase,imbrie1988intermediate}.

In dimension $d=1$, the behavior of the mixing time is also a property that exhibits a transition at $s=2$, as proven in \cite{benjamini2008long}. On the line segment $\{0,\ldots,n\}$ the mixing time grows quadratically in $n$ for $s>2$ and is of order $n^{s-1}$ for $1<s<2$. The behavior of the mixing time for $s=2$ is still open, but we conjecture a similar behavior to that of the chemical distance, namely that the mixing time interpolates between $n$ and $n^2$, as $\beta$ goes from $+\infty$ to $0$. A better understanding of the mixing time is useful to study the heat kernel and understand the long-time behavior of the simple random walk on the cluster. In \cite{crawford2012simple, crawford2013simple} Crawford and Sly give bounds on the heat kernel and prove a scaling limit for the case $s \in \left(d,d+1\right)$.

\section{Technical preliminaries}\label{section:Russo}

In this chapter, we provide a few technical preliminaries that will be used later in the proofs of the main theorems; the first one is a version of Russo's formula. The classical Russo's formula, also called Russo-Margulis lemma, see for example \cite[Section 1.3]{heydenreich2017progress} or \cite{margulis1974probabilistic,russo1981critical} for the original papers, is a formula for i.i.d. bond percolation. It states that for any finite graph $(V,E)$ and any increasing event $A$ 
\begin{align}\label{eq:classicalrusso}
	\frac{\md }{\md p} \p_p\left(A\right) = \sum_{e\in E} \p_p\left( e \text{ is pivotal for } A \right) \text ,
\end{align}
where we say that an edge $e$ is pivotal for an event $A$ when changing the status of $e$ also changes the occurrence of the event $A$. Note that it does not depend on the occupation status of the edge $e$ whether $e$ is pivotal for $A$. Russo's formula \eqref{eq:classicalrusso} tells us how the probability of an event changes for i.i.d. percolation when varying the connection probability $p$. We modify this formula in two ways. First of all, we adapt it to long-range percolation, where the inclusion probabilities of the edges are not identical. Secondly, we give a formula that determines the derivative of the expectation of  a general function rather than just the probability of a given event.

\begin{lemma}[Russo's formula for expectations]\label{lem:russoexp}
	Let $G=(V,E)$ be a finite graph with a set of inclusion probabilities $\left(p(\beta, e)\right)_{e\in E ,\beta \geq 0}$, where $\beta \mapsto p(\beta, e)$ is continuously differentiable on $\R_{\geq 0}$ for all $e \in E$. By $\p_\beta$ we denote the Bernoulli product measure on $\left\{0,1\right\}^E$ with inclusion probabilities $\left(p(\beta, e) \right)_{e \in E}$. We write $\E_\beta$ for the expectation under the measure $\p_\beta$. We write $p^\prime(\beta,e)=\frac{\md}{\md \beta} p(\beta,e)$ for the derivative of $p(\beta,e)$ with respect to $\beta$. Let $f: \left\{0,1\right\}^E \rightarrow \R$ be a function. Then
	\begin{equation}\label{eq:russoexp}
		\frac{\md }{\md \beta} \E_\beta\left[f(\omega)\right] = \sum_{e \in E} p^\prime(\beta, e) \E_\beta\left[f(\omega^{e+})-f(\omega^{e-})\right] .
	\end{equation}
\end{lemma}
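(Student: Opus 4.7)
My plan is to prove Lemma \ref{lem:russoexp} by direct termwise differentiation of the explicit product formula for the expectation. Because $E$ is finite, I can write
\begin{equation*}
\E_\beta[f(\omega)] = \sum_{\omega \in \{0,1\}^E} f(\omega) \prod_{e \in E} p(\beta,e)^{\omega(e)}\bigl(1-p(\beta,e)\bigr)^{1-\omega(e)},
\end{equation*}
a finite sum in which each factor is $C^1$ in $\beta$ by assumption. Hence the derivative exists and may be computed term by term; applying the product rule yields a double sum indexed by $(\omega,e)$, with the factor associated to $e$ replaced by its $\beta$-derivative while the other factors are left untouched.

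A short calculation shows that, for each fixed $e$, the single-edge derivative equals $p'(\beta,e)\,(2\omega(e)-1)$, i.e.\ $+p'(\beta,e)$ when $\omega(e)=1$ and $-p'(\beta,e)$ when $\omega(e)=0$. Swapping the order of summation and pulling $p'(\beta,e)$ outside, I split the inner sum over $\omega$ according to the value of $\omega(e)$ into a positive contribution from configurations with $\omega(e)=1$ and a negative contribution from those with $\omega(e)=0$, each weighted by $f(\omega)$ times the product of the Bernoulli factors at the other edges $e'\neq e$.

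The final step is to recognise these two pieces as $\E_\beta[f(\omega^{e+})]$ and $\E_\beta[f(\omega^{e-})]$, respectively. Since $f(\omega^{e+})$ and $f(\omega^{e-})$ do not depend on the coordinate $\omega(e)$, and since $\p_\beta$ is a product measure on edges, summing over the fiber $\{\omega(e)=1\}$ (respectively $\{\omega(e)=0\}$) against the remaining Bernoulli factors is exactly $\E_\beta[f(\omega^{e+})]$ (respectively $\E_\beta[f(\omega^{e-})]$). Reassembling and summing over $e$ then produces \eqref{eq:russoexp}. I do not foresee any real obstacle here: the entire argument is bookkeeping together with the independence of edge states under $\p_\beta$, so the only point that needs to be stated cleanly in the write-up is the identification of the two pieces with $\E_\beta[f(\omega^{e\pm})]$ via independence; no interchange of limits beyond finite sums is required.
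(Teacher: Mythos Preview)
Your proposal is correct and follows essentially the same approach as the paper: write $\E_\beta[f(\omega)]$ as a finite sum of products, differentiate using the product rule, and identify the two pieces (according to $\omega(e)=1$ or $\omega(e)=0$) with $\E_\beta[f(\omega^{e+})]$ and $\E_\beta[f(\omega^{e-})]$. The paper organises the same computation by introducing a vector parameter $\vv{\beta}=(\beta_e)_{e\in E}$, computing each partial $\partial/\partial\beta_f$, and then applying the chain rule along the diagonal $\beta\mapsto(\beta,\ldots,\beta)$, but this is just a cosmetic repackaging of your direct product-rule argument.
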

The lemma is stated for any set of continuously differentiable functions $p(\beta, e)$, but one can also always think of the case where $V$ is a finite subset of $\Z^d$ and where $p(\beta, \{u,v\}) = 1-e^{-\beta \int_{u+\cC} \int_{v+\cC} \frac{1}{\|x-y\|^{2d}}\md x \md y }$ for $u,v\in V$, as we only apply it to this case. The proof is almost identical to the proof of the classical Russo's formula, so we omit it here.\\

\noindent
We now consider the case where $p(\beta,\{u,v\})= 1-e^{-\int_{u+\cC} \int_{v+\cC} \frac{\beta}{\|x-y\|^{2d}} \md x \md y}$. Note that $p(\beta,e)$ decays like $\frac{\beta}{|e|^{2d}}$ as $|e|$ tends to infinity. By the triangle inequality we have for all $x\in u+\cC, y\in v+\cC$ that
\begin{equation*}
	\|x-y\| = \|u-v+(x-u)-(y-v)\| \leq \|u-v\| + \|(x-u)-(y-v)\| 
	\leq \|u-v\| + \sqrt{d} 
\end{equation*}
and
\begin{equation*}
	\|u-v\| = \|x-y+(u-x)-(v-y)\| \leq \|x-y\| + \|(u-x)-(v-y)\| 
	\leq \|x-y\| + \sqrt{d} .
\end{equation*}
Combining these inequalities we see that
\begin{align*}
	\|u-v\|-\sqrt{d} \leq \|x-y\|\leq \|u-v\|+\sqrt{d}
\end{align*}
and thus, for $u,v\in \Z^d$ with $\|u-v\| > \sqrt{d}$, one can bound $q(u-v)$ from above and below by
\begin{align}\label{eq:upperandlowerbound}
	\frac{1}{\left(\|u-v\| + \sqrt{d} \right)^{2d}} \leq \int_{v+\cC}\int_{u+\cC} \frac{1}{\|x-y\|^{2d}} \md y \md x
	\leq \frac{1}{\left(\|u-v\| - \sqrt{d} \right)^{2d}} \text .
\end{align}
Also note for all $u,v\in \Z^d$ with $\|u-v\|_\infty \geq 2$ and all $x\in u+\left[0,1\right)^d, y\in v+\left[0,1\right)^d$ one has $\|x-y\| \geq 1$. Integrating this over all $x\in u+\left[0,1\right)^d, y\in v+\left[0,1\right)^d$ yields that
\begin{align}\label{eq:integralboundedby1}
	1 \geq \int_{v+\cC}\int_{u+\cC} \frac{1}{\|x-y\|^{2d}} \md y \md x .
\end{align}
Next, we consider the derivative of $p(\beta,e)$ for edges $e = \{u,v\}$ with $\|u-v\|_\infty \geq 2$. By the chain rule we have
\begin{align*}
	\frac{\md }{\md \beta} p(\beta, e) = p^\prime(\beta, e) = \int_{u+\cC} \int_{v+\cC} \frac{1}{\|x-y\|^{2d}} \md x \md y \ e^{-\int_{u+\cC} \int_{v+\cC} \frac{\beta}{\|x-y\|^{2d}} \md x \md y}.
\end{align*}
Using \eqref{eq:upperandlowerbound} and \eqref{eq:integralboundedby1}, we get that for all $u,v \in \Z^d$ with $\|u-v\|_\infty > \sqrt{d}$
\begin{equation}\label{eq:derivativebound1}
	p^\prime(\beta, \{u,v\}) 
	\overset{\eqref{eq:upperandlowerbound}}{\geq} 
	\frac{e^{-\int_{u+\cC} \int_{v+\cC} \frac{\beta}{\|x-y\|^{2d}} \md x \md y}}{\left(\|u-v\| + \sqrt{d} \right)^{2d}}
	\overset{\eqref{eq:integralboundedby1}}{\geq} 
	\frac{e^{-\beta}}{\left(\|u-v\| + \sqrt{d} \right)^{2d}} .
\end{equation}
Further, for all $u,v\in \Z^d$ with $\|u-v\| > \sqrt{d}$, inequality \eqref{eq:upperandlowerbound} also implies that
\begin{equation}\label{eq:derivativebound2}
	p^\prime(\beta, \{u,v\}) \overset{\eqref{eq:upperandlowerbound}}{\leq} \frac{e^{-\int_{u+\cC} \int_{v+\cC} \frac{\beta}{\|x-y\|^{2d}} \md x \md y}}{\left(\|u-v\| - \sqrt{d} \right)^{2d}}
	\leq \frac{1}{\left(\|u-v\| - \sqrt{d} \right)^{2d}} .
\end{equation}
Inequalities \eqref{eq:derivativebound1} and \eqref{eq:derivativebound2} imply that for fixed $\beta >0$ the derivative $p^\prime(\beta,\{u,v\})$ satisfies $p^\prime(\beta,\{u,v\})=\Theta\left(\frac{1}{\|u-v\|^{2d}}\right)$ as $\|u-v\|\to \infty$. So in particular, the derivative $p^\prime(\beta,\{u,v\})$ is of the same order as $p(\beta,\{u,v\})$.

Above we bounded the derivative $p^\prime (\beta, e)$ from above and below. We also want to bound the connection probability $p(\beta,e)$ from above and below. As $1-e^{-s}\geq \frac{s}{2}\wedge\frac{1}{2}$ for all $s \geq 0$ one has that
\begin{align}\label{eq:jensensbound}
	p(\beta, \{u,v\})\geq \int_{u+\cC} \int_{v+\cC} \frac{\beta}{2 \|x-y\|^{2d}} \md x \md y \wedge \frac{1}{2} 
	\overset{\eqref{eq:upperandlowerbound}}{\geq} \frac{\beta}{2\left( \|u-v\| + \sqrt d \right)^{2d}} \wedge \frac{1}{2}  
\end{align}
for all $u,v \in \Z^d$ with $\|u-v\|>\sqrt{d}$.
On the other hand one has $1-e^{-s}\leq s$ and thus one can upper bound the connection probability by
\begin{align}\label{eq:connection probability upper bound}
	p(\beta, \{u,v\}) \leq \int_{u+\cC} \int_{v+\cC} \frac{\beta}{ \|x-y\|^{2d}} \md x \md y \overset{\eqref{eq:upperandlowerbound}}{\leq} \frac{\beta}{\left( \|u-v\| - \sqrt d \right)^{2d}}
\end{align}
for $\|u-v\| > \sqrt{d}$.

\section{Asymptotic behavior of $\dxp(\beta)$ for small $\beta$ and $d=1$}\label{section:smallbeta}

In this section, we prove Theorem \ref{theo:smallbeta}, i.e., that $\dxp(\beta)=1-\beta+o(\beta)$ for $\beta \to 0$ in dimension $d=1$.
Determining the asymptotic behavior of $\dxp(\beta)$ for $\beta\to 0$ in dimension $d\geq 2$ is more difficult for several reasons. First, there is no lower bound on $\dxp(\beta)$ that arises from considering cut points or something similar. The notion of cut points and its implication on the distance exponent $\dxp(\beta)$ in dimension $d=1$ will be explained below. Secondly, it is not clear which pair of vertices $x,y \in V_\mz^n$ maximizes the expected distance $\E_\beta \left[D_{V_\mz^n}(x,y)\right]$ in dimension two or higher, i.e., whether a similar statement of equation \eqref{eq:maximum of pairs} below holds in dimension $d\geq 2$. However, for all dimensions $d$ there exists a constant $c>0$ such that $\dxp(\beta) \leq 1-c\beta$ for $\beta$ small enough. This can already be shown with the exact same technique that was used in \cite{coppersmith2002diameter} to prove that $\theta(\beta) < 1$.

But now let us consider dimension $d=1$ again.
Here we have $\dxp(0)=1$ and it is well known that $\dxp(\beta)\geq 1-\beta$ (see \cite{coppersmith2002diameter, ding2013distances}).  
For the sake of completeness, we start with a short sketch of the proof of this lower bound.
For this, we define the notion of a cut point. We say that the vertex $w\in\{1,\ldots,n-2\}$ is a cut point $\big($for $\{0,\ldots,n-1\}\big)$ if there exists no open edge $\{u,v\}$ with $0\leq u < w <v \leq n-1$. By the independence of all these edges, we have that
\begin{align}\label{eq:individual cutpoint}
	\notag \p_\beta\left(w \text{ is a cut point}\right) & = \prod_{0\leq u < w} \ \prod_{w < v \leq n-1} e^{-\beta \int_{u}^{u+1} \int_{v}^{v+1} \frac{1}{|x-y|^2} \md x \md y }
	= e^{-\beta \int_{0}^{w} \int_{w+1}^{n} \frac{1}{|x-y|^2} \md x \md y } \\
	& \geq  e^{-\beta \int_{0}^{w} \int_{w+1}^{\infty} \frac{1}{|x-y|^2} \md x \md y } 
	= e^{-\beta \int_0^w \frac{1}{w+1-y} \md y}
	= e^{-\beta\log(w+1)} \geq n^{-\beta}.
\end{align}
As the distance between $0$ and $n-1$ is lower bounded by the number of cut points between $0$ and $n-1$ we get, by linearity of expectation, that
\begin{align}\label{eq:cutpoint lowerbound}
	\notag \E_\beta\left[D_{[0,n-1]}(0,n-1)\right] & \geq \E_\beta \left[ \left|\{w : w \text{ is a cut point}\} \right| \right] = \sum_{w=1}^{n-2} \p_\beta\left(w \text{ is a cut point}\right)\\
	& \geq (n-2) n^{-\beta} = \Omega\left(n^{1-\beta}\right) \ 
\end{align}
which shows that $\dxp(\beta) \geq 1-\beta$.  The emergence of cut points for $\beta < 1$ is also related to the fact that the phase transition is discontinuous for one-dimensional long-range percolation with quadratic decay of the connection probability \cite{aizenman1986discontinuity}. So Theorem \ref{theo:smallbeta} shows that for small $\beta$ the cut points really contribute significantly to the chemical distance.\\

As a first step towards the proof of Theorem \ref{theo:smallbeta}, we remind ourselves about the submultiplicativity of the expected distance and the definition of $\theta(\beta)$ using the submultiplicativity.  This was proven in \cite[Lemma 2.3]{baeumler2022distances}.

\begin{lemma}\label{lem:submultiplicativity}
	For all dimensions $d$ and all $\beta \geq 0$, the sequence
	\begin{equation}\label{eq:Lambda}
		\Lambda(n) = \Lambda(n,\beta)\coloneqq\max_{ u,v \in \left\{0, \ldots ,n-1\right\}^d} \E_\beta \left[D_{V_\mz^n}(u,v)\right] +1
	\end{equation}
	is submultiplicative and furthermore one has
	\begin{align}\label{eq:limitsinf}
		\dxp(\beta) = \lim_{n \to \infty} \frac{\log\left( \Lambda(n, \beta) \right) }{\log(n)} = \inf_{n\geq 2} \frac{\log\left( \Lambda(n, \beta) \right) }{\log(n)}  \text .
	\end{align}
\end{lemma}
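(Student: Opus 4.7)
The plan is to prove this in two stages: first establish the submultiplicativity $\Lambda(mn, \beta) \leq \Lambda(m, \beta)\Lambda(n, \beta)$ via a renormalization argument based on the self-similarity of Section \ref{subsec:cts}, and then identify the infimum with $\dxp(\beta)$ using the scaling relations \eqref{eq:baeumler1} and \eqref{eq:baeumler2}.

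For the submultiplicativity, I would partition $V_\mz^{mn}$ into the $m^d$ sub-boxes $V_u^n$, $u \in \{0,\ldots,m-1\}^d$, and define the coarsened random graph $\tilde G$ on $\{0,\ldots,m-1\}^d$ by declaring $u \sim_{\tilde G} v$ iff there exists an edge of the original graph connecting $V_u^n$ to $V_v^n$. The self-similarity computation from Section \ref{subsec:cts} shows that $\tilde G$ has the same distribution as long-range percolation on $V_\mz^m$. Crucially, the edges of $\tilde G$ are determined by bonds with endpoints in distinct sub-boxes, and therefore $\tilde G$ is independent of the family of within-box environments $(\omega|_{V_u^n})_u$.

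Given $x \in V_{u_\star}^n$ and $y \in V_{v_\star}^n$, I take a geodesic $u_\star = u_0, u_1, \ldots, u_k = v_\star$ in $\tilde G$ with witness edges $\{a_i, b_i\}$ satisfying $a_i \in V_{u_{i-1}}^n$, $b_i \in V_{u_i}^n$ for $1 \leq i \leq k$. Writing $b_0 := x$ and $a_{k+1} := y$, concatenating within-box paths from $b_i$ to $a_{i+1}$ with the $k$ witness edges yields a path in $V_\mz^{mn}$ of length at most $k + \sum_{i=0}^{k} D_{V_{u_i}^n}(b_i, a_{i+1})$. Conditioning on $\tilde G$ and the choice of witness edges (which determines $k$ and all endpoints $b_i, a_{i+1}$), independence together with translation invariance bounds each within-box expected distance by $\Lambda(n,\beta)-1$, leading to
\begin{align*}
\E_\beta\left[D_{V_\mz^{mn}}(x,y)\right] \leq \E_\beta[k] + \bigl(\E_\beta[k]+1\bigr)\bigl(\Lambda(n,\beta)-1\bigr) \leq \Lambda(m,\beta)\Lambda(n,\beta) - 1,
\end{align*}
where the last step uses $\E_\beta[k] \leq \Lambda(m,\beta)-1$ from the self-similar law of $\tilde G$. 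Taking the maximum over $x, y$ gives submultiplicativity.

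For the identification of the infimum with $\dxp(\beta)$, set $a_n := \log \Lambda(n,\beta)$, so that submultiplicativity reads $a_{mn} \leq a_m + a_n$. Combining \eqref{eq:baeumler1} at $x = (n-1)\mo$ with the trivial inequality $D_{V_\mz^n}(\mz, (n-1)\mo) \geq D(\mz, (n-1)\mo)$ yields $\Lambda(n,\beta) \geq c n^\dxp$, while $\Lambda(n,\beta) \leq \E_\beta[\dia(V_\mz^n)] + 1 \leq C n^\dxp$ follows from \eqref{eq:baeumler2}. Hence $a_n/\log n \to \dxp(\beta)$, which in particular gives $\inf_{n \geq 2} a_n/\log n \leq \dxp(\beta)$. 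For the converse, iterating submultiplicativity along powers of any fixed $n_0 \geq 2$ gives $a_{n_0^k}/\log(n_0^k) \leq a_{n_0}/\log n_0$; letting $k \to \infty$ yields $\dxp(\beta) \leq a_{n_0}/\log n_0$, and the infimum over $n_0$ completes the identification. The main obstacle will be the bookkeeping in the renormalization step: ensuring that the random entry and exit points in each sub-box can be handled so that each within-box expected distance is uniformly bounded by $\Lambda(n,\beta)-1$. This is resolved precisely by conditioning on $\tilde G$ and its witness edges, which is independent of the within-box environments, and invoking translation invariance of the model.
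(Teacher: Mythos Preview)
Your proof is correct and follows the standard renormalization argument; the paper itself does not give a proof of this lemma but simply cites \cite[Lemma~2.3]{baeumler2022distances}, and your argument is precisely the one used there (and echoed elsewhere in this paper, e.g.\ in the proof of Lemma~\ref{lem:uniform2ndmomentbound:renormalized}). The key points---that the coarsened graph $\tilde G$ has the law of $\p_\beta$ on $V_\mz^m$ by the self-similarity of Section~\ref{subsec:cts}, and that conditioning on the between-box edges $\omega_{\text{out}}$ fixes the witness endpoints while leaving the within-box environments independent and unbiased---are exactly right, and your arithmetic $(\E_\beta[k]+1)\Lambda(n,\beta)-1 \leq \Lambda(m,\beta)\Lambda(n,\beta)-1$ is clean. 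One cosmetic remark: to make the conditioning fully rigorous you should fix a deterministic (e.g.\ lexicographic) rule for selecting both the geodesic in $\tilde G$ and the witness edge between consecutive boxes, so that the endpoints $b_i,a_{i+1}$ are genuinely measurable functions of $\omega_{\text{out}}$; this is routine and you clearly have it in mind.
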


\begin{remark}\label{remark:super}
	The sequence $\Lambda(n,\beta)$ is not just submultiplicative, but also {\sl super}multiplicative in the following sense. For all $\beta \geq 0$ there exists a constant $c(\beta) \in \left(0,1\right)$ such that $c(\beta) \Lambda\left(m,\beta\right) \Lambda\left(\ell,\beta\right) \leq \Lambda\left(m\ell,\beta\right)$ for all $m,\ell \in \N$, see \cite[Lemma 5.5]{baeumler2022distances}. This directly implies that for every $m\in \N_{\geq 2}$ the sequence $b_{k} \coloneqq \log(c(\beta)\Lambda \left(m^k, \beta\right))$ is superadditive so that by Fekete's lemma one has 
	\begin{equation*}
		\theta(\beta) = \lim_{k\to \infty} \frac{\log\left(\Lambda(m^k,\beta)\right)}{\log(m^k)} = \lim_{k\to \infty} \frac{\log\left(c(\beta)\Lambda(m^k,\beta)\right)}{\log(m^k)} = \sup_{k \in \N} \frac{\log\left(c(\beta)\Lambda(m^k,\beta)\right)}{\log(m^k)} .
	\end{equation*}
	As this holds for all $m \geq 2$ and $k\in \N$, we get together with \eqref{eq:limitsinf} that
	\begin{equation*}
		\inf_{n\geq 2} \frac{\log\left( \Lambda(n, \beta) \right) }{\log(n)} = \dxp(\beta) = \sup_{n\geq 2} \frac{\log\left( c(\beta) \Lambda(n, \beta) \right) }{\log(n)},
	\end{equation*}
	which directly implies that
	\begin{align}\label{eq:subsuperimpli}
		n^{\theta(\beta)} \leq \Lambda(n,\beta) \leq c(\beta)^{-1} n^{\theta(\beta)} .
	\end{align}
	for all $n \geq 2$.\\
\end{remark}

The inequality $\theta(\beta)\geq 1-\beta$ directly implies that $\liminf_{\beta \to 0} \frac{\dxp(\beta)-\dxp(0)}{\beta} \geq -1$. So in order to prove Theorem \ref{theo:smallbeta}, it suffices to show that $\limsup_{\beta \to 0} \frac{\dxp(\beta)-\dxp(0)}{\beta} \leq -1$.
Our main tools for this are Lemma \ref{lem:submultiplicativity} and Russo's formula for expectations \eqref{eq:russoexp}.

\begin{proof}[Proof of Theorem \ref{theo:smallbeta}]
	From the definition of $\Lambda(n,\beta)$ in \eqref{eq:Lambda}, it is not clear which pair of points $(u,v) \in \{0,\ldots,n-1\}^2$ attains the maximum. We first show that for fixed $n$ and $\beta$ very small $\big($compared to $\tfrac{1}{n}\big)$ we have that
	\begin{equation*}
		\Lambda(n,\beta) = \E_\beta\left[D_{V_0^n}(0,n-1)\right] + 1.
	\end{equation*}
	To see this, note that on the one hand for any $u,v \in \{0, \ldots ,n-1\}$ we have
	\begin{align*}
		\E_\beta\left[D_{V_0^n}(u,v)\right] \leq |u-v| \text ,
	\end{align*}
	whereas on the other hand we have
	\begin{align*}
		\E_\beta\left[D_{V_0^n}(0,n-1)\right] \geq (n-1)\p_\beta\left( \bigcap_{e\in E} \{e \text{ closed}\} \right) \text,
	\end{align*}
	where $E$ is the set of all edges of length at least $2$ in the graph with vertex set $\left\{0, \ldots ,n-1\right\}$.
	As the probability of the event $\bigcap_{e\in E} \{e \text{ closed}\}$ tends to 1 for $\beta \rightarrow 0$, we see that
	\begin{align}\label{eq:maximum of pairs}
		\E_\beta\left[D_{V_0^n}(0,n-1)\right] = \max_{ u,v \in \left\{0, \ldots ,n-1\right\}} \E_\beta\left[D_{V_0^n} (u,v)\right]
	\end{align}
	for small enough $\beta$, where small enough of course depends on $n$. 
	This also already implies that $\Lambda(n,\beta)$ is  differentiable at $\beta=0$, as the inclusion probabilities $p(\beta,\{u,v\})$ are differentiable in $\beta$, and hence $\E_\beta\left[D_{V_0^n} (0,n-1)\right]$ is also differentiable in $\beta$.\\

	Note that $\Lambda(n,0)=n$ and thus $\frac{\log(\Lambda(n,0))}{\log(n)}=1$. So for fixed $\beta>0$ one has
	\begin{align*}
		&\frac{\theta(\beta)-\theta(0)}{\beta} = \frac{\left(\inf_{n\geq 2} \frac{\log\left( \Lambda(n, \beta) \right) }{\log(n)}\right)-1}{\beta} 
		=
		\inf_{n\geq 2} \left( \frac{\log\left( \Lambda(n, \beta) \right) }{\beta\log(n)} - \frac{1}{\beta} \right)\\
		&
		=
		\inf_{n\geq 2} \left( \frac{\log\left( \Lambda(n, \beta) \right) }{\beta\log(n)} - \frac{\log\left(\Lambda(n,0)\right)}{\beta\log(n)} \right)
		=
		\inf_{n\geq 2} \left( \frac{\log\left( \Lambda(n, \beta) \right) - \log\left(\Lambda(n,0)\right)}{\beta\log(n)} \right).
	\end{align*}
	Taking the limit superior for $\beta \searrow 0$ at this formula and interchanging the limit superior with the infimum, we get that 
	\begin{align}\label{eq:derivative}
		& \limsup_{\beta \searrow 0} \frac{\dxp(\beta)-\dxp(0)}{\beta} = \limsup_{\beta \searrow 0} \inf_{n \geq 2} \frac{\log(\Lambda(n,\beta))- \log(\Lambda(n,0))}{\beta \log(n)} \notag \\
		& \leq \inf_{n \geq 2} \limsup_{\beta \searrow 0}  \frac{\log(\Lambda(n,\beta))- \log(\Lambda(n,0))}{\beta \log(n)}   = \inf_{n \geq 2} \frac{1}{\log(n)} \frac{\md }{\md \beta} \log(\Lambda(n,\beta)) \Big\vert_{\beta=0} \notag \\
		& = \inf_{n \geq 2} \frac{1}{\log(n) \Lambda(n,0)}  \frac{\md }{\md \beta} \Lambda(n,\beta) \Big\vert_{\beta=0}.
	\end{align}
	So we have to calculate $\frac{\md }{\md \beta} \Lambda(n,\beta) \Big\vert_{\beta=0}$.  For $e\in E$, remember that $\omega^{e+}$ was defined to be the environment, where we added the edge $e$ (or do nothing in case it already existed before).  Using \eqref{eq:maximum of pairs}, we see that
	\begin{equation*}
		\Lambda(n,\beta) - \Lambda(n,0) = \E_\beta\left[D_{V_0^n} (0,n-1)\right] - \E_0\left[D_{V_0^n}(0,n-1)\right]
	\end{equation*}
	for all small enough $\beta>0$. So Russo's formula for expectations \eqref{eq:russoexp} gives that
	\begin{align}\label{eq:derivativestart}
		\notag	\frac{\md }{\md \beta} \Lambda(n,\beta) \Big\vert_{\beta=0} & =
		\lim_{\beta \searrow 0} \frac{\Lambda(n,\beta) - \Lambda(n,0)}{\beta} \overset{\eqref{eq:maximum of pairs}}{=} \lim_{\beta \searrow 0} \frac{\E_\beta\left[D_{V_0^n} (0,n-1)\right] - \E_0\left[D_{V_0^n}(0,n-1)\right] }{\beta} \\
		& \notag =	\sum_{e\in E} p^\prime(0,e)  \E_0 \left[ D_{V_0^n} (0,n-1;\omega^{e+}) - D_{V_0^n} (0,n-1;\omega^{e-})  \right] \\
		&
		=
		\sum_{e\in E} p^\prime(0,e)  \E_0 \left[ D_{V_0^n} (0,n-1;\omega^{e+}) - (n-1)  \right] .
	\end{align}
	In the environment $\omega^{e+}$ sampled by $\p_0$, where only the nearest-neighbor edges and the edge $e$ are present, the shortest path from $0$ to $n-1$ will also take the edge $e$. By taking the edge $e=\{u,v\}$, the distance between $0$ and $n-1$ decreases by $|e|-1$, and thus equals $D(0,n-1;\omega^{e+}) = n-1-(|e|-1)=n-|e|$.
	For $d=1$, we get from \eqref{eq:derivativebound1} that $p^\prime(0,\{u,v\}) \geq \frac{1}{(|u-v|+1)^2}$ for $|u-v|>1$. With this we can upper bound the derivative computed in \eqref{eq:derivativestart} and obtain that
	\begin{align}\label{eq:onedimensional derivative}
		\notag	\frac{\md }{\md \beta} \Lambda(n,\beta) \Big\vert_{\beta=0} & = \sum_{e\in E} p^\prime(0,e) \E_0 \left[ D_{V_0^n}(0,n-1;\omega^{e+}) - (n-1)  \right] = - \sum_{e\in E} p^\prime(0,e) (|e|-1) \\
		& \leq - \sum_{e\in E} \frac{1}{(|e|+1)^2} (|e|-1) =  \sum_{k=0}^{n-3} \sum_{j=k+2}^{n-1} \frac{1-|j-k|}{(|j-k| + 1)^2}	
		=  \sum_{k=0}^{n-3} \ \sum_{\ell=2}^{n-1-k} \frac{1-\ell}{(\ell+1)^2} \text .
	\end{align}
	For $\ell \in \N$, we have $\frac{-\ell}{(\ell+1)^2} \leq \frac{2}{\ell^2}- \frac{1}{\ell}$, as we will show now. One has
	\begin{align*}
		& \frac{-\ell}{(\ell+1)^2} \leq \frac{2}{\ell^2}- \frac{1}{\ell} \Leftrightarrow
		-\ell^3 \leq (\ell+1)^2 (2-\ell) = (\ell^2+2\ell+1)(2-\ell)\\
		\Leftrightarrow \ & \ell^3 \geq (\ell^2+2\ell+1) (\ell-2)= \ell^3 -2\ell^2 +2\ell^2 -4\ell +\ell -2 
		\Leftrightarrow \  0 \geq -3\ell -2
	\end{align*}
	and the last line is clearly true. Using that $\frac{-\ell}{(\ell+1)^2} \leq \frac{2}{\ell^2}- \frac{1}{\ell}$  we also get that $\frac{1-\ell}{(\ell+1)^2} \leq \frac{1}{(\ell+1)^2} + \frac{2}{\ell^2} - \frac{1}{\ell} \leq \frac{3}{\ell^2} -\frac{1}{\ell}$. Inserting this into \eqref{eq:onedimensional derivative}, we get that
	\begin{align*}
		\notag	\frac{\md }{\md \beta} \Lambda(n,\beta) \Big\vert_{\beta=0} 
		& =   \sum_{k=0}^{n-3} \ \sum_{\ell=2}^{n-1-k} \frac{1-\ell}{(\ell+1)^2}  \leq \sum_{k=0}^{n-3} \ \sum_{\ell=2}^{n-1-k} \left(\frac{3}{\ell^2} -\frac{1}{\ell}\right) \leq 
		\sum_{k=0}^{n-3} \sum_{\ell=2}^{\infty} \frac{3}{\ell^2} + \sum_{k=0}^{n-3} \ \sum_{\ell=2}^{n-1-k} \frac{-1}{\ell}
		\\
		& \leq \sum_{k=0}^{n-3} 6 + \sum_{k=0}^{n-3} \ \sum_{\ell=2}^{n-k-1} \frac{-1}{\ell}
		\leq
		7n + \sum_{k=0}^{n-3} \ \sum_{\ell=1}^{n-k-1} \frac{-1}{\ell}\\
		&
		\leq 7n + \sum_{k=0}^{n-3}  \int_{1}^{n-k} \frac{-1}{s} \md s = 7n - \sum_{k=0}^{n-3} \log(n-k)  
		= 7n - \sum_{k=3}^{n} \log(k) \\
		& \leq 7n - \int_{2}^{n} \log(s) \md s 
		= 7n - \Big[- s + s\log(s)\Big]_2^n \leq 8n + 4 - n \log(n) \text .
	\end{align*}
	Inserting this into \eqref{eq:derivative} gives
	\begin{align*}
		& \limsup_{\beta \searrow 0} \frac{\dxp(\beta)-\dxp(0)}{\beta}  \leq \inf_{n \geq 2} \frac{1}{\Lambda(n,0)\log(n)}  \frac{\md }{\md \beta} \Lambda(n,\beta) \Big\vert_{\beta=0} \\
		& =  \inf_{n \geq 2} \frac{1}{n\log(n)}  \frac{\md }{\md \beta} \Lambda(n,\beta) \Big\vert_{\beta=0}
		\leq \inf_{n \geq 2} \frac{8n + 4 - n\log(n)}{n\log(n)} \leq -1  ,
	\end{align*}
	where the infimum is achieved when taking $n \to \infty$. As $\dxp(\beta)\geq 1-\beta$, and thus $\liminf_{\beta \searrow 0} \frac{\dxp(\beta)-\dxp(0)}{\beta} \geq -1$, this finishes the proof of Theorem \ref{theo:smallbeta}.
\end{proof}

\section{Strict monotonicity of the distance exponent}\label{section:monotone}

In this chapter, we prove Theorem \ref{theo:strictmonotonicity}, i.e.,  that the function $\dxp(\beta)$ is strictly decreasing in $\beta$. It was known before, see \cite{baeumler2022distances,coppersmith2002diameter, ding2013distances}, that $\dxp(\beta)$ is strictly decreasing at $\beta=0$, which is equivalent to saying that $\dxp(\beta) < 1=\dxp(0)$ for all $\beta>0$. With the {\sl Harris coupling} (cf.  \cite{heydenreich2017progress}) one can see that the function $\dxp(\beta)$ is non-increasing. For this coupling, let $\left(U_e\right)_{e\in E}$ be a collection of i.i.d. random variables that are uniformly distributed on $\left[0,1\right]$ and then set $\omega(e) \coloneqq \mathbbm{1}_{\{U_e \leq p(\beta,e)\}}$. The environment $\omega\in \{0,1\}^E$ is distributed according to the law of $\p_\beta$ and for $\beta \leq \beta^\prime$ one has $\omega(e) = \mathbbm{1}_{\{U_e \leq p(\beta,e)\}} \leq \mathbbm{1}_{\{U_e \leq p(\beta^\prime,e)\}} \eqqcolon \omega^\prime(e)$. So in particular the environment defined by $\omega^\prime$ contains all edges that are open in the environment defined by $\omega$, and thus $D\left(u,v;\omega^\prime\right) \leq D\left( u,v ; \omega \right)$ for all $u,v\in \Z^d$. Taking expectations on both sides of this inequality and letting $\|u-v\|\to\infty$ already shows that $\dxp(\cdot)$ is non-increasing. 

Before going into the details of the proof of the strict monotonicity, we discuss the main idea.  We know that the chemical distance between $\mz$ and $(2^n-1)\mo$ is of order $2^{n\dxp(\beta)}$, as proven in \cite{baeumler2022distances}, and thus $\dxp(\beta) = \lim_{n\to \infty} \frac{\log\left(\E_\beta\left[D_{V_\mz^{2^n}}(\mz, (2^n-1)\mo)\right]\right)}{\log(2^n)}$. For fixed $n \in \N$, one can calculate the derivative of $\frac{\log\left(\E_\beta\left[D_{V_\mz^{2^n}}(\mz,(2^n-1)\mo)\right]\right)}{\log(2^n)}$ with Lemma \ref{lem:russoexp}. Doing this, one gets that
\begin{align}\label{eq:derivative2}
	\frac{\md }{\md \beta} & \frac{\log\left(\E_\beta\left[D_{V_\mz^{2^n}}(\mz,(2^n-1)\mo)\right]\right)}{\log(2^n)}  
	\notag\\
	\notag & = \frac{1}{\log(2^n)\E_\beta\left[D_{V_\mz^{2^n}}(\mz,(2^n-1)\mo)\right]} \frac{\md }{\md \beta} \E_\beta\left[D_{V_\mz^{2^n}}(\mz,(2^n-1)\mo)\right]\\
	& = \frac{\sum_{e \in E} p^\prime(\beta,e) \E_\beta  \left[ D_{V_\mz^{2^n}}(\mz,(2^n-1)\mo;\omega^{e+}) -  D_{V_\mz^{2^n}}(\mz,(2^n-1)\mo;\omega^{e-}) \right]}{\log(2^n)\E_\beta\left[D_{V_\mz^{2^n}}(\mz,(2^n-1)\mo)\right]} ,
\end{align}
where we define $E=\left\{\{u,v\}: u,v\in V_\mz^{2^n}, \|u-v\|_\infty \geq 2\right\}$ to be the set of edges with both endpoints in $V_\mz^{2^n}$ whose length is at least $2$. Our goal is to show that for each $\beta>0$, there exists a constant $c(\beta) < 0$ such that 
\begin{equation}\label{eq:condition derivative}
	\frac{\md }{\md \beta} \frac{\log\left(\E_\beta\left[D_{V_\mz^{2^n}}(\mz,(2^n-1)\mo)\right]\right)}{\log(2^n)}  < c(\beta)
\end{equation} uniformly over $n$. For this, it suffices to consider $n$ large enough, as the bound clearly holds for small $n$. Provided that \eqref{eq:condition derivative} holds, we get that
\begin{align*}
	\dxp(\beta+\eps)-\dxp(\beta) & = \lim_{n\to \infty} \left\{ \frac{\log\left(\E_\beta\left[D_{V_\mz^{2^n}}(\mz,(2^n-1)\mo)\right]\right)}{\log(2^n)} - 
	\frac{\log\left(\E_{\beta + \eps}\left[D_{V_\mz^{2^n}}(\mz,(2^n-1)\mo)\right]\right)}{\log(2^n)} \right\} \\
	& = \lim_{n\to \infty} \int_{\beta}^{\beta + \eps } \frac{\md }{\md \alpha} \frac{\log\left(\E_\alpha\left[D_{V_\mz^{2^n}}(\mz,(2^n-1)\mo)\right]\right)}{\log(2^n)} \md \alpha < 0 \ 
\end{align*}
as we will show in the end of section \ref{subsec:monotonproof} in detail.
This implies strict monotonicity of the function $\beta \mapsto \dxp(\beta)$.\\

In order to show $\frac{\md }{\md \beta} \frac{\log\left(\E_\beta\left[D_{V_\mz^{2^n}}(\mz,(2^n-1)\mo)\right]\right)}{\log(2^n)} < c(\beta)$, we divide the graph into several levels according to a dyadic decomposition. The $i$-th level consists of all edges $e\in E$ for which $c2^{i} < |e| \leq C 2^i$ for some constants $0<c<C<\infty$. Note that the different levels are not necessarily disjoint, but the number of levels to which an edge can belong is bounded. For an edge $e \in E$, we define
\begin{align*}
	\Delta D_{V_\mz^{2^n}} (e) \coloneqq  D_{V_\mz^{2^n}}(\mz,(2^n-1)\mo;\omega) - D_{V_\mz^{2^n}}(\mz,(2^n-1)\mo;\omega^{e+})
\end{align*}
as the difference of the distance between $\mz$ and $(2^n-1)\mo$ when the edge $e$ is inserted. As the environment $\omega^{e-}$ is contained in $\omega$, we observe that
\begin{align*}
	\sum_{e \in E} p^\prime(\beta,e) \E_\beta  \left[ D_{V_\mz^{2^n}}(\mz,(2^n-1)\mo;\omega^{e+}) -  D_{V_\mz^{2^n}}(\mz,(2^n-1)\mo;\omega^{e-}) \right]
	\leq -
	\sum_{e \in E} p^\prime(\beta,e) \Delta D_{V_\mz^{2^n}} (e) .
\end{align*}
Inserting this inequality into equation \eqref{eq:derivative2}, we see that
\begin{align}\label{eq:derivative3}
	\frac{\md }{\md \beta} & \frac{\log\left(\E_\beta\left[D_{V_\mz^{2^n}}(\mz,(2^n-1)\mo)\right]\right)}{\log(2^n)}
	\leq - \ 
	\frac{\sum_{e \in E} p^\prime(\beta,e) \Delta D_{V_\mz^{2^n}} (e)}{n\log(2)\E_\beta\left[D_{V_\mz^{2^n}}(\mz,(2^n-1)\mo)\right]} .
\end{align}
So in order to show that there exists  $c(\beta) < 0$ such that $\frac{\md }{\md \beta} \frac{\log\left(\E_\beta\left[D_{V_\mz^{2^n}} (\mz,(2^n-1)\mo)\right]\right)}{\log(2^n)}  < c(\beta)$ uniformly over $n$, it suffices to show that
\begin{align}\label{eq:self-simi-strict}
	\sum_{e \in E : c 2^k < |e| \leq C2^k} p^\prime(\beta,e) \E_\beta \left[ \Delta D_{V_\mz^{2^n}} (e) \right] \geq c^\prime(\beta) \E_\beta \left[D_{V_\mz^{2^n}} (\mz,(2^n-1)\mo)\right]
\end{align}
for some $c^\prime(\beta)>0$ and all $k \in \{1, \ldots ,n\}$ with $k,n-k$ large enough. Summing this inequality over the different values of $k\in \{1,\ldots,n\}$ then gives - up to a multiplicative constant - the numerator of the right-hand side of \eqref{eq:derivative3}. 

We first give a heuristic argument why both sides of the inequality in \eqref{eq:self-simi-strict} are of the same order. Suppose that the geodesic between $\mz$ and $(2^n-1)\mo$ enters a box $V_u^{2^k}$ at a vertex $x$ and leaves it at a vertex $y$. Then we might assume that the Euclidean distance between $x$ and $y$ is of order $2^k$, whereas the chemical distance between $x$ and $y$ is of order $2^{k\dxp}$. This is certainly true for typical points $x,y \in V_u^{2^k}$, but similar statements can also be proven for the entry- and exit-points of the geodesic. Then, we put small boxes $A_x, A_y$ around $x$, respectively $y$, such that $\dia(A_x),\dia(A_y) < \tfrac{1}{3} D_{V_u^{2^k}}(x,y)$. The side length of the boxes $A_x, A_y$ will still be of order $c 2^{k}$ for some small $c$. Inserting an edge $\{w,z\}$ with $w \in A_x , z \in A_y$ thus decreases the chemical distance between $\mz$ and $(2^n-1)\mo$ by $\tfrac{1}{3} D_{V_u^{2^k}}(x,y)\approx 2^{k\dxp}$, which implies that $\Delta D_{V_\mz^{2^n}}(\{w,z\}) \approx 2^{k\dxp}$. For each edge $e=\{w,z\}$ we have $p^\prime(\beta,e)\approx \|w-z\|^{-2d} \approx 2^{-2kd}$. However, there are approximately $|A_x|\cdot|A_y| \approx 2^{2kd}$ many such pairs $(w,z)\in A_x\times A_y$. This implies that
\begin{equation*}
	\sum_{w \in A_x , z \in A_y} p^\prime(\beta,\{w,z\}) \Delta D_{V_\mz^{2^n}}(\{w,z\}) \approx 2^{k\dxp}.
\end{equation*}

So far, we only looked at one box $V_u^{2^k}$, but we can do this for all boxes with side length $2^k$ through which the geodesic traverses. By the self-similarity of the model, there need to be around $2^{(n-k)\dxp}$ such boxes. Thus, the sum on the left-hand side of \eqref{eq:self-simi-strict} should be of order $2^{(n-k)\dxp} 2^{k\dxp} = 2^{n\dxp}$, which is also the magnitude of the right-hand side of \eqref{eq:self-simi-strict}.

\subsection{Block contraction and the graph $G^\prime$}\label{sec:contraction}

In this section, we introduce a technique and a notation that is heavily used in sections \ref{section:monotone} and \ref{sec:continuity}.
Consider long-range percolation on $\Z^d$.
We split the long-range percolation graph into blocks of the form $V_v^n$, where $v\in \Z^d$. For each $v\in \Z^d$, we contract the block $V_v^n \subset \Z^d$ into one vertex, named $r(v)$. If there are multiple edges between $r(u)$ and $r(v)$, we collapse these multiple edges into one edge. We call the graph that results from contracting all these blocks $G^\prime = (V^\prime, E^\prime)$. It follows directly from the scale invariance that the random graph $G^\prime=(V^\prime,E^\prime)$ has the same distribution as long-range percolation on $\Z^d$, because
\begin{equation*}
	\p_\beta\left(r(u)\sim r(v)\right) = \p_{\beta}\left(V_u^n \sim V_v^n\right) \overset{\eqref{scale invariance}}{=} \p_\beta(u\sim v) = 1-e^{-\int_{u+\cC} \int_{v+\cC} \frac{\beta}{ \|t-s\|^{2d}} \md s \md t},
\end{equation*}
and the connections between different boxes, i.e., the events of the form $\left\{V_u^n \sim V_v^n\right\}_{\{u,v\}\subset \Z^d}$ are independent.
In analogy to the $\infty$-metric, we also write $\|r(u)-r(v)\|_\infty=\|u-v\|_\infty$ and we write $D_{G^\prime}(r(u),r(v))$ for the graph distance between $r(u)$ and $r(v)$ in this random graph.
For a vertex $r(v)\in V^\prime$, we define its neighborhood $\cN\left(r(v)\right)$ by
\begin{align*}
	\cN\left(r(v)\right) = \left\{r(u) \in V^\prime : \|v-u\|_\infty \leq 1 \right\} \text ,
\end{align*}
and we define the neighborhood-degree of $r(v)$ by 
\begin{align}\label{eq:nbh degree}
	\deg^{\cN}(r(v)) = \sum_{r(u) \in \cN\left(r(v)\right)} \deg (r(u)) \text .
\end{align}
We also define these quantities in the same way when we start with long-range percolation on the graph $V_\mz^{mn}$, and contract boxes of the form $V_v^n$ for all $v \in V_\mz^m$. In this situation, the graph $G^\prime$ has the exact same distribution as long-range percolation with measure $\p_\beta$ on $V_\mz^m$.

\subsection{Distances of certain points}
\noindent
For the proof of Theorem \ref{theo:strictmonotonicity}, we import a few results from \cite{baeumler2022distances}:
Fix the three blocks $V_u^n, V_w^n$ and $V_{\mz}^n$ with $\|u\|_\infty \geq 2$. The next lemma deals with the distance in the infinity metric between points $x,y \in V_{\mz}^n$ with $x \sim V_u^n, y\sim V_w^n$ and is proven in \cite[Lemma 5.1]{baeumler2022distances}.
\begin{lemma}\label{lem:linearspacingcompanion}
	For all $\frac{1}{n} <\eps \leq \frac{1}{4}$ and $u,w \in \Z^d \setminus \{\mz\}$ with $\|u\|_\infty \geq 2$ and $u\neq w$ one has
	\begin{align}\label{eq:graphspacingcompanion}
		\p_\beta \big( \exists x,y \in V_{\mz}^n : \|x-y\|_\infty \leq \eps n, x \sim V_u^n, y \sim V_w^n \ \big|  \ V_\mz^n \sim V_u^n, V_\mz^n \sim V_w^n \big) \leq C_d^\prime \eps^{1/2} \lceil\beta \rceil^2
	\end{align}
	where $C_d^\prime$ is a constant that depends only on the dimension $d$.
\end{lemma}
Lemma \ref{lem:linearspacingcompanion} tells us that for a block $V_\mz^n$, the vertices $x,y \in V_\mz^n$ that are connected to different boxes $x \sim V_u^n, y\sim V_w^n$ are typically far apart in terms of Euclidean distance, whenever $\|u\|_\infty \geq 2$. The next lemma shows that such points $x,y$ are typically also not too close in terms of the graph distance inside $V_\mz^n$. It was already proven in \cite[Lemma 5.2]{baeumler2022distances}.
\begin{lemma}\label{lem:graphspacing}
	For all dimensions $d$ and all $\beta\geq 0$, there exists a function $g_1: (0,1) \to \left[0,1\right]$ with $\lim_{\eps \to 0} g_1(\eps) = 1$ such that for all $u,w \in \Z^d \setminus \{\mz\}$ with $\|u\|_\infty \geq 2$ and all large enough $n\geq n(\eps)$
	\begin{align*}
		\p_\beta \big(  D_{V_{\mz}^n}(x,y) > \eps \Lambda(n,\beta) \text{ for all } x,y \in V_{\mz}^n \text{ with } x\sim V_u^n, y\sim V_w^n \ \big| \ V_u^n \sim V_\mz^n \sim V_w^n \big) \geq g_1(\eps) \text .
	\end{align*}
\end{lemma}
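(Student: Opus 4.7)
The plan is to combine Lemma \ref{lem:linearspacingcompanion}, which controls the Euclidean separation of all pairs of connection endpoints, with the concentration of graph distances from \eqref{eq:baeumler1}, which converts Euclidean separation into graph-distance separation. A clean decoupling makes this possible: the random endpoint sets $X \coloneqq \{x \in V_\mz^n : x \sim V_u^n\}$ and $Y \coloneqq \{y \in V_\mz^n : y \sim V_w^n\}$ depend only on edges \emph{between} blocks, which are independent of the internal edges of $V_\mz^n$ that determine the distances $D_{V_\mz^n}(x, y)$. In particular, the conditioning on $V_u^n \sim V_\mz^n \sim V_w^n$ preserves the law of the internal graph. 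A Poisson-moment computation using the construction in Section \ref{subsec:cts} also yields the uniform bound $\E_\beta[|X| \cdot |Y| \mid V_u^n \sim V_\mz^n \sim V_w^n] \leq K(\beta)$ for some constant $K(\beta)$ independent of $n$, $u$, $w$.

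Fix a parameter $\delta = \delta(\eps) > 0$ to be tuned at the end. Applying Lemma \ref{lem:linearspacingcompanion} with $\delta$ in place of $\eps$ produces an event of conditional probability at least $1 - C_d'\delta^{1/2}\lceil\beta\rceil^2$ on which every $(x, y) \in X \times Y$ satisfies $\|x - y\|_\infty > \delta n$. For a fixed pair $(x, y)$ with $\|x - y\|_\infty > \delta n$, the inclusion $D_{V_\mz^n}(x, y) \geq D_{\Z^d}(x, y)$, the concentration \eqref{eq:baeumler1}, and the upper bound $\Lambda(n, \beta) \leq C n^{\dxp}$ from \eqref{eq:baeumler2} give, for any $\eta > 0$, a choice of $\delta$ of order $\eps^{1/\dxp}$ (with constants depending on $\eta$ and $\beta$) such that
\begin{equation*}
\p_\beta\bigl(D_{V_\mz^n}(x, y) \leq \eps\, \Lambda(n, \beta)\bigr) \leq \eta.
\end{equation*}
Summing this per-pair bound over $(x,y) \in X \times Y$ using $\E_\beta[|X|\cdot|Y| \mid \mathrm{cond.}] \leq K(\beta)$, and combining with the Euclidean-spacing bound, yields
\begin{equation*}
\p_\beta\bigl(\exists (x,y) \in X \times Y : D_{V_\mz^n}(x,y) \leq \eps\, \Lambda(n, \beta) \bigm| V_u^n \sim V_\mz^n \sim V_w^n \bigr) \leq C_d'\delta^{1/2}\lceil\beta\rceil^2 + K(\beta)\, \eta.
\end{equation*}
Choosing, for instance, $\eta = \sqrt{\eps}$ and the corresponding $\delta$ produces a function $g_1(\eps) \to 1$ as $\eps \to 0$.

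The principal obstacle is that \eqref{eq:baeumler1} is stated for distances in the full graph $\Z^d$, and the conditioning on $V_u^n \sim V_\mz^n \sim V_w^n$ biases precisely those between-block edges that a $\Z^d$-path from $x$ to $y$ could exploit as a shortcut; naively the lower bound $D_{\Z^d}(x,y) \gtrsim \|x-y\|^\dxp$ might deteriorate under the conditioning. This can be resolved either by noting that the conditioning affects only a Poisson(order $\beta/\|u\|^{2d}$) collection of additional edges, whose aggregate influence on long paths degrades the concentration constant by at most a $\beta$-dependent factor, or by bypassing $\Z^d$ entirely and invoking the self-similarity of Section \ref{subsec:cts} to obtain the required lower bound directly inside $V_\mz^n$.
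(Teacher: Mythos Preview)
Your strategy---Euclidean separation via Lemma~\ref{lem:linearspacingcompanion}, then conversion to graph distance using the concentration \eqref{eq:baeumler1}, with a decoupling between internal edges of $V_\mz^n$ and between-block edges---is the right one, and the decoupling observation is correct and important. However, there is a genuine gap.

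The claimed moment bound $\E_\beta[|X|\cdot|Y|\mid V_u^n\sim V_\mz^n\sim V_w^n]\leq K(\beta)$ fails when $\|w\|_\infty=1$, a case the lemma explicitly allows (only $\|u\|_\infty\geq 2$ is assumed). In that case every vertex on the relevant face of $V_\mz^n$ is deterministically connected to $V_w^n$ by a nearest-neighbour edge, so $|Y|$ is of order $n^{d-1}$, and your union bound over $X\times Y$ gives nothing. Trying to compensate by making the per-pair failure probability $\eta$ polynomially small in $n$ does not rescue the argument: the statement \eqref{eq:baeumler1} only yields, for each $\eta$, a constant $c(\eta)$ such that $\p(D\leq c(\eta)\|a-b\|^\theta)\leq\eta$, and the scale $\delta$ you must then choose to make $c(\eta)(\delta n)^\theta>\eps\Lambda(n)$ depends on $\eta$ in a way that forces $\delta$ to grow with $n$.

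The repair is to union-bound only over $x\in X$---which does have bounded conditional moments since $\|u\|_\infty\geq 2$---and for each fixed $x$ to establish a \emph{point-to-set} lower bound of the form
\[
\p_\beta\Big(\min_{y\in V_\mz^n:\,\|x-y\|_\infty>\delta n} D_{V_\mz^n}(x,y)\leq \eps\Lambda(n,\beta)\Big)\leq \eta,
\]
i.e., a lower tail for $D\big(x,B_{\delta n}(x)^c\big)$ rather than for a single pair. This is what the companion paper supplies (it is essentially the input behind Lemma~\ref{lem:all quantiles of boxtobox}). Separately, your ``principal obstacle'' is not an obstacle at all: since $D_{V_\mz^n}(a,b)$ is measurable with respect to the internal edges of $V_\mz^n$ while the conditioning event lies in the $\sigma$-algebra of between-block edges, the conditional and unconditional laws of $D_{V_\mz^n}(a,b)$ coincide; the detour through $D_{\Z^d}$ and the worry about biased shortcut edges are unnecessary.
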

\noindent
By the result of \cite{baeumler2022distances} we know that 
\begin{align}\label{point to box}
	D\left(\mz,B_n(\mz)^c \right) \approx_P \E_\beta \left[D(\mz,n\mo)\right] 
\end{align}
under the measure $\p_\beta$, see \cite[Lemma 4.10]{baeumler2022distances}. As we also have $\E_\beta \left[D(\mz,n\mo)\right] \approx_P D(\mz, n \mo)$ (by \eqref{eq:baeumler1}), we see that the point-to-point distance $D(\mz,n\mo)$ and the point-to-set distance $D\left(\mz,B_n(\mz)^c \right)$ are typically of the same order. One can ask whether the same statement is true for the distance between two sets that are separated by a Euclidean distance of $n$, for example $D\left(B_n(\mz),B_{2n}(\mz)^c \right)$. However, the same statement is not true for this case, as there is a uniform (in $n$) positive probability of a direct edge between the sets $B_n(\mz)$ and $B_{2n}(\mz)^c$. But if we condition on the event that there is no direct edge, then we can get such a result, as proven in \cite[Lemma 4.11 and Corollary 4.12]{baeumler2022distances}.

\begin{lemma}\label{lem:all quantiles of boxtobox}
	Let $\mathcal{L}$ be the event that there is no direct edge between $B_n(\mz)$ and $B_{2n}(\mz)^c$.	For all $\beta \geq 0$ and all $\eps>0$, there exist $0<c<C<\infty$ such that
	\begin{align}\label{eq:all quantiles of boxtobox}
		\p_\beta \left( c\Lambda(n,\beta) \leq D\left(B_n(\mz),B_{2n}(\mz)^c \right) \leq C\Lambda(n,\beta) \ \big| \ \mathcal{L} \right) > 1-\eps
	\end{align}
	for all $n \in \N$. Let $\mathcal{L}^\prime$ be the event that there is no direct edge between $V_\mz^n$ and $\bigcup_{u \in \Z^d : \|u\|_\infty \geq 2} V_u^n$.	For all $\beta \geq 0$ and all $\eps>0$, there exist $0<c<C<\infty$ such that
	\begin{align}\label{eq:all quantiles of boxtobox2}
		\p_\beta \left( c\Lambda(n,\beta) \leq D\left( V_\mz^n , \bigcup_{u \in \Z^d : \|u\|_\infty \geq 2} V_u^n \right) \leq C\Lambda(n,\beta) \ \Big| \ \mathcal{L}^\prime \right) > 1-\eps
	\end{align}
	for all $n \in \N$. So in particular there exists a function $g_2:(0,1) \to \left[0,1\right]$ with $\lim_{\eps \to 0}g_2(\eps)=1$ such that
	\begin{align*}
		\p_\beta \left( \eps \Lambda(n,\beta) < D\left( V_\mz^n , \bigcup_{u \in \Z^d : \|u\|_\infty \geq 2} V_u^n \right)  \ \Big| \ \mathcal{L}^\prime \right) \geq g_2(\eps)
	\end{align*}
	for all large enough $n\geq n(\eps)$.
\end{lemma}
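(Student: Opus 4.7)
The two statements are parallel; I will focus on the box version \eqref{eq:all quantiles of boxtobox2}, the Euclidean-ball statement \eqref{eq:all quantiles of boxtobox} following by an essentially identical argument combined with comparability of boxes and balls. The first move is a \textbf{reduction to the indirect distance}. Conditional on $\mathcal{L}'$, no direct edge between $V_\mz^n$ and $\bigcup_{\|u\|_\infty\geq 2}V_u^n$ is present, so $D(V_\mz^n,\bigcup_{\|u\|_\infty\geq 2} V_u^n)=D^\star(V_\mz^n,\bigcup_{\|u\|_\infty\geq 2} V_u^n)$. Since $\mathcal{L}'$ is determined by those direct edges alone while $D^\star$ depends only on the complementary, independent edges, conditioning on $\mathcal{L}'$ does not change the law of $D^\star$. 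It therefore suffices to establish
\begin{equation*}
\p_\beta\!\left( c\Lambda(n,\beta) \leq D^\star\!\left(V_\mz^n, \bigcup_{\|u\|_\infty \geq 2} V_u^n \right) \leq C\Lambda(n,\beta)\right) > 1-\eps
\end{equation*}
unconditionally, for suitable $0<c<C<\infty$.

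For the \textbf{upper bound}, I would exhibit an explicit short path. Use the deterministic nearest-neighbor edge from $(n-1,0,\ldots,0)\in V_\mz^n$ to $(n,0,\ldots,0)\in V_{e_1}^n$, and then continue inside the slab $V_{e_1}^n\cup V_{2e_1}^n$ to the far box $V_{2e_1}^n$. By Lemma~\ref{lem:scaling} and \eqref{eq:baeumler2}, the diameter of $V_{e_1}^n\cup V_{2e_1}^n$ is $O(\Lambda(n,\beta))$ with probability at least $1-\eps/2$; the resulting path has length $O(\Lambda(n,\beta))$ and uses no direct edge between $V_\mz^n$ and a far box, hence is admissible for $D^\star$.

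The \textbf{lower bound} is the heart of the matter. For any geodesic $x_0,\ldots,x_k$ realizing $D^\star$, every vertex $x_0,\ldots,x_{k-1}$ lies in the $3\times\cdots\times 3$ block $A\coloneqq\bigcup_{\|u\|_\infty\leq 1}V_u^n$, and $x_{k-1}\in A\setminus V_\mz^n$ because the edge $\{x_{k-1},x_k\}$ would otherwise be a forbidden direct edge. Hence $k\geq 1+D_A(V_\mz^n,x_{k-1})$, and the problem reduces to showing that with high probability \emph{every} vertex in an adjacent block $V_u^n$ ($\|u\|_\infty=1$) that is an endpoint of some long edge to a far box is at graph distance $\geq c\Lambda(n,\beta)$ from $V_\mz^n$ inside $A$. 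This is in the spirit of Lemma~\ref{lem:graphspacing}: for each fixed pair $(u,v)$ with $\|u\|_\infty=1$ and $\|v\|_\infty\geq 2$, conditional on the existence of a direct edge between $V_u^n$ and $V_v^n$ the endpoint in $V_u^n$ is approximately uniform, and the typical-distance estimates \eqref{eq:baeumler1}--\eqref{eq:baeumler2} applied inside the slab $V_\mz^n\cup V_u^n$ give that its distance to $V_\mz^n$ is $\Omega(\Lambda(n,\beta))$ with probability that can be driven close to $1$. A union bound over the $O(1)$ admissible $u$'s, together with the summability of $\sum_{\|v\|_\infty\geq R}\|v\|^{-2d}$ to handle distant $v$'s, then yields the desired uniform lower bound. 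The quantile formulation with the function $g_2$ is obtained by tracking $c=\eps$ in these constants.

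The principal difficulty lies exactly in this last step: the escape vertex $x_{k-1}$ is not deterministic but depends on the percolation configuration, so a pointwise typical-distance estimate alone is insufficient. The route above --- conditioning on each potential escape edge, exploiting the self-similarity of the model to lower bound the distance from its random endpoint to $V_\mz^n$, and then taking a union bound --- is the essential mechanism needed to overcome this obstacle.
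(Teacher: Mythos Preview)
The paper does not prove this lemma; it is imported wholesale from the companion paper \cite{baeumler2022distances} (Lemma~4.11 there), so there is no in-paper argument against which to compare your proposal. Your reduction to the unconditional law of $D^\star$ via the independence of the direct edges from the remaining configuration is correct and is exactly the right first move, and your upper-bound sketch is fine.

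The lower bound, however, has a genuine gap. You invoke \eqref{eq:baeumler1}--\eqref{eq:baeumler2} to argue that a (nearly uniform) endpoint $y\in V_u^n$ satisfies $D_A(V_\mz^n,y)=\Omega(\Lambda(n,\beta))$; but \eqref{eq:baeumler1} is a point-to-point estimate and \eqref{eq:baeumler2} a diameter (upper) bound, and neither yields a lower bound on a \emph{point-to-set} distance. Since $D_A(V_\mz^n,y)=\min_{x\in V_\mz^n}D_A(x,y)$, you would need simultaneous control over all $n^d$ starting points, and a union bound on \eqref{eq:baeumler1} fails badly. The correct input is the exit-distance estimate $D\left(\mz,B_m(\mz)^C\right)\approx_P m^{\dxp}$, which the paper states in the paragraph immediately preceding this lemma as one of the main results of \cite{baeumler2022distances}: once $y$ is at Euclidean distance $\geq\delta n$ from $V_\mz^n$ one has $V_\mz^n\subset B_{\delta n}(y)^C$ and hence $D(V_\mz^n,y)\geq D\big(y,B_{\delta n}(y)^C\big)$, to which the exit bound applies by translation invariance. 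With this substitution --- and using that the escape edges from $V_u^n$ to far boxes are independent of the internal edge structure of $A$, so that one may first condition on the (stochastically bounded) set of escape endpoints and then union-bound --- your outline can be completed. As written, though, the appeal to \eqref{eq:baeumler1}--\eqref{eq:baeumler2} does not do the job.
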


\subsection{The geometry inside blocks}\label{subsec:insidegeometry}

In \cite[Theorem 6.1]{baeumler2022distances} it is proven that $\frac{\dia \left(V_0^n\right)}{n^{\theta(\beta)}}$ has exponential moments uniformly in $n$, i.e., that
\begin{equation}
	\sup_{n\in \N} \E_\beta \left[ \exp \left( \frac{\dia \left(V_0^n\right)}{n^{\theta(\beta)}} \right) \right] < \infty.
\end{equation}
From this, we can directly deduce that for all $k \in \N$ and $\beta >0$ there exists a constant $C_k$ such that for all $n\in \N$
\begin{align}\label{eq:dia_momentbound}
	\E_\beta \left[\dia\left(V_\mz^n\right)^k\right] \leq C_k n^{k\dxp(\beta)}.
\end{align}
Let $\delta \in \left(0,\frac{1}{3}\right)$. We define a family of sets $\mathcal{CO}_n^\delta \subset V_\mz^n$ with the following two properties: 
\begin{itemize}
	\item $\bigcup_{x \in \mathcal{CO}_n^\delta} B_{\delta n}(x) = V_\mz^n $, and 
	\item $\left|\mathcal{CO}_n^\delta \right| \leq C_\mathcal{CO} \delta^{-d} $ for all $\delta\in \left(0,\frac{1}{3}\right)$ and $n\in \N$,
\end{itemize}
where $C_\mathcal{CO}$ is a constant that depends only on the dimension $d$, but not on $\delta$ or $n$. The abbreviation $\mathcal{CO}$ stands for cover. Such a cover can be constructed by choosing the points in $\mathcal{CO}_n^\delta$ at a distance of approximately $\max(\delta n, 1)$. The next lemma deals with the local structure around the points $x \in \mathcal{CO}_n^\delta$. In particular, we prove that the diameters of balls around such points $x$ aref not too large. This already follows by \eqref{eq:dia_momentbound} and a union bound over all points $x\in \mathcal{CO}_n^\delta$.

\begin{lemma}\label{lem:epsnearness}
	For $\eps \in \left(0,\frac{1}{3}\right)$, let $\mathcal{DL}(\eps)$ be the event
	\begin{align*}
		\mathcal{DL}(\eps) = \bigcap_{x \in \mathcal{CO}_n^{\eps^2}} \left\{ \dia \left( B_{\eps^2 n}(x) \right) <  \frac{\left(\eps^{1.5}n\right)^{\dxp}}{3} \right\} \text .
	\end{align*}
	Then there exists a function $h_1:\left(0,\frac{1}{3}\right) \to \left[0,1\right]$ with $\lim_{\eps \to 0} h_1(\eps) = 1$ such that
	\begin{align*}
		\p_\beta \left( \mathcal{DL}(\eps) \right) \geq h_1(\eps)
	\end{align*}
	for all $n \geq n(\eps)$ large enough. If the event $\mathcal{DL}(\eps)$ holds, we say that $V_\mz^n$ is $\eps$-{\sl near}.
\end{lemma}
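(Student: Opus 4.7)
The plan is to obtain the bound by a union bound over $\mathcal{CO}_n^{\eps^2}$, where each individual event is controlled by a high-moment Markov inequality. The key input is the polynomial moment bound \eqref{eq:dia_momentbound}, combined with the fact that the threshold $\left(\eps^{1.5} n\right)^{\dxp}$ is much larger than the typical scale $\left(\eps^2 n\right)^{\dxp}$ of the diameter of a ball of radius $\eps^2 n$; this polynomial gap between $\eps^{1.5}$ and $\eps^2$ will be sufficient to overcome the cardinality $|\mathcal{CO}_n^{\eps^2}| \le C_\mathcal{CO} \eps^{-2d}$ of the cover.

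First, by translation invariance of $\p_\beta$ (see Subsection~\ref{subsec:cts}), $\dia(B_{\eps^2 n}(x))$ has the same distribution for every $x \in \mathcal{CO}_n^{\eps^2}$. Moreover, $B_{\eps^2 n}(x)$ is, up to translation, a box of side length of order $\eps^2 n$ --- in the $\|\cdot\|_\infty$ norm it is literally a cube --- so by \eqref{eq:dia_momentbound} applied to this box (with at most a harmless multiplicative loss in constants), there exists, for every integer $k$, a constant $C_k = C_k(\beta) > 0$ such that
\begin{equation*}
    \E_\beta\bigl[\dia(B_{\eps^2 n}(x))^k\bigr] \le C_k \left(\eps^2 n\right)^{k\dxp}
\end{equation*}
uniformly in $n$ and $x$. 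Markov's inequality then yields
\begin{equation*}
    \p_\beta\!\left(\dia(B_{\eps^2 n}(x)) \ge \tfrac{\left(\eps^{1.5} n\right)^{\dxp}}{3}\right) \le \frac{3^k C_k \left(\eps^2 n\right)^{k\dxp}}{\left(\eps^{1.5} n\right)^{k\dxp}} = 3^k C_k\, \eps^{k\dxp/2}.
\end{equation*}

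A union bound over the at most $C_\mathcal{CO} \eps^{-2d}$ elements of $\mathcal{CO}_n^{\eps^2}$ then gives
\begin{equation*}
    \p_\beta\!\left(\mathcal{DL}(\eps)^c\right) \le C_\mathcal{CO}\, 3^k C_k\, \eps^{k\dxp/2 - 2d}.
\end{equation*}
Since $\dxp = \dxp(\beta) > 0$ for every fixed $\beta > 0$, we pick $k > 4d/\dxp(\beta)$, which makes the exponent of $\eps$ on the right strictly positive. Thus the bound tends to zero as $\eps \to 0$, and taking $h_1(\eps) \coloneqq \max\bigl\{0,\,1 - C_\mathcal{CO}\, 3^k C_k\, \eps^{k\dxp/2 - 2d}\bigr\}$ produces the required function. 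I do not foresee a serious obstacle: the moment method is essentially tight for this question, because we are free to take $k$ as large as needed (at the price of a constant depending on $k$ and $\beta$), and this freedom is precisely what lets us beat the polynomial size $\eps^{-2d}$ of the cover. The only point requiring care is justifying that the restriction to $n \ge n(\eps)$ suffices to make the moment bound \eqref{eq:dia_momentbound} kick in at the scale $m \asymp \eps^2 n$, which is straightforward once $\eps^2 n$ is large enough.
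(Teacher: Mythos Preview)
Your proposal is correct and follows essentially the same approach as the paper: a union bound over $\mathcal{CO}_n^{\eps^2}$, translation invariance to reduce to a single ball, and a $k$-th moment Markov inequality using \eqref{eq:dia_momentbound}, with $k$ chosen large enough (the paper takes $k = 6d\lceil \dxp^{-1}\rceil$, you take $k > 4d/\dxp$) so that the gain $\eps^{k\dxp/2}$ beats the cover size $\eps^{-2d}$.
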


\begin{proof}
	By a union bound we have that 
	\begin{align}\label{eq:bound1}
		\p_\beta \left( \mathcal{DL}(\eps)^c \right)& 
		\leq 
		\sum_{x \in \mathcal{CO}_n^{\eps^2}} \p_\beta \left(  \dia \left( B_{\eps^2 n}(x) \right) \geq  \frac{\left(\eps^{1.5}n\right)^{\dxp}}{3} \right) \notag \\
		&
		\leq
		C_{\mathcal{CO}}\eps^{-2d} \p_\beta \left(  \dia \left( B_{\eps^2 n}(\mz) \right) \geq \frac{\left(\eps^{1.5}n\right)^{\dxp}}{3} \right) \text .
	\end{align}
	Using Markov's inequality we get that for any $k \in \N$ and $n\geq \eps^{-2}$
	\begin{align*}
		&\p_\beta \left(  \dia \left( B_{\eps^2 n}(\mz) \right) \geq \frac{\left(\eps^{1.5}n\right)^{\dxp}}{3} \right) 
		= 
		\p_\beta \left(  \dia \left( B_{\eps^2 n}(\mz) \right)^k \geq \left(\frac{\left(\eps^{1.5}n\right)^{\dxp}}{3}\right)^k \right)\\
		&
		\leq 
		\E_\beta \left[ \dia \left( B_{\eps^2 n}(\mz) \right)^k \right]  \left(\frac{\left(\eps^{1.5}n\right)^{\dxp}}{3}\right)^{-k}
		\overset{\eqref{eq:dia_momentbound}}{\leq} C_k (2\eps^2n+1)^{k\dxp} \left(\frac{\left(\eps^{1.5}n\right)^{\dxp}}{3}\right)^{-k}
		\leq C^\prime \eps^{0.5k\dxp}
	\end{align*}
	for some constant $C^\prime=C^\prime(k)<\infty$. So using $k =  6d \lceil \theta^{-1} \rceil$ and inserting this into \eqref{eq:bound1} we get that
	\begin{align*}
		\p_\beta \left( \mathcal{DL}(\eps)^c \right) \leq
		C_{\mathcal{CO}}\eps^{-2d} C^{\prime}
		\eps^{0.5\cdot 6d \lceil \theta^{-1} \rceil \dxp } \leq \tilde{C} \eps^{d}
	\end{align*}
	for some constant $\tilde{C}<\infty$, which finishes the proof.	
\end{proof}

The next lemma concerns the indirect distance between two sets, conditioned on the graph $G^\prime$, and is identical to \cite[Lemma 5.4]{baeumler2022distances}. For this, remember that we defined $D^\star(A,B)$ as the length of the shortest path between $A$ and $B$ that does not use a direct edge from $A$ to $B$. Lemma \ref{lem:all quantiles of boxtobox} above implies that $D^\star \left( V_\mz^{n} , \bigcup_{u \in \Z^d : \| u \|_\infty \geq 2} V_u^{n} \right)$ is of the same order as $\Lambda(n,\beta)$. We want to have a similar statement conditioned on the graph $G^\prime$. However, this conditional bound involves the neighborhood-degree $\deg^\cN(r(\mz))$ defined in \eqref{eq:nbh degree} as one of its terms. This also makes intuitive sense. If the set $\bigcup_{u\in \Z^d : \|u\|_\infty =1} V_u^n$ is connected to a lot of boxes $V_w^n \subset \bigcup_{u\in \Z^d : \|u\|_\infty \geq 2} V_u^n$, then there are more edges with an endpoint in $\bigcup_{u\in \Z^d : \|u\|_\infty =1} V_u^n$ through which a path starting at $V_\mz^n$ can leave the set $\bigcup_{u\in \Z^d : \|u\|_\infty =1} V_u^n$. Also note that the neighborhood-degree $\deg^{\cN}(r(v))$ is measurable with respect to the $\sigma$-algebra generated by $G^\prime = (V^\prime, E^\prime)$.

\begin{lemma}(\cite[Lemma 5.4]{baeumler2022distances})\label{lem:goodeventW}
	Let $\mathcal{W}(\eps)$ be the event
	\begin{align*}
		\mathcal{W}(\eps) \coloneqq \left\{ D^\star \left( V_v^{n} , \bigcup_{u \in \Z^d : \| u - v\|_\infty \geq 2} V_u^{n} \right) > \eps \Lambda(n,\beta) \right\} \text .
	\end{align*}
	For all large enough $n\geq n(\eps)$ one has
	\begin{equation*}
		\p_{\beta} \left( \mathcal{W}(\eps)^c \ | \ G^\prime \right) \leq 3^d \deg^{\cN} \left(r(v)\right)  \left(1-g_1(\eps)\right) + \left(1-g_2(\eps)\right),
	\end{equation*}
	where $g_1$ and $g_2$ are the functions defined in Lemma \ref{lem:graphspacing}, respectively Lemma \ref{lem:all quantiles of boxtobox}.
\end{lemma}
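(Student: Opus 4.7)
I would prove Lemma~\ref{lem:goodeventW} by conditioning on $G^\prime$ and decomposing the event $\mathcal{W}(\eps)^C$ according to the microstructure of a short indirect path from $V_v^n$ to the far boxes.

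For the $(1-g_2(\eps))$ summand, note that if $V_v^n$ has no $G^\prime$-neighbor among the far boxes then no forbidden edges are present, so $D^\star$ coincides with the direct distance $D$; moreover $G^\prime$ implies the event $\mathcal L^\prime$ of Lemma~\ref{lem:all quantiles of boxtobox}. The additional information in $G^\prime$ concerns only edges not incident to $V_v^n$ or the far boxes and is therefore independent of the distance $D(V_v^n,\bigcup V_u^n)$ in the relevant sense; applying the second part of Lemma~\ref{lem:all quantiles of boxtobox} then yields $\p_\beta(\mathcal{W}(\eps)^C\mid G^\prime)\leq 1-g_2(\eps)$ in this sub-case.

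For the complementary case, I would fix a witnessing path $P$ of length at most $\eps\Lambda(n,\beta)$ from some $a\in V_v^n$ to a far box, let $z$ be the first vertex of $P$ in $\bigcup_{\|u-v\|_\infty\geq 2}V_u^n$, and let $y$ be its predecessor on $P$. Because the edge $\{y,z\}$ is not forbidden and every earlier vertex of $P$ avoids the far boxes, $y$ must lie in a box $V_w^n$ with $\|w-v\|_\infty=1$. Let $y_0\in V_w^n$ be the first vertex of $P$ in $V_w^n$, entering from a box $V_{w_0}^n$ with $\|w_0-v\|_\infty\leq 1$ (allowing $y_0=y$ when $P$ first meets $V_w^n$ at $y$). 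Then $y_0,y$ are exit vertices of $V_w^n$ toward $V_{w_0}^n$ and $V_u^n$ respectively, and $D_{V_w^n}(y_0,y)\leq \eps\Lambda(n,\beta)$. Lemma~\ref{lem:graphspacing}, applied inside $V_w^n$ with reference boxes $V_{w_0}^n,V_u^n$, bounds the conditional probability of such a configuration by $1-g_1(\eps)$. A union bound over the triples $(V_w^n,V_{w_0}^n,V_u^n)$, of which there are at most $|\cN(r(v))|\cdot\deg^{\cN}(r(v))\leq 3^d\deg^{\cN}(r(v))$, produces the first summand.

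The hard part will be verifying the hypothesis of Lemma~\ref{lem:graphspacing}: it requires at least one of $V_{w_0}^n,V_u^n$ to have $\infty$-distance $\geq 2$ from $V_w^n$. Since $\|u-v\|_\infty\geq 2$ and $\|w-v\|_\infty=1$, the triangle inequality only gives $\|u-w\|_\infty\geq 1$, which may equal $1$; similarly $\|w_0-w\|_\infty\leq 2$ may fail to be $\geq 2$. In such corner configurations one must re-pivot the argument to a different box along $P$ (or invoke Lemma~\ref{lem:graphspacing} inside $V_u^n$ or $V_{w_0}^n$ with different reference boxes), and the extra factor $3^d$ in the bound is precisely built to absorb this combinatorial flexibility.
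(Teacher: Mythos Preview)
Your overall architecture—split $\mathcal{W}(\eps)^C$ into a piece controlled by Lemma~\ref{lem:all quantiles of boxtobox} and a piece controlled by Lemma~\ref{lem:graphspacing}—is the right one, but the particular split you chose is what generates both of the problems you run into.

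The independence claim behind your $g_2$ argument is false. Conditioning on $G'$ includes knowing \emph{all} long inter-box connections, in particular those from the boxes $V_w^n$ with $\|w-v\|_\infty=1$ to the far boxes, and these edges are exactly what can make $D\big(V_v^n,\bigcup V_u^n\big)$ short. The correct way to isolate a $G'$-independent event is not to restrict the $G'$-structure of $V_v^n$, but to restrict the \emph{path}: let $\mathcal{E}_2$ be the event that some witnessing path of length $\le\eps\Lambda(n,\beta)$ uses no long inter-box edge at all (no edge between boxes at renormalized distance $\ge 2$). Then $\mathcal{E}_2$ is measurable with respect to the intra-box edges and the edges between $G'$-adjacent boxes; since $G'$-adjacent boxes are connected with probability $1$, conditioning on $G'$ carries no information about either family, so $\p_\beta(\mathcal{E}_2\mid G')=\p_\beta(\mathcal{E}_2)\le\p_\beta(D^\star\le\eps\Lambda)\le 1-g_2(\eps)$ by Lemma~\ref{lem:all quantiles of boxtobox}.

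This re-splitting also dissolves your ``hard part''. On $\mathcal{W}(\eps)^C\setminus\mathcal{E}_2$, every witnessing path uses some long inter-box edge; pivot at the \emph{first} such edge $\{y,z\}$, say $y\in V_w^n$, $z\in V_u^n$ with $\|u-w\|_\infty\ge 2$. Because no long edge has been used before, the prefix of the path has not left $\bigcup_{\|w'-v\|_\infty\le 1}V_{w'}^n$ (a short-edge exit would witness $\mathcal{E}_2$), so $w$ is automatically a neighbourhood box and the entry vertex $y_0$ into $V_w^n$ comes from another neighbourhood box $V_{w_0}^n$. The hypothesis $\|u-w\|_\infty\ge 2$ of Lemma~\ref{lem:graphspacing} is now part of the definition of the pivot; no triangle-inequality corner cases arise, and the union over $(w,w_0,u)$ with $w,w_0$ in the neighbourhood and $u\sim w$ in $G'$ gives at most $3^d\deg^{\cN}(r(v))$ terms. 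One last correction: take $y_0$ to be the entry point of the \emph{last} visit to $V_w^n$ before $y$, so that the segment from $y_0$ to $y$ lies inside $V_w^n$ and $D_{V_w^n}(y_0,y)\le\eps\Lambda(n,\beta)$ is actually justified.
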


Roughly speaking, the term $\left(1-g_2(\eps)\right)$ corresponds to the probability of the existence of a path of length at most $\eps \Lambda(n,\beta)$ consisting only out of short edges from $V_v^{n} $ to $ \bigcup_{u \in \Z^d : \| u - v\|_\infty \geq 2} V_u^{n}$ and the term $3^d \deg^{\cN} \left(r(v)\right)  \left(1-g_1(\eps)\right)$ corresponds to the probability that there exists a path of length at most $\eps \Lambda(n,\beta)$ from $V_v^{n} $ to $ \bigcup_{u \in \Z^d : \| u - v\|_\infty \geq 2} V_u^{n}$ that uses a long edge between $ \bigcup_{u \in \Z^d : \| u - v\|_\infty =1} V_u^{n}$ and $ \bigcup_{u \in \Z^d : \| u - v\|_\infty \geq 2} V_u^{n}$.\\

Furthermore, before going to the proof of Theorem \ref{theo:strictmonotonicity}, we introduce the following lemma that considers the structure of connected sets in the long-range percolation graph. For a finite set $Z\subset \Z^d$, we define its average degree by $\overline{\deg}(Z)= \frac{1}{|Z|} \sum_{v\in Z} \deg(z)$. It is easy to show that for a fixed set $Z\subset \Z^d$, its average degree is of order $\E_\beta \left[\deg(\mz)\right]$, and for every $\alpha>1$ it is exponentially unlikely in $|Z|$ that the average degree of $Z$ is larger than $\alpha \E_\beta \left[\deg(\mz)\right]$. The next lemma shows that a similar statement also holds when not considering a fixed set $Z$, but all connected sets of a certain size containing the origin.
It is proven in \cite[Lemma 3.2 and (24)]{baeumler2022distances}.

\begin{lemma}\label{lem:connnectedsetsinLRP}
	Let $\mathcal{CS}_k= \mathcal{CS}_k\left(\Z^d\right)$ be all connected subsets of the long-range percolation graph with vertex set $\Z^d$ of size $k$ that contain the origin $\mz$.
	Define $\mu_\beta \coloneqq \E_\beta \left[\deg(\mz)\right]$. Then for all $\beta>0$ 
	\begin{align*}
		\p_\beta \left(\exists Z \in \mathcal{CS}_k (\Z^d) : \overline{\deg}(Z) \geq 20 \mu_\beta\right) \leq e^{-4k\mu_\beta}
	\end{align*}
	and
	\begin{align*}
		\E_\beta \left[ \left|\mathcal{CS}_k\left(\Z^d\right)\right| \right] \leq 4^k \mu_\beta^k \text.
	\end{align*}
\end{lemma}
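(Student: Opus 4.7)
My plan is a standard Peierls-type tree-counting argument, reinforced for the first inequality by a Chernoff bound on the excess degree given a spanning tree.

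For the second inequality, I would use that every $Z\in\mathcal{CS}_k$ admits a spanning tree of its induced subgraph rooted at $\mz$, all of whose edges are in $\omega$. After canonically ordering children at each vertex (say lexicographically), each such spanning tree can be viewed as a rooted labeled \emph{plane} tree, giving
\[
|\mathcal{CS}_k|\le \sum_{T}\mathbbm{1}_{T\subseteq\omega},
\]
where $T$ ranges over all rooted labeled plane trees on $k$ vertices with root $\mz$. Parametrising such a $T$ as a pair consisting of a plane shape $\tau$ and a labeling $(v_2,\ldots,v_k)\in(\Z^d)^{k-1}$, the number of shapes on $k$ vertices equals the Catalan number $C_{k-1}\le 4^{k-1}$. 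For each fixed $\tau$, iteratively summing $\sum_{w\in\Z^d}p(\beta,\{v,w\})=\mu_\beta$ over a leaf's label and pruning the leaf yields $\mu_\beta^{k-1}$ per shape. Taking expectations then gives $\E_\beta[|\mathcal{CS}_k|]\le C_{k-1}\mu_\beta^{k-1}\le 4^k\mu_\beta^k$, where the last step uses $\mu_\beta\ge 3^d-1\ge 1$, which follows from the deterministic presence of nearest-neighbour edges.

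For the first inequality, the same enumeration yields
\[
\p_\beta\Big(\exists Z\in\mathcal{CS}_k:\overline{\deg}(Z)\ge 20\mu_\beta\Big)\le\sum_{T}\p_\beta\Big(T\subseteq\omega\ \text{and}\ \sum_{v\in V(T)}\deg_\omega(v)\ge 20k\mu_\beta\Big).
\]
Conditional on $T\subseteq\omega$, I would decompose $\sum_{v\in V(T)}\deg_\omega(v)=2(k-1)+S_T$, where $S_T=\sum_{e\notin T}|e\cap V(T)|\,\mathbbm{1}_{e\in\omega}$ is a sum of independent $[0,2]$-valued random variables independent of $\{T\subseteq\omega\}$, with $\E[S_T]\le k\mu_\beta$ by translation invariance. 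Since $\mu_\beta\ge 2$, the event reduces to $S_T\ge 19k\mu_\beta\ge 19\,\E[S_T]$, and a Chernoff estimate (using the moment generating function bound $\E[e^{\lambda|e\cap V(T)|\mathbbm{1}_{e\in\omega}}]\le\exp(p(\beta,e)(e^{2\lambda}-1))$ and optimizing $\lambda$) will give $\p_\beta(S_T\ge 19k\mu_\beta)\le e^{-ck\mu_\beta}$ for a large absolute constant $c$. Multiplying by the tree bound $\sum_T\p_\beta(T\subseteq\omega)\le 4^k\mu_\beta^{k-1}$ and using $\log 4+\log\mu_\beta\le (c-4)\mu_\beta$ (valid for $\mu_\beta\ge 2$ and $c$ large enough) will produce the target $e^{-4k\mu_\beta}$.

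The main delicate point is simply tuning the Chernoff rate large enough to dominate the tree-counting factors $4^k$ and $\mu_\beta^{k-1}$. The constants $20$ and $4$ in the statement have evidently been chosen with enough slack so that an elementary Chernoff estimate suffices; the deterministic nearest-neighbour edges provide the universal lower bound $\mu_\beta\ge 3^d-1$, which removes the only potential subtlety, namely the small-$\mu_\beta$ regime.
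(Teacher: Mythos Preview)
Your proposal is correct; this lemma is not proved in the present paper but imported from the companion paper \cite{baeumler2022distances}, and the spanning-tree enumeration via Catalan numbers together with a Chernoff bound on the non-tree degree is precisely the standard route taken there. A quick check of the constants confirms the slack is ample: optimizing $\lambda$ in your Chernoff bound yields a rate $c>12$, and $\log 4+\log\mu_\beta\le(c-4)\mu_\beta$ holds comfortably for all $\mu_\beta\ge 3^d-1\ge 2$.
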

The statement above also holds when one considers the graph $V_\mz^m$ instead of $\Z^d$. Indeed, restricting to the set $V_\mz^m$ only restricts the possible space of connected sets $Z$ and also lowers the average degree of such sets.\\

When contracting the sets $(V_v^n)_{v\in \Z^d}$, respectively the sets $(V_v^n)_{v\in V_\mz^m}$, and constructing the graph $G^\prime=(V^\prime,E^\prime)$ as described above, we also use the same notation and define $\mathcal{CS}_k(G^\prime)$ as the set containing all connected subsets $Z\subset V^\prime$ with $|Z|=k$ and $r(\mz)\in Z$. Further, for $t\in \R_{\geq 0}$, we also define
\begin{equation*}
	\mathcal{CS}_{\geq t}\left(G^\prime\right) = \bigcup_{i\geq t} \mathcal{CS}_{i}\left(G^\prime\right)
\end{equation*}
as the set of all connected sets of $V^\prime$ with size at least $t$ containing the origin $r(\mz)$. As the long-range percolation graphs with vertex set $\Z^d$, respectively $V_\mz^m$,  and the graph $G^\prime=(V^\prime, E^\prime)$ have the same distribution under the measure $\p_\beta$, the results of Lemma \ref{lem:connnectedsetsinLRP} also hold for $G^\prime$.

\begin{corollary}\label{coro:connnectedsetsinLRP}
	Consider long-range percolation with measure $\p_\beta$ on $\Z^d$, respectively $V_\mz^{mn}$.
	Let $G^\prime=(V^\prime,E^\prime)$ be the graph that results from contracting the sets $(V_v^n)_{v\in \Z^d}$, respectively the sets  $(V_v^n)_{v\in V_\mz^m}$.
	Then for all $\beta>0$ 
	\begin{align}\label{coroeq48}
		\p_\beta \left(\exists Z \in \mathcal{CS}_k(G^\prime) : \overline{\deg}(Z) \geq 20 \mu_\beta\right) \leq e^{-4k\mu_\beta}
	\end{align}
	and
	\begin{align}\label{coroeq482}
		\E_\beta \left[ \left|\mathcal{CS}_k\left(G^\prime\right)\right| \right] \leq 4^k \mu_\beta^k \text.
	\end{align}
\end{corollary}

\subsection{The proof of Theorem \ref{theo:strictmonotonicity}}\label{subsec:monotonproof}

With the knowledge from the previous subsections, we are now ready to go to the proof of Theorem \ref{theo:strictmonotonicity}. The proof consists of five main parts: In part (A), we describe how to extract certain subpaths from a path $P$ in $G^\prime$. In part (B), we define a notion of {\sl influential} paths in a renormalized graph $G^\prime$. Then, in part (C), we show that all paths of length $t$ are influential with probability approaching $1$ exponentially fast, as $t$ increases. In part (D), we show how this implies the uniform upper bound on the derivative of the expected distance (inequality \eqref{eq:condition derivative}), and in part (E) we show that this implies the strict monotonicity of the distance exponent.

\begin{proof}[Proof of Theorem \ref{theo:strictmonotonicity}] \textbf{Part (A)}.
	Consider the random graph with vertex set $V_\mz^{2^n}$. For $k\leq n$, define the graph $G^\prime=(V^\prime,E^\prime)$ by contracting all blocks of the form $V_u^{2^k}$, where $u\in V_\mz^{2^{n-k}}$. We define $r(u)\in V^\prime$ as the vertex that results from contracting $V_u^{2^k}$. By the scale invariance \eqref{scale invariance}, the graph $G^\prime$ has the same distribution as long-range percolation with measure $\p_\beta$ on $V_\mz^{2^{n-k}}$. In analogy to $\Z^d$, we call the vertex $r(\mz)$ the origin of $G^\prime$. Now consider a self-avoiding path $P=\left(r(u_0),r(u_1),\ldots, r(u_t)\right) \subset G^\prime$, where $u_0=\mz$, and $t \gg 10^{6d} \E_\beta \left[\deg(\mz)\right]$. 
	We divide the path into blocks of length $K \coloneqq 3^d+1$: For $j \leq \lfloor \frac{t}{K} \rfloor -1$, we define $R_j = \left( r(u_{jK}) ,\ldots, r(u_{jK+3^d})\right)$. Note that the subpath $R_{j+1}$ comes directly after $R_j$ in the original path $P$, but $R_j$ and $R_{j+1}$ do not overlap. So in particular, for $j^\prime \leq \lfloor \frac{t}{K} \rfloor -1$, one can concatenate the subpaths $R_0,R_1,\ldots,R_{j^\prime}$ and obtains a path in $G^\prime$ that contains the origin $r(\mz)$. \\
	
	For each such $j \in \{0,\ldots, \lfloor \tfrac{t}{K} \rfloor -1\}$ and $R_j$, we define a set $\widetilde{R}_j$ as follows: If there exist $i \in \{jK,\ldots, jK+3^d-1\}$ with $\|u_i-u_{i+1}\|_\infty \geq 2$, we simply set $\widetilde{R}_j = R_j$. If $\|r(u_i)-r(u_{i+1})\|_\infty = 1$ for all $i \in \{jK , \ldots, jK +3^d -1 \}$, then we set $\widetilde{R}_j = R_j \cup \cN(r(u_{jK}))$.
	The set $\bigcup_{j=0}^{\lfloor \frac{t}{K} \rfloor -1 } \widetilde{R}_j$ is a connected set and its cardinality is bounded from below by
	\begin{equation}\label{eq:t halbe}
		\left|\bigcup_{j=0}^{\lfloor \frac{t}{K} \rfloor -1 } \widetilde{R}_j\right| \geq \left|\bigcup_{j=0}^{\lfloor \frac{t}{K} \rfloor -1 } R_j\right| \geq K \Big\lfloor \frac{t}{K} \Big\rfloor \geq \frac{t}{2} \text ,
	\end{equation}
	where we used $t\gg 10^{6d} \E_\beta \left[\deg(\mz)\right]$ for the last inequality. Further, we can bound the size of the set $\bigcup_{j=0}^{\lfloor \frac{t}{K} \rfloor -1 } \widetilde{R}_j$ from above by
	\begin{equation}\label{eq:t doppelte}
		\left|\bigcup_{j=0}^{\lfloor \frac{t}{K} \rfloor -1 } \widetilde{R}_j\right|
		\leq
		\left|\bigcup_{j=0}^{\lfloor \frac{t}{K} \rfloor -1 }\left( {R}_j \cup \cN(r(u_{jK})) \right)\right|
		\leq
		\sum_{j=0}^{\lfloor \frac{t}{K} \rfloor -1 }
		(K+3^d)
		\leq
		\Big \lfloor \frac{t}{K} \Big \rfloor 2K
		\leq 2 t \text .
	\end{equation}
	From now on, we will always work on the event
	\begin{align}\label{eq:Ht assumption}
		\mathcal{H}_t \coloneqq \left\{ \overline{\deg}(Z) < 20 \mu_\beta \text{ for all } Z \in \mathcal{CS}_{\geq t/2}\left(G^\prime\right) \right\} .
	\end{align}
	The event $\mathcal{H}_t$ is very likely for large $t$, as
	\begin{align}\label{eq:Hbound}
		\p_\beta \left(\mathcal{H}_t^c \right) \overset{\eqref{coroeq48}}{\leq} 
		\sum_{t^\prime = \lceil t/2 \rceil}^\infty e^{-4t^\prime \mu_\beta}
		\leq 2 e^{-2t \mu_\beta} \leq 2^{-t}
	\end{align}
	by Corollary \ref{coro:connnectedsetsinLRP}, where the last and second to last inequalities hold since $\mu_\beta = \E_\beta \left[\deg(\mz)\right]\geq 2$ and $t \gg 10^{6d} \E_\beta \left[\deg(\mz)\right]$. We define the degree of $\widetilde{R}_j$ by 
	\begin{align*}
		\deg\left(\widetilde{R}_j\right) = \sum_{r(u) \in \widetilde{R}_j} \deg(r(u)) \text .
	\end{align*}
	The sets $(\widetilde{R}_j)_{j\in \{0,\ldots,\lfloor\tfrac{t}{K}\rfloor -1\}}$ satisfy $\widetilde{R}_j \subseteq R_j \cup \cN(r(u_{jK}))$. A fixed vertex $r(u) \in V^\prime$ might be included in more than one of the sets $\widetilde{R}_j$, so it does not necessarily hold that
	\begin{equation*}
		\sum_{j=0}^{\lfloor \frac{t}{K} \rfloor -1 } \deg\left(\widetilde{R}_j\right) = \sum_{r(u)\in  \bigcup_{j=0}^{\lfloor \frac{t}{K} \rfloor -1 } \widetilde{R}_j } \deg(r(u)) \text ,
	\end{equation*}
	as some vertices $r(u)$ can  be included in both $\widetilde{R}_j$ and $\widetilde{R}_i$ for different values $j\neq i$. If $r(u)\in \widetilde{R}_j$, then $\cN(r(u)) \cap R_j \neq \emptyset$ (where we identify the path-segment $R_j=(r(u_{jK}),\ldots,r(u_{jK+3^d}))$ with the set $\{r(u_{jK}),\ldots,r(u_{jK+3^d})\}$). Since $K=3^d+1$ and $R_j=(r(u_{jK}),\ldots,r(u_{jK+3^d}))$, the different segments $(R_j)_{j\in\{0,\ldots,\lfloor \tfrac{t}{K} \rfloor -1\}}$ are disjoint. So in particular, for each vertex $r(u)$ there can be at most $\left|\cN(r(u))\right|\leq 3^d$ many indices $j$ with $\cN(r(u))\cap R_j \neq \emptyset$. So each vertex $r(u)$ can be included in at most $3^d$ many sets of the form $\widetilde{R}_j$. Thus we have
	\begin{align}\label{drei hoch d}
		&\notag \sum_{j=0}^{\lfloor \frac{t}{K} \rfloor -1 } \deg\left(\widetilde{R}_j\right) 
		=
		\sum_{j=0}^{\lfloor \frac{t}{K} \rfloor -1 } \
		\sum_{r(u) \in \widetilde{R}_j} \deg(r(u))\\
		& \notag
		= 
		\sum_{r(u)\in  \bigcup_{j=0}^{\lfloor \frac{t}{K} \rfloor -1 } \widetilde{R}_j } \deg(r(u)) \left|\left\{j\in \{0,\ldots, \Big\lfloor \frac{t}{K} \Big\rfloor -1 \} : r(u) \in \widetilde{R}_j \right\} \right| \\
		&
		\leq
		3^d
		\sum_{r(u)\in  \bigcup_{j=0}^{\lfloor \frac{t}{K} \rfloor -1 } \widetilde{R}_j } \deg(r(u)) 
		.
	\end{align}
	The set $\bigcup_{j=0}^{\lfloor \frac{t}{K} \rfloor -1 } \widetilde{R}_j$ is a connected set with size at least $\tfrac{t}{2}$ \eqref{eq:t halbe} that contains the origin $r(\mz)$.
	So assuming that the event $\mathcal{H}_t$ defined in \eqref{eq:Ht assumption} holds, we get that
	\begin{align}\label{eq:degree sum bound}
		\notag
		&\sum_{j=0}^{\lfloor \frac{t}{K} \rfloor -1 } \deg\left(\widetilde{R}_j\right) 
		\overset{\eqref{drei hoch d}}{\leq} 3^d \sum_{r(u)\in  \bigcup_{j=0}^{\lfloor \frac{t}{K} \rfloor -1 } \widetilde{R}_j } \deg(r(u)) 
		= 3^d \left| \bigcup_{j=0}^{\lfloor \frac{t}{K} \rfloor -1 } \widetilde{R}_j \right| \overline{\deg}\left( \bigcup_{j=0}^{\lfloor \frac{t}{K} \rfloor -1 } \widetilde{R}_j \right)\\
		&
		\overset{\eqref{eq:Ht assumption}}{\leq}
		3^d\ 20 \mu_\beta \left| \bigcup_{j=0}^{\lfloor \frac{t}{K} \rfloor -1 } \widetilde{R}_j \right|
		\overset{\eqref{eq:t doppelte}}{\leq}
		3^d\ 40 \mu_\beta t .
	\end{align} 
	Thus there can be at most $0.5 \lfloor \tfrac{t}{K} \rfloor$ many indices $j \in \{0,\ldots, \lfloor \tfrac{t}{K} \rfloor -1\}$ with 
	\begin{equation*}
		\deg\left(\widetilde{R}_j\right) \geq 40^{d+1} \mu_\beta .
	\end{equation*}
	Indeed, if there would be more than $0.5 \lfloor \tfrac{t}{K} \rfloor$ many indices $j \in \{0,\ldots, \lfloor \tfrac{t}{K} \rfloor -1\}$ with $\deg\left(\widetilde{R}_j\right) \geq 40^{d+1} \mu_\beta$, then
	\begin{equation*}
		\sum_{j=0}^{\lfloor \frac{t}{K} \rfloor -1 } \deg\left(\widetilde{R}_j\right) > 0.5 \Big\lfloor \frac{t}{K} \Big\rfloor \cdot  40^{d+1} \mu_\beta
		\geq
		\Big\lfloor \frac{t}{3^d+1} \Big\rfloor \cdot  20^{d} 40 \mu_\beta
		\geq
		\frac{t}{4^d} \cdot  20^{d} 40 \mu_\beta
		> 3^d 40 \mu_\beta t
	\end{equation*}
	which is a contradiction to \eqref{eq:degree sum bound}. If there are at most $0.5 \lfloor \tfrac{t}{K} \rfloor$ many indices $j \in \{0,\ldots, \lfloor \tfrac{t}{K} \rfloor -1\}$ with $\deg\left(\widetilde{R}_j\right) \geq 40^{d+1} \mu_\beta$, then there need to be at least $\lfloor \tfrac{t}{K} \rfloor - 0.5 \lfloor \tfrac{t}{K} \rfloor \geq \lceil \tfrac{t}{10^d} \rceil$ many indices $j \in \{0,\ldots, \lfloor \tfrac{t}{K} \rfloor -1\}$ with $\deg\left(\widetilde{R}_j\right) \leq 40^{d+1} \mu_\beta$, i.e.,
	\begin{equation*}
		\left|\left\{j \in \left\{0,\ldots, \Big\lfloor \frac{t}{K} \Big\rfloor -1 \right\} : \deg\left(\widetilde{R}_j\right) \leq 40^{d+1} \mu_\beta \right\}\right| \geq \Big\lceil \frac{t}{10^d} \Big\rceil.
	\end{equation*}
	Say that $\left\{j_1,\ldots, j_{\lceil \frac{t}{10^d} \rceil }\right\} \subset \left\{0,\ldots, \lfloor \frac{t}{K} \rfloor -1 \right\}$ are the first such indices, where $j_1 < j_2 < \ldots < j_{\lceil \frac{t}{10^d} \rceil }$. We define a further subset $\mathcal{IND} = \mathcal{IND}(P) \subset \left\{j_1,\ldots, j_{\lceil \frac{t}{10^d} \rceil }\right\}$ of these indices as follows. Start with $\mathcal{IND}_0 = \emptyset$ and then iteratively define $\left(\mathcal{IND}_{i}\right)_{i=1}^{\lceil \frac{t}{10^d} \rceil }$ by
	\begin{align*}
		\mathcal{IND}_{i} \coloneqq \begin{cases}
			\mathcal{IND}_{i-1} \cup \{j_i\} & \text{if } \widetilde{R}_{j_i} \nsim \bigcup_{\ell \in \mathcal{IND}_{i-1}} \widetilde{R}_{\ell}\\
			\mathcal{IND}_{i-1} & \text{else}
		\end{cases}.
	\end{align*}
	Then we define $\mathcal{IND}(P) = \mathcal{IND}  \coloneqq \mathcal{IND}_{\lceil \frac{t}{10^d} \rceil}$ as the last element in this sequence.
	By the construction, there is no edge between $\widetilde{R}_j$ and $ \widetilde{R}_{j^\prime}$ for different $j,j^\prime \in \mathcal{IND}  \coloneqq \mathcal{IND}_{\lceil \frac{t}{10^d} \rceil}$. Further, for every $j_i \in \left\{j_1,\ldots, j_{\lceil \frac{t}{10^d} \rceil }\right\} \setminus \mathcal{IND}$ one has
	\begin{equation*}
		\widetilde{R}_{j_i} \sim \bigcup_{\ell \in \mathcal{IND}_{i-1}} \widetilde{R}_{\ell} \subset \bigcup_{\ell \in \mathcal{IND}} \widetilde{R}_{\ell} ,
	\end{equation*}
	as otherwise $j_i$ would have been included into $\mathcal{IND}$.
	Therefore we have that
	\begin{align}\label{eq:IND union}
		\notag \left\{j_1,\ldots, j_{\big\lceil \frac{t}{10^d} \big\rceil} \right\} & = \mathcal{IND} \cup \bigcup_{\ell \in \mathcal{IND}} \left\{ j \in \left\{j_1,\ldots, j_{\big\lceil \frac{t}{10^d} \big\rceil} \right\} : \widetilde{R}_{j} \sim \widetilde{R}_{\ell} \right\}\\
		&
		=
		\bigcup_{\ell \in \mathcal{IND}} \left(\{\ell\} \cup \left\{ j \in \left\{j_1,\ldots, j_{\big\lceil \frac{t}{10^d} \big\rceil} \right\} : \widetilde{R}_{j} \sim \widetilde{R}_{\ell} \right\}\right)
	\end{align}
	For each $\ell \in \mathcal{IND}$, there are at most $\deg\left(\widetilde{R}_{\ell}\right) \leq 40^{d+1} \mu_\beta$ many vertices $r(u) \in G^\prime \setminus \widetilde{R}_{\ell}$ for which $r(u) \sim \widetilde{R}_{\ell}$. For each such vertex $r(u)$, there can be at most $|\cN(r(u))|\leq 3^d$ many indices $j \in \left\{j_1,\ldots, j_{\big\lceil \frac{t}{10^d} \big\rceil} \right\}$ for which $r(u) \in \widetilde{R}_{j}$. Thus we see that
	\begin{equation*}
		\left| \left\{ j \in \left\{j_1,\ldots, j_{\big\lceil \frac{t}{10^d} \big\rceil} \right\} : \widetilde{R}_{j} \sim \widetilde{R}_{\ell} \right\} \right|
		\leq 3^d \deg\left(\widetilde{R}_{\ell}\right) \leq 3^d 40^{d+1} \mu_\beta \leq 120^{d+1} \mu_\beta 
	\end{equation*}
	for each $\ell \in \mathcal{IND}$. Taking the sizes of the sets in the above equality \eqref{eq:IND union}, we see that
	\begin{align*}
		\Big\lceil \frac{t}{10^d} \Big\rceil & =\left| \left\{j_1,\ldots, j_{\big\lceil \frac{t}{10^d} \big\rceil} \right\}\right| 
		=
		\left|\bigcup_{\ell \in \mathcal{IND}} \left(\{\ell\} \cup \left\{ j \in \left\{j_1,\ldots, j_{\big\lceil \frac{t}{10^d} \big\rceil} \right\} : \widetilde{R}_{j} \sim \widetilde{R}_{\ell} \right\}\right)\right|\\
		&
		\leq
		\sum_{\ell \in \mathcal{IND}}
		\left| \{\ell\} \cup \left\{ j \in \left\{j_1,\ldots, j_{\big\lceil \frac{t}{10^d} \big\rceil} \right\} : \widetilde{R}_{j} \sim \widetilde{R}_{\ell} \right\} \right|
		\leq
		\sum_{\ell \in \mathcal{IND}}
		\left(1+120^{d+1}\mu_\beta\right).
	\end{align*}
	From this, we can directly deduce that
	\begin{equation}\label{eq:ind_bound}
		\left|\mathcal{IND}\right| \geq \frac{1}{120^{d+1}\mu_\beta + 1} \Big\lceil \frac{t}{10^d} \Big\rceil \geq \frac{t}{10^{6d} \mu_\beta } \text .
	\end{equation}
	
	\noindent
	\textbf{Part (B)}. Next, we define what it means for a subpath $R_j = \left(r(u_{jK}) , \ldots, r(u_{jK+3^d})\right)$ to be $\eps$-{separated\sl}, where $\eps\in \left(0,\frac{1}{3}\right)$. Here, we make a distinction whether there exists $i \in \{jK, \ldots, jK +3^d - 1\}$ with $\|u_{i+1}-u_i\|_\infty \geq 2$ or not. The goal is to have a guaranteed lower bound on the minimal length of a path in the original model that goes through the blocks $\left(V_{u_{jK}}^{2^k} , \ldots, V_{u_{jK+3^d}}^{2^k}\right)$.\\
	
	Let $R_j = \left(r(u_{jK}) , \ldots, r(u_{jK+3^d})\right)$ be a subpath of length $K=3^d+1$ in the graph $G^\prime$ with $\deg \left(\widetilde{R}_j\right) \leq 40^{d+1} \mu_\beta$.	
	If there exists $i \in \{jK, \ldots, jK +3^d - 1\}$ with $\|u_{i+1}-u_i\|_\infty \geq 2$, let $\iota$ be the smallest such index.  If $D_{V_{u_\iota}^{2^k}}(x,y) > \eps \Lambda(2^k, \beta)$ for all $x,y \in V_{u_\iota}^{2^k}$ such that $x \sim  V_{u_{\iota+1}}^{2^k} $, $y \sim  V_{w}^{2^k}$, where $w \notin \{u_\iota, u_{\iota+1}\}$, we say that the path $R_j$ and the set $\widetilde{R}_j$ are $\eps$-separated. Conditional on $G^\prime$, the probability that there exist $x,y \in V_{u_\iota}^{2^k}$ such that $x \sim  V_{u_{\iota+1}}^{2^k} $, $y \sim  V_{w}^{2^k}$, where $w \notin \{u_\iota, u_{\iota+1}\}$, and $D_{V_{u_\iota}^{2^k}}(x,y) \leq \eps \Lambda(2^k, \beta)$ is bounded by 
	\begin{align}\label{eq:separation1}
		&
		\notag
		\p_\beta \left( D_{V_{u_\iota}^{2^k}}(x,y) \leq \eps \Lambda(2^k, \beta) \text{ for some } x,y \in V_{u_\iota}^{2^k} \text{ with } x \sim  V_{u_{\iota+1}}^{2^k},y \sim  V_{w}^{2^k}, w \notin \{u_\iota, u_{\iota+1}\} \Big| G^\prime \right)
		\\
		& 
		\leq
		\deg \left(V_{u_\iota}^{2^k}\right)  \left(1-g_1(\eps)\right) \leq \deg \left(\widetilde{R}_j\right)  \left(1-g_1(\eps)\right) 
		\leq 40^{d+1}\mu_\beta \left(1-g_1(\eps)\right)
	\end{align}
	by Lemma \ref{lem:graphspacing}, for $k$ large enough.\\
	
	Now suppose that $\|u_i-u_{i+1}\|_\infty = 1$ for all $i \in \{jK,\ldots, jK+3^d-1\}$. There exists an index $\iota \in \{jK+1,\ldots, jK+3^d\}$ with $\|u_{jK}-u_\iota\|_\infty \geq 2$, as there are only $3^d-1$ many points $w \in \Z^d$ with $\|u_{jK}-w\|_\infty = 1$. When the path exits the cube $V_{u_{jK}}^{2^k}$ for the last time, it goes to $V_{u_{jK+1}}^{2^k}$, so in particular the path does not use a long edge from $V_{u_{jK}}$ to $\bigcup_{w : \|w-u_{jK}\|_\infty \geq 2} V_w^{2^k}$ for the last exit. If the indirect distance between the sets $V_{u_{jK}}^{2^k}$ and the set $\bigcup_{r(w) \in G^\prime: \|w-u_{jK}\|_\infty \geq 2} V_w^{2^k}$ is at least $\eps \Lambda \left(2^k, \beta\right)$, i.e, if
	\begin{align*}
		D^\star_{V_\mz^{2^n}} \left( V_{u_{jK}}^{2^k} , \bigcup_{r(w) \in G^\prime : \|w-u_{jK}\|_\infty \geq 2} V_w^{2^k}  \right) \geq \eps \Lambda \left(2^k, \beta\right) \text ,
	\end{align*}
	we also say that the subpath $R_j$ and the set $\widetilde{R}_j$ are $\eps$-separated. Note that this directly implies that
	\begin{align*}
		D^\star_{V_\mz^{2^n}} \left( V_{u_{jK}}^{2^k} , V_{u_{\iota}}^{2^k} \right) \geq D^\star_{V_\mz^{2^n}} \left( V_{u_{jK}}^{2^k} , \bigcup_{r(w) \in G^\prime : \|w-u_{jK}\|_\infty \geq 2} V_w^{2^k}  \right) \geq \eps \Lambda \left(2^k, \beta\right) \text ,
	\end{align*}
	Whenever $\deg\left(\widetilde{R}_j\right) \leq 40^{d+1} \mu_\beta$, the conditional probability that there is a path of length at most $\eps \Lambda(2^k,\beta)$ that starts at $V_{u_{jK}}^{2^k}$ and goes through $V_{u_{jK+1}}^{2^k}, \ldots , V_{u_{jK+3^d-1}}^{2^k}$ to $V_{u_{jK+3^d}}^{2^k}$ is bounded by
	\begin{align}\label{eq:separation2}
		&\notag \p_{\beta} \left(  D^\star_{V_\mz^{2^n}} \left( V_{u_{jK}}^{2^k} , \bigcup_{r(w) \in G^\prime : \|w-u_{jK}\|_\infty \geq 2} V_w^{2^k}  \right) \leq \eps \Lambda \left(2^k, \beta\right) \Big| G^\prime \right)\\
		& \leq \notag 3^d \deg^\cN(r(u_{jK})) (1-g_1(\eps)) + (1-g_2(\eps))
		\leq
		3^d \deg\left(\widetilde{R}_j\right) (1-g_1(\eps)) + (1-g_2(\eps))\\
		&
		\leq
		3^d 40^{d+1} \mu_\beta (1-g_1(\eps)) + (1-g_2(\eps))
	\end{align}
	by Lemma \ref{lem:goodeventW}. So we see that in both cases, if $\deg \left(\widetilde{R}_j\right) \leq 40^{d+1} \mu_\beta$, the probability that $R_j$, respectively $\widetilde{R}_j$, is $\eps$-separated is at least
	\begin{align*}
		1-\left(3^d 40^{d+1} \mu_\beta (1-g_1(\eps)) + (1-g_2(\eps))\right),
	\end{align*}
	see \eqref{eq:separation1} and \eqref{eq:separation2}. Note that the condition $\deg \left(\widetilde{R}_j\right) \leq 40^{d+1} \mu_\beta$ is satisfied for all $j\in \mathcal{IND}$.
	\\
	
	For $\eps\in \left(0,\frac{1}{3}\right)$, we say that a subpath $R_j$ is {\sl $\eps$-influential }, if $\widetilde{R}_j$ is $\eps$-separated and all boxes $V_{u_{jK}}^{2^k},\ldots$$,V_{u_{jK+3^d}}^{2^k}$ are $\eps^{1/\dxp}$-near (see Lemma \ref{lem:epsnearness} for the definition of $\eps$-near). The events of the form $\left\{V_{u_i}^{2^k} \text{ is } \eps^{1/\theta}\text{-near }\right\}$ are independent of $G^\prime$, as they only depend on edges with both endpoints in $V_{u_i}^{2^k}$, whereas $G^\prime$ only depends on edges with endpoints in different blocks $V_{u}^{2^k}, V_{w}^{2^k},u\neq w$. For a subpath $R_j$ with $j \in \mathcal{IND}$, we can bound the probability that the sequence $R_j$ is not  $\eps$-influential  by
	\begin{align*}
		&\p_\beta \left( R_j\text{ is not } \eps\text{-influential } \Big|  G^\prime \right)\\
		&
		\leq
		\p_\beta \left( R_j\text{ is not } \eps\text{-separated } \Big|  G^\prime \right) + \sum_{i=jK}^{jK+3^d} \p_\beta \left( V_{u_i}^{2^k} \text{ is not } \eps^{1/\theta}\text{-near } \Big|  G^\prime \right)\\
		&
		\leq 3^d 40^{d+1} \mu_\beta (1-g_1(\eps)) + (1-g_2(\eps)) + (3^d+1) \left(1-h_1\left( \eps^{1/\dxp} \right)\right) \text .
	\end{align*}
	Note that it only depends on edges with at least one endpoint inside $\widetilde{R}_i$, whether $R_i$ is $\eps$-influential. For different values of $j_1, \ldots, j_\ell \in \mathcal{IND}$, the sets $\left(\widetilde{R}_{j_i}\right)_{i\in \{1,\ldots,\ell\}}$ are not connected, and thus it is independent whether these sub-paths are $\eps$-influential.\\
	
	\noindent
	\textbf{Part (C).} 
	Define the function $f:\left(0,\frac{1}{3}\right) \to \R_{\geq 0}$ by
	\begin{equation*}
		f(\eps) = 3^d 40^{d+1} \mu_\beta (1-g_1(\eps)) + (1-g_2(\eps)) + (3^d+1) \left(1-h_1\left( \eps^{1/\dxp} \right)\right).
	\end{equation*}
	So in particular the function $f$ is such that $\p_\beta \left( R_j\text{ is not } \eps\text{-influential } | G^\prime\right) \leq f(\eps)$ for all $j\in \mathcal{IND}$.
	Note that $\lim_{\eps \searrow 0} f(\eps) = 0$ and therefore we can take $\eps\in \left(0,\frac{1}{3}\right)$ small enough such that
	\begin{align}\label{eq:eps strict}
		f(\eps)^{\frac{1}{2\cdot 10^{6d} \mu_\beta}} \leq  \frac{1}{100 \mu_\beta^2} \text .
	\end{align}
	Let $\mathcal{INF} = \mathcal{INF}(P)\subset \mathcal{IND}(P)$ be all indices $j \in \mathcal{IND}$, for which $R_j$ is $\eps$-influential. Assuming that the event $\mathcal{H}_t$ defined in \eqref{eq:Ht assumption} holds, we want to get bounds on the cardinality of the set $\mathcal{INF}$ for a fixed path $P\subset G^\prime$ of length $t$. Remember that we have
	\begin{align*}
		\left|\mathcal{IND}\right| \geq \frac{t}{10^{6d}\mu_\beta} \text ,
	\end{align*}
	as shown in \eqref{eq:ind_bound}. For a path $P=\left(r(u_0),r(u_1),\ldots, r(u_t)\right) \subset G^\prime$, conditioned on $G^\prime$, the events of the form $\{R_j \text{ is } \eps\text{-influential}\}$ are independent for different $j\in \mathcal{IND}$, as the event $\{R_j \text{ is } \eps\text{-influential}\}$ only depends on edges with at least one end in $\widetilde{R}_j$ and $\widetilde{R}_j \nsim \widetilde{R}_{j^\prime}$ for different $j,j^\prime \in \mathcal{IND}$. Further, for every $j\in \mathcal{IND}$, the probability that $R_j$ is not $\eps$-influential is at most $\p_\beta  \left( R_j \text{ not $\eps$-influential } |  G^\prime \right) \leq f(\eps)$ by the definition of $f(\eps)$. Thus we can deduce that
	\begin{align}\label{50}
		&\notag  \p_\beta  \left( \left|\mathcal{INF}\right| < \frac{t}{2\cdot 10^{6d}\mu_\beta} \ \Big| \ G^\prime \right)
		\leq
		\p_\beta  \left( \bigcup_{U \subset \mathcal{IND} : |U| \geq  |\mathcal{IND}|/2} \left\{R_j \text{ not $\eps$-influential } \forall j\in U \right\}  \Big| \ G^\prime \right)
		\\
		& \notag
		\leq
		\sum_{U \subset \mathcal{IND} : |U| \geq  |\mathcal{IND}|/2}
		\p_\beta  \left( \left\{R_j \text{ not $\eps$-influential } \forall j\in U \right\}  \Big| \ G^\prime \right)
		\\
		& \notag
		=
		\sum_{U \subset \mathcal{IND} : |U| \geq  |\mathcal{IND}|/2} \ \
		\prod_{j\in U}
		\p_\beta  \left( R_j \text{ not $\eps$-influential } \Big| G^\prime  \right)
		\leq  \sum_{U \subset \mathcal{IND} : |U| \geq  |\mathcal{IND}|/2} \ \
		f(\eps)^{|U|}
		\\
		& 
		\leq  2^{|\mathcal{IND}|} f(\eps)^{|\mathcal{IND}|/2} \leq 2^t f(\eps)^{\frac{t}{2\cdot 10^{6d}\mu_\beta}} \overset{\eqref{eq:eps strict}}{\leq} 2^{t} \left( \frac{1}{100 \mu_\beta^2} \right)^t
		= \frac{1}{50^t \mu_\beta^{2t}} 
	\end{align}
	where used the assumption \eqref{eq:eps strict} on $\eps$ for the last inequality. This bound holds for a fixed path $P\subset G^\prime$ starting at $r(\mz)$, and assuming that the event $\cH_t$ holds. 
	Next, we derive a similar bound that holds for all paths $P$ of length $t$ starting at $r(\mz)$ simultaneously. For this, let $\mathcal{P}_t$ be the set of all paths in $G^\prime$ of length $t$ starting at $r(\mz)$. We call the previously mentioned event $\mathcal{G}_t$, i.e.,
	\begin{align}\label{eq:Gt}
		\mathcal{G}_t = \left\{  \left|\mathcal{INF}(P)\right| \geq \frac{t}{2\cdot 10^{6d}\mu_\beta} \text{ for all } P \in \mathcal{P}_t \right\} \text .
	\end{align}
	Since $G^\prime$ and $V_\mz^{2^{n-k}}$ have the same distribution under the measure $\p_\beta$, a union bound over all possible paths of length $t$ shows that $\E_\beta \left[ \left| \mathcal{P}_t \right| \right]  \leq \mu_\beta^t$. Thus we have, again by a union bound
	\begin{align*}
		\p_\beta \left(\mathcal{G}_t^c \right) 
		& \leq \p_\beta \left(\mathcal{H}_t^c \right)
		+ \p_\beta \left( \left|\mathcal{P}_t\right| > 2^t \mu_\beta^t \right) 
		+ \p_\beta \left(\mathcal{G}_t^c \ \Big|  \mathcal{H}_t ,  \left|\mathcal{P}_t\right| \leq 2^t \mu_\beta^t \right)\\
		&
		\overset{\eqref{50}}{\leq}
		\p_\beta \left(\mathcal{H}_t^c \right)
		+ 
		\p_\beta \left( \left|\mathcal{P}_t\right| > 2^t \mu_\beta^t \right)
		+
		2^t\mu_\beta^t \frac{1}{50^t \mu_\beta^{2t}} \overset{\eqref{eq:Hbound}}{\leq} 2^{-t} + \frac{\E \left[\left|\mathcal{P}_t\right|\right]}{2^t \mu_\beta^t} + \frac{1}{25^t} \leq 3\cdot 2^{-t}.
	\end{align*}
	and with another union bound we get for the event $\mathcal{G}_{\geq t} \coloneqq \bigcap_{t^\prime =t}^\infty \mathcal{G}_{t^\prime}$ that
	\begin{equation}\label{eq:G prob bound}
		\p_\beta \left( \mathcal{G}_{\geq t}^c \right) \leq \sum_{t^\prime=t}^{\infty} \p_\beta \left( \mathcal{G}_{t^\prime}^c \right)
		\leq \sum_{t^\prime=t}^{\infty} 3 \cdot 2^{-t^\prime}
		\leq 6 \cdot 2^{-t}.
	\end{equation}
	So we see that
	\begin{equation*}
		\p_\beta \left(\bigcup_{t^\prime = t}^\infty \left\{\exists P \in \mathcal{P}_{t^\prime} \text{ with } |\mathcal{INF}(P)|< \frac{t^\prime}{2 \cdot 10^{6d} \mu_\beta} \right\} \right) \leq 6 \cdot 2^{-t}.
	\end{equation*}
	
	\noindent
	\textbf{Part (D).}
	Now, let $\hat{P}=(a_0,a_1,\ldots,a_N)$ be a geodesic between $\mz$ and $(2^n -1)\mo$ in the graph with vertex set $V_\mz^{2^n}$. Let $\widetilde{P}$ be the projection of this path onto $G^\prime$, i.e., the path $(r(b_0),\ldots,r(b_N))$ with $r(b_i)$ such that $a_i \in V_{b_i}^{2^k}$ for all $i$. Let $P$ be the loop-erasure of $\widetilde{P}$, i.e., the path $(r(u_0),\ldots,r(u_t))$ defined by $r(u_i)=r(b_{j_i})$, where $j_i$ is defined by $j_0=0$ and, whenever $j_i < N$, we define $j_{i+1}$ by
	\begin{equation*}
		j_{i+1} = \max\{k : r(u_k)=r(u_{j_i})\}+1 .
	\end{equation*}
	
	Whenever the path $P=(r(u_0),\ldots,r(u_t))$ contains an $\eps$-influential subpath $R_j = \left(r(u_{jK}),\ldots, r(u_{jK+3^d})\right) \subset P$, let $\ell = \ell(j) \in \{jK,\ldots, jK+3^d-1\}$ be the first index for which $\|r(u_\ell)-r(u_{\ell+1})\|_\infty \geq 2$ if such an index exists. Respectively, let $\ell = \ell(j) \in \{jK,\ldots, jK+3^d\}$ be the first index for which $\|r(u_\ell)-r(u_{jK})\|_\infty \geq 2$, if there does not exist $\ell \in \{jK,\ldots,jK+3^d-1\}$ with $\|r(u_\ell)-r(u_{\ell+1})\|_\infty \geq 2$. For both cases, we observe that $\left\| r(u_{jK})-r(u_\ell) \right\|_\infty \in \{0,\ldots,3^d\}$.
	
	Whenever the path $\hat{P}$ crosses the set $R_j$ (for $j\geq 1$), it enters $V_{u_{jK}}^{2^k}$ for the first time through some vertex $x_L$ and it leaves $V_{u_{\ell}}^{2^k}$ to $V_{u_{\ell+1}}^{2^k}$ through some vertex $x_R$. As the boxes $V_{u_{jK}}^{2^k}$ and $V_{u_{\ell}}^{2^k}$ are $\eps^{1/\dxp}$-near (see Lemma \ref{lem:epsnearness} for the definition of $\eps$-near), there exist cubes $B_L, B_R$ of side length at least $2\eps^{2/\dxp} 2^k$ such that
	\begin{align*}
		& x_L \in B_L \subset V_{u_{jK}}^{2^k} , x_R \in B_R \subset V_{u_{\ell}}^{2^k}, \text{ and } \\
		&\dia(B_L), \dia(B_R) < \frac{\left(\eps^{\frac{1.5}{\dxp}} 2^k\right)^\dxp}{3} = \frac{\eps^{1.5}}{3} 2^{k\dxp}.
	\end{align*}
	The graph distance between $x_L$ and $x_R$ is at least $\eps \Lambda\left(2^k,\beta\right)$, as we will argue now. If there exists an index $\ell \in \{jK,\ldots, jK+3^d-1\}$ for which $\|r(u_\ell)-r(u_{{\ell+1}})\|_\infty \geq 2$, then we know that the box $V_{u_\ell}^{2^k}$ is $\eps$-separated, as $R_j$ was assumed to be $\eps$-influential. At the first visit of the box $V_{u_{\ell}}^{2^k}$, the geodesic $\hat{P}$ enters the box $V_{u_{\ell}}^{2^k}$ through some point $z\in V_{u_{\ell}}^{2^k}$ with $z \sim V_w^{2^k}$, for some $w \in V_\mz^{2^{n-k}} \setminus\{u_{\ell},u_{\ell+1}\}$. We have $w \neq u_{\ell+1}$, as the loop-erased projection $P$ is self-avoiding.
	As $D_{V_{u_{\ell}}^{2^k}}(z,x_R) \geq \eps \Lambda\left(2^k,\beta\right)$ for all $z \in V_{u_{\ell}}^{2^k}$ with $z \sim V_{w}^{2^k}$ for $w\notin\{u_{\ell},u_{\ell+1}\}$, we automatically get that
	\begin{equation*}
		D_{V_\mz^{2^n}}(x_L,x_R) \geq \inf \left\{D_{V_{u_{\ell}}^{2^k}}(z,x_R) : z \in V_{u_{\ell}}^{2^k}, z \sim V_{w}^{2^k} \text{ for some } w\notin\{u_{\ell},u_{\ell+1}\} \right\}  \geq \eps \Lambda\left(2^k,\beta\right)
	\end{equation*}
	as either $x_L \in V_{u_{\ell}}^{2^k}$ with $x_L \sim V_{w}^{2^k}$ for $w\notin\{u_{\ell},u_{\ell+1}\}$, or $x_L \notin V_{u_{\ell}}^{2^k}$. If there does not exist an index $\ell \in \{jK,\ldots, jK+3^d-1\}$ for which $\|r(u_{\ell})-r(u_{\ell+1})\|_\infty \geq 2$, then we know that $\|u_{\ell}-u_{jK}\|_\infty =2$ and the geodesic between $x_L$ and $x_R$ traverses the set $\bigcup_{r(w) \in G^\prime : \|w-u_{jK}\|_\infty = 1} V_w^{2^k}$, and thus the distance between $x_L$ and $x_R$ is at least 
	\begin{align*}
		D_{V_\mz^{2^n}}\left(x_L, x_R\right) \geq D^\star_{V_\mz^{2^n}} \left(V_{u_{jK}}^{2^k} , \bigcup_{r(w) \in G^\prime : \|w-u_{jK}\|_\infty \geq 2} V_w^{2^k}\right)
		\geq \eps \Lambda \left(2^k, \beta\right)
	\end{align*}
	where the last inequality holds, as the subpath $R_j$ was assumed to be $\eps$-separated.

	When we insert an edge between the boxes $B_L$ and $B_R$, the distance between $x_L$ and $x_R$ is at most $\dia(B_L)+\dia(B_R)+1\leq 2\frac{\eps^{1.5}}{3} 2^{k\dxp} + 1$. Remember that $\Lambda(2^k,\beta) \geq 2^{k\dxp}$ (see \eqref{eq:subsuperimpli}). Thus we have for all edges $e \in B_L \times B_R \coloneqq \left\{\{u,v\}: u\in B_L, v \in B_R\right\}$ 
	\begin{align*}
		D_{V_\mz^{2^n}}\left(x_L,x_R;\omega\right) - D_{V_\mz^{2^n}}\left(x_L,x_R;\omega^{e+} \right) \geq \eps \Lambda\left(2^k, \beta \right) -\left( 2\frac{\eps^{1.5}}{3} 2^{k\dxp} + 1\right) \geq \frac{\eps \Lambda(2^k,\beta)}{4}
	\end{align*}
	where the last inequality holds for $k$ large enough. The boxes $B_L$ and $B_R$ are of side length at least $2\eps^{2/\dxp}2^k$ and are disjoint, as $D_{V_\mz^{2^n}}(x_L,x_R) > \dia \left(B_L\right) + \dia\left(B_R\right)$. Thus, for $k$ large enough, there are at least $0.5 \left(\eps^{2/\dxp}2^k\right)^d \cdot \left(\eps^{2/\dxp}2^k\right)^d$ pairs of vertices $(a,b)\in B_L \times B_R$ for which $|\{a,b\}| \geq \eps^{2/\dxp}2^k$. On the other hand, we also have $|\{a,b\}| \leq (3^d+1)2^k \leq 6^d 2^k$ for all pairs $(a,b)\in B_L \times B_R$, as $\|r(u_{iK})-r(u_{\ell})\|_\infty \leq 3^d$. So in particular we have
	\begin{align}\label{eq:longlist}
		&\notag \sum_{\substack{e \in B_L \times B_R : \\ \eps^{2/\dxp}2^k \leq |e| \leq 6^d 2^k }} p^\prime(\beta, e) \left( D_{V_\mz^{2^n}} \left(x_L,x_R;\omega\right) - D_{V_\mz^{2^n}} \left(x_L,x_R;\omega^{e+} \right) \right)\\
		&\notag
		\geq 
		\sum_{\substack{e \in B_L \times B_R : \\ \eps^{2/\dxp}2^k \leq |e| \leq 6^d 2^k }} p^\prime(\beta, e) \frac{\eps \Lambda(2^k,\beta)}{4} 
		\overset{\eqref{eq:derivativebound1}}{\geq}
		\sum_{\substack{e \in B_L \times B_R : \\ \eps^{2/\dxp}2^k \leq |e| \leq 6^d 2^k }} \frac{e^{-\beta}}{\left(d |e|+\sqrt{d}\right)^{2d}} \frac{\eps \Lambda(2^k,\beta)}{4}
		\\
		&\notag
		\geq
		\sum_{\substack{e \in B_L \times B_R : \\ \eps^{2/\dxp}2^k \leq |e| \leq 6^d 2^k }}
		\frac{e^{-\beta}}{\left(d 6^d 2^k +\sqrt{d}\right)^{2d}} \frac{\eps \Lambda(2^k,\beta)}{4}\\
		&
		\geq
		0.5
		\left(\eps^{2/\dxp} 2^k\right)^d \cdot \left(\eps^{2/\dxp} 2^k\right)^d \cdot
		\frac{e^{-\beta}}{\left(d 6^d 2^k +\sqrt{d}\right)^{2d}} \frac{\eps \Lambda(2^k,\beta)}{4}
		\geq c \Lambda(2^k,\beta)
	\end{align}
	with a constant $c>0$, that depends on $\beta,\eps,\dxp$, and $d$, and for $k$ large enough. For the two points $\mz$ and $(2^n-1)\mo$, and two points $x_L$ and $x_R$ which are in a geodesic between $\mz$ and $(2^n-1)\mo$ in this order, and any edge $e$ we have
	\begin{align*}
		&D_{V_\mz^{2^n}}\left(\mz,(2^n-1)\mo;\omega\right) = D_{V_\mz^{2^n}}\left(\mz,x_L;\omega\right) +
		D_{V_\mz^{2^n}}\left(x_L,x_R;\omega\right) +
		D_{V_\mz^{2^n}}\left(x_R,(2^n-1)\mo;\omega\right) \text{, and }\\
		&
		D_{V_\mz^{2^n}}\left(\mz,(2^n-1)\mo;\omega^{e+}\right) \leq  D_{V_\mz^{2^n}}\left(\mz,x_L;\omega\right) +
		D_{V_\mz^{2^n}}\left(x_L,x_R;\omega^{e+}\right) +
		D_{V_\mz^{2^n}}\left(x_R,(2^n-1)\mo;\omega\right) \text .
	\end{align*}
	Subtracting the second line from the first line already yields that
	\begin{align*}
		\Delta D_{V_\mz^{2^n}}(e) & \coloneqq
		D_{V_\mz^{2^n}} \left(\mz,(2^n-1)\mo;\omega\right) - D_{V_\mz^{2^n}} \left(\mz,(2^n-1)\mo;\omega^{e+} \right) \\
		&
		\geq
		D_{V_\mz^{2^n}} \left(x_L,x_R;\omega\right) - D_{V_\mz^{2^n}} \left(x_L,x_R;\omega^{e+} \right) \text .
	\end{align*}
	This inequality combined with \eqref{eq:longlist} already shows that
	\begin{align*}
		&\sum_{\substack{e \in B_L \times B_R : \\ \eps^{2/\dxp}2^k \leq |e| \leq 6^d 2^k }} p^\prime(\beta, e) \Delta D_{V_\mz^{2^n}}(e)\\
		& \geq
		\sum_{\substack{e \in B_L \times B_R : \\ \eps^{2/\dxp}2^k \leq |e| \leq 6^d 2^k }} p^\prime(\beta, e) \left( D_{V_\mz^{2^n}} \left(x_L,x_R;\omega\right) - D_{V_\mz^{2^n}} \left(x_L,x_R;\omega^{e+} \right) \right)
		\geq c \Lambda\left(2^k,\beta\right) \text .
	\end{align*}
	The above inequality holds for fixed $B_L \subset V_{u_{jK}}^{2^k} , B_R \subset V_{u_{\ell}}^{2^k}$. However, such boxes exist for all indices $j \in \mathcal{INF}(P)$ with $j\geq 1$. Thus, assuming that $D_{G^\prime}\left(\mz, (2^{n-k}-1)\mo \right)= t$ and $\mathcal{G}_{\geq t}$ holds for large enough $t \geq T$, we have for large enough $k$
	\begin{align}\label{eq:ongt}
		& \notag \sum_{e : \eps^{2/\dxp}2^k \leq |e| \leq 6^d 2^k } p^\prime(\beta, e) \Delta D_{V_\mz^{2^n}}(e) \geq \left(\left| \mathcal{INF}(P) \right|-1\right) c \Lambda \left(2^k, \beta\right)\\
		&
		\overset{\eqref{eq:Gt}}{\geq} \frac{t}{4 \cdot 10^{6d}\mu_\beta}  c \Lambda \left(2^k, \beta\right)
		\eqqcolon c^\prime t \Lambda \left(2^k, \beta\right) \text .
	\end{align}
	So far, we always worked on the event $\mathcal{G}_{\geq t}$. 
	Now, we want to get a similar bounds in probability, not conditioning on $\mathcal{G}_{\geq t}$. We write $E_k$ for the set of edges $e$ with $2\vee\eps^{2/\dxp}2^k \leq |e| \leq 6^d 2^k$. For $T\in \N$ large enough we have that
	\begin{align}\label{eq:gleich plug in}
		\notag & \sum_{e\in E_k} p^\prime(\beta, e)
		\E_\beta \left[\Delta D_{V_\mz^{2^n}}(e) \right]\\
		\notag & \geq
		\sum_{t=T}^{\infty} \sum_{e \in E_k } p^\prime(\beta, e)
		\E_\beta \Big[ \Delta D_{V_\mz^{2^n}}(e)  \cdot \mathbbm{1}_{\left\{D_{G^\prime}\left(r(\mz), r((2^{n-k}-1)\mo)\right)= t\right\}} \mathbbm{1}_{\left\{\mathcal{G}_{\geq t}\right\}} \Big]\\
		\notag & \overset{\eqref{eq:ongt}}{\geq} \sum_{t=T}^{\infty} c^\prime t \Lambda\left(2^k,\beta \right) 
		\E_\beta \left[ \mathbbm{1}_{\left\{D_{G^\prime}\left(r(\mz), r((2^{n-k}-1)\mo)\right)= t\right\}} \mathbbm{1}_{\left\{\mathcal{G}_{\geq t}\right\}} \right] \\
		&
		=
		c^\prime  \Lambda\left(2^k,\beta \right) 
		\sum_{t=T}^{\infty}  t
		\E_\beta \left[ \mathbbm{1}_{\left\{D_{G^\prime}\left(r(\mz), r((2^{n-k}-1)\mo)\right)= t\right\}} \mathbbm{1}_{\left\{\mathcal{G}_{\geq t}\right\}} \right] .
	\end{align}
	Next, we use that $\p_\beta \left(\mathcal{G}_{\geq t}^c \right) \leq 6 \cdot 2^{-t}$ \eqref{eq:G prob bound}, and the self-similarity of the model to further calculate that
	\begin{align*}
		&  \sum_{t=T}^{\infty} t 
		\E_\beta \left[ \mathbbm{1}_{\left\{D_{G^\prime}\left(r(\mz), r((2^{n-k}-1)\mo)\right)= t\right\}} \mathbbm{1}_{\left\{\mathcal{G}_{\geq t}\right\}} \right]\\
		& \geq  \sum_{t=T}^{\infty}  t  
		\left( \p_\beta \left( D_{G^\prime}\left(r(\mz), r((2^{n-k}-1)\mo)\right)= t \right) - \p_\beta \left( \mathcal{G}_{\geq t}^c \right) \right)\\
		&  \overset{\eqref{eq:G prob bound}}{\geq}  \sum_{t=T}^{\infty}  t  
		\left( \p_\beta \left( D_{V_\mz^{2^{n-k}}} \left(\mz, (2^{n-k}-1)\mo\right)= t \right) \right) - \sum_{t=T}^\infty 6t 2^{-t}\\
		&
		\geq
		\sum_{t=0}^{\infty}  t  
		\left( \p_\beta \left( D_{V_\mz^{2^{n-k}}} \left(\mz, (2^{n-k}-1)\mo\right)= t \right) \right)  - 
		\sum_{t=0}^{T-1}  t  
		\left( \p_\beta \left( D_{V_\mz^{2^{n-k}}} \left(\mz, (2^{n-k}-1)\mo\right)= t \right) \right) - 6
		\\
		& \geq  \left( \E_\beta \left[D_{V_\mz^{2^{n-k}}} \left( \mz, (2^{n-k}-1)\mo \right)\right] - T - 6 \right)
		\geq \tilde{c} \Lambda\left(2^{n-k},\beta \right) 
	\end{align*} 
	for some $\tilde{c}>0$ and all $k,n-k$ large enough. Inserting this into \eqref{eq:gleich plug in} finally shows that
	\begin{align*}
		\sum_{e \in E_k } p^\prime(\beta, e)
		\E_\beta \left[\Delta D_{V_\mz^{2^n}}(e) \right] \geq c^\prime \Lambda\left(2^{k},\beta \right) \tilde{c} \Lambda\left(2^{n-k},\beta \right)
		\geq \bar{c} \Lambda\left(2^{n},\beta \right) 
	\end{align*}
	for some $\bar{c}>0$ small enough and $k,n-k$ large enough.\\
	
	\noindent
	For each edge $e$, there is only a bounded number of levels $k \in \N$ for which $2\vee \eps^{2/\dxp}2^k \leq |e| \leq 6^d 2^k$. Writing $E=\bigcup_{k=0}^{\infty} E_k$ for the set of edges with length at least $2$, we get that
	\begin{align*}
		& \sum_{e \in E} p^\prime(\beta, e)
		\E_\beta \left[ \Delta D_{V_\mz^{2^n}}(e) \right]\geq c_1 \sum_{k=1}^{n} \sum_{e \in E_k } p^\prime(\beta, e)
		\E_\beta \left[ \Delta D_{V_\mz^{2^n}}(e) \right]\\
		& \geq c_1  \sum_{k=1}^{n} \bar{c} \Lambda\left(2^n,\beta \right)
		\geq c_2 \log\left(2^n\right) \Lambda\left(2^n,\beta \right)
	\end{align*}
	for constants $c_1,c_2>0$ and $n$ large enough. This already implies that for $n$ large enough
	\begin{align}\label{eq:finalineqmono}
		&\notag \sum_{e \in E} p^\prime(\beta, e)
		\E_\beta \left[ D_{V_\mz^{2^n}}\left(\mz,(2^n-1)\mo;\omega^{e-}\right) - D_{V_\mz^{2^n}}\left(\mz,(2^n-1)\mo;\omega^{e+} \right) \right]\\
		&
		\geq 
		\sum_{e \in E} p^\prime(\beta, e)
		\E_\beta \left[ D_{V_\mz^{2^n}} \left(\mz,(2^n-1)\mo;\omega \right) - D_{V_\mz^{2^n}} \left(\mz,(2^n-1)\mo;\omega^{e+} \right) \right]
		\geq c_2 \log\left(2^n\right) \Lambda\left(2^n,\beta \right) \text .
	\end{align}
	
	\noindent
	\textbf{Part (E).}
	Finally, let us see how this bound implies strict monotonicity of the distance exponent $\dxp(\beta)$. We know that
	\begin{align*}
		\dxp(\beta) = \lim_{n\to \infty} \frac{\log \left( \E_\beta\left[ D_{V_\mz^{2^n}}\left( \mz, (2^n-1)\mo \right) \right]\right)}{\log(2^n)}
	\end{align*}
	and that for fixed $n$ the function 
	\begin{align*}
		\beta \mapsto \frac{\log \left( \E_\beta\left[ D_{V_\mz^{2^n}}\left( \mz, (2^n-1)\mo \right) \right]\right)}{\log(2^n)}
	\end{align*}
	is, by Russo's formula for expectations \eqref{eq:russoexp}, differentiable. So we can calculate the derivative and bound it from above by
	\begin{align*}
		&\frac{\md }{\md \beta}  \frac{\log \left( \E_\beta\left[D_{V_\mz^{2^n}}\left( \mz, (2^n-1)\mo \right) \right]\right)}{\log(2^n)} \\
		& = \frac{1}{\E_\beta\left[D_{V_\mz^{2^n}}\left( \mz, (2^n-1)\mo \right) \right] \log(2^n)} \frac{\md }{\md \beta} \E_\beta\left[D_{V_\mz^{2^n}}\left( \mz, (2^n-1)\mo \right) \right] \\
		& = \frac{\sum_{ e \in E} p^\prime(\beta, e) \E_\beta \left[ D_{V_\mz^{2^n}}\left( \mz, (2^n-1)\mo ; \omega^{e+}\right) - D_{V_\mz^{2^n}}\left( \mz, (2^n-1)\mo ; \omega^{e-}\right) \right]}{\E_\beta\left[D_{V_\mz^{2^n}}\left( \mz, (2^n-1)\mo \right) \right] \log(2^n)}  \\
		& \overset{\eqref{eq:finalineqmono}}{\leq} \frac{-c_2 \Lambda\left(2^n,\beta\right) \log(2^n)}{\E_\beta\left[D_{V_\mz^{2^n}}\left( \mz, (2^n-1)\mo \right) \right] \log(2^n)} \leq -c_2 \eqqcolon c(\beta)
	\end{align*}
	for some $c(\beta)<0$ and this holds for all $n \in \N_{>0}$ large enough. Now fix $0<\beta_1<\beta_2<\infty$.  
	For each fixed $\beta \in \left[ \beta_1, \beta_2 \right]$ there exists $n(\beta) < \infty$ such that for all $n \geq n(\beta)$
	\begin{align}\label{eq:finalder}
		\frac{\md }{\md \beta} \frac{\log \left( \E_\beta\left[D_{V_\mz^{2^n}}\left( \mz, (2^n-1)\mo \right) \right]\right)}{\log\left(2^n\right)} \leq \frac{c(\beta)}{2}
	\end{align}
	holds. So in particular we can take $N$ large enough and $c<0$ with $|c|$ small enough so that the set of $\beta \in \left[\beta_1,\beta_2\right]$ which satisfy $\frac{c(\beta)}{2} < c$, and which satisfy \eqref{eq:finalder} for all $n\geq N$, has Lebesgue measure at least $\frac{\beta_2-\beta_1}{2}$.
	Thus we get
	\begin{align*}
		\dxp(\beta_2)-\dxp(\beta_1) & = \lim_{n\to \infty} \left( \frac{\log \left( \E_{\beta_2}\left[D_{V_\mz^{2^n}}\left( \mz, (2^n-1)\mo \right) \right]\right)}{\log(2^n)} - \frac{\log \left( \E_{\beta_1}\left[D_{V_\mz^{2^n}}\left( \mz, (2^n-1)\mo \right) \right]\right)}{\log(2^n)} \right) \\
		& = \lim_{n \to \infty} \int_{\beta_1}^{\beta_2} \frac{\md }{\md \beta}  \frac{\log \left( \E_\beta\left[D_{V_\mz^{2^n}}\left( \mz, (2^n-1)\mo \right) \right]\right)}{\log(2^n)} \md \beta \\
		& \leq \frac{\beta_2-\beta_1}{2} c < 0 \text, 
	\end{align*}
	which finishes the proof of the strict monotonicity
\end{proof}

\section{Continuity of the distance exponent}\label{sec:continuity}

In this section, we show that the distance exponent is continuous in $\beta$.  With the tools that we have developed so far, we can already prove continuity from the left:

\begin{lemma}\label{lem:leftcontinuity}
	The distance exponent $\dxp(\beta)$ is continuous from the left.
\end{lemma}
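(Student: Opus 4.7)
The plan is to exploit the infimum representation of the distance exponent from Lemma \ref{lem:submultiplicativity}, namely
\begin{equation*}
\dxp(\beta) = \inf_{n \geq 2} \frac{\log \Lambda(n, \beta)}{\log n},
\end{equation*}
together with the monotonicity of $\dxp$ and the (continuous) dependence of $\Lambda(n,\beta)$ on $\beta$ for each fixed $n$.

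First I would observe that for each fixed $n$, the function $\beta \mapsto \Lambda(n,\beta)$ is continuous (in fact continuously differentiable by Lemma \ref{lem:russoexp}), since $\Lambda(n,\beta)$ is a finite maximum of finite polynomial expressions in the inclusion probabilities $p(\beta, e)$, which are themselves smooth in $\beta$. Therefore $\beta \mapsto \frac{\log \Lambda(n,\beta)}{\log n}$ is continuous for every $n \geq 2$, and so $\dxp$, being the pointwise infimum of a family of continuous functions, is upper semicontinuous in $\beta$.

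Now fix $\beta > 0$ and a sequence $\beta_k \nearrow \beta$. Monotonicity of $\dxp(\cdot)$ gives $\dxp(\beta_k) \geq \dxp(\beta)$ for every $k$, hence
\begin{equation*}
\liminf_{k \to \infty} \dxp(\beta_k) \geq \dxp(\beta).
\end{equation*}
For the matching upper bound, fix $\eps > 0$ and use the infimum representation to choose $N = N(\eps) \geq 2$ with $\frac{\log \Lambda(N,\beta)}{\log N} \leq \dxp(\beta) + \tfrac{\eps}{2}$. By continuity of $\beta' \mapsto \frac{\log \Lambda(N,\beta')}{\log N}$ at $\beta$, there exists $\delta > 0$ such that
\begin{equation*}
\frac{\log \Lambda(N,\beta')}{\log N} \leq \dxp(\beta) + \eps \qquad \text{whenever } |\beta' - \beta| < \delta.
\end{equation*}
Since $\beta_k \to \beta$, for all sufficiently large $k$ we have $|\beta_k - \beta| < \delta$, and hence
\begin{equation*}
\dxp(\beta_k) = \inf_{n \geq 2} \frac{\log \Lambda(n, \beta_k)}{\log n} \leq \frac{\log \Lambda(N, \beta_k)}{\log N} \leq \dxp(\beta) + \eps.
\end{equation*}
Letting $k \to \infty$ and then $\eps \to 0$ yields $\limsup_k \dxp(\beta_k) \leq \dxp(\beta)$, which combined with the liminf bound gives $\lim_k \dxp(\beta_k) = \dxp(\beta)$. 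The case $\beta = 0$ requires no separate treatment because the sequence $\beta_k \nearrow 0$ is only the constant one.

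There is no real obstacle here: the only ingredients are upper semicontinuity (free from the infimum structure) and monotonicity (already known). The reason continuity from the right is genuinely harder, and deferred to the rest of Section \ref{sec:continuity}, is that on that side monotonicity points the wrong way, so one cannot convert upper semicontinuity into full continuity without an additional quantitative comparison between $\dxp(\beta)$ and $\dxp(\beta + \eps)$.
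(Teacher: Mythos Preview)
Your proof is correct and follows essentially the same approach as the paper: both use the infimum representation $\dxp(\beta)=\inf_{n\geq 2}\frac{\log\Lambda(n,\beta)}{\log n}$ together with continuity of $\Lambda(n,\cdot)$ for fixed $n$ and monotonicity, the paper phrasing this as a direct interchange of infima while you phrase it as upper semicontinuity plus the monotone lower bound. One minor remark: the parenthetical claim that $\Lambda(n,\beta)$ is continuously differentiable is not obviously true (a maximum of smooth functions can have kinks), but you only use continuity, so this does not affect the argument.
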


\begin{proof}
	Remember that 
	\begin{align*}
		\dxp(\beta) = \inf_{n\geq 2} \frac{\log\left( \Lambda(n, \beta) \right) }{\log(n)} \ 
	\end{align*}
	which is stated in Lemma \ref{lem:submultiplicativity}. For fixed $n \in \N$, the function $\beta \mapsto \Lambda(n, \beta )$ is continuous and decreasing in $\beta$. The continuity holds, as the inclusion probabilities $p(\beta,e)$ are continuous in $\beta$ for all edges $e$, and the quantity $\Lambda(n,\beta)$ only depends on the finitely many edges with both endpoints in $V_\mz^n$. As the functions $p(\beta,e)$ are increasing in $\beta$ for all edges $e$, one can see with the Harris coupling that for every $n\in \N$ the function $\beta \mapsto \Lambda\left(n,\beta\right)$ is decreasing.
	So we get that for all $\beta >0$
	\begin{align*}
		\lim_{\eps \searrow 0} \dxp(\beta - \eps) & 
		= \inf_{\eps > 0} \dxp(\beta - \eps) 
		= \inf_{\eps > 0} \inf_{n\geq 2} \frac{\log\left( \Lambda(n, \beta-\eps) \right) }{\log(n)}
		= \inf_{n\geq 2} \inf_{\eps > 0} \frac{\log\left( \Lambda(n, \beta-\eps) \right) }{\log(n)}\\
		& = \inf_{n\geq 2} \frac{\log\left( \Lambda(n, \beta ) \right) }{\log(n)} = \dxp(\beta) \text ,
	\end{align*}
	and this shows continuity from the left.
\end{proof}
The proof of continuity from the right is more difficult. We consider independent bond percolation on the complete graph with vertex set $V=V_\mz^{2^n}$ and edge set $E=\left\{\{x,y\}: x,y \in V_\mz^{2^n}, x\neq y\right\}$. For $k\in \{1,\ldots,n\}$ and $\beta_1,\beta_2 \geq 0$, we denote by $\p_{\beta_1 < k}^{\beta_2 \geq k}$ the product probability measure on the space $\{0,1\}^E$ where edges $e=\{u,v\}$ are  open with the following probabilities:
\begin{align*}
	\p_{\beta_1 < k}^{\beta_2 \geq k}\left(\omega(\{u,v\})=1\right) = 
	\begin{cases}
		1-e^{-\beta_1 \int_{u+\cC} \int_{v+\cC} \frac{1}{\|x-y\|^{2d}} \md x \md y}  & \text{if } 1 < |\{u,v\}| \leq 2^k-1 \\
		1-e^{-\beta_2 \int_{u+\cC} \int_{v+\cC} \frac{1}{\|x-y\|^{2d}} \md x \md y}  & \text{if } |\{u,v\}| \geq 2^k \\
		1  & \text{if } |\{u,v\}|=1
	\end{cases} \text ,
\end{align*}
so in particular the measure $\p_{\beta_1 < 1}^{\beta_2 \geq 1}$ is identical to the measure $\p_{\beta_2}$, and the measure $\p_{\beta_1 < n}^{\beta_2 \geq n}$ on the graph with vertex set $V_\mz^{2^n}$ is identical to the measure $\p_{\beta_1}$. For $k \in \{2,\ldots,n-1\}$, we think of the measure $\p_{\beta_1 < k}^{\beta_2 \geq k}$ as an interpolation between the probability measures $\p_{\beta_1}$ and $\p_{\beta_2}$ on the graph  with vertex set $V_\mz^{2^n}$. We denote by $\E_{\beta_1 < k}^{\beta_2 \geq k}$ the expectation under $\p_{\beta_1 < k}^{\beta_2 \geq k}$. Our main strategy of the proof of Theorem \ref{theo:continuity} is as follows: We know that
\begin{align*}
	\dxp(\beta) = \lim_{n\to \infty}
	\frac{\log\left(\E_\beta\left[D_{V_\mz^{2^n}} \left(\mz,(2^n-1)\mo\right)\right]\right)}{\log \left(2^n -1 \right)} = \lim_{n\to \infty}
	\frac{\log\left(\E_\beta\left[D_{V_\mz^{2^n}} \left(\mz,(2^n-1)\mo\right)\right]\right)}{\log(2) n}
\end{align*}
and thus, using a telescopic sum, we also have
\begin{align}\label{eq:cont:sum}
	& \dxp(\beta) \notag - \dxp(\beta + \eps ) \\
	& \notag =  \lim_{n\to \infty} \frac{1}{\log(2) n} \left(\log\left(\E_\beta\left[D_{V_\mz^{2^n}} \left(\mz,(2^n-1)\mo\right)\right]\right) - \log\left(\E_{\beta+\eps}\left[D_{V_\mz^{2^n}} \left(\mz,(2^n-1)\mo\right)\right]\right)\right) \\
	& \notag = \lim_{n\to \infty} \frac{1}{\log(2) n} \left(\log\left(\E_{\beta < n}^{\beta+\eps \geq n} \left[D_{V_\mz^{2^n}} \left(\mz,(2^n-1)\mo\right)\right]\right) - \log\left(\E_{\beta < 1}^{\beta+\eps \geq 1} \left[ D_{V_\mz^{2^n}} \left(\mz,(2^n-1)\mo\right)\right]\right)\right) \\
	& \notag = \frac{1}{\log(2)} \lim_{n\to \infty} \frac{1}{ n} \sum_{k=2}^n \left(\log\left(\E_{\beta < k}^{\beta+\eps \geq k} \left[D_{V_\mz^{2^n}} \left(\mz,(2^n-1)\mo\right)\right]\right) - \log\left(\E_{\beta < k-1}^{\beta+\eps \geq k-1} \left[D_{V_\mz^{2^n}} \left(\mz,(2^n-1)\mo\right)\right]\right)\right) \\
	& = \frac{1}{\log(2)} \lim_{n\to \infty} \frac{1}{ n} \sum_{k=2}^n \log\left(\frac{\E_{\beta < k}^{\beta+\eps \geq k} \left[D_{V_\mz^{2^n}} \left(\mz,(2^n-1)\mo\right)\right]}{\E_{\beta < k-1}^{\beta+\eps \geq k-1} \left[D_{V_\mz^{2^n}} \left(\mz,(2^n-1)\mo\right)\right]}\right)  \text .
\end{align}
So in order to show the (right-)continuity of the distance exponent it suffices to show that for all $\beta \geq 0$
\begin{align}\label{eq:cont:toshow}
	\lim_{\eps \searrow 0} \lim_{n\to \infty} \frac{1}{ n} \sum_{k=2}^n \log\left(\frac{\E_{\beta < k}^{\beta+\eps \geq k} \left[D_{V_\mz^{2^n}} \left(\mz,(2^n-1)\mo\right)\right]}{\E_{\beta < k-1}^{\beta+\eps \geq k-1} \left[D_{V_\mz^{2^n}} \left(\mz,(2^n-1)\mo\right)\right]}\right) = 0 . 
\end{align}
In order to show this, we will show that the terms
\begin{align*}
	\log\left(\frac{\E_{\beta < k}^{\beta+\eps \geq k} \left[D_{V_\mz^{2^n}} \left(\mz,(2^n-1)\mo\right)\right]}{\E_{\beta < k-1}^{\beta+\eps \geq k-1} \left[D_{V_\mz^{2^n}} \left(\mz,(2^n-1)\mo\right)\right]}\right)  , \ k \in \{2,\ldots, n\},
\end{align*}
are bounded uniformly for all $\eps \in \left[0,1\right], k,n\in \N$ (cf. Lemmas \ref{lem:uniformovereps} and \ref{lem:auxil}) and we will show that the terms converge to $0$, as $\eps \searrow 0 $ and $ k , n-k \to  \infty$, with explicit bounds on the speed of the decay (cf. Lemma \ref{lem:eps h functions}). We then show below, how exactly this implies \eqref{eq:cont:toshow}. For the proof of the convergence to $0$, we construct a coupling between the measures $\p_{\beta < k}^{\beta+\eps \geq k}$ and $\p_{\beta < k-1}^{\beta+\eps \geq k-1}$ and compare the differences in terms of the chemical distance. Intuitively, it makes sense that this difference is small, as there will be only very few edges on which the two environments differ as $\eps \to 0$. \\

The measure $\p_{\beta_1 < k}^{\beta_2 \geq k}$ also has the following useful properties. If $e=\{a,b\}$ is an edge with $a,b\in V_u^{2^k}$, then $|e|=\|a-b\|_\infty \leq 2^{k}-1$. So in particular, the probability that the edge $e$ is open is the same for the measures $\p_{\beta_1 < k}^{\beta_2 \geq k}$ and $\p_{\beta_1}$. This implies that for a fixed box $V_u^{2^k}$, the percolation configuration sampled within this box has the exact same distribution, no matter if it is sampled from $\p_{\beta_1 < k}^{\beta_2 \geq k}$ or from $\p_{\beta_1}$. On the other hand, if $e=\{a,b\}$ is an edge with $a\in V_u^{2^k}, b\in V_v^{2^k}$ with $\|u-v\|_\infty \geq 2$, then $|e|=\|a-b\|_\infty \geq 2^{k}$. So in particular, the probability that the edge $e$ is open is the same for the measures $\p_{\beta_1 < k}^{\beta_2 \geq k}$ and $\p_{\beta_2}$. From this, we also directly get that for all $u,v \in \Z^d$ with $\|u-v\|_\infty \geq 2$ one has
\begin{equation}\label{eq:block contraction}
	\p_{\beta_2} \left(V_u^{2^k} \sim V_v^{2^k}\right) = \p_{\beta_1 < k}^{\beta_2 \geq k}  \left(V_u^{2^k} \sim V_v^{2^k}\right) .
\end{equation}
Note that equality \eqref{eq:block contraction} also holds when $\|u-v\|_\infty = 1$, as both sides of the equality equal $+1$.
This also has an implication when contracting boxes of the form $V_u^{2^k}$. Start with the vertex set $V=\Z^d$ $\big($respectively $V=V_\mz^{2^{n}}$, where $n\geq k\big)$ and consider bond percolation with the measure $\p_{\beta_1 < k}^{\beta_2 \geq k}$ on $V$. Then contract all boxes of the form $V_u^{2^k}$ into a single vertex named $r(u)$ for all $u\in \Z^d$ $\big($respectively, for all $u\in V_\mz^{2^{n-k}}\big)$ and call the resulting graph $G^\prime =(V^\prime,E^\prime)$ $($cf. section \ref{sec:contraction} for the definition of the graph $G^\prime)$. Then $G^\prime$ has the exact same distribution as percolation on the set of vertices $\Z^d$ $\big($respectively $V_{\mz}^{2^{n-k}}\big)$ with the measure $\p_{\beta_2}$.\\

Before going to the proof of the right-continuity, we need to prove several technical results. In Lemma \ref{lem:moments for diameter}, we investigate the exponential moments of $\frac{\dia\left(V_\mz^m\right)}{m^{\dxp}}$, uniformly over $m$. In section \ref{subsec:cont:lemmas}, we present various properties of the mixed measure $\p_{\beta < k}^{\beta+\eps \geq k}$ that we need later in the proof. These properties are proven only later in section \ref{sec:auxil}. In section \ref{subsec:cont:proof} we prove $\eqref{eq:cont:toshow}$ and thus continuity of the distance exponent $\dxp$.

\begin{lemma}\label{lem:moments for diameter}
	For all $\beta \geq 0$, there exists a constant $C_1 < \infty$ such that for all $s\geq1$, and all $m \in \N_{>0}$
	\begin{align}\label{eq:expomoments for diameter}
		\E_\beta \left[e^{s \frac{\dia\left(V_\mz^m\right)}{m^{\dxp(\beta)}}}\right] < e^{C_1 s^{C_1}} .
	\end{align}
\end{lemma}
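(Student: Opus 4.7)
The proof plan is to reduce to $m=2^k$ and then establish a stretched-exponential tail bound for $D_k := \dia(V_\mz^{2^k})/2^{k\dxp(\beta)}$, from which the exponential moment estimate will follow by integration. For the reduction, given $m \in \N$, pick $k$ with $2^{k-1}\leq m \leq 2^k$; Lemma~\ref{lem:scaling} provides a coupling under which $\dia(V_\mz^m) \leq 3\,\dia(V_\mz^{2^k})$, and combined with $2^{\dxp}\leq 2$ this yields $\dia(V_\mz^m)/m^{\dxp} \leq 6 D_k$ stochastically, so any uniform bound on the exponential moments of $D_k$ transfers to general $m$ with a harmless constant factor absorbed into $C_1$.

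The main step is to show that there exist constants $c_2>0$, $C_2<\infty$, and $\alpha>1$ such that
\begin{equation*}
\p_\beta(D_k > tA) \leq C_2 \exp\bigl(-c_2 t^{\alpha}\bigr)
\end{equation*}
uniformly in $k\in\N$ and $t\geq 1$, where $A=A(\beta)<\infty$ is chosen using \eqref{eq:baeumler2} so that $\p_\beta(D_k > A) \leq 1/2$ for every $k$. The strategy is a multi-scale renormalization that exploits the continuous-model self-similarity from Subsection~\ref{subsec:cts}: decompose $V_\mz^{2^k}$ into nested sub-blocks at scales $2^{k-1},2^{k-2},\ldots$, and argue that if $D_k$ is abnormally large then at some intermediate scale many sub-blocks must simultaneously have above-typical local diameter. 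Conditionally on the configuration of long edges connecting distinct sub-blocks at a given scale, the local diameters at that scale are independent; combining the base tightness $\p_\beta(D_\ell > A) \leq 1/2$ with Chernoff-type concentration across scales then produces the stretched-exponential bound.

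Given the tail bound, one integrates:
\begin{equation*}
\E_\beta\!\bigl[e^{s D_k}\bigr] \leq e^{sA} + \int_A^\infty s\, e^{su}\, C_2\, e^{-c_2 (u/A)^\alpha}\md u,
\end{equation*}
and a saddle-point analysis around $u^\star \sim s^{1/(\alpha-1)}$ gives $\E_\beta[e^{s D_k}] \leq \exp\bigl(C_3\, s^{\alpha/(\alpha-1)}\bigr)$, yielding the claim with $C_1 := \max\bigl(C_3,\; \alpha/(\alpha-1),\; A + \log 6\bigr)$. The hard step is the stretched-exponential tail with exponent $\alpha>1$: long-range edges couple distant sub-blocks, so the near-independence used in the renormalization must be extracted carefully -- either by conditioning on the long-edge configuration at each scale and using the Poisson structure of Subsection~\ref{subsec:cts}, or by domination via a branching structure in the spirit of Lemma~\ref{lem:connnectedsetsinLRP}. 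This is the sole obstacle; once such a cascade of independent failures is exhibited, the rest of the argument is essentially a Chernoff computation followed by the integration above.
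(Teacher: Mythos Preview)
Your overall architecture is sound and in fact parallels the paper's, but you are attempting to re-derive from scratch an input that the paper simply imports. The paper's proof is two lines: it cites \cite[Theorem~6.1]{baeumler2022distances}, which gives the stretched-exponential moment bound
\[
\E_\beta\bigl[e^{Y_m^\eta}\bigr]\leq C\qquad\text{for some }\eta>1\text{ and all }m,
\]
where $Y_m=\dia(V_\mz^m)/m^{\dxp(\beta)}$, and then observes the elementary Young-type inequality $sy\leq y^\eta+s^{\eta/(\eta-1)}$ to conclude $\E_\beta[e^{sY_m}]\leq Ce^{s^{\eta/(\eta-1)}}$. Note that the cited bound already holds for all $m$, so your dyadic reduction via Lemma~\ref{lem:scaling} is unnecessary in this context.

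The stretched-exponential tail you aim for, $\p_\beta(D_k>tA)\leq C_2\exp(-c_2t^\alpha)$ with $\alpha>1$, is equivalent (up to constants) to the cited moment bound with $\eta=\alpha$; your saddle-point integration then recovers exactly the exponent $\alpha/(\alpha-1)=\eta/(\eta-1)$ that the paper obtains via Young's inequality. So the final deduction is the same in substance, though the paper's one-line inequality is cleaner than your integral computation.

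The genuine gap in your proposal is that you do not prove the stretched-exponential tail: you describe a multi-scale renormalization heuristic and then explicitly flag the near-independence of sub-block diameters as ``the sole obstacle''. That obstacle is real and nontrivial---it is precisely the content of \cite[Theorem~6.1]{baeumler2022distances}, whose proof does require a careful iterative scheme controlling the interaction between scales. Within the present paper this is not a gap because the result is imported; but as a self-contained proof your proposal is incomplete at exactly the point you identify.
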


\begin{proof}
	Define $Y_m \coloneqq \frac{\dia\left(V_\mz^m\right)}{m^{\dxp(\beta)}}$.
	In \cite[Theorem 6.1]{baeumler2022distances}, it is proven that for each $\beta \geq 0$ there exists an $\eta>1$ and a $C< \infty$ such that
	\begin{equation}\label{eq:moments for diameter}
		\sup_{m}\E_\beta \left[e^{ Y_m^\eta}\right] \leq C .
	\end{equation}
	In fact, it is proven that \eqref{eq:moments for diameter} holds for all $\eta \in \left(0,\frac{1}{1-\theta(\beta)}\right)$ for sufficiently large $C$.
	So let $\eta>1$ and $C<\infty$ be such that \eqref{eq:moments for diameter} is satisfied.
	For all $y,s>0$ one has
	\begin{align*}
		sy 
		\leq 
		sy \mathbbm{1}_{\{s< y^{\eta-1}\}} + sy \mathbbm{1}_{\{s \geq y^{\eta-1}\}}
		\leq 
		y^{\eta-1}y \mathbbm{1}_{\{s< y^{\eta-1}\}} + sy \mathbbm{1}_{\left\{s^{\frac{1}{\eta-1}} \geq y\right\}} \leq y^{\eta} + s^{\frac{\eta}{\eta-1}}.
	\end{align*}
	Combining this with \eqref{eq:moments for diameter}, we get that
	\begin{equation*}
		\E_\beta \left[e^{ s Y_m}\right] 
		\leq 
		\E_\beta \left[e^{ Y_m^\eta + s^{\frac{\eta}{\eta-1}} }\right]
		=
		\E_\beta \left[e^{ Y_m^\eta  }\right]  e^{s^{\frac{\eta}{\eta-1}}}
		\leq 
		C e^{s^{\frac{\eta}{\eta-1}}} \leq e^{C_1 s^{C_1}}
	\end{equation*}
	for some $C_1 < \infty$ and all $s\geq 1$.
\end{proof}

\subsection{Uniform bounds for the mixed measure}\label{subsec:cont:lemmas}

In this chapter, we give several bounds for the chemical distances of points under the measure $\p_{\beta < k}^{\beta+\eps \geq k}$ that hold uniformly over $\eps \in \left[0,1\right]$ and $k \leq n$. These bounds were partially already proven in the previous chapters or in \cite{baeumler2022distances,ding2013distances} for fixed $\beta$ and $\eps=0$. The results stated in this chapter will then help us to prove Theorem \ref{theo:continuity} in section \ref{subsec:cont:proof}. The proofs of the results stated in this section are deferred to section \ref{sec:auxil}.

The next two lemmas deal with the graph distance of certain points in boxes that have direct edges to other far away blocks. They say that it is unlikely to have two points $x,y \in V_\mz^{2^k}$ that are both endpoints of long edges and are close in the metric $D_{V_\mz^{2^k}}$. They are very similar to Lemma \ref{lem:graphspacing}, but for the measure $\p_{\beta < k}^{\beta+\eps \geq k}$ instead of $\p_\beta$.

\begin{lemma}\label{lem:goodeventB}
	Let $V_u^{2^k}$ be a block with side length $2^k$ that is connected to $V_\mz^{2^k}$ with $\|u\|_\infty\geq 2$. Let $\mathcal{B}_u (\delta)$ be the following event:
	\begin{align*}
		\mathcal{B}_u (\delta) = \bigcap_{ \substack{ x,y \in V_\mz^{2^k} : \\ x,y \sim V_u^{2^k} , x \neq y} } \left\{D_{V_\mz^{2^k}}\left(x,y\right) \geq \delta 2^{k\dxp(\beta)} \right\} \text .
	\end{align*}
	For every $\beta >0$, there exists a function $f_1(\delta)$ with $f_1(\delta) \underset{\delta\to 0}{\longrightarrow}$ 1 such that for all large enough $k \geq k(\delta)$, all $u \in \Z^d$ with $\|u\|_\infty \geq 2$, and all $\eps \in \left[0,1\right]$
	\begin{equation*}
		\p_{\beta < k}^{\beta+\eps \geq k} \left(\mathcal{B}_u (\delta) \ | \ V_\mz^{2^k} \sim V_u^{2^k} \right) \geq f_1(\delta) \text .
	\end{equation*}
\end{lemma}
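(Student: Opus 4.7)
The plan is to decompose the complement event into two independent pieces and estimate each uniformly in $\eps$. Fix a parameter $\delta' = \delta'(\delta)$ to be chosen. Let $\mathcal{E}$ be the event that some distinct $x,y \in V_\mz^{2^k}$ with $x,y \sim V_u^{2^k}$ satisfy $\|x-y\|_\infty \leq \delta' 2^k$, and let $\mathcal{F}$ be the event that some $x,y \in V_\mz^{2^k}$ with $\|x-y\|_\infty \geq \delta' 2^k$ satisfy $D_{V_\mz^{2^k}}(x,y) < \delta\,2^{k\dxp(\beta)}$. Then $\mathcal{B}_u(\delta)^c \subset \mathcal{E} \cup \mathcal{F}$. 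The key structural observation for handling the mixed measure is that, since $\|u\|_\infty \geq 2$, every edge between $V_\mz^{2^k}$ and $V_u^{2^k}$ has length at least $2^k$ and therefore carries parameter $\beta+\eps$ under $\p_{\beta\leq k}^{\beta+\eps>k}$, whereas every edge internal to $V_\mz^{2^k}$ has length at most $2^k-1$ and carries parameter $\beta$. Consequently $\mathcal{F}$ is independent of both $\mathcal{E}$ and the conditioning event $\{V_\mz^{2^k}\sim V_u^{2^k}\}$, and its distribution coincides with its distribution under $\p_\beta$, so it can be analysed using the results of \cite{baeumler2022distances} directly, with uniformity in $\eps$ built in automatically.

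For $\mathcal{E}$, the self-similarity of the model applied to the long edges alone gives the identity $\p_{\beta\leq k}^{\beta+\eps>k}(V_\mz^{2^k}\sim V_u^{2^k})=p(\beta+\eps,\{\mz,u\})$, which together with \eqref{eq:jensensbound} bounds the conditioning probability below by $c(\beta+\eps)/\|u\|^{2d}\wedge 1/2$, uniformly in $k$. For distinct $x,y\in V_\mz^{2^k}$ the edge sets $\{x\}\times V_u^{2^k}$ and $\{y\}\times V_u^{2^k}$ are disjoint, so $\p(x\sim V_u^{2^k},\,y\sim V_u^{2^k})=\p(x\sim V_u^{2^k})\p(y\sim V_u^{2^k})$, and by \eqref{eq:connection probability upper bound} each factor is at most $C(\beta+\eps)/(2^{kd}\|u\|^{2d})$. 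Summing over the at most $C_d(\delta')^d 2^{2kd}$ close pairs and dividing by the lower bound on the conditioning probability yields $\p(\mathcal{E}\mid V_\mz^{2^k}\sim V_u^{2^k})\leq C_\beta(\delta')^d$, uniformly in $\eps\in[0,1]$, in $k$, and in $u$ with $\|u\|_\infty\geq 2$.

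For $\mathcal{F}$, cover $V_\mz^{2^k}$ by $O((\delta')^{-d})$ translates of a cube of side length of order $\delta' 2^k/10$, so that any pair with Euclidean separation at least $\delta' 2^k$ lies in a pair of such cubes at $\ell^\infty$-distance of order $\delta' 2^k$. Then $D_{V_\mz^{2^k}}(x,y)$ is bounded below by the corresponding cube-to-cube distance, which in turn is bounded below by $D(B_r(c),B_{2r}(c)^c)$ for a radius $r\asymp\delta' 2^k$ and a centre $c$ inside one of the cubes. Lemma \ref{lem:all quantiles of boxtobox}, in its all-quantiles form, provides for every $\eta>0$ a constant $c(\eta)>0$ (with $c(\eta)\to 0$ as $\eta\to 0$) for which this box-to-box distance exceeds $c(\eta)\Lambda(r,\beta)\asymp c(\eta)(\delta')^{\dxp(\beta)}2^{k\dxp(\beta)}$ with probability at least $1-\eta$. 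A union bound over the $O((\delta')^{-2d})$ cube pairs, together with translation invariance of $\p_\beta$, then gives $\p(\mathcal{F})\to 0$ as $\delta\to 0$, provided $\delta'=\delta'(\delta)$ is chosen so that both $(\delta')^d\to 0$ and $\delta/(\delta')^{\dxp(\beta)}\to 0$ simultaneously (for instance $\delta'=\delta^{1/(2\dxp(\beta))}$). Combined with the bound on $\mathcal{E}$, this yields the required function $f_1$.

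The main technical obstacle will be converting the conditional (on the no-direct-edge event $\mathcal{L}$) statement of Lemma \ref{lem:all quantiles of boxtobox} into an unconditional lower tail estimate on the cube-to-cube distance, since a direct edge between $B_r(c)$ and $B_{2r}(c)^c$ has uniformly positive probability. This is resolved by noting via \eqref{eq:jensensbound} and \eqref{eq:connection probability upper bound} that $\p(\mathcal{L})$ is bounded strictly below $1$ uniformly in $r$ and by splitting the union bound according to whether a direct edge is present, exploiting that the "all-quantiles" form of the lemma allows $c(\eta)$ to be taken arbitrarily small at the cost of only a small additional error in $\eta$.
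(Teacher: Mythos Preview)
Your decomposition $\mathcal{B}_u(\delta)^c\subset\mathcal{E}\cup\mathcal{F}$ is correct, and your treatment of $\mathcal{E}$ is fine and essentially matches what the paper does via Lemma~\ref{lem:linearspacingcompanion}. The gap is in $\mathcal{F}$. As you have defined it, $\mathcal{F}$ ranges over \emph{all} pairs $x,y\in V_\mz^{2^k}$ with $\|x-y\|_\infty\geq\delta'2^k$, not only those with $x,y\sim V_u^{2^k}$. But then $\p_\beta(\mathcal{F})$ does \emph{not} tend to $0$: any single edge of length at least $\delta'2^k$ inside $V_\mz^{2^k}$ already produces a Euclidean--far pair at graph distance $1<\delta 2^{k\dxp(\beta)}$ for $k\geq k(\delta)$, and the expected number of such internal edges is of order $\beta(\delta')^{-d}$, a fixed positive constant. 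Hence $\p_\beta(\mathcal{F})$ is bounded away from $0$ uniformly in $k$ and $\delta$. Your last paragraph does not repair this: on $\mathcal{L}^c$ one has $D(B_r(c),B_{2r}(c)^c)=1$, so the unconditional lower-tail probability of the annulus-crossing distance is at least $\p_\beta(\mathcal{L}^c)$, a fixed positive number, and the union bound over $O((\delta')^{-2d})$ cube pairs blows up rather than vanishing.

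The point you are missing is that one must \emph{not} decouple the graph-distance condition from the selection condition $x,y\sim V_u^{2^k}$. Because the edges from $V_\mz^{2^k}$ to $V_u^{2^k}$ (all of length $\geq 2^k$) are independent of the internal edges of $V_\mz^{2^k}$, for \emph{fixed} $x,y$ the three events $\{x\sim V_u^{2^k}\}$, $\{y\sim V_u^{2^k}\}$ and $\{D_{V_\mz^{2^k}}(x,y)<\delta 2^{k\dxp(\beta)}\}$ factor. Summing $\p(x\sim V_u^{2^k})\p(y\sim V_u^{2^k})$ over all pairs and dividing by $\p(V_\mz^{2^k}\sim V_u^{2^k})$ gives a quantity bounded uniformly in $k,u,\eps$, and then one multiplies by $\sup_{\|x-y\|_\infty\geq\delta'2^k}\p_\beta\big(D_{V_\mz^{2^k}}(x,y)<\delta 2^{k\dxp(\beta)}\big)$, which \emph{does} tend to $0$ as $\delta\to0$ by \eqref{eq:baeumler1} and Lemma~\ref{lem:scaling}. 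This is exactly the mechanism behind Lemma~\ref{lem:graphspacing} in the companion paper, which the paper simply cites after reducing to $\eps=1$ via the Harris coupling. Your route through Lemma~\ref{lem:all quantiles of boxtobox} and a cube cover is not needed and, as formulated, cannot work.
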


The intuition behind this result is the following. Conditioned on the event $\left\{V_\mz^{2^k} \sim V_u^{2^k}\right\}$, there typically are only a small number of vertices $x \in V_\mz^{2^k}$ with  $x \sim V_u^{2^k}$ (see Lemma \ref{lem:number of edges conditioned}). Typically, these vertices are relatively far apart $($of order $\approx 2^k)$ in terms of their Euclidean distance. As the structure of the percolation configuration inside the set $V_\mz^{2^k}$ is independent of the position of the vertices $x$ with $x \sim V_\mz^{2^k}$, these vertices are typically also far apart in terms of the graph distance inside $V_\mz^{2^k}$. Also note that this result is not true when $\|u\|_\infty=1$, as for $\|u\|_\infty=1$ there are typically a lot of vertices $x\in V_\mz^{2^k}$ with $x\sim V_\mz^{2^k}$. The next lemma is similar to that before, with the difference being that one considers vertices $x,y\in V_\mz^{2^k}$ with $x\sim V_u^{2^k}, y \sim V_v^{2^k}$ for distinct $u,v\in \Z^d$.

\begin{lemma}\label{lem:goodeventA}
	Let $V_u^{2^k}, V_v^{2^k}$ be two blocks of side length $2^k$ that are connected to $V_\mz^{2^k}$, with $u\neq v \neq \mz$ and $\|u\|_\infty \geq 2$. Let $\mathcal{A}_{u,v}(\delta)$ be the following event:
	\begin{align*}
		\mathcal{A}_{u,v} (\delta) 
		= 
		\bigcap_{ \substack{x \in V_\mz^{2^k}: \\ x \sim V_u^{2^k}} } \ \bigcap_{ \substack{ y \in V_\mz^{2^k}: \\ y \sim V_v^{2^k}} } \left\{D_{V_\mz^{2^k}}\left(x,y\right) \geq \delta 2^{k\dxp(\beta)} \right\} \text .
	\end{align*}
	For every $\beta >0$, there exists a function $f_2(\delta)$ with $f_2(\delta) \underset{\delta\to 0}{\longrightarrow}1$ such that for all large enough $k \geq k(\delta)$, all $u,v \in \Z^d\setminus \{\mz\}$ with $u\neq v, \|u\|_\infty \geq 2$, and all $\eps \in \left[0,1\right]$
	\begin{equation*}
		\p_{\beta < k}^{\beta+\eps \geq k} \left(\mathcal{A}_{u,v}(\delta) \ | \ V_u^{2^k} \sim V_\mz^{2^k} \sim V_v^{2^k} \right) \geq f_2(\delta) \text .
	\end{equation*}
\end{lemma}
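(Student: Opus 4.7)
The plan is to adapt the proof of \Cref{lem:graphspacing}---which is the analogous statement under the pure measure $\p_\beta$---to the mixed measure $\p_{\beta\leq k}^{\beta+\eps > k}$, tracking that all bounds remain uniform in $\eps \in [0,1]$. The structural observation that makes this reduction possible is that every edge with both endpoints inside $V_\mz^{2^k}$ has $\ell_\infty$-length at most $2^k-1$, and is therefore sampled with parameter $\beta$ under the mixed measure, exactly as under $\p_\beta$. In particular, the graph distance $D_{V_\mz^{2^k}}(\cdot,\cdot)$ has the same law as under $\p_\beta$ and is independent of the configuration on edges exiting $V_\mz^{2^k}$. Meanwhile the edges from $V_\mz^{2^k}$ to $V_u^{2^k}$, all of $\ell_\infty$-length at least $2^k$ since $\|u\|_\infty \geq 2$, are independent Bernoullis with parameter $\beta+\eps \leq \beta+1$; and the relevant edge parameters to $V_v^{2^k}$ are likewise bounded by $\beta+1$ uniformly in $\eps$, whether $\|v\|_\infty \geq 2$ or $\|v\|_\infty = 1$.

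Setting $X = \{x \in V_\mz^{2^k}: x \sim V_u^{2^k}\}$ and $Y = \{y \in V_\mz^{2^k}: y \sim V_v^{2^k}\}$, this independence yields
\begin{equation*}
	\p_{\beta \leq k}^{\beta+\eps > k}\!\left(\mathcal{A}_{u,v}(\delta) \,\big|\, V_u^{2^k} \sim V_\mz^{2^k} \sim V_v^{2^k}\right) = \E\!\left[ \p_\beta\!\Big(\bigcap_{x\in X, y\in Y}\{D_{V_\mz^{2^k}}(x,y) \geq \delta 2^{k\dxp(\beta)}\}\Big) \,\Big|\, V_u^{2^k} \sim V_\mz^{2^k} \sim V_v^{2^k}\right],
\end{equation*}
where the outer expectation is under the mixed measure and the inner probability depends only on inside edges. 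I would then isolate the good event $\mathcal{G} = \{\min_{x\in X, y\in Y}\|x-y\|_\infty \geq \delta^{1/2} 2^k\}$ on which the inner probability is close to $1$. That $\mathcal{G}$ has high probability under the mixed measure is delivered by \Cref{lem:linearspacingcompanion} applied with $\beta + \eps$ in place of $\beta$: its bound depends only polynomially on $\lceil\beta\rceil$, so replacing $\beta$ by $\beta + 1$ gives a bound uniform in $\eps \in [0,1]$. On $\mathcal{G}$, the remaining inner probability is a pure $\p_\beta$-statement to which the arguments of \Cref{lem:graphspacing}---combining sub-box diameter bounds from \Cref{lem:epsnearness} with the indirect-distance lower bound of \Cref{lem:all quantiles of boxtobox}---apply verbatim and give the inner probability tending to $1$ as $\delta \to 0$.

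The most delicate point is the case $\|v\|_\infty = 1$, where $V_v^{2^k}$ is adjacent to $V_\mz^{2^k}$ and the always-present nearest-neighbor edges force $Y$ to contain the entire face of $V_\mz^{2^k}$ adjacent to $V_v^{2^k}$. The argument has to force $X$---driven purely by long-range connections to the distant block $V_u^{2^k}$---to avoid this entire face with a macroscopic margin, rather than merely being separated from a few random points. \Cref{lem:linearspacingcompanion} still delivers this (its hypothesis requires only $\|u\|_\infty \geq 2$ and so applies with the second index $w$ equal to $v$), but uniformity of its constants in $\eps \in [0,1]$ has to be tracked with care; this is the step I expect to be the main technical obstacle.
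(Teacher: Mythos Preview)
Your proposal is correct and follows essentially the same route as the paper's (very brief) proof: both observe that the internal edges of $V_\mz^{2^k}$ are governed by the pure measure $\p_\beta$ while the external edges carry parameter at most $\beta+1$, and then defer to the proofs of \Cref{lem:linearspacingcompanion} and \Cref{lem:graphspacing} from the companion paper. The only cosmetic difference is that the paper packages the uniformity in $\eps$ as a one-line Harris-coupling reduction to $\eps=1$, whereas you track it explicitly through the polynomial dependence on $\lceil\beta\rceil$; your worry about the case $\|v\|_\infty=1$ is already absorbed by the hypothesis of \Cref{lem:linearspacingcompanion}, which requires only $\|u\|_\infty\geq 2$.
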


For the next lemma, we define the graph $G^\prime$ as the graph, in which we contract boxes of the form $V_u^{2^k}$ for $u\in \Z^d$. The vertex that results from contracting the box $V_u^{2^k}$ is called $r(u)$. This lemma is the analogue of Lemma \ref{lem:goodeventW} for the measure $\p_{\beta < k}^{\beta+\eps \geq k}$ instead of $\p_\beta$.

\begin{lemma}\label{lem:goodeventD}
	Let $\mathcal{B}(\delta)$ be the event
	\begin{align*}
		\mathcal{B}(\delta) \coloneqq \left\{ D^\star \left( V_\mz^{2^k} , \bigcup_{u \in \Z^d : \| u \|_\infty \geq 2} V_u^{2^k} \right) \geq \delta 2^{k\dxp(\beta)} \right\} \text .
	\end{align*}
	For every $\beta >0$, there exists a function $f_3(\delta)$ with $f_3(\delta) \underset{\delta\to 0}{\longrightarrow}1$ such that for all large enough $k \geq k(\delta)$, all $\eps \in \left[0,1\right]$, and all realizations of $G^\prime$ with $\deg^{\cN} \left(r(\mz)\right) \leq 9^d 100 \mu_{\beta+1}$
	\begin{equation*}
		\p_{\beta < k}^{\beta+\eps \geq k} \left( \mathcal{B}(\delta) \ | \ G^\prime \right) \geq f_3(\delta) \text .
	\end{equation*}
\end{lemma}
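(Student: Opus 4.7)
The plan is to adapt the proof of Lemma \ref{lem:goodeventW} to the mixed measure $\p_{\beta\leq k}^{\beta+\eps>k}$. The key inputs are Lemmas \ref{lem:goodeventA} and \ref{lem:goodeventB}, which serve as the mixed-measure analogues of the spacing estimates (Lemmas \ref{lem:graphspacing}, \ref{lem:all quantiles of boxtobox}) used in the proof of Lemma \ref{lem:goodeventW}. The deterministic assumption $\deg^{\cN}(r(\mz)) \leq 9^d 100 \mu_\beta$ replaces the random degree factor in Lemma \ref{lem:goodeventW} by a constant, which is precisely what allows the resulting estimate to be uniform in the realization of $G^\prime$ and in $\eps\in[0,1]$.

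I would first use the Harris coupling described at the start of Subsection \ref{subsec:cont:lemmas} to reduce to the worst case $\eps=1$: the event $\mathcal{B}(\delta)$ is decreasing in $\omega$, and conditional on a fixed $G^\prime_0$, the distribution of the remaining randomness (realized inter-block edges together with internal edges of each block) is stochastically monotone in $\eps$. It therefore suffices to bound $\p_{\beta\leq k}^{\beta+1>k}(\mathcal{B}(\delta)^c\mid G^\prime_0)$, and any uniform-in-$\eps$ estimate we obtain via Lemmas \ref{lem:goodeventA}, \ref{lem:goodeventB} will propagate to all $\eps\in[0,1]$.

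The central analysis is a case distinction on how a short indirect path $P$ from $V_\mz^{2^k}$ to $\bigcup_{\|u\|_\infty\ge 2}V_u^{2^k}$ can be realized. Conditional on $G^\prime_0$, the number of $G^\prime$-edges incident to $\cN(r(\mz))$ is bounded by $9^d 100\mu_\beta$. If $P$ has length less than $\delta 2^{k\dxp(\beta)}$ for $\delta$ small, then it must reach a far block $V_u^{2^k}$ through some $G^\prime$-edge, and since direct edges from $V_\mz^{2^k}$ to far blocks are excluded, this last $G^\prime$-edge goes from some $r(w)\in\cN(r(\mz))\setminus\{r(\mz)\}$ to $r(u)$. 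The length of $P$ is bounded below by the internal graph distances traversed in $V_\mz^{2^k}$ and in $V_w^{2^k}$; by Lemmas \ref{lem:goodeventA} and \ref{lem:goodeventB}, applied after translation to the central block $V_w^{2^k}$ (when $\|u-w\|_\infty\geq 2$) and combined with standard block-diameter estimates from Lemma \ref{lem:moments for diameter} (when $\|u-w\|_\infty=1$), each of these internal distances is at least $\delta 2^{k\dxp(\beta)}$ with conditional probability at least $f(\delta)\to 1$, uniformly in $\eps$. A union bound over the $O(1)$ candidate $G^\prime$-edges and the boundedly many entry configurations then yields
\[
\p_{\beta\leq k}^{\beta+\eps>k}\bigl(\mathcal{B}(\delta)^c\mid G^\prime\bigr)\leq C(\beta,d)\bigl(1-f(\delta)\bigr),
\]
and the lemma follows with $f_3(\delta)=1-C(\beta,d)(1-f(\delta))$.

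The main obstacle I expect is the case where the last intermediate block $V_w^{2^k}$ is adjacent to both $V_\mz^{2^k}$ and the target far block $V_u^{2^k}$ (so $\|w\|_\infty=1$ and $\|u-w\|_\infty=1$). Here Lemma \ref{lem:goodeventA} does not apply with $V_w^{2^k}$ as the central block, because its "distant block" hypothesis would require $\|u-w\|_\infty\geq 2$, which fails. In this regime one needs a separate crossing-distance estimate for $V_w^{2^k}$: the path must cross $V_w^{2^k}$ between two opposite faces, and the corresponding graph distance is still of order $2^{k\dxp(\beta)}$, so the desired lower bound can be extracted from the block-diameter moment bounds in Lemma \ref{lem:moments for diameter} together with a suitable rescaling. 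Verifying that this crossing estimate is uniform in $\eps$ (which again reduces to the fact that the internal edges of $V_w^{2^k}$ all use parameter $\beta$) is the technically most delicate step of the argument.
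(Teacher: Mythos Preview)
Your overall plan---reduce to $\eps=1$ via the Harris coupling and rerun the argument behind Lemma~\ref{lem:goodeventW} with the bounded-degree hypothesis replacing the random factor $\deg^{\cN}(r(v))$---is exactly what the paper has in mind (the paper itself only says the proof is ``similar to the proof of Lemma~\ref{lem:goodeventW}'' and omits it). Your union bound over the at most $9^d100\mu_\beta$ long inter-block edges incident to $\cN(r(\mz))$ correctly reproduces the first term in the bound of Lemma~\ref{lem:goodeventW}.

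There is, however, a genuine gap in how you handle the ``adjacent block'' case. First, a misidentification: you write that Lemmas~\ref{lem:goodeventA} and~\ref{lem:goodeventB} are the analogues of Lemmas~\ref{lem:graphspacing} and~\ref{lem:all quantiles of boxtobox}. They are not. Both~\ref{lem:goodeventA} and~\ref{lem:goodeventB} are spacing estimates inside a single block and together only replace $g_1$; neither is an analogue of Lemma~\ref{lem:all quantiles of boxtobox}, which supplies the second term $(1-g_2(\eps))$ in Lemma~\ref{lem:goodeventW} and controls precisely the annular crossing you are worried about. Second, your proposed fix for the case $\|w\|_\infty=1$, $\|u-w\|_\infty=1$ cannot work: Lemma~\ref{lem:moments for diameter} gives \emph{upper} moment bounds on $\dia(V_\mz^m)/m^{\dxp(\beta)}$, and no rescaling of an upper bound on the diameter yields a \emph{lower} bound on a crossing distance. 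Your geometric claim that the path ``must cross $V_w^{2^k}$ between two opposite faces'' is also not correct: the indirect path is free to detour through the other blocks $V_{w'}^{2^k}$ with $\|w'\|_\infty=1$ and need not linger in any single block. What you actually need for this case is a mixed-measure analogue of Lemma~\ref{lem:all quantiles of boxtobox} (the $g_2$ input), i.e.\ a lower bound of order $2^{k\dxp(\beta)}$ on $D^\star\bigl(V_\mz^{2^k},\bigcup_{\|u\|_\infty\ge 2}V_u^{2^k}\bigr)$ conditioned on the absence of the relevant long edges; this is the missing piece, not a single-block diameter estimate.
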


The proof of this lemma is similar to the proof of \cite[Lemma 5.4]{baeumler2022distances}, and we prove it in section \ref{sec:auxil}.
From here on we also use the notation $f(\delta) = \min \left\{f_1(\delta) , f_2(\delta) , f_3(\delta)\right\}$.
The next lemma is a version of the sub- and supermultiplicativity for the mixed measure $\p_{\beta < k}^{\beta+\eps \geq k}$ and is conceptually similar to Lemma \ref{lem:submultiplicativity}, respectively Remark \ref{remark:super}. 

\begin{lemma}\label{lem:uniformovereps}
	For all $\beta > 0$, all $\eps \in \left[0,1\right]$, and all $k,n\in \N$ with $k\leq n$ one has
	\begin{align}\label{eq:trivia}
		& \frac{1}{72d}\Lambda\left(2^{n-k},\beta + \eps\right)
		\leq \E_{\beta < k}^{\beta+\eps \geq k} \left[D_{V_\mz^{2^n}}\left(\mz,(2^n-1) \mo \right)\right] .
	\end{align}
	Further, for all $\beta > 0$, there exist constants $c_\beta>0$ and $C(\beta)<\infty$ such that for all $\eps \in \left[0,1\right]$ and all $k,n \in\N$ with $k\leq n$ and all $x,y \in V_\mz^{2^n}$
	\begin{align}
		\label{trivia2}
		& \E_{\beta < k}^{\beta+\eps \geq k} \left[D_{V_\mz^{2^n}}\left(x,y \right)\right]   \leq C(\beta) \Lambda \left(2^k,\beta\right)  \Lambda\left(2^{n-k},\beta + \eps\right), \text{ and } \\
		& \label{eq:uniformovereps} c_\beta  \Lambda\left(2^k,\beta\right)  \Lambda\left(2^{n-k},\beta + \eps\right)
		\leq \E_{\beta < k}^{\beta+\eps \geq k} \left[D_{V_\mz^{2^n}}\left(\mz,(2^n-1) \mo \right)\right]  .
	\end{align}
\end{lemma}

The proof of this lemma is similar to the proofs of \cite[Lemma 2.3 and Lemma 5.5]{baeumler2022distances}, and we prove it in section \ref{sec:auxil}. We want to get similar bounds on the second moment of distances $D_{V_\mz^{2^n}}$ under the measure $\p_{\beta < k}^{\beta+\eps \geq k}$. For this, we introduce the following lemma, which was already proven in \cite[Lemma 4.5]{baeumler2022distances}. Roughly speaking, this lemma says that the second moment of distances is of the same order as the square of the first moment.

\begin{lemma}\label{lem:uniform 2nd moment bound}
	For all $\beta \geq 0$, there exists a constant $\tilde{C_\beta} < \infty$ such that for all $n \in \N$, all $\eps \in \left[0,1\right]$ and all $x,y \in V_{\mz}^n$
	\begin{align}\label{eq:second moment bound}
		\E_{\beta+\eps} \left[ D_{V_{\mz}^n} (x,y)^2 \right] \leq \tilde{C_\beta} \Lambda(n,\beta+\eps)^2.
	\end{align}
\end{lemma}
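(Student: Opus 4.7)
The plan is to deduce this from a uniform exponential-moment bound on the diameter. First I would establish the following uniform analogue of Lemma \ref{lem:moments for diameter}: for every $\beta \geq 0$ there exist constants $\eta>1$ and $K<\infty$ depending only on $\beta$ such that
\begin{equation*}
\E_{\beta+\eps}\left[\exp\left(\left(\frac{\dia(V_\mz^m)}{\Lambda(m,\beta+\eps)}\right)^{\eta}\right)\right]\leq K
\end{equation*}
for all $m \in \N$ and every $\eps \in [0,1]$. Given this, the elementary inequality $sy \leq y^{\eta} + s^{\eta/(\eta-1)}$ combined with Markov's inequality applied at $s=2$ yields $\E_{\beta+\eps}[\dia(V_\mz^m)^2] \leq C_\beta \Lambda(m,\beta+\eps)^2$, and the claim of the lemma follows from the trivial bound $D_{V_\mz^n}(x,y)\leq \dia(V_\mz^n)$. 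Note that one cannot simply invoke Lemma \ref{lem:moments for diameter} together with the Harris coupling, because $\dxp(\beta+\eps)$ could in principle be strictly smaller than $\dxp(\beta)$, so the ratio $m^{\dxp(\beta)}/\Lambda(m,\beta+\eps)$ need not remain bounded; genuinely redoing the exponential-moment proof under $\p_{\beta+\eps}$ is therefore necessary.

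The uniform exponential bound is obtained by re-running the renormalization proof of \cite[Theorem 6.1]{baeumler2022distances} for the measure $\p_{\beta+\eps}$ in place of $\p_\beta$. That proof cuts a diameter-realizing path at scale $mM$ into excursions inside sub-boxes of side $m$ plus a coarse-grained skeleton at scale $M$, producing a recursive inequality for exponential moments whose contraction is driven by good-event probabilities. The key observation is that the inputs to the recursion, namely the mean degree $\mu_{\beta+\eps}\leq\mu_{\beta+1}<\infty$ and the good-event probability functions $f_1(\delta),f_2(\delta),f_3(\delta)$ supplied by Lemmas \ref{lem:goodeventB}, \ref{lem:goodeventA} and \ref{lem:goodeventD}, are all uniform in $\eps\in[0,1]$. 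Submultiplicativity of $\Lambda(\cdot,\beta+\eps)$ from Lemma \ref{lem:submultiplicativity} also holds uniformly in $\eps$ and provides the normalization $\Lambda(mM,\beta+\eps)\leq \Lambda(m,\beta+\eps)\Lambda(M,\beta+\eps)$ that is needed in order to iterate.

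The main obstacle is the bookkeeping: one must verify that every constant appearing in the original renormalization argument depends on $\beta+\eps$ only through quantities that are uniformly bounded for $\eps\in[0,1]$. In particular, the contractivity parameter $\delta$ in the recursion must be fixed as a function of $\beta$ alone; since the resulting contraction factor can be expressed through $\delta$, $\mu_\beta$ and the $f_i(\delta)$ only, this is possible. Once a single choice $\delta=\delta(\beta)$ is made, iteration starting from the trivial base case $\dia(V_\mz^m)\leq d\cdot m$ for small $m$ yields the desired exponential-moment estimate with constants depending only on $\beta$, which in turn gives \eqref{eq:second moment bound}.
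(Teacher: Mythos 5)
You have correctly identified the one genuinely delicate point: Lemma \ref{lem:moments for diameter} together with the Harris coupling only yields $\E_{\beta+\eps}\left[\dia\left(V_\mz^m\right)^2\right]\leq C m^{2\dxp(\beta)}$, and since $\dxp(\beta+\eps)$ may be strictly smaller than $\dxp(\beta)$, this does not control $\Lambda(m,\beta+\eps)^2$; also the reduction from a uniform exponential-moment bound to \eqref{eq:second moment bound} is unobjectionable (any polynomial moment follows, the detour through Markov at $s=2$ is not even needed). Note, however, that the paper does not prove this lemma at all: it is imported verbatim from \cite[Lemma 4.5]{baeumler2022distances}, applied at the parameter $\beta+\eps$, so any self-contained argument is by construction a different route, and the comparison has to be with the companion paper's proof rather than with anything in the present text.

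The problem is that the core of your plan --- the uniform-in-$\eps$ exponential-moment bound for $\dia(V_\mz^m)/\Lambda(m,\beta+\eps)$ --- is asserted rather than proved, and the inputs you name do not actually feed it. The functions $f_1,f_2,f_3$ of Lemmas \ref{lem:goodeventB}, \ref{lem:goodeventA} and \ref{lem:goodeventD} concern the \emph{mixed} measure $\p_{\beta\leq k}^{\beta+\eps>k}$, with thresholds $\delta 2^{k\dxp(\beta)}$, and they are uniform in $\eps$ precisely because the relevant in-box randomness there has parameter $\beta$ and is untouched by $\eps$; they say nothing about distances under the pure measure $\p_{\beta+\eps}$ measured against $\Lambda(2^k,\beta+\eps)$, which is what re-running the renormalization of \cite[Theorem 6.1]{baeumler2022distances} at parameter $\beta+\eps$ would require. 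The correct inputs would be the companion's spacing and box-to-box lemmas at parameter $\beta+\eps$ (the functions $g_1,g_2$ of Lemmas \ref{lem:graphspacing} and \ref{lem:all quantiles of boxtobox}), and their uniformity over $\eps\in[0,1]$ is a statement of exactly the same nature as the one you are trying to prove --- so the ``bookkeeping'' you defer is the entire mathematical content, not a routine check. There is also a risk of circularity: the companion's second-moment bound (its Lemma 4.5) precedes its Theorem 6.1, and unless one verifies that the exponential-moment proof does not use it, deducing the former from a re-run of the latter is not legitimate. A shorter honest route is the one the paper implicitly takes: apply \cite[Lemma 4.5]{baeumler2022distances} at parameter $\beta+\eps$ and check that its constant can be chosen bounded (e.g.\ monotone or explicit) over the compact range $\beta+\eps\in[\beta,\beta+1]$, rather than re-deriving exponential moments from scratch.
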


Having this lemma allows us to prove a uniform bound on the second moment of distances under the measure $\p_{\beta < k}^{\beta+\eps \geq k}$.

\begin{lemma}\label{lem:uniform2ndmomentbound:renormalized}
	For all $\beta \geq 0$, there exists a constant $C_\beta<\infty$ such that uniformly over all $\eps \in \left[0,1\right]$, all $k \leq n$, and all $x,y \in V_\mz^{2^n}$
	\begin{align}\label{eq:uniformoverbeta}
		\E_{\beta < k}^{\beta+\eps \geq k} \left[D_{V_\mz^{2^n}}\left(x,y\right)^2 \right] \leq
		C_\beta  \Lambda(2^k,\beta)^2 \Lambda(2^{n-k},\beta+\eps)^2.
	\end{align}
\end{lemma}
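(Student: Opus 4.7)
The plan is to mirror the first-moment argument of Lemma~\ref{lem:uniformovereps}, upgraded to second moments by exploiting an independence structure at two scales. The key observation is that, under $\p_{\beta\leq k}^{\beta+\eps>k}$, the edges of $L_\infty$-length $\leq 2^k-1$ (parameter $\beta$) and those of length $\geq 2^k$ (parameter $\beta+\eps$) form disjoint, hence independent, families. I will contract each block $V_u^{2^k}$ ($u\in V_\mz^{2^{n-k}}$) to a single vertex $r(u)$, producing a renormalized graph $G'$ on $V_\mz^{2^{n-k}}$. For $\|u-v\|_\infty=1$ the blocks are always $G'$-adjacent via nearest-neighbor edges; for $\|u-v\|_\infty\geq 2$, the two closest vertices of $V_u^{2^k}$ and $V_v^{2^k}$ lie at $L_\infty$-distance $\geq 2^k+1$, so every connecting edge falls into the $\beta+\eps$ family. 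The self-similarity computation of Section~\ref{subsec:cts} then gives
\[
\p_{\beta\leq k}^{\beta+\eps>k}\!\left(V_u^{2^k}\sim V_v^{2^k}\right)=p(\beta+\eps,\{u,v\})
\]
with independence across pairs, identifying the law of $G'$ with $\p_{\beta+\eps}$ on $V_\mz^{2^{n-k}}$. At the same time, each diameter $\dia(V_u^{2^k})$ depends only on edges fully inside $V_u^{2^k}$, so the family $\{\dia(V_u^{2^k})\}_u$ is independent of $G'$ and its members are i.i.d.\ with the law of $\dia(V_\mz^{2^k})$ under $\p_\beta$.

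Next, I will set $L\coloneqq D_{G'}(r(x),r(y))$ and fix any $G'$-geodesic $r(x)=r(u_0),\ldots,r(u_L)=r(y)$. Lifting it to $V_\mz^{2^n}$, using one cross-block edge per step and traversing each visited block along its diameter, yields
\[
D_{V_\mz^{2^n}}(x,y)\ \leq\ L + \sum_{i=0}^{L}\dia\!\left(V_{u_i}^{2^k}\right).
\]
Writing $X_i\coloneqq \dia(V_{u_i}^{2^k})+1$, the vertices of a geodesic are distinct and the $X_i$ depend on pairwise disjoint sets of within-block edges, so conditional on $G'$ they are i.i.d. A direct second-moment computation ($(L+1)^2(\E X_0)^2+(L+1)\operatorname{Var}(X_0)\leq(L+1)^2\E[X_0^2]$) then gives
\[
\E_{\beta\leq k}^{\beta+\eps>k}\!\left[\left(\sum_{i=0}^L X_i\right)^{\!2}\,\Big|\,G'\right] \leq (L+1)^2\,\E[X_0^2],
\]
and after taking expectation over $G'$ we reduce to bounding $\E[(L+1)^2]\cdot\E[X_0^2]$.

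Finally, I will plug in the two ingredients. Since $L$ under the mixed measure is distributed as $D_{V_\mz^{2^{n-k}}}(r(x),r(y))$ under $\p_{\beta+\eps}$, Lemma~\ref{lem:uniform 2nd moment bound} yields $\E[(L+1)^2]\leq C_\beta\,\Lambda(2^{n-k},\beta+\eps)^2$ uniformly in $\eps\in[0,1]$. Lemma~\ref{lem:moments for diameter} implies $\E_\beta[\dia(V_\mz^{2^k})^2]\leq C\,2^{2k\dxp(\beta)}$, and combining with $\Lambda(2^k,\beta)\geq 2^{k\dxp(\beta)}$ gives $\E[X_0^2]\leq C'\,\Lambda(2^k,\beta)^2$. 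Multiplying the two bounds delivers the claim. The delicate part of the argument is the first paragraph: verifying through the continuous-model coupling that $G'$ has exactly the law $\p_{\beta+\eps}$ and is independent of the within-block diameters. Once this independence-at-two-scales picture is established, the remainder reduces to a standard computation with a random sum of independent variables.
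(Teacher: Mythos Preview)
Your proposal is correct and follows essentially the same approach as the paper: both contract the $2^k$-blocks to obtain a renormalized graph $G'$ distributed as $\p_{\beta+\eps}$ on $V_\mz^{2^{n-k}}$, bound $D_{V_\mz^{2^n}}(x,y)$ by a geodesic-indexed sum of block diameters plus $L$, exploit the independence of the i.i.d.\ diameters from $G'$ to get $\E[(L+1)^2]\,\E_\beta[(\dia(V_\mz^{2^k})+1)^2]$, and then invoke Lemma~\ref{lem:uniform 2nd moment bound} and Lemma~\ref{lem:moments for diameter} for the two factors. The only cosmetic differences are that the paper writes the random-sum second-moment inequality in a slightly more abstract form (for a general $\tilde K$ independent of an i.i.d.\ sequence) and handles the case $r(x)=r(y)$ with a one-line remark.
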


The proof of this lemma is given in section \ref{sec:auxil}.

\subsection{The proof of Theorem \ref{theo:continuity} }\label{subsec:cont:proof}

In order to prove Theorem \ref{theo:continuity}, we  use a coupling between the measures $\p_{\beta < k}^{\beta+\eps \geq k}$ and $\p_{\beta < k-1}^{\beta + \eps \geq k-1}$. Let $\omega \in \{0,1\}^E $ be distributed according to $\p_{\beta < k}^{\beta+\eps \geq k}$. Let $E=\left\{\{x,y\} \subset V_\mz^{2^n}: x\neq y\right\}$ and let $\chi \in \{0,1\}^E$ be a random vector that is independent of $\omega$ and has independent coordinates such that
\begin{align}\label{eq:chi}
	\p\left(\chi\left(\{u,v\}\right)= 1\right) = \begin{cases}
		1- e^{-\eps \int_{u+\cC} \int_{v+\cC} \frac{1}{\|x-y\|^{2d}} \md x \md y } & \text{if } 2^{k-1} \leq |\{u,v\}| \leq 2^k-1 \\
		0 & \text{else}
	\end{cases}.
\end{align}
Then set $\omega^\chi (e) \coloneqq \omega(e) \vee \chi(e) = \max\{ \omega(e) , \chi(e)\}$ for all edges $e \in E$. 
The coordinates of $\omega^\chi$ are independent and for $e =\{u,v\} \in E$ with $2^{k-1} \leq |e| \leq 2^k -1$ we have
\begin{align*}
	\p \left(\omega^\chi (e) = 0 \right) & = \p \left(\omega(e) = 0 \right) \p\left( \chi(e)= 0 \right) 
	= e^{- \int_{u+\cC} \int_{v+\cC} \frac{\beta}{\|x-y\|^{2d}} \md x \md y } e^{- \int_{u+\cC} \int_{v+\cC} \frac{\eps}{\|x-y\|^{2d}} \md x \md y } \\ 
	& = e^{- \int_{u+\cC} \int_{v+\cC} \frac{\beta + \eps}{\|x-y\|^{2d}} \md x \md y }  = 1- p\left(\beta+\eps, \{u,v\} \right)
\end{align*}
and thus $\omega^\chi$ is distributed according to the measure $\p_{\beta \leq k-1}^{\beta+\eps > k-1}$. The pair $(\omega, \omega^\chi)$ is a coupling of the measures  $\p_{\beta < k}^{\beta+\eps \geq k}$ and  $\p_{\beta < k-1}^{\beta + \eps \geq k-1}$. Next, we consider properties of the differences between the two measures.

For a block $ V_u^{2^k} =  2^k u + \left\{0,\ldots,2^k-1\right\}^d$ of side length $2^k$ and every vertex $v\in V_u^{2^k}$, there are at most $\left(2(2^k-1)+1\right)^d \leq 2^{(k+1)d}$ vertices $w \in \Z^d$ with $2^{k-1} \leq \left|\{v,w\}\right| \leq 2^k - 1$. As $\chi$ can only be $+1$ on edges $e$ with $2^{k-1} \leq \left|e \right| \leq 2^k - 1$, we have
\begin{align}\label{eq:coupling bound}
	& \notag \p_{\beta < k}^{\beta+\eps \geq k} \left(\exists v \in V_u^{2^k}, w \in \Z^d \text{ with } \chi\left(\{v,w\}\right)=1 \right)\\
	& \notag \leq 2^{kd} \p_{\beta < k}^{\beta+\eps \geq k} \left(\exists w \in \Z^d \text{ with } \chi\left(\{\mz,w \}\right)=1 \right) \\
	&\notag  \leq 2^{kd} \sum_{ w\in \Z^d : \|w\|_\infty \in \left[2^{k-1},2^k - 1 \right]} \left(1-e^{- \int_{\mz + \cC} \int_{w+\cC} \frac{ \eps}{\|x-y\|^{2d}} \md x \md y }\right)
	\\
	&\notag \overset{\eqref{eq:connectionupper bound} }{ \leq }
	2^{kd} \sum_{w \in \Z^d : \|w\|_\infty \in \left[2^{k-1},2^k - 1 \right]}	
	\frac{2^{2d} \eps}{\|w\|_\infty^{2d}}
	\leq
	2^{kd} \sum_{w \in \Z^d : \|w\|_\infty \in \left[2^{k-1},2^k - 1 \right]}	
	\frac{2^{2d} \eps}{2^{(k-1)2d}} \\
	&
	\leq 2^{kd} 2^{(k+1)d} \frac{2^{4d}\eps}{2^{2kd}} = 2^{5d} \eps \text , 
\end{align}
where we used that
\begin{align}\label{eq:connectionupper bound}
	1- e^{-\eps \int_{\mz+\cC} \int_{w+\cC} \frac{1}{\|x-y\|^{2d}} \md x \md y } = \p_\eps \left(\mz\sim w\right) \leq \frac{2^{2d}\eps}{\|w\|_\infty^{2d}} 
\end{align}
for all $\eps \geq 0$, all $n\in \N$, and all $w\in \Z^d$ with $\|w\|_\infty \geq 2$. This was proven in \cite[Lemma 2.1]{baeumler2022distances}.

Next, we define a notion of good sets inside the graph with vertex set $V_\mz^{2^n}$. For $w\in V_\mz^{2^{n-k}}$, we contract the box $V_w^{2^k}\subset V_\mz^{2^n}$ to a vertex, named $r(w)$, and call the resulting graph $G^\prime=(V^\prime, E^\prime)$. Remember the definition of the events $\mathcal{B}_u(\delta), \mathcal{A}_{u,v}(\delta)$, and $\mathcal{B}(\delta)$ from Lemmas \ref{lem:goodeventB}, \ref{lem:goodeventA}, and \ref{lem:goodeventD}, respectively.

\begin{definition}
	For a small $\delta>0$, we call a vertex $r(w)$ and the underlying block $V_w^{2^{k}}$ $\delta$-{\sl good} (for the environment $\omega$), if all the translated events of $\mathcal{A}_{u,v}(\delta), \mathcal{B}_u(\delta)$, and $\mathcal{B}(\delta)$ occur for the environment $\omega$, i.e., if
	\begin{align}\label{eq:good1}
		\bigcap_{\substack{ x \in V_w^{2^k} : \\ x \sim V_u^{2^{k}}} } \ \bigcap_{\substack{ y \in V_w^{2^k} : \\ y \sim V_v^{2^{k}}   }  } \left\{D_{V_w^{2^{k}}}\left(x,y;\omega\right) \geq \delta 2^{k \dxp(\beta)} \right\}
	\end{align}
	for all $u,v\in V_\mz^{2^{n-k}}$ for which $w \neq v\neq u$, $V_u^{2^{k}} \sim V_w^{2^{k}} \sim V_v^{2^{k}}$ and $\|u-w\|_\infty \geq 2$, and if
	\begin{align}\label{eq:good2}
		\bigcap_{\substack{ x,y \in V_w^{2^k} : \\ x ,y \sim V_u^{2^{k}} , x\neq y } } 
		\left\{D_{V_w^{2^{k}}} (x,y;\omega) \geq \delta 2^{k \dxp(\beta)} \right\}
	\end{align}
	for all $u \in V_\mz^{2^{n-k}}$ with $\|u-w\|_\infty \geq 2$ and $V_u^{2^k}\sim V_w^{2^k}$, and if
	\begin{align}\label{eq:good3}
		D^\star \left( V_w^{2^k} , \bigcup_{u \in V_\mz^{2^{n-k}} : \| u - w \|_\infty \geq 2} V_u^{2^k} ; \omega \right) \geq \delta 2^{k\dxp(\beta)} .
	\end{align} 
\end{definition}

Suppose that a self-avoiding path in $V_\mz^{2^n}$ crosses a $\delta$-good set $V_w^{2^k}$, in the sense that it starts somewhere outside of the set $\bigcup_{u\in V_\mz^{2^{n-k}} : \| u - w \|_\infty \leq 1} V_u^{2^k} $, then goes to the set $V_w^{2^k}$, and then leaves the set $\bigcup_{u \in  V_\mz^{2^{n-k}} : \| u - w \|_\infty \leq 1} V_u^{2^k} $ again. When the path enters the set $V_w^{2^k}$ at the vertex $x \in V_w^{2^k}$, coming from some a block $V_u^{2^k}$ with $\|u-w\|_\infty\geq 2$, the path needs to walk a distance of at least $\delta 2^{k\dxp(\beta)}$ inside the set $V_w^{2^k}$ to reach a vertex $y\in V_w^{2^k}$ that is connected to the complement of $V_w^{2^k}$, because of \eqref{eq:good1} and \eqref{eq:good2}. When the path enters the set $V_w^{2^k}$ from a block $V_v^{2^k}$ with $\|v-w\|_\infty=1$, then the path crosses the annulus between $V_w^{2^k}$ and $\bigcup_{u \in  V_\mz^{2^{n-k}} : \| u - w \|_\infty \geq 2} V_u^{2^k}$. So in particular it needs to walk a distance of at least $\delta 2^{k\dxp(\beta)}$ in order to cross this annulus, because of \eqref{eq:good3}. Overall, we see that the path needs to walk a distance of at least $\delta 2^{k\dxp(\beta)}$ within the set $\bigcup_{u \in  V_\mz^{2^{n-k}} : \| u - w \|_\infty \leq 1} V_u^{2^k}$ in order to cross the set $V_w^{2^k}$. Let $f_1,f_2$, and $f_3$ be the functions defined in Lemmas \ref{lem:goodeventB}, \ref{lem:goodeventA}, and \ref{lem:goodeventD} and let $f(\delta)= \min \left\{f_1(\delta) , f_2(\delta) , f_3(\delta)\right\}$.
Let $\delta$ be small enough such that
\begin{equation}\label{eq:deltasmall}
	9^{2d} 5000 \mu_{\beta+1}^2 \left(1-f(\delta)\right) \leq \left(32\mu_{\beta + 1}\right)^{-9^d400\mu_{\beta+1}}.
\end{equation}
Such a $\delta >0$ exists, as $f(\delta)= \min \left\{f_1(\delta) , f_2(\delta) , f_3(\delta)\right\}$ tends to $1$ for $\delta \to 0$.
\begin{definition}
	From here on we call a block $V_w^{2^k}$ {\sl good} (for the environment $\omega$) if it is $\delta$-good for the specific choice of $\delta$ in \eqref{eq:deltasmall}, and we call a vertex $r(w)\in G^\prime$ {\sl good} (for the environment $\omega$) if the underlying block $V_w^{2^k}$ is good.
\end{definition}
For a connected set $Z \subset G^\prime$, we are interested in the number of {\sl separated good} vertices inside this set. We say that a subset $U\subset Z$ is a set of separated good vertices if $r(v)$ is good for all $r(v) \in U$ and if $\cN(r(v)) \nsim \cN(r(w))$ for all $r(v)\neq r(w) \in U$. We say that a set $Z\subset G^\prime$ has less than $t$ separated good vertices if every subset $U\subset Z$ of separated good vertices satisfies $|U|<t$, and we say that $Z$ has at least $t$ separated good vertices if there exists a subset $U\subset Z$ of separated good vertices with $|U|\geq t$. Remember that $\mathcal{CS}_K (G^\prime)$ was defined as the set of connected subsets of $G^\prime$ that have size $K$ and contain the origin $r(\mz)$.
In the next lemma, we study how many separated good vertices the connected sets $Z \in \mathcal{CS}_K (G^\prime)$ contain.

\begin{lemma}
	Let $\eps \in \left[0,1\right]$. Consider long-range percolation with the measure $\p_{\beta < k}^{\beta+\eps \geq k}$ on the set $V_\mz^{2^n}$, and let $G^\prime$ be the graph that results from contracting boxes of the form $V_w^{2^k}$ for $w\in V_\mz^{2^{n-k}}$. Then for large enough $K$ one has
	\begin{align}\label{eq:connectedset goodbound}
		\p_{\beta < k}^{\beta+\eps \geq k} \left( \exists Z \in \mathcal{CS}_K\left(G^\prime\right) \text{ with less than $\frac{K}{9^d 400 \mu_{\beta + 1}}$ separated good vertices}  \right) \leq 3 \cdot 2^{-K}.
	\end{align}
\end{lemma}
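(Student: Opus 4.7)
The plan is to follow the renormalization / connected-set argument from the proof of Theorem \ref{theo:strictmonotonicity}. By Harris coupling, $\p_{\beta \leq k}^{\beta + \eps > k}$ is dominated by $\p_{\beta + 1}$ for every $\eps \in [0,1]$, so all degree-type estimates for $G'$ are uniform in $\eps$ with constants depending only on $\mu_{\beta + 1}$. Applying Lemma \ref{lem:connnectedsetsinLRP} to $G'$ with $\mu$ replaced by $\mu_{\beta+1}$, the event
\begin{equation*}
\mathcal{H}_K \coloneqq \left\{ \overline{\deg}(Z) < 20 \mu_{\beta + 1} \text{ for all } Z \in \mathcal{CS}_{\geq K}(G') \right\}
\end{equation*}
satisfies $\p_{\beta \leq k}^{\beta + \eps > k} (\mathcal{H}_K^C) \leq e^{-4K \mu_{\beta + 1}} \leq 2^{-K}$ for $K$ large, and Markov's inequality applied to $\E[|\mathcal{CS}_K(G')|] \leq (4 \mu_{\beta+1})^K$ yields $\p(|\mathcal{CS}_K(G')| > 2^K (4\mu_{\beta+1})^K) \leq 2^{-K}$.

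Next, fix any connected set $Z \in \mathcal{CS}_K(G')$ with $\overline{\deg}(Z) \leq 20 \mu_{\beta + 1}$. By a greedy procedure entirely analogous to the construction of $\mathcal{IND}$ in Section \ref{subsec:monotonproof}, extract a subset $\mathcal{IND}(Z) \subset Z$ such that the closed neighborhoods $\cN(r(v))$ for $v \in \mathcal{IND}(Z)$ are pairwise non-adjacent in $G'$ and $|\mathcal{IND}(Z)| \geq K / (9^d \cdot 200 \mu_{\beta + 1})$; the size estimate follows by bookkeeping how many other vertices each $r(v)$ can block via its neighborhood degree $\deg^{\cN}(r(v)) \leq 9^d \cdot 20 \mu_{\beta+1}$. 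Each $r(v) \in \mathcal{IND}(Z)$ then has $\deg^{\cN}(r(v)) \leq 9^d \cdot 100 \mu_{\beta+1}$, which is the hypothesis required by Lemma \ref{lem:goodeventD}, so Lemmas \ref{lem:goodeventA}, \ref{lem:goodeventB} and \ref{lem:goodeventD} give $\p_{\beta \leq k}^{\beta + \eps > k}(r(v) \text{ is good} \mid G') \geq f(\delta)$.

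The crucial point is that, conditional on $G'$, these good events are mutually independent across $v \in \mathcal{IND}(Z)$: the local events \eqref{eq:good1} and \eqref{eq:good2} for $r(v)$ depend only on edges incident to $V_v^{2^k}$, while the indirect-distance event \eqref{eq:good3} can only fail via paths of length at most $\delta 2^{k \dxp(\beta)}$, which for $k$ large are localized inside $\bigcup_{u \in \cN(r(v))} V_u^{2^k}$. Since the $\cN(r(v))$ are pairwise disconnected in $G'$, the relevant edge sets are disjoint. A Chernoff bound then gives
\begin{equation*}
\p_{\beta \leq k}^{\beta + \eps > k}\!\left( \#\{\text{good } r(v) \in \mathcal{IND}(Z)\} < \tfrac{|\mathcal{IND}(Z)|}{2} \,\Big|\, G' \right) \leq 2^{|\mathcal{IND}(Z)|} (1 - f(\delta))^{|\mathcal{IND}(Z)|/2}.
\end{equation*}
The prescribed inequality $9^{2d} 5000 \mu_{\beta+1}^2 (1 - f(\delta)) \leq (32 \mu_{\beta+1})^{-9^d \cdot 400 \mu_{\beta+1}}$ is precisely calibrated so that the union bound over the at most $2^K (4 \mu_{\beta+1})^K$ connected sets $Z$ contributes at most $2^{-K}$. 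Summing the three contributions from $\p(\mathcal{H}_K^C)$, the Markov bound on $|\mathcal{CS}_K(G')|$, and the conditional failure then gives the claimed $3 \cdot 2^{-K}$, using that $|\mathcal{IND}(Z)|/2 \geq K/(9^d \cdot 400 \mu_{\beta+1})$ for large $K$.

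The main obstacle is the conditional independence claim above: the event $\mathcal{B}(\delta)$ in \eqref{eq:good3} is defined via a global indirect distance, so showing rigorously that its failure depends only on edges near $\cN(r(v))$ requires a localization argument analogous to the one implicitly used in Section \ref{subsec:monotonproof} when asserting that being $\eps$-influential depends only on edges meeting $\tilde{R_i}$. Concretely, one can replace $\mathcal{B}(\delta)$ by a localized variant $\mathcal{B}_{\mathrm{loc}}(\delta) \supset \mathcal{B}(\delta)$ that only considers paths contained in $\bigcup_{u \in \cN(r(v))} V_u^{2^k}$, verify that Lemma \ref{lem:goodeventD} transfers to $\mathcal{B}_{\mathrm{loc}}(\delta)$ at the cost of absorbing harmless constants into $f_3$, and work with the localized event throughout.
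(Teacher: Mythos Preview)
Your approach is essentially the paper's: build a separated low-degree subfamily inside each connected $Z$, use conditional independence of the good events given $G'$, and do a union bound over $\mathcal{CS}_K(G')$. Two points deserve correction.

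First, the ``main obstacle'' you flag is not one. Any path realizing $D^\star\big(V_w^{2^k},\bigcup_{\|u-w\|_\infty\geq 2}V_u^{2^k}\big)$ terminates the moment it enters a block at $\infty$-distance $\geq 2$ from $w$; hence every vertex before the last lies in $\bigcup_{\|u-w\|_\infty\leq 1}V_u^{2^k}$, so every edge of the path has at least one endpoint in $\bigcup_{r(v)\in\cN(r(w))}V_v^{2^k}$. Thus, once you condition on $G'$, whether $r(w)$ is good is measurable with respect to edges touching this neighbourhood, and the non-adjacency of the neighbourhoods $\cN(r(v))$, $v\in\mathcal{IND}(Z)$, gives genuine independence without any auxiliary localized event $\mathcal{B}_{\mathrm{loc}}$. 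The paper simply states this observation.

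Second, two imprecisions in your write-up: (i) your greedy procedure must simultaneously select vertices of small neighbourhood-degree \emph{and} enforce non-adjacency of neighbourhoods --- not every $r(v)\in Z$ satisfies $\deg^{\cN}(r(v))\leq 9^d\cdot 20\mu_{\beta+1}$, only at least half of them do, and it is from this half that you greedily pick; (ii) the per-vertex failure probability is not $1-f(\delta)$ but a union over all neighbouring boxes, which the paper packages as $\tilde f(\delta)=9^{2d}5000\,\mu_{\beta+1}^2(1-f(\delta))$. Your Chernoff display should read $2^{|\mathcal{IND}(Z)|}\tilde f(\delta)^{|\mathcal{IND}(Z)|/2}$; the calibration of $\delta$ then makes this beat $(8\mu_{\beta+1})^K$, exactly as you indicate.
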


\begin{proof}
	Let $\hat{Z}=\left\{r(v_1),\ldots,r(v_K) \right\}$ be a connected set in $G^{\prime}$ containing $r(\mz)$. 
	Let $\preceq$ be the lexicographic ordering on $\Z^d$, where we write $\prec$ for strict inequalities. So we can assume that $\hat{Z} = \left\{ r(v_1) , \ldots, r(v_K)\right\}$, where $v_1 \prec v_2 \prec \ldots \prec v_K$.
	For such a set, we add the nearest neighbors to it. Formally, we define the set
	\begin{align*}
		\hat{Z}^\cN = \bigcup_{ r(v) \in \hat{Z} }  \cN\left(r(v)\right)
	\end{align*}
	which is still a connected set containing the origin $r(\mz)$ and satisfies
	\begin{equation}\label{eq:ZN size}
		K \leq |\hat{Z}^\cN| \leq 3^d K .
	\end{equation}
	A vertex $r(u) \in G^\prime$ can be included into the set $\hat{Z}^{\cN}$ in more than one way, meaning that there can be different vertices $r(v),r(\tilde{v}) \in \hat{Z}$ such that $r(u) \in \cN\left(r(v)\right)$ and $r(u) \in \cN\left(r(\tilde{v})\right)$.
	However, if $r(u)\in \hat{Z}^\cN$, then $\cN(r(u)) \cap \hat{Z} \neq \emptyset$. This implies that for each vertex $r(v) \in G^\prime$ there are at most $3^d$ many indices $i\in \{1,\ldots,K\}$ with $ r(v) \in \cN(r(v_i))$. So in particular, using the definition of the neighborhood-degree of a vertex $\deg^{\cN} \left(r(u)\right)$ \eqref{eq:nbh degree}, we see that
	\begin{align}\label{overset}
		\notag &\sum_{i=1}^K \deg^{\cN} \left(r(v_i)\right)
		=
		\sum_{i=1}^K \ \sum_{r(v) \in \cN(r(v_i))} \deg \left(r(v)\right)\\
		&
		=
		\sum_{r(v) \in \hat{Z}^{\cN} } \deg\left(r(v)\right) \cdot \Big| \left\{i\in \{1,\ldots,K\} : r(v) \in \cN(r(v_i))\right\} \Big|
		\leq 
		3^d \sum_{r(v) \in \hat{Z}^{\cN} } \deg\left(r(v)\right) .
	\end{align}
	Next, we iteratively define a set $\mathbb{LI} = \mathbb{LI}(\hat{Z}) = \mathbb{LI}_K \subset \hat{Z}$ as follows:
	\begin{enumerate}\addtocounter{enumi}{-1}
		\item Start with $\mathbb{LI}_0 = \emptyset$.
		\item For $i=1,\ldots, K$: If $\deg^{\cN} \left(r(v_i)\right) \leq 9^d 50 \mu_{\beta+1}$ and $ \cN \left( r(v_i) \right)  \nsim \bigcup_{r(u) \in \mathbb{LI}_{i-1}} \cN(r(u))$, then set $\mathbb{LI}_{i} = \mathbb{LI}_{i-1} \cup \{r(v_i)\}$; else set $\mathbb{LI}_{i}= \mathbb{LI}_{i-1}$.
	\end{enumerate}
	On the event where $\overline{\deg}(\widetilde{Z}) \leq 20 \mu_{\beta + 1}$ for all sets $\widetilde{Z}\in \mathcal{CS}_{\geq K} \left(G^\prime\right) $, we have
	\begin{align*}
		\sum_{i=1}^K \deg^{\cN} \left(r(v_i)\right)  \overset{\eqref{overset}}{\leq} 3^d \sum_{r(v) \in \hat{Z}^{\cN} } \deg\left(r(v)\right) \leq 3^d 20 \mu_{\beta + 1} \left|\hat{Z}^{\cN}\right| \overset{\eqref{eq:ZN size}}{\leq} 9^d 20 \mu_{\beta + 1} K
	\end{align*}
	and thus there can be at most $\frac{K}{2}$ many indices $i\in \{1,\ldots,K\}$ in the above sum with $\deg^{\cN} \left(r(v_i)\right) > 9^d 50 \mu_{\beta+1}$. Vice versa, this implies that there are at least  $\frac{K}{2}$ many indices $i\in \{1,\ldots,K\}$ with $\deg^{\cN} \left(r(v_i)\right) \leq 9^d 50 \mu_\beta$. 
	Let $U$ be the set
	\begin{equation*}
		U=\left\{i\in \{1,\ldots,K\} : \deg^{\cN}(r(v_i)) \leq 9^d 50 \mu_{\beta+1}\right\} .
	\end{equation*}
	Then the above argument says that the event where $\overline{\deg}(\widetilde{Z}) \leq 20 \mu_{\beta + 1}$ for all sets $\widetilde{Z}\in \mathcal{CS}_{\geq K} \left(G^\prime\right)$ already implies $|U| \geq \frac{K}{2}$. Note that the event where $\overline{\deg}(\widetilde{Z}) \leq 20 \mu_{\beta+1}$ for all $\widetilde{Z}\in \mathcal{CS}_{\geq K} \left(G^\prime\right) $ is very likely for large $K$, by Corollary \ref{coro:connnectedsetsinLRP}. 
	For each $i\in U$ with $r(v_i) \notin \mathbb{LI}$, we have that
	\begin{equation*}
		\cN(r(v_i)) \sim \bigcup_{r(u) \in \mathbb{LI}} \cN(r(u)),
	\end{equation*}
	as otherwise, the vertex $r(v_i)$ would have been included into the set $\mathbb{LI}_i$. Using this, we get that
	\begin{equation}\label{eq:set ineq}
		U \subseteq \bigcup_{j:r(v_j) \in \mathbb{LI}} \Big( \{j\} \cup \left\{i \in \{1,\ldots,K\} : \cN(r(v_i)) \sim \cN(r(v_j)) \right\} \Big).
	\end{equation}
	For each $j\in \{1,\ldots,K\}$ with $r(v_j) \in \mathbb{LI}$, there can be at most $\deg^{\cN}(r(v_j)) \leq 9^d 50 \mu_{\beta+1}$ many vertices $r(w) \in G^\prime$ with $r(w) \sim \cN(r(v_j))$. Thus, there can be also at most $3^d \deg^{\cN}(r(v_j)) \leq 27^d 50 \mu_{\beta+1}$ many vertices $r(w) \in G^\prime$ with $\cN(r(w)) \sim \cN(r(v_j))$. This directly implies that 
	\begin{equation*}
		\left| \left\{i \in \{1,\ldots,K\} : \cN(r(v_i)) \sim \cN(r(v_j)) \right\} \right| \leq 27^d 50 \mu_{\beta+1}.
	\end{equation*}
	Taking the cardinality of the respective sets in \eqref{eq:set ineq}, we see that
	\begin{align*}
		\frac{K}{2} & \leq 
		\left|U\right| \leq \sum_{j:r(v_j) \in \mathbb{LI}} \Big( 1 + \big| \left\{i \in \{1,\ldots,K\} : \cN(r(v_i)) \sim \cN(r(v_j)) \right\} \big| \Big)\\
		&
		\leq
		\sum_{j:r(v_j) \in \mathbb{LI}}  \left(1 + 27^d 50 \mu_{\beta+1} \right) 
		=
		\left|\mathbb{LI}\right| \left(1 + 27^d 50 \mu_{\beta+1} \right)
	\end{align*}
	and solving this inequality for $|\mathbb{LI}|$ shows that
	\begin{equation}\label{eq:LI}
		|\mathbb{LI}| \geq \frac{K}{2 (27^d 50 \mu_{\beta+1} + 1)} \geq \frac{K}{27^d 200 \mu_{\beta+1}}  .
	\end{equation}
	If the block $V_w^{2^k}$ is not $\delta$-good, then at least one of the events of the form
	\begin{align*}
		&
		\mathcal{B}^\prime(\delta) \coloneqq D^\star \left( V_w^{2^k} , \bigcup_{u \in V_\mz^{2^{n-k}} : \| u - w \|_\infty \geq 2} V_u^{2^k} \right) < \delta 2^{k\dxp(\beta)} ,\text{ or } \\
		&
		\mathcal{B}_u^\prime(\delta) \coloneqq
		\bigcup_{\substack{ x,y \in V_w^{2^k} : \\ x ,y \sim V_u^{2^{k}} , x\neq y } } 
		\left\{D_{V_w^{2^{k}}} (x,y) < \delta 2^{k \dxp(\beta)} \right\} \text{ for } r(u) \text{ with } r(u) \sim r(w),  \|r(u)-r(w)\|_\infty \geq 2, \text{ or }\\
		& \mathcal{A}_{u,v}^\prime (\delta) \coloneqq \bigcup_{\substack{ x \in V_w^{2^k} : \\ x \sim V_u^{2^{k}}} } \ \bigcup_{\substack{ y \in V_w^{2^k} : \\ y \sim V_v^{2^{k}}   }  } \left\{D_{V_w^{2^{k}}}\left(x,y\right) < \delta 2^{k \dxp(\beta)} \right\} 
	\end{align*}
	for $r(u),r(v)$ with $r(u) \sim r(w) \sim r(v), \|r(u)-r(w)\|_\infty \geq 2, r(u)\neq r(v)\neq r(w)$
	needs to hold. 
	Conditioned on the graph $G^\prime$, and assuming that $\deg^{\cN} \left( r(w)\right) \leq 9^d 50 \mu_{\beta+1}$, the probability that the block $V_w^{2^k}$ is not $\delta$-good is thus bounded by
	\begin{align*}
		&\p_{\beta < k}^{\beta + \eps \geq k} \left( V_w^{2^k} \text{ not $\delta$-good} \Big| G^\prime \right)
		\leq \sum_{\substack{r(u),r(v): r(u) \sim r(w) \sim r(v), \\ \|r(u)-r(w)\|_\infty \geq 2, r(u)\neq r(v)\neq r(w)}}  \p_{\beta < k}^{\beta + \eps \geq k} \left(  \mathcal{A}_{u,v}^\prime (\delta) \big| G^\prime  \right)
		\\
		&
		\ \ \ \ \ \ \ \ \ \ \ \ \ \ \ \ \ \ \ \ \ \ \ \ \ \ \ \ \ \ \ \ \ \ \ \ \ \ \ \ 
		+
		\sum_{\substack{r(u): r(u) \sim r(w), \\ \|r(u)-r(w)\|_\infty \geq 2}}  \p_{\beta < k}^{\beta + \eps \geq k} \left(  \mathcal{B}_{u}^\prime (\delta)  \big| G^\prime \right)
		+
		\p_{\beta < k}^{\beta + \eps \geq k} \left(  \mathcal{B}^\prime (\delta) \big| G^\prime \right)
		\\
		&
		\leq
		\deg\left(r(w)\right)^2 \left(1-f_2(\delta)\right) +
		\deg^{\cN}\left( r(w) \right) \left(1-f_1(\delta)\right)
		+ (1-f_3(\delta))
		\leq 9^{2d} 5000 \mu_{\beta+1}^2 \left(1-f(\delta)\right) ,
	\end{align*}
	where $f$ was defined by $f(\delta)= \min\{f_1(\delta) , f_2(\delta) , f_3(\delta)\}$ and the functions $f_1,f_2$, and $f_3$ were defined in Lemmas \ref{lem:goodeventB}, \ref{lem:goodeventA}, and \ref{lem:goodeventD}.
	Remember that we chose $\delta>0$ small enough so that
	\begin{align}\label{ftilde}
		\tilde{f}(\delta) \coloneqq  9^{2d} 5000 \mu_{\beta+1}^2 \left(1-f(\delta)\right) \leq \left(32\mu_{\beta + 1}\right)^{-27^d 400\mu_{\beta+1}} .
	\end{align}
	Next, we argue that if $\hat{Z}$ and $G^\prime$ are such that $|\mathbb{LI}(\hat{Z})| \geq \frac{K}{27^d 200 \mu_{\beta+1}}$, then
	\begin{align}\label{eq:16}
		\p_{\beta < k}^{\beta + \eps \geq k} \left( \left|\left\{ r(u) \in \mathbb{LI}(\hat{Z}) : r(u) \text{ good} \right\}\right| \leq \frac{K}{27^d 400 \mu_{\beta + 1}} \Big| G^\prime \right) \leq \left(16 \mu_{\beta+1}\right)^{-K}.
	\end{align}
	Given the graph $G^\prime$, it is independent whether different vertices in $\mathbb{LI}(\hat{Z})$ are good or not, as we will argue now. For a vertex $r(u)$, it depends only on edges with at least one end point in the set $\bigcup_{r(v) \in \cN\left(r(u)\right)} V_v^{2^k}$ whether the vertex $r(u)$ is good or not. But for different vertices $r(u), r(u^\prime) \in \mathbb{LI}(\hat{Z})$ there are no edges with one end in $\bigcup_{r(v) \in \cN\left(r(u)\right)} V_v^{2^k}$ and the other end in $\bigcup_{r(v) \in \cN\left(r(u^\prime)\right)} V_v^{2^k}$, as $\cN\left(r(u)\right) \nsim \cN\left(r(u^\prime)\right)$. Thus, it is independent whether different vertices in $\mathbb{LI}(\hat{Z})$ are good. So in particular, for all connected sets $\hat{Z}\in \mathcal{CS}_K(G^\prime)$ with $\left|\mathbb{LI}(\hat{Z})\right| \geq \frac{K}{27^d 200 \mu_{\beta + 1}}$, the probability that at most $\frac{K}{27^d 400 \mu_{\beta + 1}}$ of the vertices in $\mathbb{LI}(\hat{Z})$ are good is bounded by
	\begin{align*}
		&
		\p_{\beta < k}^{\beta + \eps \geq k} \left( \left|\left\{ r(u) \in \mathbb{LI}(\hat{Z}) : r(u) \text{ good} \right\}\right| \leq \frac{K}{27^d 400 \mu_{\beta + 1}} \Big| G^\prime \right)
		\\
		& 
		\leq 
		\p_{\beta < k}^{\beta+\eps \geq k} \left( \left|\left\{r(u) \in \mathbb{LI}(\hat{Z}) : r(u) \text{ not good}\right\}\right| \geq \frac{\left|\mathbb{LI}(\hat{Z})\right|}{2}  \ \Big| G^\prime  \right)\\
		&
		\leq
		\sum_{H \subset \mathbb{LI}(\hat{Z}): |H| \geq \frac{|\mathbb{LI}(\hat{Z})|}{2}}
		\p_{\beta < k}^{\beta+\eps \geq k} \left( r(u) \text{ not good for all } r(u)\in H  \ \Big| G^\prime  \right)
		\\
		&
		\leq
		2^{|\mathbb{LI}(\hat{Z})|} \tilde{f}(\delta)^{\frac{|\mathbb{LI}(\hat{Z})|}{2}} \leq 2^K \tilde{f}(\delta)^{\frac{K}{27^d 400\mu_{\beta + 1}}} \overset{\eqref{ftilde}}{\leq} 2^K\left(32 \mu_{\beta+1}\right)^{-K} = \left(16 \mu_{\beta+1}\right)^{-K},
	\end{align*}
	which proves \eqref{eq:16}. Note that if $\left| \left\{r(v) \in \mathbb{LI}(\hat{Z}) : r(v) \text{ good}\right\} \right| \geq \frac{K}{27^d 400 \mu_{\beta+1}}$, then the set $Z$ also automatically contains $\frac{K}{27^d 400 \mu_{\beta+1}}$ many separated good vertices. The separation property directly follows from the construction, as we assumed that $\cN \left(r(u)\right) \nsim \cN \left(r(v)\right)$ for all distinct $r(u), r(v) \in \mathbb{LI}(\hat{Z})$. Further, note that if $\overline{\deg}(\widetilde{Z}) \leq 20 \mu_{\beta + 1}$ for all sets $\widetilde{Z}\in \mathcal{CS}_{\geq K} \left(G^\prime\right)$, then $|\mathbb{LI}(\hat{Z})| \geq \frac{K}{27^d 200 \mu_{\beta+1}}$ for all sets $\hat{Z}=\left\{r(v_1),\ldots,r(v_K)\right\}\in \mathcal{CS}_K(G^\prime)$, by \eqref{eq:LI}. This implies that inequality \eqref{eq:16} holds for all fixed sets $\hat{Z} \in \mathcal{CS}_K(G^\prime)$, under the assumption that $\overline{\deg}(\widetilde{Z})\leq 20 \mu_{\beta +1}$ for all sets $\widetilde{Z} \in \mathcal{CS}_{\geq K}(G^\prime)$.
	\\
	
	Next, we want to translate such a bound from one connected set to all connected sets simultaneously. Define the event
	\begin{equation*}
		\mathcal{T}_K\coloneqq \left\{\exists Z \in \mathcal{CS}_K\left(G^\prime\right) : \left| \left\{r(v) \in \mathbb{LI}(Z) : r(v) \text{ good}\right\} \right| \leq \frac{K}{27^d 400 \mu_{\beta+1}}\right\} .
	\end{equation*}
	If the event $\mathcal{T}_K$ does not hold, then every set $Z \in \mathcal{CS}_K\left(G^\prime\right) $ needs to contain at least  $\frac{K}{27^d 400 \mu_{\beta + 1}}$ separated good vertices. Conditioned on the event $\overline{\deg}(\widetilde{Z}) \leq 20\mu_{\beta + 1} \ \forall  \widetilde{Z} \in \mathcal{CS}_K\left(G^\prime\right), \left|\mathcal{CS}_K(G^\prime) \right| \leq 8^K \mu_{\beta + 1}^K$, we get via a union bound over the at most $8^K \mu_{\beta + 1}^K$ many sets $Z \in \mathcal{CS}_K(G^\prime)$ that
	\begin{align}\label{conditionTK}
		& \notag \p_{\beta < k}^{\beta+\eps \geq k} \left( \mathcal{T}_K \Big| \overline{\deg}(\widetilde{Z}) \leq 20\mu_{\beta + 1} \ \forall  \widetilde{Z} \in \mathcal{CS}_K\left(G^\prime\right), \left|\mathcal{CS}_K(G^\prime) \right| \leq 8^K \mu_{\beta + 1}^K \right)
		\\
		&
		\overset{\eqref{eq:16}}{\leq} 8^K \mu_{\beta + 1}^K  \left(16 \mu_{\beta + 1}\right)^{-K} = 2^{-K}.
	\end{align}
	Another union bound implies that
	\begin{align*}
		&\notag \p_{\beta < k}^{\beta+\eps \geq k} \left( \exists Z \in \mathcal{CS}_K\left(G^\prime\right) \text{ with less than $\frac{K}{27^d 400 \mu_{\beta + 1}}$ separated good vertices}  \right)\\
		&\notag \leq \p_{\beta < k}^{\beta+\eps\geq k} \left(\exists Z \in \mathcal{CS}_K\left(G^\prime\right) : \left| \left\{r(v) \in \mathbb{LI}(Z) : r(v) \text{ good}\right\} \right| \leq \frac{K}{27^d 400 \mu_{\beta+1}}\right)
		=
		\p_{\beta < k}^{\beta+\eps \geq k} \left(\mathcal{T}_K\right) \\
		&\notag \leq \p_{\beta < k}^{\beta+\eps \geq k} \left( \exists \widetilde{Z} \in \mathcal{CS}_K\left(G^\prime\right) :  \overline{\deg}(\widetilde{Z}) > 20\mu_{\beta + 1} \right) + \p_{\beta < k}^{\beta+\eps \geq k} \left(\left|\mathcal{CS}_K(G^\prime) \right|> 8^K \mu_{\beta + 1}^K \right) \\
		& \ \
		+  \p_{\beta < k}^{\beta+\eps \geq k} \left( \mathcal{T}_K \Big| \overline{\deg}(\widetilde{Z}) \leq 20\mu_{\beta + 1} \ \forall  \widetilde{Z} \in \mathcal{CS}_K\left(G^\prime\right), \left|\mathcal{CS}_K(G^\prime) \right| \leq 8^K \mu_{\beta + 1}^K \right)\\
		&
		\overset{\eqref{coroeq48},\eqref{conditionTK}}{\leq} 2^{-K} + \frac{\E_{\beta \leq k}^{\beta+\eps>k} \left[\left|\mathcal{CS}_K(G^\prime) \right| \right]}{8^K \mu_{\beta + 1}^K} +  2^{-K} 
		\overset{\eqref{coroeq482}}{\leq} 3 \cdot 2^{-K} ,
	\end{align*}
	which finishes the proof.
\end{proof}

Again, we write $G^\prime$ for the graph where start with long-range percolation on $\Z^d$ or $V_\mz^{2^n}$ and then contract all boxes of the form $V_v^{2^k}$ $($for $v\in \Z^d$ or $v\in V_\mz^{2^{n-k}})$ into vertices named $r(v)$. So each vertex $r(v)$ in $G^\prime$ corresponds to the set $V_v^{2^k}$.  Next, we investigate the sum of diameters in connected sets. 

\begin{lemma}
	There exists a constant $1 < C^\prime <\infty$ such that for all $\eps \in \left[0,1\right]$
	\begin{align}\label{eq:connectedset diabound}
		\p_{\beta < k}^{\beta+\eps \geq k} \left(\exists Z \in \mathcal{CS}_K\left(G^\prime\right) : \sum_{r(v) \in Z } \dia(V_v^{2^k};\omega) > C^\prime |Z| 2^{k\dxp(\beta)} \right) \leq 2^{-K}.
	\end{align}
\end{lemma}

\begin{proof}
	Let $Z$ be a fixed connected set in $G^\prime$. Under the measure $\p_{\beta < k}^{\beta+\eps \geq k}$, the diameter of the box $V_v^{2^k}$ corresponding to some vertex $r(v)\in G^\prime$ always has the same distribution, not depending on $\eps$. This holds because the diameter only depends on edges with both endpoints inside $V_v^{2^k}$, and such edges have length at most $2^k-1$. For a fixed set of vertices $Z\subset G^\prime$, the random variables $\left(\dia \left( V_v^{2^k} ; \omega \right)\right)_{r(v)\in Z}$ are also independent. Further, the random variables $ \frac{\dia\left(V_\mz^{2^k};\omega \right)}{ 2^{k\dxp(\beta)} }$ have uniform exponential moments, meaning that
	\begin{equation*}
		\sup_{k\in \N} \E_\beta \left[\exp\left(  \frac{\dia\left(V_\mz^{2^k};\omega\right)}{ 2^{k\dxp(\beta)} }\right)\right] < \infty ,
	\end{equation*}
	which directly follows from Lemma \ref{lem:moments for diameter}.
	Markov's inequality implies that
	\begin{align}\label{eq:8mubeta}
		\notag \p_{\beta} & \left(\sum_{r(v) \in Z } \dia\left(V_v^{2^k};\omega\right) > C^\prime |Z| 2^{k\dxp(\beta)} \right)
		= \p_{\beta } \left( e^{  \sum_{r(v) \in Z } \frac{\dia \left( V_v^{2^k} ; \omega \right)}{ 2^{k\dxp(\beta)} }  } > e^{C^\prime  |Z|}\right) \\
		& \leq \E_\beta \left[e^{  \frac{\dia \left( V_\mz^{2^k} ; \omega \right)}{ 2^{k\dxp(\beta)} }}\right]^{|Z|}e^{-C^\prime  |Z|} \leq \left(8 \mu_{\beta + 1}\right)^{-|Z|}
	\end{align}
	for $C^\prime$ large enough. The diameter of the vertices $r(v) \in G^\prime$ only depends on edges within one block $V_v^{2^k}$. Contrary, the graph $G^\prime$ and events of the form $\{Z\in \mathcal{CS}_K(G^\prime)\}$ only depend on edges between two different blocks $V_v^{2^k}, V_w^{2^k}$ with $v\neq w$. So in particular the events $\{Z\in \mathcal{CS}_K(G^\prime)\}$ and $\left\{ \sum_{r(v) \in Z } \dia \left( V_v^{2^k} ; \omega \right) > C^\prime |Z| 2^{k\dxp(\beta)} \right\}$ are independent for all sets $Z\subset G^\prime$. Thus we get that
	\begin{align*}
		& \p_{\beta < k}^{\beta+\eps \geq k} \left(\exists Z \in \mathcal{CS}_K\left(G^\prime\right) : \sum_{r(v) \in Z } \dia \left( V_v^{2^k} ; \omega \right) > C^\prime |Z| 2^{k\dxp(\beta)} \right)\\
		&
		\leq
		\sum_{Z\subset G^\prime} \p_{\beta < k}^{\beta+\eps \geq k} \left(  Z \in \mathcal{CS}_K(G^\prime) \ , \ \sum_{r(v) \in Z } \dia \left( V_v^{2^k} ; \omega \right) > C^\prime |Z| 2^{k\dxp(\beta)} \right)\\
		&
		=
		\sum_{Z\subset G^\prime} \p_{\beta < k}^{\beta+\eps \geq k} \left(  Z \in \mathcal{CS}_K(G^\prime) \right) \p_{\beta < k}^{\beta+\eps \geq k} \left( \sum_{r(v) \in Z } \dia \left( V_v^{2^k} ; \omega \right) > C^\prime |Z| 2^{k\dxp(\beta)} \right)\\
		&
		\overset{\eqref{eq:8mubeta}}{\leq}
		\sum_{Z\subset G^\prime} \p_{\beta < k}^{\beta+\eps \geq k} \left(  Z \in \mathcal{CS}_K(G^\prime) \right)  \left(8 \mu_{\beta + 1}\right)^{-|Z|}
		= \E_{\beta + \eps } \left[ \left|\mathcal{CS}_K \left(G^\prime \right) \right| \right] \left(8 \mu_{\beta + 1}\right)^{-K} \overset{\eqref{coroeq482}}{\leq} 2^{-K}.
	\end{align*}
\end{proof}

In the next lemma, we consider the sum of diameters of boxes that are adjacent to edges defined by $\chi$.

\begin{lemma}
	For a connected set $Z \subset G^\prime$, we write $Z_\chi$ for the set of vertices $r(w) \in Z$ for which there exists an edge $e=\{a,b\}$ with $a \in V_w^{2^k}$ and $\chi(e)=1$. Then for all $\eps> 0$ small enough
	\begin{align}
		\label{set z chi small} &
		\p_{\beta < k}^{\beta+\eps \geq k}  \left(\exists Z \in \mathcal{CS}_K \left( G^\prime \right) : |Z_\chi| >\frac{ 16 \mu_{\beta + 1}  }{\log(1/\eps)} K \right) \leq 2^{-K} , \quad \text{ and } \\
		\label{eq:D bound}
		& \p_{\beta < k}^{\beta+\eps \geq k} \left( \exists Z \in \mathcal{CS}_K(G^\prime) : \sum_{r(v) \in Z_\chi} \dia\left(V_v^{2^k} ;\omega \right) > r(\eps) 2^{k\dxp(\beta)} K \right)
		\leq 3\cdot 2^{-K}.
	\end{align}
	where $r(\eps) = \log(1/\eps)^{-\frac{1}{2C_1}}$, and $C_1$ is the constant from Lemma \ref{lem:moments for diameter}.
\end{lemma}

\begin{proof}
	We start the proof by bounding the size $|Z_\chi|$.
	When introducing the edges defined by $\chi$ into the set $V_\mz^{2^n}$, the structure of the graph $G^\prime$, in which we contracted blocks of side length $2^k$, does not change, as the edges $\{a,b\}$ with $\chi(\{a,b\})=1$ are either inside one block, i.e., $\{a,b\}\subset V_w^{2^k}$ or between neighboring blocks, i.e., $a \in V_w^{2^k}, b \in V_u^{2^k}$ with $\|u-v\|_\infty = 1$. The probability that a block $V_w^{2^k}$ is adjacent to an edge in $\omega^\chi$ that did not exist in the percolation environment $\omega$ is bounded by $2^{5d} \eps$, see \eqref{eq:coupling bound}. 
	Let $\preceq$ be the lexicographic ordering on $\Z^d$, where we write $\prec$ for strict inequalities.
	For a fixed connected set $Z\in \mathcal{CS}_K (G^\prime)$, define the set $Z_\chi^{\ell}$ as the set of vertices $r(w) \in Z_\chi$ for which an edge $e$ with $\chi(e)=1$ exists, so that $e$ has both endpoints in $V_w^{2^k}$, or one endpoint is in $V_w^{2^k}$ and one endpoint is in $V_{u}^{2^k}$ with $r(u) \notin Z$, or one endpoint in $V_w^{2^k}$ and one in $V_{u}^{2^k}$ with $w \prec u$ and $r(u) \in Z$. The probability that $r(u) \in Z_\chi^{\ell}$ is upper bounded by
	\begin{equation*}
		\p_{\beta < k}^{\beta+\eps \geq k} \left(r(u) \in Z_\chi^{\ell}\right) \leq \p_{\beta < k}^{\beta+\eps \geq k} \left(\exists v \in V_u^{2^k}, w \in \Z^d \text{ with } \chi\left(\{v,w\}\right)=1 \right)
		\overset{\eqref{eq:coupling bound}}{\leq} 2^{5d} \eps \text .
	\end{equation*}
	Further, for different vertices $r(u) \in Z$, it is independent whether they are in the set $Z_\chi^{\ell}$ or not.
	Hence the size of the set $Z_\chi^{\ell}$ is stochastically dominated by $\sum_{i=1}^{K} X_i$, where $X_i$ are independent Bernoulli-distributed random variables with parameter $2^{5d} \eps$. Furthermore, one has $|Z_\chi| \leq 2 |Z_\chi^{\ell}|$, as each edge $e$ with  $\chi(e) = 1$, that creates a vertex in $Z_\chi^{\ell}$, can add at most two vertices to $Z_\chi$. So in particular, for each fixed set $Z\subset G^\prime$, we get that
	\begin{align}\label{eq:bernoullibound}
		\notag & \p_{\beta < k}^{\beta+\eps \geq k}  \left( |Z_\chi| >\frac{ 16 \mu_{\beta + 1}  }{\log(1/\eps)} K \right)
		\leq
		\p_{\beta < k}^{\beta+\eps \geq k}  \left( |Z_\chi^{\ell}| >\frac{\mu_{\beta + 1} 8 }{\log(1/\eps)} K \right) \leq \p\left(  \sum_{i=1}^K X_i > \frac{\mu_{\beta + 1} 8 }{\log(1/\eps)}K \right)\\
		&
		\leq
		\sum_{\substack{U\subset \{1,\ldots,K\}: \\  |U|\geq \frac{\mu_{\beta + 1} 8 }{\log(1/\eps)}K }} \p \left(X_i = 1 \text{ for all } i\in U\right)
		\leq
		\sum_{\substack{U\subset \{1,\ldots,K\}: \\  |U|\geq \frac{\mu_{\beta + 1} 8 }{\log(1/\eps)}K }} \p(X_1=1)^{|U|}
		\leq
		2^K \left(2^{5d} \eps\right)^{\frac{\mu_{\beta + 1} 8 }{\log(1/\eps)}K } .
	\end{align}
	As $\chi$ can only be positive on pairs of vertices $\{a,b\}$ in the same block $V_w^{2^k}$ or in neighboring blocks $V_u^{2^k}, V_w^{2^k}$ with $\|u-w\|_\infty = 1$, we see that the graph $G^\prime$ and the random set $Z_\chi$ are independent. A union bound over all feasible sets $Z\subset G^\prime$ shows that
	\begin{align*}
		\notag & \p_{\beta < k}^{\beta+\eps \geq k}  \left(\exists Z \in \mathcal{CS}_K \left( G^\prime \right) : |Z_\chi| >\frac{ 16 \mu_{\beta + 1}  }{\log(1/\eps)} K \right)\\
		\notag &
		\leq
		\sum_{Z\subset G^\prime: |Z|=K} \p_{\beta < k}^{\beta+\eps \geq k}  \left( Z \in \mathcal{CS}_K \left( G^\prime \right) , |Z_\chi| >\frac{ 16 \mu_{\beta + 1}  }{\log(1/\eps)} K \right)\\
		\notag &
		=
		\sum_{Z\subset G^\prime: |Z|=K} \p_{\beta < k}^{\beta+\eps \geq k}  \left( Z \in \mathcal{CS}_K \left( G^\prime \right) \right)   \p_{\beta < k}^{\beta+\eps \geq k} \left( |Z_\chi| >\frac{ 16 \mu_{\beta + 1}  }{\log(1/\eps)} K \right)\\
		\notag & 
		\overset{\eqref{eq:bernoullibound}}{\leq} \E_{\beta \leq k}^{\beta + \eps > k} \left[ \left| \mathcal{CS}_K \left( G^\prime \right) \right| \right] 
		2^K \left(2^{5d} \eps\right)^{\frac{\mu_{\beta + 1} 8 }{\log(1/\eps)}K } 
		\leq
		\E_{\beta+\eps} \left[ \left| \mathcal{CS}_K \left( \Z^d \right) \right| \right] 
		2^K \left(2^{5d} \eps\right)^{\frac{\mu_{\beta + 1} 8 }{\log(1/\eps)}K } \\
		&
		\overset{\eqref{coroeq482}}{\leq} 
		4^K \mu_{\beta + 1}^K 2^K \left( 2^{5d} \eps \right)^{ \frac{\mu_{\beta + 1} 8 }{\log(1/\eps)}K}
		= \left( 2^{5d} \right)^{ \frac{\mu_{\beta + 1} 8 }{\log(1/\eps)}K} 8^K \mu_{\beta + 1}^K e^{ - \mu_{\beta + 1} 8  K} 
		\leq 2^{-K}
	\end{align*}
	where the last inequality holds for small enough $\eps$. This proves \eqref{set z chi small}.
	Next, let us see how the sums of the diameters of vertices in the sets $Z_\chi$ grow. The constant $C_1$ was chosen in Lemma \ref{lem:moments for diameter} such that $\E_\beta \left[\exp \left( s \tfrac{\dia\left(V_\mz^{2^k}\right)}{2^{k\dxp(\beta)}} \right) \right] < e^{C_1 s^{C_1}}$ for all $s\geq 1$ and $k \in \N$. We define the function 
	\begin{equation*}
		s(\eps) \coloneqq \left(\frac{\log(1/\eps)}{  16 \mu_{\beta + 1}  C_1 }\right)^{\frac{1}{C_1}} .
	\end{equation*}
	Remember that the function $r(\eps)$ was defined by $r(\eps) = \log(1/\eps)^{-\frac{1}{2C_1}}$. Let $Z^\prime \subset G^\prime$ be a fixed set of size at most $\frac{ 16 \mu_{\beta + 1}  }{\log(1/\eps)}K$. Then we have for all small enough $\eps$ that
	\begin{align}\label{16mu}
		& \notag \p_{\beta<k}^{\beta+\eps\geq k} \left( \sum_{r(v) \in Z^\prime } \dia\left( V_v^{2^k} ; \omega \right) > r(\eps) 2^{k\dxp(\beta)} K \right) 
		\\
		& \notag
		= \p_{\beta<k}^{\beta+\eps\geq k} \left( \exp \left(s(\eps) \sum_{r(v) \in Z^\prime} \frac{\dia\left(V_v^{2^k} ;\omega \right)}{2^{k\dxp( \beta)}} \right) > \exp \left( s(\eps)r(\eps) K \right) \right)\\
		& \notag \leq \E_\beta \left[ \exp \left(s(\eps) \frac{\dia\left( V_\mz^{2^k} \right)}{2^{k \dxp(\beta)}}\right) \right]^{|Z^\prime|} e^{-s(\eps) r(\eps)K} 
		\leq
		e^{\frac{\mu_{\beta + 1}  16 K}{\log(1/\eps)} C_1 s(\eps)^{C_1} }
		e^{-s(\eps) r(\eps) K }
		\\
		& = e^{ K}  \exp \left( - \frac{\log(1/\eps)^{\frac{1}{2C_1}}}{\left( 16 \mu_{\beta + 1}  C_1\right)^{\frac{1}{C_1}}} K \right)
		\leq \left(16\mu_{\beta + 1}\right)^{-K}
	\end{align}
	where the last inequality holds for $\eps$ small enough. Denote by $\mathcal{S}_K$ the event
	\begin{equation*}
		\mathcal{S}_K= \left\{ \exists Z \in \mathcal{CS}_K(G^\prime) : \sum_{r(v) \in Z_\chi} \dia\left(V_v^{2^k}; \omega \right) > r(\eps) 2^{k\dxp(\beta)} K \right\}
	\end{equation*}
	As the random variables $\left(\dia \left(V_v^{2^k}; \omega\right) \right)_{v\in \Z^d}$, the sets $Z_\chi$, and the connections inside the graph $G^\prime$ are independent, we get that for $\eps >0$ small enough
	\begin{align*}
		&\notag  \p_{\beta < k}^{\beta+\eps \geq k} \left( \exists Z \in \mathcal{CS}_K(G^\prime) : \sum_{r(v) \in Z_\chi} \dia\left(V_v^{2^k}; \omega \right) > r(\eps) 2^{k\dxp(\beta)} K \right) 
		=
		\p_{\beta < k}^{\beta+\eps \geq k} \left( \mathcal{S}_K \right)
		\\
		&\notag 
		\leq
		\p_{\beta < k}^{\beta+\eps \geq k} \left(\mathcal{S}_K \Big|  |Z_\chi| \leq \frac{ 16 \mu_{\beta + 1} }{\log(1/\eps)} K \text{ for all } Z \in \mathcal{CS}_K(G^\prime) ,  \left|\mathcal{CS}_K(G^\prime)\right| \leq 8^K \mu_{\beta + 1}^K  \right) 	\\
		&
		\ \
		+ \p_{\beta < k}^{\beta+\eps \geq k} \left(  \left|\mathcal{CS}_K(G^\prime)\right| > 8^K \mu_{\beta + 1}^K \right) 
		+ 
		\p_{\beta < k}^{\beta+\eps \geq k} \left( \exists Z \in \mathcal{CS}_K(G^\prime) : |Z_\chi| > \frac{ 16 \mu_{\beta + 1} }{\log(1/\eps)} K \right) 
		\\
		&
		\overset{\eqref{set z chi small}}{\leq}
		\p_{\beta < k}^{\beta+\eps \geq k} \left(\mathcal{S}_K \Big|  |Z_\chi| \leq \frac{ 16 \mu_{\beta + 1} }{\log(1/\eps)} K \text{ for all } Z \in \mathcal{CS}_K(G^\prime) ,  \left|\mathcal{CS}_K(G^\prime)\right| \leq 8^K \mu_{\beta + 1}^K  \right) 
		\\
		&
		\ \ \ \ +
		\frac{\E_{\beta \leq k}^{\beta + \eps > k} \left[  \left|\mathcal{CS}_K(G^\prime)\right|  \right]}{ 8^K \mu_{\beta + 1}^K} + 2^{-K}	
		\overset{\eqref{16mu}}{\leq}
		8^K \mu_{\beta+1}^K \left(16 \mu_{\beta+1}\right)^{-K}
		+
		\frac{\E_{\beta +\eps} \left[  \left|\mathcal{CS}_K(Z^d)\right|  \right]}{ 8^K \mu_{\beta + 1}^K} + 2^{-K}	
		\\
		&
		\overset{\eqref{coroeq482}}{\leq} 3\cdot 2^{-K}.
	\end{align*}
\end{proof}

The main ingredient for the proof of Theorem \ref{theo:continuity} is the following lemma, which allows us to make comparisons between the measures $\p_{\beta < k}^{\beta+\eps \geq k}$ and $\p_{\beta < k-1}^{\beta+\eps \geq k-1}$

\begin{lemma}\label{lem:eps h functions}
	There exist a constant $C<\infty$ and functions $r:(0,1) \to \R_{\geq 0}$ and $h : \N \to \R_{\geq 0}$ with $\lim_{\eps \searrow 0} r(\eps) = 0$ and $\lim_{n \to \infty} h(n)=0$ such that for all large enough $k \in \N$
	\begin{align}\label{eq:eps h functions}
		&\notag \E_{\beta < k}^{\beta+\eps \geq k} \left[D_{V_\mz^{2^n}}\left(\mz,(2^n-1)\mo \right)\right]-\E_{\beta < k-1}^{\beta+\eps \geq k-1} \left[D_{V_\mz^{2^n}}\left(\mz,(2^n-1)\mo \right)\right] \\
		& \leq C \left(r(\eps)+h(n-k)\right) \Lambda \left(2^k, \beta\right) \Lambda \left(2^{n-k}, \beta+\eps \right) .
	\end{align}
\end{lemma}

Let us first see how this lemma implies Theorem \ref{theo:continuity}.

\begin{proof}[Proof of Theorem \ref{theo:continuity} given Lemma \ref{lem:eps h functions}]
	We want to show that for all $\beta \geq 0$, the difference $\dxp(\beta) - \dxp(\beta + \eps )$ converges to $0$ as $\eps \to 0$. In Lemma \ref{lem:leftcontinuity}, we already showed that the function $\dxp(\cdot)$ is continuous from the left, so it suffices to consider $\eps > 0$ now. We have also seen in \eqref{eq:cont:sum} that
	\begin{align}\label{eq:finalproof1}
		\dxp(\beta) - \dxp(\beta + \eps )  = \frac{1}{\log(2)} \lim_{n\to \infty} \frac{1}{ n} \sum_{k=2}^n \log\left(\frac{\E_{\beta < k}^{\beta+\eps \geq k} \left[D_{V_\mz^{2^n}} \left(\mz , (2^n-1)\mo\right)\right]}{\E_{\beta < k-1}^{\beta+\eps \geq k-1} \left[D_{V_\mz^{2^n}} \left(\mz, (2^n-1)\mo\right)\right]}\right)  \text .
	\end{align}
	Each of the summands in \eqref{eq:finalproof1} is bounded, since
	\begin{align}\label{bounded}
		\notag 1 & \leq \frac{\E_{\beta < k}^{\beta+\eps \geq k} \left[D_{V_\mz^{2^n}} \left(\mz , (2^n-1)\mo\right)\right]}{\E_{\beta < k-1}^{\beta+\eps \geq k-1} \left[D_{V_\mz^{2^n}} \left(\mz, (2^n-1)\mo\right)\right]}
		\overset{\eqref{trivia2},\eqref{eq:uniformovereps}}{\leq}
		\frac{C(\beta) \Lambda\left(2^k,\beta\right) \Lambda\left(2^{n-k},\beta+\eps\right)}{c_\beta \Lambda\left(2^{k-1},\beta\right) \Lambda\left(2^{n-k+1},\beta+\eps\right) }
		\\
		&
		\overset{\eqref{2-comparison}}{\leq} 2
		\frac{C(\beta) \Lambda\left(2^{k-1},\beta\right) \Lambda\left(2^{n-k},\beta+\eps\right)}{c_\beta \Lambda\left(2^{k-1},\beta\right) \Lambda\left(2^{n-k},\beta+\eps\right) } = \frac{2 C(\beta)}{c_\beta},
	\end{align}
	where $c_\beta, C(\beta)$ are the constants from Lemma \ref{lem:uniformovereps}, and where we used the inequalities $ \Lambda\left(2^{k},\beta\right) \leq   2\Lambda\left(2^{k-1},\beta\right)$ and $ \Lambda\left(2^{n-k+1},\beta+\eps\right)\geq  \Lambda\left(2^{n-k},\beta+\eps\right)$, which are proven in Lemma \ref{lem:auxil} below.\\
	
	Lemma \ref{lem:uniformovereps} and Lemma \ref{lem:auxil} below imply that for all $\eps \in (0,1)$ and all $k,n \in \N$ with $k\leq n$ one has
	\begin{align}
		& \notag \E_{\beta < k-1}^{\beta+\eps \geq k-1} \left[D_{V_\mz^{2^n}}  \left(\mz,(2^n-1)\mo  \right)\right]  \overset{\eqref{eq:uniformovereps}}{\geq} c_\beta  \Lambda\left(2^{k-1},\beta\right)  \Lambda\left(2^{n-k+1},\beta + \eps\right)
		\\
		& \label{eq:denominator}
		\overset{\eqref{2-comparison}}{\geq} \frac{c_\beta}{2}  \Lambda\left(2^{k},\beta\right)  \Lambda\left(2^{n-k+1},\beta + \eps\right)
		\overset{\eqref{2-comparison}}{\geq} \frac{c_\beta}{2}  \Lambda\left(2^{k},\beta\right)  \Lambda\left(2^{n-k},\beta + \eps\right) .
	\end{align}
	
	Together with Lemma \ref{lem:eps h functions}, this implies that for all large enough $k \in \N$ and $\eps \in (0,1)$
	\begin{align}\label{eq:fraction}
		& \notag \frac{\E_{\beta < k}^{\beta+\eps \geq k} \left[ D_{V_\mz^{2^n}}  \left(\mz,(2^n-1)\mo\right)\right]
			-
			\E_{\beta < k-1}^{\beta+\eps \geq k-1} \left[ D_{V_\mz^{2^n}} \left(\mz,(2^n-1)\mo  \right)\right]}{\E_{\beta < k-1}^{\beta+\eps \geq k-1} \left[D_{V_\mz^{2^n}}  \left(\mz,(2^n-1)\mo  \right)\right]}
		\\
		&\notag 
		\overset{\eqref{eq:eps h functions}}{\leq}
		\frac{ C \left(r(\eps)+h(n-k)\right) \Lambda \left(2^k, \beta\right) \Lambda \left(2^{n-k}, \beta+\eps \right)}{\E_{\beta < k-1}^{\beta+\eps \geq k-1} \left[D_{V_\mz^{2^n}}  \left(\mz,(2^n-1)\mo  \right)\right]}
		\\
		& 
		\overset{\eqref{eq:denominator}}{\leq}
		\frac{ 2 C \left(r(\eps)+h(n-k)\right) \Lambda \left(2^k, \beta\right) \Lambda \left(2^{n-k}, \beta+\eps \right)}{c_\beta  \Lambda\left(2^{k},\beta\right)  \Lambda\left(2^{n-k},\beta + \eps\right)} =  2 C \left(r(\eps)+h(n-k)\right) .
	\end{align}
	Applying the logarithm, we get that for all large enough $k \in \N$, say for $k\geq K$, one has
	\begin{align}
		&\notag \log\left(\frac{\E_{\beta < k}^{\beta+\eps \geq k} \left[ D_{V_\mz^{2^{n}}} \left(\mz,(2^n-1)\mo \right)\right]}{\E_{\beta < k-1}^{\beta+\eps \geq k-1} \left[D_{V_\mz^{2^{n}}} \left(\mz,(2^n-1)\mo \right)\right]}\right)
		\\
		& \notag
		= \log\left(1 + \frac{\E_{\beta < k}^{\beta+\eps \geq k} \left[ D_{V_\mz^{2^{n}}} \left(\mz,(2^n-1)\mo \right)\right] - \E_{\beta < k-1}^{\beta+\eps \geq k-1} \left[D_{V_\mz^{2^{n}}} \left(\mz,(2^n-1)\mo \right)\right]}{\E_{\beta < k-1}^{\beta+\eps \geq k-1} \left[D_{V_\mz^{2^{n}}} \left(\mz,(2^n-1)\mo \right)\right]}\right)
		\\
		& \label{eq:log}
		\overset{\eqref{eq:fraction}}{\leq}
		\log\left(1 +  2 C \left(r(\eps)+h(n-k)\right) \right) \leq  2 C \left(r(\eps)+h(n-k)\right) .
	\end{align}
	For fixed $\delta > 0$, take $\eps^\prime \in (0,1)$ and $K^\prime \in \N$ such that $ 2 C \left(r(\eps)+h(n-k)\right) < \delta \log(2)$ for all $\eps \in \left(0,\eps^\prime\right]$ and $n-k \geq K^\prime$. Then for all $\eps \in \left(0,\eps^\prime\right]$ one has
	\begin{align}
		&\notag \dxp(\beta) - \dxp(\beta + \eps )  =  \frac{1}{\log(2)} \lim_{n\to \infty} \frac{1}{ n} \sum_{k=2}^n \log\left(\frac{\E_{\beta < k}^{\beta+\eps \geq k} \left[D_{V_\mz^{2^{n}}}\left(\mz,(2^n-1)\mo \right)\right]}{\E_{\beta < k-1}^{\beta+\eps \geq k-1} \left[D_{V_\mz^{2^{n}}}\left(\mz,(2^n-1)\mo \right)\right]}\right) 
		\\
		& \label{1st term}
		\leq  \frac{1}{\log(2)} \limsup_{n\to \infty} \frac{1}{ n} \sum_{k=K}^{n-K^\prime} \log\left(\frac{\E_{\beta < k}^{\beta+\eps \geq k} \left[D_{V_\mz^{2^{n}}}\left(\mz,(2^n-1)\mo \right)\right]}{\E_{\beta < k-1}^{\beta+\eps \geq k-1} \left[D_{V_\mz^{2^{n}}}\left(\mz,(2^n-1)\mo \right)\right]}\right) 
		\\
		& \label{2nd term}
		+
		\frac{1}{\log(2)} \limsup_{n\to \infty} \frac{1}{ n} \sum_{k \in \{2,\ldots,n\}\setminus\{K,\ldots,n-K^\prime\}} \log\left(\frac{\E_{\beta < k}^{\beta+\eps \geq k} \left[D_{V_\mz^{2^{n}}}\left(\mz,(2^n-1)\mo \right)\right]}{\E_{\beta < k-1}^{\beta+\eps \geq k-1} \left[D_{V_\mz^{2^{n}}}\left(\mz,(2^n-1)\mo \right)\right]}\right) .
	\end{align}
	For the term in line \eqref{1st term}, each of the summands is bounded by $ 2 C \left(r(\eps)+h(n-k)\right) < \delta \log(2)$ by \eqref{eq:log}, so that
	\begin{align}
		& \notag \frac{1}{\log(2)} \limsup_{n\to \infty} \frac{1}{ n} \sum_{k=K}^{n-K^\prime} \log\left(\frac{\E_{\beta < k}^{\beta+\eps \geq k} \left[D_{V_\mz^{2^{n}}}\left(\mz,(2^n-1)\mo \right)\right]}{\E_{\beta < k-1}^{\beta+\eps \geq k-1} \left[D_{V_\mz^{2^{n}}}\left(\mz,(2^n-1)\mo \right)\right]}\right)
		\\
		& \label{1st term bound}
		\leq \frac{1}{\log(2)} \limsup_{n\to \infty} \frac{1}{ n} \sum_{k=K}^{n-K^\prime} 2 C \left(r(\eps)+h(n-k)\right)
		\leq  \frac{1}{\log(2)} \limsup_{n\to \infty} \frac{1}{ n} \sum_{k=K}^{n-K^\prime} \delta \log(2) = \delta .
	\end{align}
	Fort the term in line \eqref{2nd term}, we use that all the summands are bounded by \eqref{bounded}, and thus
	\begin{align}
		& \notag \frac{1}{\log(2)} \limsup_{n\to \infty} \frac{1}{ n} \sum_{k \in \{2,\ldots,n\}\setminus\{K,\ldots,n-K^\prime\}} \log\left(\frac{\E_{\beta < k}^{\beta+\eps \geq k} \left[D_{V_\mz^{2^{n}}}\left(\mz,(2^n-1)\mo \right)\right]}{\E_{\beta < k-1}^{\beta+\eps \geq k-1} \left[D_{V_\mz^{2^{n}}}\left(\mz,(2^n-1)\mo \right)\right]}\right) 
		\\
		& \label{2nd term bound}
		\overset{\eqref{bounded}}{\leq} \frac{1}{\log(2)} \limsup_{n\to \infty} \frac{1}{ n} \left(K+K^\prime\right) \frac{2 C(\beta)}{c_\beta} = 0 .
	\end{align}
	Inserting the upper bounds \eqref{1st term bound} and \eqref{2nd term bound} into \eqref{1st term}, respectively \eqref{2nd term}, we see that $\theta(\beta)-\theta(\beta+\eps)\leq \delta$ for all $\eps \in \left(0,\eps^\prime\right]$. As the function $\beta \mapsto \theta(\beta)$ is non-increasing, we also directly get that $|\theta(\beta)-\theta(\beta+\eps)| \leq \delta$ for all $\eps \in \left(0,\eps^\prime\right]$. As $\delta > 0$ was arbitrary, this shows continuity from the right of the distance exponent $\dxp(\cdot)$ and thus finishes the proof of Theorem \ref{theo:continuity}.
	
\end{proof}

Let us now go the the proof of Lemma \ref{lem:eps h functions}

\begin{proof}[Proof of Lemma \ref{lem:eps h functions}]
	Let $E=\left\{\{x,y\}\subset V_\mz^{2^n} : x \neq y\right\}$ be the set of edges with both end points in $V_\mz^{2^n}$. Let $\omega \in \{0,1\}^E $ be distributed according to $\p_{\beta < k}^{\beta+\eps \geq k}$ and let $\chi \in \{0,1\}^E$ be a random vector that is independent of $\omega$ and has independent coordinates with distribution defined in \eqref{eq:chi}.
	Then set $\omega^\chi (e) \coloneqq \omega(e) \vee \chi(e) = \max\{ \omega(e) , \chi(e)\}$ for all edges $e \in E$. As discussed earlier, the percolation environment $\omega^\chi$ is distributed like percolation on $V_\mz^{2^n}$ under the measure $\p_{\beta \leq k-1}^{\beta+\eps > k-1}$. Again, we define $G^\prime$ as the graph that results from contracting the blocks of the form $V_u^{2^k}$ to a vertex, called $r(u)$, for all $u\in V_\mz^{2^{n-k}}$. Note that the graph $G^\prime$ only depends on $\omega$, as the open edges in the environment $\chi$ are either within one block $V_v^{2^k}$ or between neighboring blocks $V_v^{2^k}, V_u^{2^k}$ with $\|u-v\|_\infty = 1$. \\
	
	Let $\mathcal{Z}_K$ be the event that every set $Z \in \mathcal{CS}_K\left(G^\prime\right)$ satisfies $ \sum_{r(v) \in Z } \dia \left( V_v^{2^k} ; \omega \right) \leq C^\prime K 2^{k\dxp(\beta)} $ and that every connected set $Z \in \mathcal{CS}_K\left(G^\prime\right)$ contains at least $\frac{K}{9^d 400 \mu_{\beta + 1}}$ separated good vertices with respect to the environment $\omega$.
	We also define $\mathcal{Z}_{\geq K} \coloneqq \bigcap_{t=K}^\infty \mathcal{Z}_{t}$. By \eqref{eq:connectedset goodbound}, \eqref{eq:connectedset diabound} and a union bound over all $t \geq K$ we get that 
	\begin{align}\label{eq:z}
		\p_{\beta < k}^{\beta+\eps \geq k}\left( \mathcal{Z}_{\geq K}^c \right) \leq \sum_{t=K}^\infty
		\p_{\beta < k}^{\beta+\eps \geq k}\left( \mathcal{Z}_{t}^c \right)
		\leq 10 \cdot 2^{-K}
	\end{align}
	for all large enough $K$. 
	Let $\mathcal{D}_K$ be the event
	\begin{align*}
		\mathcal{D}_K & = \left\{  \sum_{r(v) \in Z_\chi} \dia\left(V_v^{2^k}; \omega\right) \leq  r(\eps) 2^{k\dxp(\beta)} K \text{ for all } Z \in \mathcal{CS}_K(G^\prime) \right\} \\
		& \cap \left\{|Z_\chi|  \leq \frac{16 \mu_{\beta+1}}{\log(1/\eps)}K \text{ for all } Z \in \mathcal{CS}_K(G^\prime) \right\},
	\end{align*}
	and let $\mathcal{D}_{\geq K} \coloneqq \bigcap_{t=K}^\infty \mathcal{D}_{t}$. 
	From \eqref{eq:z}, \eqref{set z chi small}, and \eqref{eq:D bound} we get for all $K$ large enough and $\eps > 0$ small enough that
	\begin{align}\label{eq:20}
		& \notag \p_{\beta < k}^{\beta+\eps \geq k} \left(\left(\mathcal{Z}_{\geq K} \cap \mathcal{D}_{\geq K}\right)^c \right) \leq \sum_{t=K}^\infty \left(\p_{\beta < k}^{\beta+\eps \geq k} \left(\mathcal{Z}_t^c \right) + \p_{\beta < k}^{\beta+\eps \geq k} \left(\mathcal{D}_t^c \right)\right) \\
		& \overset{\eqref{set z chi small},\eqref{eq:D bound}}{\leq} \sum_{t=K}^\infty \p_{\beta < k}^{\beta+\eps \geq k} \left(\mathcal{Z}_t^c \right) + 4 \sum_{t=K}^{\infty}  2^{-t} \overset{\eqref{eq:z}}{\leq} 10 \cdot 2^{-K} + 4 \sum_{t=K}^{\infty}  2^{-t} \leq 
		20 \cdot 2^{-K}.
	\end{align}
	Now assume that the event $\mathcal{Z}_{\geq K}$ holds and that $D_{G^\prime}\left(r(\mz), r((2^{n-k}-1)\mo )\right)=K$. Using this, we now construct a path between $\mz$ and $(2^n-1)\mo $. Let $\big(r(\mz)=r(u_0), \ldots, r(u_K) $ $= r((2^{n-k}-1)\mo )\big)$ be a geodesic in $G^\prime$ between $r(\mz)$ and $r((2^{n-k}-1)\mo )$. In case there are multiple geodesics between $r(\mz)$ and $r((2^{n-k}-1)\mo )$, we pick one by some arbitrary deterministic rule. Using this geodesic, we can build a path between $\mz$ and $(2^{n}-1)\mo$ in the environment $\omega$ that only goes from $V_{u_i}^{2^k}$ to $V_{u_{i+1}}^{2^k}$ for $i=0,\ldots,K-1$. Inside each box $V_{u_i}^{2^k}$, we can use the geodesic inside this box to connect the entry and the exit point. This construction gives a path between $\mz$ and $(2^n-1)$ whose length is at most $K+ 
	\sum_{i=0}^{K} \dia \left(V_{u_i}^{2^k};\omega\right)$.
	The path $\left(r(\mz)=r(u_0),\ldots,r(u_K)=r((2^{n-k}-1)\mo )\right)$ is also a connected set in $G^\prime$ containing the origin $r(\mz)$ and of size $K+1$. As we assumed that the event  $\mathcal{Z}_{\geq K}$ holds, we get that
	\begin{equation}\label{eq:distupperbound}
		D_{V_\mz^{2^n}}  \left(\mz,(2^n-1)\mo ; \omega \right) \leq 
		K+ 
		\sum_{i=0}^{K} \dia \left(V_{u_i}^{2^k};\omega\right)
		\leq C^\prime (K+1) 2^{k\dxp(\beta)} + K \leq 2C^\prime K 2^{k\dxp(\beta)} .
	\end{equation}
	On the other hand, let $P=(\mz=x_0,\ldots,x_\ell=(2^n-1)\mo)$ be a geodesic from $\mz$ to $(2^n-1)\mo$ in the environment $\omega^\chi$. Define the set 
	\begin{equation*}
		\hat{P} = \left\{r(w) \in G^\prime: x_i \in V_w^{2^k} \text{ for some } i\in \{0,\ldots,\ell\}\right\},
	\end{equation*}
	which can be thought of the projection of the path $P$ onto the graph $G^\prime$. Further, note that $\hat{P}$ is a connected set in $G^\prime$.
	As the graph distance between $r(\mz)$ and $r((2^{n-k}-1)\mo)$ is $K$ in graph $G^\prime$ $($i.e., $D_{G^\prime}\left(r(\mz), r((2^{n-k}-1) \mo)\right)=K)$, the projection $\hat{P}$ needs to have a size of at least $K$. In the following, we will show that the length of the path $P$ is at least of the same order as $\left|\hat{P}\right|2^{k\dxp(\beta)}$. \\
	
	\noindent
	Define the sets $\hat{P}^\cN$ and $\hat{P}^{\cN}_\chi$ by
	\begin{align*}
		&\hat{P}^\cN  \coloneqq \bigcup_{r(v) \in \hat{P}} \cN(r(v)) = \left\{r(u) \in G^\prime : \cN(r(u)) \cap \hat{P} \neq \emptyset\right\}, \text{ and }\\
		& \hat{P}^{\cN}_\chi \coloneqq \left\{r(v) \in \hat{P}^\cN : \exists a \in V_v^{2^k}, b \in V_\mz^{2^n} \text{ with } \chi(\{a,b\})=1\right\} .
	\end{align*}
	The definition of the set $\hat{P}^\cN$ directly implies that
	\begin{align*}
		K \leq |\hat{P}| \leq |\hat{P}^\cN| \leq 3^d |\hat{P}|.
	\end{align*}
	As we assumed that the event $\mathcal{D}_{\geq K}$ holds, we directly get that
	\begin{align}\label{eq:hat P N chi size}
		|\hat{P}^\cN_\chi| \leq \frac{ 16 \mu_{\beta + 1} }{\log(1/\eps)} |\hat{P}^\cN| \leq \frac{3^d 16 \mu_{\beta + 1} }{\log(1/\eps)} |\hat{P}|
	\end{align}
	
	The set $\hat{P}$ has size at least $K$ and - as we are assuming that the event  $\mathcal{Z}_{\geq K}$ holds - contains at least $\frac{\left|\hat{P}\right|}{9^d400\mu_{\beta + 1}}$ separated good vertices with respect to the environment $\omega$. Say that $Z \subset \hat{P}$ is a subset of separated good vertices (for the environment $\omega$) with $|Z| \geq \frac{\left|\hat{P}\right|}{9^d400\mu_{\beta + 1}}$. Then we also get the set $Z \setminus \hat{P}^{\cN}_\chi$ satisfies
	\begin{align*}
		\left|Z \setminus \hat{P}^{\cN}_\chi\right| \geq \left|Z \right| - \left| \hat{P}^{\cN}_\chi\right|
		\overset{\eqref{eq:hat P N chi size}}{\geq } \frac{\left|\hat{P}\right|}{9^d400\mu_{\beta + 1}} - \frac{3^d 16 \mu_{\beta + 1} }{\log(1/\eps)} |\hat{P}|
		\geq
		\frac{\left|\hat{P}\right|}{9^d500\mu_{\beta + 1}},
	\end{align*}
	where the last inequality holds for $\eps$ small enough.
	Now consider the situation where the path $P$ crosses a block $V_w^{2^k}$ with $r(w) \in Z \setminus \hat{P}^{\cN}_\chi$, in the sense that it starts somewhere outside of $\bigcup_{r(u) \in G^\prime : \|u-w\|_\infty \leq 1}  V_u^{2^k}$, then is inside the set $V_w^{2^k}$, and then leaves the set $\bigcup_{r(u) \in G^\prime : \|u-w\|_\infty \leq 1}  V_u^{2^k}$ again. As the set $\bigcup_{r(u) \in G^\prime : \|u-w\|_\infty \leq 1}  V_u^{2^k}$ is not adjacent to any edges induced by $\chi$, this part of the path can only use the edges induced by $\omega$. In this case, the path $P$ already needs to make at least $\delta 2^{k\dxp(\beta)}$ many steps inside the set $\bigcup_{r(u) \in G^\prime : \| u - w \|_\infty \leq 1} V_u^{2^k}$, as the block $V_w^{2^k}$ was assumed to be good and the path $P$ was assumed to be self-avoiding. The path $P$ crosses at least $\frac{|\hat{P}|}{9^d500\mu_{\beta + 1}}-2$ separated good boxes, where the subtraction of two is necessary because the path $P$ might touch good boxes at the beginning/end without crossing them. The sets $\bigcup_{r(u) \in G^\prime : \| u - w \|_\infty \leq 1} V_u^{2^k}$ are not directly connected for different vertices $r(w) \in Z \setminus \hat{P}^{\cN}_\chi$, as we assumed that $Z$ is a set of separated good vertices. So in particular, for each box of the form $V_w^{2^k}$ with $r(w) \in Z \setminus \hat{P}^{\cN}_\chi$, the path $P$ needs to have at least $\delta 2^{k\theta(\beta)}$ many steps inside this box. This already implies that
	\begin{align}\label{eq:distlowerbound}
		D_{V_\mz^{2^n}}\left(\mz, (2^n-1)\mo ; \omega^\chi \right) = \ell = \text{length}(P) \geq \left(\frac{|\hat{P}|}{9^d 500\mu_{\beta + 1}}-2\right) \delta 2^{k\dxp(\beta)} \geq c_1 2^{k \dxp(\beta)} .
	\end{align}
	for $c_1 = \frac{|\hat{P}|}{9^d 600\mu_{\beta + 1}}$ and $k$ large enough.
	This shows a lower bound on the length of paths in the environment $\omega^\chi$.
	Combining the inequalities \eqref{eq:distupperbound} and \eqref{eq:distlowerbound} we get that for $K=D_{G^\prime} \left( r(\mz), r((2^{n-k}-1)\mo ) \right)$
	\begin{align*}
		2C^\prime K 2^{k\dxp(\beta)} \geq D_{V_\mz^{2^n}} \left(\mz,(2^n-1)\mo ; \omega \right) \geq D_{V_\mz^{2^n}} \left(\mz,(2^n-1)\mo ; \omega^\chi \right)  \geq  c_1 \left|\hat{P}\right| 2^{k \dxp(\beta)}
	\end{align*}
	and thus
	\begin{equation}\label{eq:wandering}
		\left| \hat{P} \right| \leq 
		\frac{2 C^\prime K}{c_1} 
		= \frac{2 C^\prime}{c_1} D_{G^\prime}\left(r(\mz) , r((2^{n-k}-1)\mo )\right) \eqqcolon C_w D_{G^\prime}\left(r(\mz), r((2^{n-k}-1)\mo ) \right) \text .
	\end{equation}
	So the shortest path $P$ between $\mz$ and $(2^n-1)\mo $ in the environment $\omega^\chi = \omega\vee \chi$ does not touch more than $C_w D_{G^\prime}\left(r(\mz),r((2^{n-k}-1)\mo )\right)$ blocks of the form $V_w^{2^k}$. This is a useful observation, as the path also needs to touch at least $D_{G^\prime}\left(r(\mz),r((2^{n-k}-1)\mo )\right)$ many blocks of the form $V_w^{2^k}$.
	Using this, we can bound the difference 
	\begin{equation*}
		\Delta_\eps D_{V_\mz^{2^n}} \coloneqq 
		D_{V_\mz^{2^n}} \left(\mz,(2^n-1)\mo ;\omega\right) - 
		D_{V_\mz^{2^n}} \left(\mz,(2^n-1)\mo ;\omega^\chi\right)
	\end{equation*}
	Let $P = \left(x_0,\ldots,x_\ell\right)$ be a geodesic between $x_0=\mz$ and $x_\ell=(2^n-1)\mo $ in the environment $\omega^\chi$. Define the set $\hat{P} = \left\{r(w) \in G^\prime: x_i \in V_w^{2^k} \text{ for some } i \in \{0,\ldots,\ell\}\right\}$. It is proven in Claim \ref{claim:geodesic comparsions} below that the distance between $\mz$ and $(2^n-1)\mo$ in the environment $\omega$ is at most $\ell+ 2 \sum_{r(v) \in \hat{P}_\chi} \dia \left(V_v^{2^k};\omega\right)$.
	This already implies
	\begin{align}\label{eq:boundongoodevents}
		&\notag \Delta_\eps D_{V_\mz^{2^n}}  = D_{V_\mz^{2^n}} \left(\mz,(2^n - 1)\mo  ; \omega\right) - \ell 
		\leq 2 \sum_{r(v) \in \hat{P}_\chi} \dia\left(V_v^{2^k};\omega\right) \\
		& \leq 2 \left|\hat{P}_\chi\right| r(\eps) \overset{\eqref{eq:wandering}}{\leq} 2 C_w r(\eps) D_{G^\prime} \left(r(\mz),r((2^{n-k}-1)\mo )\right) 2^{k\dxp(\beta)}
	\end{align}
	for small enough $\eps$ and when $ \mathcal{D}_{\geq K} \cap \mathcal{Z}_{\geq K}$ and $D_{G^\prime}\left(r(\mz),r((2^{n-k}-1)\mo )\right) = K$ hold for large enough $K$. So this gives us a bound $\Delta_\eps D_{V_\mz^{2^n}}$ that goes to 0, as $\eps \searrow 0$. This bound only holds on the previously mentioned event, but we can also choose $K$, depending on $n-k$, in such a way such that the probability of this event goes to $1$ as $n-k \to \infty$. We have that
	\begin{align}
		&\notag \E_{\beta < k}^{\beta+\eps \geq k} \left[D_{V_\mz^{2^n}}\left(\mz,(2^n-1)\mo \right)\right]-\E_{\beta < k-1}^{\beta+\eps \geq k-1} \left[D_{V_\mz^{2^n}}\left(\mz,(2^n-1)\mo \right)\right] 
		= \E_{\beta < k}^{\beta+\eps \geq k} \left[ \Delta_\eps D_{V_\mz^{2^n}} \right]\\
		& \label{eq:zeile1} = \E_{\beta < k}^{\beta+\eps \geq k} \Big[ \Delta_\eps D_{V_\mz^{2^n}}	\cdot \mathbbm{1}_{\left\{ \mathcal{D}_{\geq n-k} \cap \mathcal{Z}_{\geq n-k} \right\}} \mathbbm{1}_{\left\{D_{G^\prime}\left( r(\mz), r((2^{n-k} - 1)\mo ) \right) \geq n-k\right\}} \Big] \\
		& \ \ \label{eq:zeile2} + \E_{\beta < k}^{\beta+\eps \geq k} \Big[ \Delta_\eps D_{V_\mz^{2^n}}	\cdot
		\mathbbm{1}_{\left\{ \mathcal{D}_{\geq n-k} \cap \mathcal{Z}_{\geq n-k} \right\}} \mathbbm{1}_{\left\{D_{G^\prime}\left( r(\mz), r((2^{n-k} - 1)\mo )  \right) < n-k\right\}} \Big]\\
		& \label{eq:zeile3} \ \ +  \E_{\beta < k}^{\beta+\eps \geq k} \Big[ \Delta_\eps D_{V_\mz^{2^n}} 	\mathbbm{1}_{\left\{ \left(\mathcal{D}_{\geq n-k} \cap \mathcal{Z}_{\geq n-k}\right)^c \right\}}  \Big] .
	\end{align}
	We upper bound the expression in \eqref{eq:zeile1} by
	\begin{align*}
		& \E_{\beta < k}^{\beta+\eps \geq k} \Big[ \Delta_\eps D_{V_\mz^{2^n}}	\cdot \mathbbm{1}_{\left\{ \mathcal{D}_{\geq n-k} \cap \mathcal{Z}_{\geq n-k} \right\}} \mathbbm{1}_{\left\{D_{G^\prime}\left( r(\mz), r((2^{n-k} - 1)\mo ) \right) \geq n-k\right\}} \Big] \\
		& \overset{\eqref{eq:boundongoodevents}}{\leq} \E_{\beta < k}^{\beta+\eps \geq k} \left[ D_{G^\prime}\left( r(\mz), r((2^{n-k} - 1)\mo ) \right) 2 C_w r(\eps) 2^{k \dxp(\beta)}  \right]
		\\
		&
		=
		2 C_w r(\eps) 2^{k \dxp(\beta)}
		\E_{\beta + \eps} \left[ D_{V_{\mz}^{2^{n-k}}}\left( \mz, (2^{n-k} - 1)\mo  \right)   \right]
		\leq
		2 C_w r(\eps) 2^{k\dxp(\beta)} \Lambda(2^{n-k},\beta+\eps) 
	\end{align*}
	for small enough $\eps > 0$ and large enough $k, n-k$. 
	
	Using that $\Delta_\eps D_{V_\mz^{2^n}} \leq D_{V_\mz^{2^n}}\left(\mz,(2^n-1)\mo;\omega\right)$, the term \eqref{eq:zeile3} can be upper bounded with the Cauchy-Schwarz inequality by
	\begin{align*}
		& \E_{\beta < k}^{\beta+\eps \geq k} \Big[ \Delta_\eps D_{V_\mz^{2^n}} 	\mathbbm{1}_{\left\{ \left(\mathcal{D}_{\geq n-k} \cap \mathcal{Z}_{\geq n-k}\right)^c \right\}}  \Big] \\
		& 
		\leq \sqrt{\E_{\beta < k}^{\beta+\eps \geq k} \left[D_{V_\mz^{2^n}}(\mz,(2^n-1)\mo )^2\right]} \sqrt{\E_{\beta < k}^{\beta+\eps \geq k} \left[ \mathbbm{1}^2_{ \left\{ \left(\mathcal{D}_{\geq n-k} \cap \mathcal{Z}_{\geq n-k} \right)^c \right\} } \right]}\\
		& \overset{\eqref{eq:20}}{\leq} \sqrt{C_\beta}  \Lambda(2^k,\beta) \Lambda(2^{n-k},\beta+\eps) \sqrt{20} \cdot 2^{-\frac{n-k}{2}}
		\eqqcolon  \Lambda(2^k,\beta) \Lambda(2^{n-k},\beta+\eps) h_1(n-k) ,
	\end{align*}
	where $C_\beta$ is the constant of Lemma \ref{lem:uniform2ndmomentbound:renormalized}. Further, \eqref{eq:zeile2} can be upper bounded with the same method  by
	\begin{align*}
		&\E_{\beta < k}^{\beta+\eps \geq k} \Big[ \Delta_\eps D_{V_\mz^{2^n}}	\cdot
		\mathbbm{1}_{\left\{ \mathcal{D}_{\geq n-k} \cap \mathcal{Z}_{\geq n-k} \right\}} \mathbbm{1}_{\left\{D_{G^\prime}\left( r(\mz), r((2^{n-k} - 1)\mo )  \right) < n-k\right\}} \Big] \\
		& 
		\leq \sqrt{\E_{\beta < k}^{\beta+\eps \geq k} \left[D_{V_\mz^{2^n}}(\mz,(2^n-1)\mo )^2\right]} \sqrt{\E_{\beta < k}^{\beta+\eps \geq k} \left[ \mathbbm{1}^2_{\left\{ D_{G^\prime}\left( r(\mz), r((2^{n-k} - 1)\mo )  \right) < n-k \right\}} \right]}\\
		& \leq \sqrt{C_\beta}  \Lambda(2^k,\beta) \Lambda(2^{n-k},\beta+\eps) \left(\p_{\beta + 1}\left( D_{V_\mz^{2^{n-k}}}   (\mz,(2^{n-k}-1)\mo ) < n-k \right)^{1/2}  \right)\\
		&
		\eqqcolon  \Lambda(2^k,\beta) \Lambda(2^{n-k},\beta+\eps) h_2(n-k) .
	\end{align*}
	Note that $h_2(n-k) \to 0$ as $n-k \to \infty$, as we expect that $D_{V_\mz^{2^{n-k}}}   (\mz,(2^{n-k}-1)\mo )$ is of order $2^{(n-k)\theta(\beta+1)} \gg n-k$ under the measure $\p_{\beta+1}$, see \eqref{eq:baeumler1}. This allows us to bound all the expressions occurring in lines \eqref{eq:zeile1}, \eqref{eq:zeile2} and \eqref{eq:zeile3}.
	Using that $2^{k\theta(\beta)} \leq \Lambda \left(2^k,\beta\right)$ by \eqref{eq:subsuperimpli}, we see that
	\begin{align*}
		& \E_{\beta < k}^{\beta+\eps \geq k} \left[D_{V_\mz^{2^n}}\left(\mz,(2^n-1)\mo \right)\right]-\E_{\beta < k-1}^{\beta+\eps \geq k-1} \left[D_{V_\mz^{2^n}}\left(\mz,(2^n-1)\mo \right)\right] \\
		&
		\leq 2 C_w r(\eps) 2^{k\dxp(\beta)} \Lambda(2^{n-k},\beta+\eps) + \Lambda(2^k,\beta) \Lambda(2^{n-k},\beta+\eps) \left(h_1(n-k)  + h_2(n-k) \right)\\
		&
		\leq \big(2 C_w r(\eps)  + h_1(n-k)  + h_2(n-k) \big) \Lambda(2^k,\beta) \Lambda(2^{n-k},\beta+\eps)
		\\
		&
		\leq
		C \big( r(\eps)  + h(n-k)  \big) \Lambda(2^k,\beta) \Lambda(2^{n-k},\beta+\eps)
	\end{align*}
	with $C=2C_w \vee 1$ and $h(n-k)=h_1(n-k)+h_2(n-k)$.
\end{proof}

\begin{claim}\label{claim:geodesic comparsions}
	Let $P = \left(x_0,\ldots,x_\ell\right)$ be a path between $x_0=\mz$ and $x_\ell=(2^n-1)\mo $ in the environment $\omega^\chi$. Define the set 
	\begin{equation*}
		\hat{P} = \left\{r(w) \in G^\prime: x_i \in V_w^{2^k} \text{ for some } i \in \{0,\ldots,\ell\}\right\}
	\end{equation*}
	Then
	\begin{equation*}
		D_{V_\mz^{2^n}}\left(\mz, (2^n-1)\mo ; \omega \right) \leq \ell + 2 \sum_{r(v) \in \hat{P}_\chi} \dia \left(V_v^{2^k};\omega\right).
	\end{equation*}
\end{claim}

\begin{proof}
	We build a path $\left(y_0, \ldots, y_{\tilde{\ell}}\right)$ between $\mathbf{0}$ and $\left(2^n-1\right) \mathbf{1}$ in the environment $\omega$ as follows:
	
	As long as $\omega\left(\left\{x_i, x_{i+1}\right\}\right)=1$, we follow the path $P$. If $\omega\left(\left\{x_i, x_{i+1}\right\}\right)=0$, say with $x_i \in V_u^{2^k}$ and $x_{i+1} \in V_w^{2^k}$, we take the shortest path from $x_i$ to $x_{i^{\prime}}$ where $i^{\prime}=\max \left\{\ell^{\prime}: x_{\ell^{\prime}} \in V_w^{2^k} \cup V_u^{2^k}\right\}$. That means, we go to the point $x_{i^{\prime}}$ where the path $\left(x_0, \ldots, x_\ell\right)$ leaves the boxes $V_w^{2^k} \cup V_u^{2^k}$ for the last time. As $\omega$ and $\omega^\chi$ can only differ at edges with length in $\left[2^{k-1}, 2^k-1\right]$, we already have $\|u-w\|_{\infty} \in\{0,1\}$, and thus the boxes $V_u^{2^k}$ and $V_w^{2^k}$ are either identical or connected by a nearest-neighbor-edge. So the length of the path segment between $x_i$ and $x_{i^{\prime}}$ constructed by this procedure is bounded by 
	\begin{align*}
		D_{V_\mz^{2^n}} (x_i, x_{i^\prime}) \leq \begin{cases}
			\dia\left(V_u^{2^k} ; \omega\right)+\dia\left(V_w^{2^k} ; \omega\right) + 1 & \text{ if } u \neq w \\
			\dia\left(V_u^{2^k} ; \omega\right) & \text{ if } u = w 
		\end{cases} .
	\end{align*}
	Thus, the length of path that we construct increases by at most $\dia\left(V_u^{2^k} ; \omega\right)+\dia\left(V_w^{2^k} ; \omega\right)$ if $u\neq w$, respectively by $\dia\left(V_u^{2^k} ; \omega\right)$ if $u=w$.
	When at $x_{i^{\prime}}$, we follow the path $P$ again until there appears again an edge $e=\left\{x_j, x_{j+1}\right\}$ with $\omega^\chi(e)=1=1-\omega(e)$ and do the same procedure as before. 
	
	Say that $x_i \in V_u^{2^k}, x_{i+1} \in V_w^{2^k}$, $\chi(\{x_i, x_{i+1}\}) = 1 = 1- \omega(\{x_i, x_{i+1}\})$, and we insert a path between $x_i$ and $x_{i^\prime}$ as described above.
	After leaving $x_{i^\prime}$, the path $\left(x_{i^\prime+1}, \ldots, x_\ell \right)$ can never return to $V_{u}^{2^k}$ by the definition of $i^\prime = \max \left\{\ell^{\prime}: x_{\ell^{\prime}} \in V_w^{2^k} \cup V_u^{2^k}\right\}$. So it can happen at most twice that the insertion of a path between pairs of vertices with one endpoint in $V_u^{2^k}$ is necessary, namely once in order to get to $x_{i^{\prime}}$ and, if $x_{i^\prime} \in V_{u}^{2^k}$ and $\chi\left(\{x_{i^\prime},x_{i^\prime + 1}\}\right)=1= 1- \omega\left(\{x_{i^\prime},x_{i^\prime + 1}\}\right)$, also once in order to leave $x_{i^{\prime}}$. The same also holds for $r(w)$ instead of $r(u)$. It can happen at most twice that the insertion of a path between pairs of vertices with one endpoint in $V_w^{2^k}$ is necessary, namely once in order to get to $x_{i^{\prime}}$ and, if $x_{i^\prime} \in V_{w}^{2^k}$  and $\chi\left(\{x_{i^\prime},x_{i^\prime + 1}\}\right)=1= 1- \omega\left(\{x_{i^\prime},x_{i^\prime + 1}\}\right)$, also once in order to leave $x_{i^{\prime}}$. Thus, for each $r(v) \in \hat{P}_\chi$ it can happen at most twice that the insertion of a path between a pair of vertices with at least one of them in $V_v^{2^k}$ is necessary.
	
	So the construction above gives a path from $\mathbf{0}$ to $\left(2^n-1\right) \mathbf{1}$ in the environment $\omega$, and this path has length at most $\ell+2 \sum_{r(v) \in \hat{P}_\chi} \operatorname{Diam}\left(V_v^{2^k} ; \omega\right)$.
\end{proof}

\subsection{Proof of the auxiliary lemmas}\label{sec:auxil}

\noindent
In this section, we state and prove several auxiliary lemmas.

\begin{lemma}\label{lem:auxil}
	For each $n\in \N, d\in \N$ and $\beta \geq 0$, the function $\Lambda(n,\beta)$ defined in Lemma \ref{lem:submultiplicativity} satisfies
	\begin{equation*}
		\Lambda(n,\beta) \geq n^{\dxp(d,\beta)}
	\end{equation*}
	and
	\begin{equation}\label{2-comparison}
		\Lambda(n,\beta) \leq \Lambda (2n,\beta) \leq 2 \Lambda(n,\beta).
	\end{equation}
\end{lemma}

\begin{proof}
	The fact that $\Lambda(n,\beta) \geq n^{\dxp(d,\beta)}$ directly follows from the submultiplicativity of $\Lambda$, since
	\begin{equation*}
		\theta(d,\beta) = \inf_{n\in \N} \frac{\log\left(\Lambda(n,\beta)\right)}{\log(n)}.
	\end{equation*}
	The first inequality $\Lambda (2n,\beta) \leq 2 \Lambda(n,\beta)$ also follows directly from the submultiplicativity, since
	\begin{equation*}
		\Lambda (2n,\beta) \leq \Lambda(2,\beta) \Lambda(n,\beta) = 2 \Lambda(n,\beta).
	\end{equation*}
	For the second inequality $\Lambda(n,\beta) \leq \Lambda (2n,\beta)$, it suffices to show that for all $x,y \in V_\mz^n$ one has
	\begin{equation*}
		\E_\beta \left[D_{V_\mz^n} (x,y)\right] \leq \E_\beta \left[D_{V_\mz^{2n}} (2x,2y)\right] .
	\end{equation*}
	This follows directly from the self-similarity of the model. We define the graph $G^\prime=(V^\prime, E^\prime)$ by starting with the random graph with vertex set $V_\mz^{2n}$ and contracting the boxes $\left(V_u^2\right)_{u\in V_\mz^n}$. We write $r(u)$ for the vertex in $V^\prime$ that corresponds to the set $V_u^2$. This construction directly implies that $D_{V_\mz^{2n}} (2x,2y) \geq D_{G^\prime} (r(x),r(y))$. By the self-similarity of the model, the graph $G^\prime$ has the exact same distribution as long-range percolation with measure $\p_\beta$ on $V_\mz^n$ and thus
	\begin{equation*}
		\E_\beta \left[D_{V_\mz^{2n}} (2x,2y)\right] \geq 
		\E_\beta \left[D_{G^\prime} (r(x),r(y))\right]
		=
		\E_\beta \left[D_{V_\mz^{n}} (x,y)\right].
	\end{equation*}
\end{proof}

The next lemma is an analogous version of Lemma \ref{lem:linearspacingcompanion} for the mixed measure $\p_{\beta < k}^{\beta+\eps \geq k}$.

\begin{lemma}\label{lem:linearspacingmixed}
	For all $k\in \N, \delta, \eps \in \left[0,1\right]$ with  $\frac{1}{2^k} < \delta \leq \frac{1}{4}$ and $u,w \in \Z^d \setminus \{\mz\}$ with $\|u\|_\infty \geq 2$ and $u\neq w$ one has
	\begin{align*}
		\p_{\beta < k}^{\beta+\eps \geq k} \big( \exists x,y \in V_{\mz}^{2^k} : \|x-y\|_\infty \leq \delta 2^k, x \sim V_u^{2^k}, y \sim V_w^{2^k} \ \big|  \ V_\mz^{2^k} \sim V_u^{2^k}, V_\mz^{2^k} \sim V_w^{2^k} \big) \leq C^\prime \delta^{1/2}
	\end{align*}
	for some $C^\prime = C^\prime(d,\beta)<\infty$.
\end{lemma}

\begin{proof}
	Using the result of Lemma \ref{lem:linearspacingcompanion}, we get that
	\begin{align}
		& \label{eq:nominator}
		\p_{\beta < k}^{\beta+\eps \geq k} \big( \exists x,y \in V_{\mz}^{2^k} : \|x-y\|_\infty \leq \delta 2^k, x \sim V_u^{2^k}, y \sim V_w^{2^k} ,  V_\mz^{2^k} \sim V_u^{2^k}, V_\mz^{2^k} \sim V_w^{2^k} \big)\\
		& \notag
		=
		\p_{\beta < k}^{\beta+\eps \geq k} \big( \exists x,y \in V_{\mz}^{2^k} : \|x-y\|_\infty \leq \delta 2^k, x \sim V_u^{2^k}, y \sim V_w^{2^k} \big)\\
		& \notag
		\leq
		\p_{\beta +\eps} \big( \exists x,y \in V_{\mz}^{2^k} : \|x-y\|_\infty \leq \delta 2^k, x \sim V_u^{2^k}, y \sim V_w^{2^k} \big) \\
		&
		\notag
		= \p_{\beta +\eps} \big( \exists x,y \in V_{\mz}^{2^k} : \|x-y\|_\infty \leq \delta 2^k, x \sim V_u^{2^k}, y \sim V_w^{2^k} \ | \  V_u^{2^k} \sim V_\mz^{2^k} \sim V_w^{2^k} \big) \p_{\beta + \eps} \left(V_u^{2^k} \sim V_\mz^{2^k} \sim V_w^{2^k}\right)
		\\
		& \label{eq:nominator2}
		\overset{\eqref{eq:graphspacingcompanion}}{\leq}
		C_d^\prime \delta^{1/2} \lceil\beta + \eps \rceil^2  \p_{\beta + \eps} \left(V_u^{2^k} \sim V_\mz^{2^k} \sim V_w^{2^k}\right)
		\leq
		C_d^\prime \delta^{1/2} \lceil\beta + 1 \rceil^2  \p_{\beta + \eps} \left(V_u^{2^k} \sim V_\mz^{2^k} \sim V_w^{2^k}\right) .
	\end{align}
	Now note that for all $u,w \in \Z^d \setminus \{\mz\}$ with $u \neq w$ one has
	\begin{align}\label{fraction}
		\frac{\p_{\beta + \eps} \left(V_u^{2^k} \sim V_\mz^{2^k} \sim V_w^{2^k}\right)}{\p_{\beta < k}^{\beta+\eps>k} \left(V_u^{2^k} \sim V_\mz^{2^k} \sim V_w^{2^k}\right)}
		\leq
		\frac{\p_{\beta + 1} \left(V_u^{2^k} \sim V_\mz^{2^k} \sim V_w^{2^k}\right)}{\p_{\beta} \left(V_u^{2^k} \sim V_\mz^{2^k} \sim V_w^{2^k}\right)}
		=
		\frac{\p_{\beta + 1} \left(\mz \sim u\right) \p_{\beta + 1} \left(\mz \sim w\right)}{\p_{\beta} \left(\mz \sim u\right) \p_{\beta} \left(\mz \sim w\right)}.
	\end{align}
	The last fraction is uniformly bounded over all choices of $u,w \in \Z^d \setminus \{\mz\}$, since the probability of a connection under the measure $\p_{\tilde{\beta}}$ is bounded from above and below by
	\begin{equation}\label{upperlowerbeta}
		\frac{(4d)^{-2d} \tilde{\beta}}{\|v\|_\infty^{2d}} \wedge \frac{1}{2} \leq \p_{\tilde{\beta}} \left(\mz \sim v\right) \leq \frac{2^{2d} \tilde{\beta}}{\|v\|_\infty^{2d}}
	\end{equation}
	for all $\tilde{\beta} \geq 0$, $v\in \Z^d$ with $\|v\|_\infty \geq 2$; This was proven in \cite[(3) and (4)]{baeumler2022distances}. So in particular both the numerator and the denominator in the last term of \eqref{fraction} are always of the same order. Dividing by $\p_{\beta < k}^{\beta+\eps>k} \left(V_u^{2^k} \sim V_\mz^{2^k} \sim V_w^{2^k}\right)$ in both \eqref{eq:nominator} and \eqref{eq:nominator2} shows that
	\begin{align*}
		& \p_{\beta < k}^{\beta+\eps \geq k} \big( \exists x,y \in V_{\mz}^{2^k} : \|x-y\|_\infty \leq \delta 2^k, x \sim V_u^{2^k}, y \sim V_w^{2^k} \ | \  V_\mz^{2^k} \sim V_u^{2^k}, V_\mz^{2^k} \sim V_w^{2^k} \big) \\
		&
		\leq
		C_d^\prime \delta^{1/2} \lceil\beta + 1 \rceil^2  \frac{\p_{\beta + \eps} \left(V_u^{2^k} \sim V_\mz^{2^k} \sim V_w^{2^k}\right)}{\p_{\beta < k}^{\beta+\eps>k} \left(V_u^{2^k} \sim V_\mz^{2^k} \sim V_w^{2^k}\right)}
		\overset{\eqref{fraction}}{\leq}
		C_d^\prime \delta^{1/2} \lceil\beta + 1 \rceil^2  \frac{\p_{\beta + 1} \left(\mz \sim w\right) \p_{\beta + 1} \left(\mz \sim u\right)}{\p_{\beta} \left(\mz \sim w\right) \p_{\beta} \left(\mz \sim u\right)}\\
		&
		\leq
		C^\prime \delta^{1/2}
	\end{align*}
	for some constant $C^\prime<\infty$ depending only on the dimension $d$ and $\beta$.
\end{proof}

We also introduce the following elementary result.

\begin{lemma}\label{lem:number of edges conditioned}
	If $X$ is the sum of finitely many independent Bernoulli random variables, then for all $K\in \N_{>0}$
	\begin{align*}
		\p\left(X \geq K | X \geq 1\right) \leq \frac{2(\E\left[X\right]\vee 1)}{K} .
	\end{align*}
\end{lemma}

\begin{proof}
	If $X=\sum_{i=1}^{n} X_i$ is the sum of independent Bernoulli random variables, then
	\begin{align}\label{eq:pbound}
		\notag \p(X\geq 1) & = 1- \p(X=0) =  1- \p(X_i = 0 \text{ for } i=1,\ldots,n) = 1 - \prod_{i=1}^{n} \p(X_i=0) \\
		& = 1-\prod_{i=1}^{n} (1-\E\left[X_i\right]) 
		\geq
		1-\prod_{i=1}^{n} e^{-\E\left[X_i\right]} = 1-e^{-\E\left[X \right]} \geq \frac{\E\left[X\right]\wedge 1}{2} ,
	\end{align}
	where we used the elementary inequality $1-e^{-s} \geq \frac{s\wedge 1}{2}$ in the last step.
	For $\E\left[X\right] > 1$, the above inequality says that $\p(X\geq 1) \geq 0.5$. Markov's inequality directly shows that $\p(X\geq K) \leq \frac{1}{K} \E\left[X\right]$. Combining these two inequalities we get that
	\begin{equation}\label{eq:groesser1}
		\p\left(X \geq K | X \geq 1\right) =
		\frac{\p\left(X \geq K\right)}{\p\left( X \geq 1\right)}
		\leq
		\frac{\tfrac{1}{K} \E \left[X\right]}{0.5} = \frac{2}{K} \E \left[X\right] 
	\end{equation}
	if $\E\left[X\right]>1$.
	Contrary to that, for $\E\left[X\right] \leq 1$, inequality \eqref{eq:pbound} says that $\p(X\geq 1) \geq 0.5 \E\left[X\right]$. Combining this again with the inequality $\p(X\geq K) \leq \frac{1}{K} \E\left[X\right]$, we get that
	\begin{equation}\label{eq:kleiner1}
		\p\left(X \geq K | X \geq 1\right)
		=
		\frac{\p\left(X \geq K\right)}{\p\left( X \geq 1\right)}
		\leq
		\frac{\tfrac{1}{K} \E \left[X\right]}{0.5 \E\left[X\right]} = \frac{2}{K}.
	\end{equation}
	The statement of the lemma directly follows by combining inequalities \eqref{eq:groesser1} and \eqref{eq:kleiner1}.
\end{proof}

With this, we are ready to go to the proofs of Lemmas \ref{lem:goodeventB} and \ref{lem:goodeventA}. We only do the proof of Lemma \ref{lem:goodeventA}, the proof of Lemma \ref{lem:goodeventB} works similar.

\begin{proof}[Proof of Lemma \ref{lem:goodeventA}]
	We write $\p^{u,v}\left(\cdot\right)$ for the conditioned probability measure $\p_{\beta<k}^{\beta+\eps>k} \big( \cdot \ \big| \ V_u^{2^k} \sim V_\mz^{2^k} \sim V_v^{2^k} \big)$. We define the event 
	\begin{align*}
		& \mathcal{A}(K,\delta_1,\delta)  = 
		\left\{\|x-y\|_\infty > \delta_1 2^k \text{ for all } x,y \in V_{\mz}^{2^k} \text{ with } x\sim V_u^{2^k}, y\sim V_v^{2^k} \right\}\\
		&
		\cap
		\left\{D_{V_{\mz}^{2^k}}\left(x,B_{\delta_1 2^k}(x)^c\right) > \delta 2^{k\dxp(\beta)} \text{ for all } x \in V_{\mz}^{2^k} \text{ with } x\sim V_u^{2^k} \right\}\\
		&
		\cap
		\left\{ \left| \left\{x\in V_{\mz}^{2^k} : x\sim V_u^{2^k} \right\} \right| \leq K \right\}
	\end{align*}
	and observe that 
	\begin{equation}\label{supset}
		\mathcal{A}_{u,v} (\delta) 
		= 
		\bigcap_{ \substack{x \in V_\mz^{2^k}: \\ x \sim V_u^{2^k}} } \ \bigcap_{ \substack{ y \in V_\mz^{2^k}: \\ y \sim V_v^{2^k}} } \left\{D_{V_\mz^{2^k}}\left(x,y\right) \geq \delta 2^{k\dxp(\beta)} \right\} \supset  \mathcal{A}(K,\delta_1,\delta) \text .
	\end{equation}
	Indeed, if the event $\mathcal{A}_{u,v} (\delta) $ does not hold, then there exists $x,y\in V_\mz^{2^k}$ with $x\sim V_u^{2^k}, y \sim V_v^{2^k}$, and $D_{V_\mz^{2^k}}\left(x,y\right) \leq \delta 2^{k\dxp(\beta)}$. For $x,y$ one either has $\|x-y\|_\infty \leq \delta_1 2^k$ (which says that the first event in the definition of $\mathcal{A}(K,\delta_1,\delta)$ does not hold) or $\|x-y\|_\infty > \delta_1 2^k$ and thus $D_{V_\mz^{2^k}}\left(x,B_{\delta_1 2^k}(x)^c\right) \leq D_{V_\mz^{2^k}}\left(x,y\right) \leq \delta 2^{k\dxp(\beta)}$ (which says that the second event in the definition of $\mathcal{A}(K,\delta_1,\delta)$ does not hold). In total, we see that if the event $\mathcal{A}_{u,v} (\delta) $ does not hold, then also the event $\mathcal{A}(K,\delta_1,\delta)$ does not hold, which proves \eqref{supset}.\\
	Thus it suffices to show that for $\delta>0$ small enough the probability $\p^{u,v}\left( \mathcal{A}(K,\delta_1,\delta) \right)$ can be arbitrarily close to 1, for an appropriate choice of $K = K(\delta), \delta_1 = \delta_1(\delta)$ and $k\in \N$ large enough. Respectively, that $\p^{u,v}\left( \mathcal{A}(K,\delta_1,\delta)^c \right)$ converges to $0$ for $\delta \to 0$. We have that
	\begin{align}\label{union}
		& \mathcal{A}(K,\delta_1,\delta)^c  = \left\{ \left| \left\{ x\in V_{\mz}^{2^k} : x\sim V_u^{2^k} \right\} \right| > K \right\}\\
		&\notag
		\cup
		\left\{\|x-y\|_\infty \leq \delta_1 2^k \text{ for some } x,y \in V_{\mz}^{2^k} \text{ with } x\sim V_u^{2^k}, y\sim V_v^{2^k} \right\}\\
		&\notag
		\cup
		\Big(\left\{D_{V_{\mz}^{2^k}}\left(x,B_{\delta_1  2^k} (x)^c \right) \leq \delta 2^{k\dxp(\beta)} \text{ for some } x \in V_{\mz}^{2^k} \text{ with } x\sim V_u^{2^k} \right\} \\
		& \notag \hspace{10mm} \cap \left\{ \left| \left\{ x\in V_{\mz}^{2^k} : x\sim V_u^{2^k} \right\} \right| \leq K  \right\} \Big).
	\end{align}
	We now upper bound the probability of each of the three events listed in the above union \eqref{union}. For the first event, note that under the measure $\p_{\beta < k}^{\beta+\eps \geq k}$, the random variable $X\coloneqq \left| \left\{ x\in V_{\mz}^{2^k} : x\sim V_u^{2^k} \right\} \right|$ is the sum of independent Bernoulli random variables with
	\begin{align}\label{eq:expectation mixed}
		\notag
		\E_{\beta\leq k}^{\beta+\eps>k} & \left[  \left| \left\{ x\in V_{\mz}^{2^k} : x \sim V_u^{2^k} \right\} \right| \right] \leq \sum_{x\in V_{\mz}^{2^k}} \sum_{ z \in V_u^{2^k}} \p_{\beta+1} \left( x \sim z \right)
		\overset{\eqref{eq:connectionupper bound}}{\leq}
		\sum_{x\in V_{\mz}^{2^k}} \sum_{ z \in V_u^{2^k}} \frac{(\beta+1)2^{2d}}{\|x-z\|_\infty ^{2d}}\\
		&
		\leq \sum_{x\in V_{\mz}^{2^k}} \sum_{ z \in V_u^{2^k}} \frac{(\beta+1)2^{2d}}{\left((\|u\|_\infty -1) 2^k \right)^{2d}}
		\leq 2^{2d} \frac{(\beta+1) 2^{2d}}{\|u\|_\infty^{2d}} \leq (\beta+1) 2^{4d}.
	\end{align}
	Under the measure $\p^{u,v}$, the random variable $X = \left| \left\{ x\in V_{\mz}^{2^k} : x \sim V_u^{2^k} \right\} \right|$ is not the sum of independent Bernoulli random variables, as the measure $\p^{u,v}$ conditions on the event $V_\mz^{2^k} \sim V_u^{2^k}$, i.e., $X = \left| \left\{ x\in V_{\mz}^{2^k} : x \sim V_u^{2^k} \right\} \right| \geq 1$. The conditioning on the event $V_\mz^{2^k} \sim V_v^{2^k}$ does not change the distribution of $\left| \left\{ x\in V_{\mz}^{2^k} : x \sim V_u^{2^k} \right\} \right|$, as the status of the edges between $V_\mz^{2^k}$ and $V_v^{2^k}$, respectively between $V_\mz^{2^k}$ and $V_v^{2^k}$ are independent. 
	Thus, using Lemma \ref{lem:number of edges conditioned}, we get that
	\begin{align*}
		& \p^{u,v} \left(  \left| \left\{ x\in V_{\mz}^{2^k} : x\sim V_u^{2^k} \right\} \right| > K \right) = \p^{u,v} \left( X > K \right)
		=
		\p_{\beta < k}^{\beta+\eps \geq k} \left( X > K  \Big|  X \geq 1 \right)
		\\
		&
		\leq \frac{2 \left(  \E_{\beta\leq k}^{\beta+\eps>k}  \left[ X \right] \vee 1\right)}{K}
		\overset{\eqref{eq:expectation mixed}}{\leq}
		\frac{2 \left(  ((\beta+1) 2^{4d}  ) \vee 1\right)}{K}
		\leq
		\frac{  (\beta+1) 2^{5d}  }{K}.
	\end{align*}
	For the second term in the union \eqref{union} we have that
	\begin{align*}
		\p^{u,v} \left( \|x-y\|_\infty < \delta_1 2^k \text{ for some } x,y \in V_{\mz}^{2^k} \text{ with } x\sim V_u^{2^k}, y\sim V_v^{2^k} \right) \leq C^\prime \sqrt{\delta_1}
	\end{align*}
	by Lemma \ref{lem:linearspacingmixed}. For the last term in \eqref{union}, first note that the random variable $X =  \left| \left\{ x\in V_{\mz}^{2^k} : x\sim V_u^{2^k} \right\} \right|$ does not reveal any information about the structure inside the set $V_\mz^{2^k}$. Thus
	\begin{align*}
		&
		\p^{u,v} \Big(\left\{ D_{V_{\mz}^{2^k}}\left(x,B_{\delta_1 2^k}(x)^c\right) \leq \delta 2^{k\dxp(\beta)} \text{ for some } x \in V_{\mz}^{2^k} \text{ with } x\sim V_u^{2^k} \right\} \cap \left\{ X \leq K  \right\} \Big)  \\
		& \leq
		\p^{u,v} \Big( D_{V_{\mz}^{2^k}}\left(x,B_{\delta_1 2^k}(x)^c\right) \leq \delta 2^{k\dxp(\beta)} \text{ for some } x \in V_{\mz}^{2^k} \text{ with } x\sim V_u^{2^k}  \Big|  X \leq K \Big)\\
		& \leq K \p_{\beta} \left( D \left(\mz,B_{\delta_1 2^k}(\mz)^c\right) \leq \delta 2^{k\dxp(\beta)} \right)  \text .
	\end{align*}
	In total, we see that we can bound the probability of the event considered in \eqref{union} by
	\begin{equation*}
		\p^{u,v} \left(\mathcal{A}(K,\delta_1,\delta)^c\right)
		\leq
		\frac{  (\beta+1) 2^{5d}  }{K}
		+
		C^\prime \sqrt{\delta_1}
		+
		K \p_{\beta} \left( D \left(\mz,B_{\delta_1 2^k}(\mz)^c\right) \leq \delta 2^{k\dxp(\beta)} \right) .
	\end{equation*}
	
	Using that $D\left(\mz,B_\ell(\mz)^c\right) \approx_P \ell^{\dxp(\beta)}$ (cf. \eqref{point to box}), we see that for fixed $\delta_1>0$ the term $\p_\beta \left( D \left(\mz,B_{\delta_1 2^k}(\mz)^c\right) \leq \delta 2^{k\dxp(\beta)} \right)$ converges to $0$ as $\delta \to 0$ and thus we can take $K=K(\delta)$ and $\delta_1=\delta_1(\delta)$ that converge to $+\infty$, respectively $0$, slow enough such that the product $K \p_\beta \left( D  \left(\mz,B_{\delta_1 2^k}(\mz)^c\right) \leq \delta 2^{k\dxp(\beta)} \right)$ also converges to $0$ for $\delta \to 0$.
\end{proof}

Next, we prove Lemma \ref{lem:goodeventD}.

\begin{proof}[Proof of Lemma \ref{lem:goodeventD}]
	Let $E=\left\{\{u,v\}\subset \Z^d: u\neq v\right\}$. Let $\omega \in \{0,1\}^E$ be distributed according to the measure $\p_\beta$, i.e., $\omega$ has independent increments with 
	\begin{equation*}
		\p(\omega(\{u,v\})=1)=p(\beta,\{u,v\})=1-\exp\left(-\beta \int_{u+\cC} \int_{v+\cC} \frac{1}{\|t-s\|} \md t \md s\right).
	\end{equation*}
	Further, let $\psi \in \{0,1\}^E$ be independent of $\omega$ with independent increments such that
	\begin{align*}
		\p(\psi(\{u,v\})=1)=
		\begin{cases}
			1-\exp\left(- \eps \int_{u+\cC} \int_{v+\cC} \frac{1}{\|t-s\|} \md t \md s\right) & \text{ if $\|u-v\|_\infty \geq 2^k$}\\
			0 & \text{ if $\|u-v\|_\infty < 2^k$}
		\end{cases}.
	\end{align*}
	We define $\omega^+=\omega \vee \psi$ as the union of these two percolation environments. Note that $\omega^+$ has exactly the same distribution as the percolation environment sampled by $\p_{\beta < k}^{\beta+\eps \geq k}$ and thus we only need to show that for each fixed $c>0$ there exists $\delta >0$ and $k(\delta) \in \N$ such that for all large enough $k\geq k(\delta)$ and all realizations of $G^\prime$ with $\deg^{\cN}(r(\mz))\leq 9^d 100 \mu_{\beta+1}$ one has
	\begin{equation*}
		\p \left( D^\star \left( V_\mz^{2^k} , \bigcup_{u \in \Z^d : \| u \|_\infty \geq 2} V_u^{2^k} ; \omega^+ \right) \leq \delta 2^{k\dxp(\beta)} \Big| G^\prime \right) < c .
	\end{equation*}
	We spit the environment $\omega^+$ into a {\sl lower environment} $\omega^\downarrow$ and into an {\sl upper environment} $\omega^\uparrow$, which are defined by
	\begin{align*}
		\omega^\downarrow (e) = \omega^+(e) \mathbbm{1}_{\{|e| < 2^k\}} = \begin{cases}
			\omega^+(e) & \text{ if } |e| < 2^k \\
			0 & \text{ if } |e| \geq 2^k
		\end{cases},
	\end{align*}
	respectively by
	\begin{align*}
		\omega^\uparrow (e) = \omega^+(e) \mathbbm{1}_{\{|e| \geq 2^k\}} = \begin{cases}
			0 & \text{ if } |e| < 2^k \\
			\omega^+(e) & \text{ if } |e| \geq 2^k
		\end{cases}.
	\end{align*}
	Note that the environment $\omega^\downarrow$ contains the short edges (of length at most $2^k-1$) whereas the environment $\omega^\uparrow$ contains the long edges (of length at least $2^k$). When taking their union, we have that $\omega^\downarrow \vee \omega^\uparrow = \omega^+ = \omega \vee \psi$.
	We also define the set
	\begin{equation*}
		\mathcal{V}(\omega^\uparrow) = \left\{u\in \Z^d: \exists v \in \Z^d \text{ with } \omega^\uparrow(\{u,v\})=1\right\}
	\end{equation*}
	as the set of endpoints of edges in the edge set defined by $\omega^\uparrow$. Further, for $s\in \left[0,1\right]$ and $k\in \N$ we define the set $A_s^k$  
	\begin{equation*}
		A_s^k=\left\{x\in \Z^d: s2^k < \inf_{y\in V_\mz^{2^k}} \|x-y\|_\infty \leq 2^k\right\}.
	\end{equation*}
	Note that the set $A_s^k$ always satisfies $A_s^k\subseteq \bigcup_{v\in \Z^d : \|v\|_\infty =1} V_v^{2^k}$, with an equality for $s=0$. 
	Define the event
	\begin{align}
		\label{line1} \widetilde{\mathcal{B}}(s,\delta) & = \left\{ D^\star \left( V_\mz^{2^k} , \bigcup_{u \in \Z^d : \| u \|_\infty \geq 2} V_u^{2^k} ; \omega^\downarrow \right) \leq \delta 2^{k\dxp(\beta)} \right\}\\
		&
		\cup \label{line2}
		\bigcup_{x \in A_s^k} \left\{x \in \mathcal{V}(\omega^\uparrow), D\left(x, B_{s 2^k}(x)^c ;\omega^\downarrow \right) \leq \delta 2^{k\dxp(\beta)} \right\}
		\\
		& \label{line3}
		\cup
		\bigcup_{x \in A_0^k \setminus A_s^k} \left\{x \in \mathcal{V}(\omega^\uparrow) \right\} .
	\end{align}
	Now note that for every $s\in \left[0,1\right]$ one has $\left\{D^\star \left( V_\mz^{2^k} , \bigcup_{u \in \Z^d : \| u \|_\infty \geq 2} V_u^{2^k} ; \omega^+ \right) \leq \delta 2^{k\dxp(\beta)}\right\} \subseteq \widetilde{\mathcal{B}}(s,\delta)$. Indeed if, $D^\star \left( V_\mz^{2^k} , \bigcup_{u \in \Z^d : \| u \|_\infty \geq 2} V_u^{2^k} ; \omega^+ \right) \leq \delta 2^{k\dxp(\beta)}$ then there exists either a path from $ V_\mz^{2^k} $ to $\bigcup_{u \in \Z^d : \| u \|_\infty \geq 2} V_u^{2^k}$ which is entirely contained in the environment $\omega^\downarrow$ - which corresponds to the event described in \eqref{line1} - or this path uses at least one of the edges in the environment $\omega^\uparrow$. Assume that $P$ is a path from $ V_\mz^{2^k} $ to $\bigcup_{u \in \Z^d : \| u \|_\infty \geq 2} V_u^{2^k}$ that has length at most $\delta 2^{k\theta(\beta)}$. Let $e=\{x,y\}$ be the first edge in the environment $\omega^\uparrow$ that is used in this path, with the path entering the edge at $x$ and leaving at $y$. Then either $x\in A_0^k \setminus A_s^k$ (which corresponds to the event in \eqref{line3}) or $x\in A_s^k$. For $x\in A_s^k$ one has $B_{s2^k}(x)^c \supseteq V_\mz^{2^k}$ and thus 
	\begin{equation*}
		D\left(x, B_{s 2^k}(x)^c ;\omega^\downarrow \right) \leq D\left(x, V_\mz^{2^k}; \omega^\downarrow\right) \leq \text{length}(P) \leq \delta 2^{k\dxp(\beta)}
	\end{equation*}
	(which corresponds to the event in \eqref{line2}). Thus we only need to show that  $\p\left(\widetilde{\mathcal{B}}(s,\delta) \big| G^\prime\right) < c$ for a suitable choice of $s$ and $\delta$. We do this via a union bound over the three events listed in \eqref{line1} through \eqref{line3}.\\
	
	For $x\in A_0^k$, say with $x \in V_v^{2^k}$ and $u\in \Z^d$ with $\|u-v\|_\infty \geq 2$ the condition that $\|u-v\|_\infty \geq 2$ directly implies that for all $y\in V_u^{2^k}$ one has  
	\begin{equation*}
		\|x-y\|_\infty > (\|u-v\|_\infty-1) 2^k \geq \|u-v\|_\infty 2^{k-1} \geq 2^k.
	\end{equation*}
	So in particular, the edge $\{x,y\}$ can only be open in the environment $\omega^{\uparrow}$, but not in the environment $\omega^\downarrow$. Further, the probability that the edge $\{x,y\}$ is open is bounded by
	\begin{equation*}
		\p \left( \omega^{\uparrow}(\{x,y\}) = 1 \right) = \p_{\beta+\eps} (x\sim y) \overset{\eqref{eq:connectionupper bound}}{\leq} \frac{2^{2d} (\beta+\eps)}{\|x-y\|^{2d}_\infty}
		\leq
		\frac{2^{2d} (\beta+1)}{\left(2^{k-1} \|u-v\|_\infty\right)^{2d}} 
		= 
		\frac{2^{4d} (\beta +1)}{\left(2^{k} \|u-v\|_\infty\right)^{2d}} .
	\end{equation*}
	A union bound over all possible $y\in V_u^{2^k}$ gives that
	\begin{align}\label{eq:psi ineq}
		\p \left( \exists y \in V_u^{2^k}: \omega^\uparrow(\{x,y\}) = 1 \right) & \notag
		\leq
		\sum_{y \in V_u^{2^k}}
		\p \left( \omega^\uparrow(\{x,y\}) = 1 \right)
		\leq
		\sum_{y \in V_u^{2^k}} \frac{2^{4d} (\beta+1) }{\left(2^{k} \|u-v\|_\infty\right)^{2d}}
		\\
		&
		=
		\frac{2^{4d} (\beta+1) }{\|u-v\|_\infty^{2d} \ 2^{kd}}.
	\end{align}
	Further, we have 
	\begin{equation*}
		\p \left(   V_u^{2^k} \sim V_v^{2^k} \text{ in } \omega^+ \right)
		\geq
		\p_{\beta} \left( V_u^{2^k} \sim V_v^{2^k} \right) = \p_\beta (u \sim v) \geq \frac{(4d)^{-2d} \beta}{\|u-v\|_\infty^{2d}} \wedge \frac{1}{2} ,
	\end{equation*}
	where the last inequality follows by \eqref{upperlowerbeta}.
	So in particular we see that for all $x \in V_v^{2^k}$ and all $u\in \Z^d$ with $\|u-v\|_\infty \geq 2$ one has
	\begin{align}\label{dist2}
		&\notag \p \left( \exists y \in V_u^{2^k}: \omega^\uparrow(\{x,y\}) = 1 \Big|  V_u^{2^k} \sim V_v^{2^k} \text{ in } \omega^+ \right)
		=
		\frac{\p \left( \exists y \in V_u^{2^k}: \omega^\uparrow(\{x,y\}) = 1 \right) }{\p \left(   V_u^{2^k} \sim V_v^{2^k} \text{ in } \omega^+ \right)}\\
		&
		\leq
		\frac{\frac{2^{4d} (\beta+1) }{\|u-v\|_\infty^{2d} \ 2^{kd}}}{\frac{(4d)^{-2d} \beta}{\|u-v\|_\infty^{2d}} \wedge \frac{1}{2}}
		=
		\frac{2^{4d+1} (\beta+1) }{\|u-v\|_\infty^{2d} \ 2^{kd}}
		\vee
		\frac{2^{8d} d^{2d} (\beta+1) }{ 2^{kd} \beta} \leq \frac{(2d)^{8d} (\beta+1)}{(\beta \wedge 1) 2^{kd}}.
	\end{align}
	When $\|u-v\|_\infty = 1$, then for all $x\in V_v^{2^k}$
	\begin{align*}
		& \p \left( \exists y \in V_u^{2^k}: \omega^\uparrow(\{x,y\}) = 1 \right) 
		\leq
		\sum_{\substack{y \in V_u^{2^k} : \\ \|x-y\|_\infty \geq  2^k}}
		\p \left( \omega^\uparrow(\{x,y\}) = 1 \right)
		\leq
		\sum_{\substack{y \in V_u^{2^k} : \\ \|x-y\|_\infty \geq  2^k}}
		\p_{\beta+\eps} \left( x \sim y \right)
		\\
		&
		\overset{\eqref{eq:connectionupper bound}}{\leq}
		\sum_{\substack{y \in V_u^{2^k} : \\ \|x-y\|_\infty \geq  2^k}} \frac{2^{2d} (\beta+\eps) }{(2^k)^{2d}} 
		\leq
		\left|V_u^{2^k}\right| \frac{2^{2d} (\beta+1) }{(2^k)^{2d}}
		=
		\frac{2^{2d} (\beta+1) }{2^{kd}} .
	\end{align*}
	As the event $\left\{V_u^{2^k} \sim V_v^{2^k} \text{ in }\omega^+\right\}$ has probability $1$ for $\|u-v\|_\infty=1$, we get that
	\begin{equation}\label{dist1}
		\p \left( \exists y \in V_u^{2^k}: \omega^\uparrow(\{x,y\}) = 1 \big|  V_u^{2^k} \sim V_v^{2^k} \text{ in } \omega^+ \right) \leq
		\frac{2^{2d} (\beta+1) }{2^{kd}} 
	\end{equation}
	for $\|u-v\|_\infty =1$. Combining \eqref{dist2} and \eqref{dist1}, we see that for all $u,v\in \Z^d$ with $u\neq v$ and for all $x\in V_v^{2^k}$ one has
	\begin{equation*}
		\p \left( \exists y \in V_u^{2^k}: \omega^\uparrow(\{x,y\}) = 1 \big|  V_u^{2^k} \sim V_v^{2^k} \text{ in } \omega^+ \right) \leq \frac{(2d)^{8d}(\beta+1)}{(\beta \wedge 1) 2^{kd}} \eqqcolon \frac{C_\beta}{2^{kd}} .
	\end{equation*}
	For $x\in V_v^{2^k}$ and $y\in V_u^{2^k}$ with $\omega^\uparrow(\{x,y\}) = 1 $ one already needs to have $\|u-v\|_\infty \geq 1$ (since $\|x-y\|\geq 2^k$) and $r(u) \sim r(v)$ in $G^\prime$. Further, conditioned on $G^\prime$, whenever $r(u) \sim r(v)$, the probability that there exists $y\in V_u^{2^k}$ with $\omega^\uparrow(\{x,y\}) = 1 $ is exactly $\p \left( \exists y \in V_u^{2^k}: \omega^\uparrow(\{x,y\}) = 1 \big|  V_u^{2^k} \sim V_v^{2^k} \text{ in } \omega^+ \right)$. Thus we get that for all $v\in \Z^d$ with $\|v\|_\infty=1$ and all $x\in V_v^{2^k}$
	\begin{align}\label{eq:upperenvironm bound}
		\notag & \p \left( x \in \mathcal{V}(\omega^{\uparrow}) \big|  G^\prime \right)
		=
		\p \left( \exists y \in \Z^d: \omega^\uparrow(\{x,y\}) = 1 \big|  G^\prime \right)
		\\
		& \notag
		\leq
		\sum_{u\in \Z^d : r(u) \sim r(v)} \p \left( \exists y \in V_u^{2^k}: \omega^\uparrow(\{x,y\}) = 1 \big|  G^\prime \right)\\
		\notag &
		=
		\sum_{u\in \Z^d : r(u) \sim r(v)} \p \left( \exists y \in V_u^{2^k}: \omega^\uparrow(\{x,y\}) = 1 \big|  V_u^{2^k} \sim V_v^{2^k} \text{ in } \omega^+ \right)
		\\
		&
		\leq
		\sum_{u\in \Z^d : r(u) \sim r(v)} \frac{C_\beta}{2^{kd}}
		=
		\deg(r(v)) \frac{C_\beta}{2^{kd}}.
	\end{align}
	For each $v\in \Z^d$ with $\|v\|_\infty=1$, the set $ (A_0^k \setminus A_s^k) \cap V_v^{2^k} = \left\{x \in V_v^{2^k} : \inf_{y\in V_\mz^{2^k}} \|x-y\|_\infty \leq s 2^k \right\}$ has size at most $s 2^{kd}$. Indeed, assume that $v=(v_1,\ldots,v_d)$ satisfies $\|v\|_\infty=1$. By symmetry, we can assume that $v_1=+1$. If $x = (x_1,\ldots,x_d)\in V_v^{2^k}$ satisfies $\inf_{y\in V_\mz^{2^k}} \|x-y\|_\infty \leq s 2^k$, then $x_1 \in \left( 2^k-1, 2^k-1+ s2^k \right]\cap \Z$ and $x_i \in \{v_i 2^k, \ldots, v_i 2^k + 2^k-1\}$. Thus, there are at most $s 2^k \cdot (2^k)^{d-1} = s2^{kd}$ many choices for $x=(x_1,x_2,\ldots,x_d)$.\\
	
	Using that $\left|(A_0^k \setminus A_s^k) \cap V_v^{2^k}\right| \leq s 2^{kd}$ and inequality \eqref{eq:upperenvironm bound}, we can bound the conditional probability of the event in \eqref{line3} by
	\begin{align}\label{line3bound}
		\notag & \p \left( \bigcup_{x \in A_0^k \setminus A_s^k} \{x\in \mathcal{V}(\omega^\uparrow)\} \Big| G^\prime \right)
		\leq
		\sum_{x \in A_0^k \setminus A_s^k}
		\p \left( x\in \mathcal{V}(\omega^\uparrow) \Big| G^\prime \right)
		\\
		& \notag
		=
		\sum_{v : \|v\|_\infty = 1} \
		\sum_{x \in (A_0^k \setminus A_s^k) \cap V_v^{2^k}}
		\p \left( x\in \mathcal{V}(\omega^\uparrow) \Big| G^\prime \right)
		\overset{\eqref{eq:upperenvironm bound}}{\leq}
		\sum_{v : \|v\|_\infty = 1} \
		\sum_{x \in (A_0^k \setminus A_s^k) \cap V_v^{2^k}}
		\deg(r(v)) \frac{C_\beta}{2^{kd}}
		\\
		& \notag
		=
		\sum_{v : \|v\|_\infty = 1}
		\left| \left(A_0^k \setminus A_s^k\right) \cap V_v^{2^k} \right|
		\deg(r(v)) \frac{C_\beta}{2^{kd}}
		\leq
		\sum_{v : \|v\|_\infty = 1}
		s 2^{kd}
		\deg(r(v)) \frac{C_\beta}{2^{kd}}
		\\
		&
		=
		s C_\beta \sum_{v : \|v\|_\infty = 1} \deg(r(v)) 
		\leq
		s C_\beta \deg^{\cN}(r(\mz)) .
	\end{align} 
	Further, note that the events of the form $\left\{D\left(x, B_{s 2^k}(x)^c ; \omega^\downarrow \right) \leq \delta 2^{k\theta(\beta)}\right\}$ are independent of both $G^\prime$ and $\omega^\uparrow$, as the graph $G^\prime$ depends almost surely just on edges that are introduced by the environment $\omega^\uparrow$. 
	Thus we get that
	\begin{align}\label{eq:lowerupperenvironments}
		& \notag \p \left( \bigcup_{x \in A_s^k} \left\{x \in \mathcal{V}(\omega^\uparrow), D\left(x, B_{s 2^k}(x)^c ;\omega^\downarrow \right) \leq \delta 2^{k\dxp(\beta)} \right\} \Big| G^\prime \right)\\
		& \notag
		\leq
		\sum_{x \in A_s^k}
		\p \left(  x \in \mathcal{V}(\omega^\uparrow), D\left(x, B_{s 2^k}(x)^c ;\omega^\downarrow \right) \leq \delta 2^{k\dxp(\beta)}  \Big| G^\prime \right)
		\\
		& \notag
		=
		\sum_{x \in A_s^k}
		\p\left( D\left(x, B_{s 2^k}(x)^c ;\omega^\downarrow \right) \leq \delta 2^{k\dxp(\beta)}\right)
		\p \left( x \in \mathcal{V}(\omega^\uparrow) \Big| G^\prime \right)\\
		&
		=
		\p\left( D\left(\mz, B_{s 2^k}(\mz)^c ;\omega^\downarrow \right) \leq \delta 2^{k\dxp(\beta)}\right)
		\sum_{v: \|v\|_\infty = 1} \
		\sum_{x \in A_s^k \cap V_v^{2^k}}
		\p \left( x \in \mathcal{V}(\omega^\uparrow) \Big| G^\prime \right).
	\end{align}
	The random variable $D\left(\mz, B_{s 2^k}(\mz)^c ;\omega^\downarrow \right)$ stochastically dominates the random variable $D\left(\mz, B_{s 2^k}(\mz)^c ;\omega \right)$, as all edges contained in the environment $\omega^{\downarrow}$ are also contained in the environment $\omega$.
	Let $q_1:\R_{\geq 0}\to \left[0,1\right]$ be a function such that $\lim_{\delta \searrow 0} q_1(\delta)=0$ and
	\begin{equation*}
		\p_{\beta}\left( D\left(\mz, B_m(\mz)^c \right) \leq \delta m^{\dxp(\beta)} \right) \leq q_1(\delta) .
	\end{equation*}
	for all $m\in \N$.
	Such a function exists since $ D\left(\mz, B_m(\mz)^c \right) \approx_P m^{\theta(\beta)}$, see \eqref{point to box}. Then we get that
	\begin{align*}
		& \p\left( D\left(\mz, B_{s 2^k}(\mz)^c ;\omega^\downarrow \right) \leq \delta 2^{k\dxp(\beta)}\right) \leq \p\left( D\left(\mz, B_{s 2^k}(\mz)^c ;\omega \right) \leq \delta 2^{k\dxp(\beta)}\right) \\
		&
		=
		\p_\beta \left( D\left(\mz, B_{s 2^k}(\mz)^c  \right) \leq \frac{\delta}{s^{\theta(\beta)}} \left(s2^{k}\right)^{\dxp(\beta)} \right)
		\leq
		q_1\left(\frac{\delta}{s^{\dxp(\beta)}} \right) .
	\end{align*}
	Plugging this into inequality \eqref{eq:lowerupperenvironments}, we get that
	\begin{align}\label{line2bound}
		&\notag \p \left( \bigcup_{x \in A_s^k} \left\{x \in \mathcal{V}(\omega^\uparrow), D\left(x, B_{s 2^k}(x)^c ;\omega^\downarrow \right) \leq \delta 2^{k\dxp(\beta)} \right\} \Big| G^\prime \right)\\
		& \notag
		\leq
		q_1\left(\frac{\delta}{s^{\dxp(\beta)}} \right)
		\sum_{v: \|v\|_\infty = 1} \
		\sum_{x \in A_s^k \cap V_v^{2^k}}
		\p \left( x \in \mathcal{V}(\omega^\uparrow) \Big| G^\prime \right)\\
		& \notag
		\overset{\eqref{eq:upperenvironm bound}}{\leq}
		q_1\left(\frac{\delta}{s^{\dxp(\beta)}} \right)
		\sum_{v: \|v\|_\infty = 1} \
		\sum_{x \in A_s^k \cap V_v^{2^k}}
		\deg(r(v)) \frac{C_\beta}{2^{kd}}
		\leq
		q_1\left(\frac{\delta}{s^{\dxp(\beta)}} \right)
		\sum_{v: \|v\|_\infty = 1} 
		\deg(r(v)) C_\beta\\
		&
		\leq
		q_1\left(\frac{\delta}{s^{\dxp(\beta)}} \right)  C_\beta \deg^{\cN}(r(\mz)) .
	\end{align}
	This bounds the probability of the event described in line \eqref{line2}. In order to bound the probability of the event described in line \eqref{line1}, let $q_2:\R_{\geq 0}\to \left[0,1\right]$ be a function so that $\lim_{\delta \searrow 0} q_2(\delta)=0$ and
	\begin{equation*}
		\p_{\beta}\left( D^\star\left(V_\mz^{m}, \bigcup_{u \in \Z^d : \| u \|_\infty \geq 2} V_u^{m} \right) \leq \delta m^{\dxp(\beta)} \right) \leq q_2(\delta)
	\end{equation*}
	for all $m\in \N$.
	Such a function exists by Lemma \ref{lem:all quantiles of boxtobox}. Since the environment $\omega^\downarrow$ and the graph $G^\prime$ are independent, we get that
	\begin{align}\label{line1bound}
		&\notag \p \left(  D^\star \left( V_\mz^{2^k} , \bigcup_{u \in \Z^d : \| u \|_\infty \geq 2} V_u^{2^k} ; \omega^\downarrow \right) \leq \delta 2^{k\dxp(\beta)} \Bigg| G^\prime \right)
		=
		\p \left(  D^\star \left( V_\mz^{2^k} , \bigcup_{u \in \Z^d : \| u \|_\infty \geq 2} V_u^{2^k} ; \omega^\downarrow \right) \leq \delta 2^{k\dxp(\beta)}  \right)
		\\
		&
		\leq
		\p_\beta \left(  D^\star \left( V_\mz^{2^k} , \bigcup_{u \in \Z^d : \| u \|_\infty \geq 2} V_u^{2^k} \right) \leq \delta 2^{k\dxp(\beta)} \right) \leq q_2(\delta).
	\end{align}
	Inequalities \eqref{line1bound}, \eqref{line2bound}, and \eqref{line3bound} imply that the conditional probability of the event $\widetilde{\mathcal{B}}(s,\delta)$ defined in \eqref{line1} is bounded by
	\begin{align*}
		\p \left( \widetilde{\mathcal{B}}(s,\delta) | G^\prime \right) & \leq q_2(\delta)
		+
		q_1\left(\frac{\delta}{s^{\dxp(\beta)}} \right)  C_\beta \deg^{\cN}(r(\mz))
		+
		s C_\beta \deg^{\cN}(r(\mz)).
	\end{align*}
	Assuming that $\deg^{\cN}(r(\mz)) \leq 9^d 100 \mu_{\beta+1}$, we can thus bound the conditional probability of the event $\widetilde{\mathcal{B}}(s,\delta)$ by	
	\begin{equation*}
		\p \left( \widetilde{\mathcal{B}}(s,\delta) | G^\prime \right) 
		\leq q_2(\delta)
		+
		C_\beta 9^d 100 \mu_{\beta+1} \left(
		q_1 \left(\frac{\delta}{s^{\dxp(\beta)}} \right) 
		+
		s \right) .
	\end{equation*}
	This expression can be made arbitrarily small for suitable choice of $s,\delta >0$, as $\lim_{\delta \searrow 0} q_1(\delta) = \lim_{\delta \searrow 0} q_2(\delta) = 0$.
\end{proof}

Finally, we prove Lemma \ref{lem:uniformovereps}. We prove the three inequalities (\eqref{eq:trivia}, \eqref{trivia2}, and \eqref{eq:uniformovereps}) separately. We start with the proof of inequality \eqref{eq:trivia}.

\begin{proof}[Proof of inequality \eqref{eq:trivia}]
	The inequality is clear when $n=k$, since the left hand side of the inequality equals $0$ when $n-k=0$. For $k<n$, it suffices to show that 
	\begin{equation}\label{reno}
		\E_{\beta<k}^{\beta+\eps \geq k} \left[D_{V_\mz^{2^n}} \left(\mz,(2^n-1)\mo\right)\right] \geq \E_{\beta+\eps} \left[D_{V_\mz^{2^{n-k}}} \left(\mz,(2^{n-k}-1)\mo\right)\right]  ,
	\end{equation}
	since it is known by \cite[Remark 4.3]{baeumler2022distances} that
	\begin{equation*}
		\E_{\beta+\eps} \left[D_{V_\mz^{2^{n-k}}} \left(\mz,(2^{n-k}-1)\mo\right)\right] \geq \frac{\Lambda\left(2^{n-k},\beta+\eps\right)-1}{36 d} \geq
		\frac{\Lambda\left(2^{n-k},\beta+\eps\right)}{72 d} .
	\end{equation*}
	In order to show inequality \eqref{reno}, we define the graph $G^\prime=(V^\prime, E^\prime)$ by starting with the random graph with vertex set $V_\mz^{2^n}$ and contracting the boxes $\left(V_u^{2^k}\right)_{u\in V_\mz^{2^{n-k}}}$. We write $r(u)$ for the vertex in $V^\prime$ that corresponds to the set $V_u^{2^k}$. This construction directly implies that $D_{V_\mz^{2^{n}}} \left(\mz,(2^{n}-1)\mo\right) \geq D_{G^\prime} (r(\mz),r((2^{n-k}-1)\mo))$. The graph $G^\prime$ has the exact same distribution as long-range percolation with measure $\p_{\beta+\eps}$ on $V_\mz^{2^{n-k}}$ and thus
	\begin{align*}
		\E_{\beta<k}^{\beta+\eps \geq k} \left[D_{V_\mz^{2^{n}}} \left(\mz,(2^{n}-1)\mo\right)\right] & \geq 
		\E_{\beta<k}^{\beta+\eps \geq k} \left[D_{G^\prime} (r(\mz),r((2^{n-k}-1)\mo))\right]
		\\
		&
		=
		\E_{\beta+\eps} \left[D_{V_\mz^{2^{n-k}}} \left(\mz,(2^{n-k}-1)\mo\right)\right].
	\end{align*}
\end{proof}

\begin{proof}[Proof of inequality \eqref{trivia2}]
	The proof of the inequality 
	\begin{align*}
		\E_{\beta < k}^{\beta+\eps \geq k} \left[D_{V_\mz^{2^n}}\left(\mz,(2^n-1) \mo \right)\right]  \leq
		C(\beta) \Lambda\left(2^k,\beta\right)  \Lambda\left(2^{n-k},\beta + \eps\right)
	\end{align*}
	directly follows from Lemma \ref{lem:uniform2ndmomentbound:renormalized} and the Cauchy-Schwarz-inequality. Indeed, let $C_\beta$ be the constant of Lemma \ref{lem:uniform2ndmomentbound:renormalized}. Then we get by the Cauchy-Schwarz inequality and inequality \eqref{eq:uniformoverbeta} that
	\begin{align*}
		&\E_{\beta < k}^{\beta+\eps \geq k} \left[D_{V_\mz^{2^n}}\left(\mz,(2^n-1) \mo \right)\right] 
		\leq \sqrt{\E_{\beta < k}^{\beta+\eps \geq k} \left[D_{V_\mz^{2^n}}\left(\mz,(2^n-1) \mo \right)^2 \right]} 
		\\
		&
		\overset{\eqref{eq:uniformoverbeta}}{\leq}\sqrt{C_\beta  \Lambda(2^k,\beta)^2 \Lambda(2^{n-k},\beta+\eps)^2}
		=
		\sqrt{C_\beta}  \Lambda(2^k,\beta) \Lambda(2^{n-k},\beta+\eps).
	\end{align*}
\end{proof}

\begin{proof}[Proof of inequality \eqref{eq:uniformovereps}]
	We are left to show that 
	\begin{align}\label{eq:lower bound}
		\E_{\beta < k}^{\beta+\eps \geq k} \left[D_{V_\mz^{2^n}}\left(\mz,(2^n-1) \mo \right)\right]  \geq
		c_\beta \Lambda\left(2^k,\beta\right)  \Lambda\left(2^{n-k},\beta + \eps\right) .
	\end{align}
	The proof of inequality \eqref{eq:lower bound} is divided into four parts. In part (A), we argue that it is sufficient to prove inequality \eqref{eq:lower bound} for $k,n-k$ large. In part (B), we define a notion of {\sl good blocks} in paths $P^\prime \subset G^\prime$. In part (C), we show that all paths $P^\prime \subset G^\prime$ of length $\ell$ starting at $r(\mz)$ contain many good blocks with probability approaching 1 exponentially fast, as $\ell$ increases. In part (D), we show how this implies inequality \eqref{eq:lower bound} for large $k,n-k$.\\
	
	\noindent
	\textbf{Part (A)}.
	We first argue that it suffices to consider $k,n-k$ large enough.
	When $k$ is small, this directly follows from inequality \eqref{eq:trivia}. To treat the case when $n-k$ is small, define the event
	\begin{equation*}
		\mathcal{D} = \left\{x\nsim y \text{ for all } x,y \in V_\mz^{2^n} \text{ with } |\{x,y\}| \geq 2^k \right\}.
	\end{equation*}
	Since
	\begin{align*}
		& \sum_{\substack{ x,y \in V_\mz^{2^n} : |\{x,y\}| \geq 2^k}} \p_{\beta < k}^{\beta+\eps \geq k} \left(x\sim y\right)
		=
		\sum_{\substack{ x,y \in V_\mz^{2^n} : |\{x,y\}| \geq 2^k}} \p_{\beta + \eps} \left(x\sim y\right)
		\\
		&
		\overset{\eqref{eq:connectionupper bound}}{\leq}
		\sum_{\substack{ x,y \in V_\mz^{2^n} : |\{x,y\}| \geq 2^k}} \frac{2^{2d}(\beta+\eps)}{(2^k)^{2d}}
		\leq
		\left(2^{nd}\right)^2  \frac{2^{2d}(\beta+\eps)}{(2^k)^{2d}} = 2^{2d(n-k)} 2^{2d} (\beta+1),
	\end{align*}
	and each term in the above sum is at most $0.5$ for $k$ large enough, we get that
	\begin{align*}
		& \p_{\beta < k}^{\beta+\eps \geq k} \left(\mathcal{D}\right) = \prod_{\substack{ x,y \in V_\mz^{2^n} : |\{x,y\}| \geq 2^k}} \left(1- \p_{\beta < k}^{\beta+\eps \geq k} \left(x\sim y\right)\right)
		\geq
		\exp\left(- 2 \sum_{\substack{ x,y \in V_\mz^{2^n} : |\{x,y\}| \geq 2^k}} \p_{\beta < k}^{\beta+\eps \geq k} \left(x\sim y\right)\right)
		\\
		&
		\geq
		\exp\left(-2^{2d(n-k)} 2^{2d+1} (\beta+1) \right),
	\end{align*}
	where we used the elementary inequality $1-s \geq e^{-2s}$ for $s\leq 0.5$. In particular, we see that the event $\mathcal{D}$ has a uniform positive probability for $n-k$ small and $k$ large enough. Next, observe that the conditional measure $ \p_{\beta < k}^{\beta+\eps \geq k}\left(\cdot \big| \mathcal{D}\right)$ and $\p_{\beta}\left(\cdot \big| \mathcal{D}\right)$ are the same, as all edges $e$ with length at least $2^k$ are closed almost surely under this measure, whereas all edges $e$ with length at most $2^{k}-1$ are open with the exact same probability. Further, as both the event $\mathcal{D}$ and the distance $D_{V_\mz^{2^n}}\left(\mz,(2^n-1)\mo\right)$ are decreasing in the percolation environment, we can apply the FKG inequality and get that
	\begin{align*}
		& \E_{\beta < k}^{\beta+\eps \geq k} \left[D_{V_\mz^{2^n}}\left(\mz,(2^n-1) \mo \right)\right] =
		\E_{\beta < k}^{\beta+\eps \geq k} \left[D_{V_\mz^{2^n}}\left(\mz,(2^n-1) \mo \right)\Big| \mathcal{D}\right] \p_{\beta < k}^{\beta+\eps \geq k} \left(\mathcal{D}\right)
		\\
		&
		=
		\E_{\beta} \left[ D_{V_\mz^{2^n}}\left(\mz,(2^n-1) \mo \right)\Big| \mathcal{D} \right] \p_{\beta < k}^{\beta+\eps \geq k} \left(\mathcal{D}\right)
		\geq
		\E_{\beta} \left[ D_{V_\mz^{2^n}}\left(\mz,(2^n-1) \mo \right) \right] \p_{\beta < k}^{\beta+\eps \geq k} \left(\mathcal{D}\right)
		\\
		&
		\overset{\eqref{36d}}{\geq} \frac{\Lambda(2^n,\beta)-1}{36d} 	\exp\left(-2^{2d(n-k)} 2^{2d+1} (\beta+1) \right)
		\geq \frac{\Lambda(2^n,\beta)}{72d} 	\exp\left(-2^{2d(n-k)} 2^{2d+1} (\beta+1) \right)
		\\
		&
		\overset{\eqref{2-comparison}}{\geq} \frac{\Lambda(2^{n-k},\beta)}{72d} 	\exp\left(-2^{2d(n-k)} 2^{2d+1} (\beta+1) \right).
	\end{align*}
	In the last inequality above, we used inequality \eqref{2-comparison}, and in the third to last inequality above, we used that
	\begin{equation}\label{36d}
		\E_{\beta} \left[ D_{V_\mz^{2^n}}\left(\mz,(2^n-1) \mo \right) \right] + 1 \leq 36 d \Lambda(2^n,\beta) ;
	\end{equation}
	This was proven in \cite[Remark 4.3]{baeumler2022distances}. Thus we see that inequality \eqref{eq:lower bound} holds for small $n-k$ with a sufficiently small constant $c_\beta>0$.\\
	
	\noindent
	\textbf{Part (B)}.
	For the rest of the proof, we always assume that both $k$ and $n-k$ are large.
	We split the graph $V_{\mz}^{2^n}$ into blocks of the form $V_v^{2^k}$, where $v\in V_{\mz}^{2^{n-k}}$. For each $v\in V_{\mz}^{2^{n-k}}$, we contract the block $V_{v}^{2^k} \subset V_{\mz}^{2^n}$ into one vertex, denoted by $r(v)$. We call the graph that results from contracting all these blocks $G^\prime = (V^\prime, E^\prime)$. The graph $G^\prime$ has the same distribution as long range percolation on $V_{\mz}^{2^{n-k}}$ under the measure $\p_{\beta+\eps}$. We also define an analogy of the infinity-distance on $G^\prime$ by $\|r(u) - r(v)\|_\infty= \|u-v\|_\infty$. Remember that we write $\cN(r(u))=\left\{r(v)\in V^\prime : \|r(u)-r(v)\|_\infty \leq 1\right\}$ for the neighborhood of $r(u)$. Our goal is to bound the expected distance between the vertices $\mz$ and $(2^n-1)\mo$ from below, conditioned on the graph $G^\prime$. For this, we consider all self-avoiding paths $P^\prime = \left(r(v_0), r(v_1),\ldots, r(v_\ell) \right)$ between $r(\mz)$ and $r((2^{n-k}-1)\mo)$ in $G^\prime$. In the following, we always work on a certain event $\mathcal{H}_t$, which is defined by
	\begin{align*}
		\mathcal{H}_t = \bigcap_{\ell \geq t} \left\{	\left| \mathcal{CS}_\ell \left( G^\prime \right) \right| \leq 10^\ell \mu_{\beta+\eps}^\ell \right\} \cap 
		\bigcap_{\ell\geq t} \left\{ \overline{\deg}(Z) \leq 20 \mu_{\beta+\eps} \forall Z \in \mathcal{CS}_\ell \left( G^\prime \right) \right\} \text .
	\end{align*}
	The graph $G^\prime$ has the exact same distribution as long-range percolation on $V_\mz^{2^{n-k}}$ under the measure $\p_{\beta+\eps}$.
	So by Lemma \ref{lem:connnectedsetsinLRP} and Markov's inequality one has
	\begin{align}\label{eq:Htbound}
		& \notag \p_{\beta < k}^{\beta+\eps \geq k} \left( \mathcal{H}_t^c \right) \\
		& \notag \leq \sum_{\ell=t}^{\infty} \p_{\beta < k}^{\beta+\eps \geq k} \left( 	\left| \mathcal{CS}_\ell \left( G^\prime \right) \right| > 10^\ell \mu_{\beta+\eps}^\ell \right)  + 
		\sum_{\ell=t}^{\infty} \p_{\beta < k}^{\beta+\eps \geq k} \left( \exists  Z \in \mathcal{CS}_\ell \left( G^\prime \right) : \overline{\deg}(Z) > 20 \mu_{\beta+\eps}  \right)\\
		& \notag = \sum_{\ell=t}^{\infty}  \p_{\beta+\eps} \left( 	\left| \mathcal{CS}_\ell \left( V_\mz^{2^{n-k}} \right) \right| > 10^\ell \mu_{\beta+\eps}^\ell \right)
		+  
		\sum_{\ell=t}^{\infty} \p_{\beta+\eps} \left( \exists  Z \in \mathcal{CS}_\ell \left( V_\mz^{2^{n-k}} \right) : \overline{\deg}(Z) > 20 \mu_{\beta+\eps}  \right)
		\\
		& \notag
		\leq \frac{\E_{\beta+\eps} \left[ 	\left| \mathcal{CS}_\ell \left( \Z^d \right) \right| \right] }{ 10^\ell \mu_{\beta+\eps}^\ell  }
		+  
		\sum_{\ell=t}^{\infty} \p_{\beta+\eps} \left( \exists  Z \in \mathcal{CS}_\ell \left( \Z^d \right) : \overline{\deg}(Z) > 20 \mu_{\beta+\eps}  \right)\\
		& \leq \sum_{\ell=t}^{\infty} 0.4^\ell +  \sum_{\ell=t}^{\infty} e^{-4\mu_{\beta+\eps} \ell}
		\leq \sum_{\ell=t}^{\infty} 0.4^\ell +  \sum_{\ell=t}^{\infty} e^{- 4 \ell}
		\leq  \sum_{\ell=t}^{\infty} 0.5^\ell = 2\cdot 2^{-t}.
	\end{align}
	Let $P^\prime=\left(r(\mz)=r(v_0),\ldots,r(v_\ell)\right)$ be a self-avoiding path in $G^\prime$ starting at $r(\mz)$. Assume that $\ell$ is large enough (which will be specified later) and let $\delta$ be small enough such that
	\begin{align}\label{eq:supermult eps }
		2 \left(	 9^d 50 \mu_{\beta+1} \left( 1-f_2(\delta)\right) + \left(1-f_3(\delta)\right) \right)^{\frac{1}{30^d 200 \mu_{\beta+1}}}
		\leq
		\frac{1}{20 \mu_{\beta+1}}, 
	\end{align}
	where $f_2$ and $f_3$ are the functions defined in Lemma \ref{lem:goodeventA} and Lemma \ref{lem:goodeventD}.
	We will see later on, where this condition on $\delta$ comes from. We will now describe what it means for a block $V_{v_i}^{2^k}$ to be {\sl good}; we will also say that the vertex $r(v_i) \in G^\prime$ is good in this case. Intuitively, a block being good ensures that a path in the original model that passes through this block needs to walk a distance of at least $\delta \Lambda(2^k,\beta)$. Let $P^\prime = \left(r(\mz)=r(v_0),\ldots,r(v_\ell) = r((2^{n-k}-1)\mo)\right)$ be a self-avoiding path in $G^\prime$.
	Suppose that $\|v_{i} - v_{i+1} \|_\infty \geq 2$. Then we call the block $V_{v_i}^{2^k}$ $($and the vertex $r(v_i))$ good $($for the path $P^\prime)$ if
	\begin{align*}
		D_{V_{v_{i}}^{2^k}}(x,y) \geq \delta 2^{k\theta(\beta)} \text{ for all } x,y \in V_{v_{i}}^{2^k} \text{ with } x \sim V_{v_{i+1}}^{2^k}, y \sim V_w^{2^k}, w \notin \{v_{i}, v_{i+1}\}.
	\end{align*}
	If $\|v_{i} - v_{i+1} \|_\infty = 1$, we call the block $V_{v_i}^{2^k}$ $($and the vertex $r(v_i))$ good $($for the path $P^\prime)$ if
	\begin{align*}
		D^\star_{V_\mz^{2^n}} \left( V_{v_{i}}^{2^k} , \bigcup_{r(w) \in G^\prime : \|w-v_{i} \|_\infty \geq 2 } V_w^{2^k}\right) \geq \delta 2^{k\theta(\beta)}  \text .
	\end{align*}
	
	\noindent
	\textbf{Part (C)}.
	We define the set 
	\begin{equation*}
		\widetilde{R}_\ell= \bigcup_{i=0}^{\ell-1}  \cN\left(r(v_i)\right) \text .
	\end{equation*}
	The set $\widetilde{R}_\ell$ is a connected set in $G^\prime$, containing the origin $r(v_0)=r(\mz)$, and its size is bounded from above and below by
	\begin{equation}\label{widetilde ub}
		\ell \leq |\widetilde{R}_\ell|\leq 3^d \ell  \text.
	\end{equation}
	Assuming that the event $\cH_{\ell}$ holds, the average degree of the set $\widetilde{R}_\ell$ is bounded by $20 \mu_{\beta+\eps}$. If $r(u) \in \widetilde{R}_\ell$, then at least one of $r(v_0),\ldots,r(v_{\ell-1})$ is in $\cN(r(u))$. Thus, there can be at most $3^d$ many indices $i \in \{0,\ldots,\ell-1\}$ for which $r(u) \in \cN(r(v_i))$. From this, we can directly deduce that
	\begin{align*}
		& \sum_{i=0}^{\ell-1} \deg^{\cN} \left(r(v_i)\right)
		=
		\sum_{i=0}^{\ell-1} \ \sum_{r(u) \in \cN(r(v_i))} \deg \left(r(u)\right)
		\\
		&
		=
		\sum_{r(u)\in \widetilde{R}_\ell} \Big|\left\{ i \in \{0,\ldots,\ell-1\} : r(u) \in \cN(r(v_i)) \right\}  \Big| \deg(r(u))
		\\
		&
		\leq
		3^d
		\sum_{r(u)\in \widetilde{R}_\ell} \deg(r(u))
		=
		3^d \left|\widetilde{R}_\ell\right| \overline{\deg} \left(\widetilde{R}_\ell\right)
		\leq 3^d \left|\widetilde{R}_\ell\right| 20 \mu_{\beta+\eps}  \overset{\eqref{widetilde ub}}{\leq} 9^d 20 \mu_{\beta+\eps} \ell .
	\end{align*}
	This implies that there can be at most $\tfrac{2\ell}{5}$ many indices $i \in \{0,\ldots, \ell-1\}$ with $\deg^{\cN} \left(r(v_i)\right) > 9^d 50 \mu_{\beta+\eps}$. Indeed if there were strictly more than $\tfrac{2\ell}{5}$ many indices $i \in \{0,\ldots, \ell-1\}$ with $\deg^{\cN} \left(r(v_i)\right) > 9^d 50 \mu_{\beta+\eps}$, then
	\begin{equation*}
		\sum_{i=0}^{\ell-1} \deg^{\cN} (r(v_i)) > \frac{2\ell}{5}  \cdot  9^d 50 \mu_{\beta+\eps} = 9^d 20 \mu_{\beta+\eps} \ell ,
	\end{equation*}
	which is a contradiction. Thus, there need to be at least $\tfrac{\ell}{2} \leq \ell - \tfrac{2 \ell}{5} -1$ many indices $i \in \{1,\ldots, \ell-1\}$ with $\deg^{\cN} \left(r(v_i)\right) \leq 9^d 50 \mu_{\beta+\eps}$. 
	Say that $\mathcal{I}\subset \{1,\ldots,\ell-1\}$ is the set of indices $i$ with $\deg^{\cN} \left(r(v_i)\right) \leq 9^d 50 \mu_{\beta+\eps}$. Then $\left|\mathcal{I}\right| \geq \tfrac{\ell}{2}$. \\
	
	We now define a set of special indices $\mathcal{IND}(P^\prime) \subset \mathcal{I} \subset \{1,\ldots,\ell-1\}$ via the algorithm below. For abbreviation, we will mostly just write $\mathcal{IND}$ for $\mathcal{IND}(P^\prime)$, but one should keep in mind that the indices really depend on the chosen path $P^\prime$.
	\begin{enumerate}\addtocounter{enumi}{-1}
		\item Start with $\mathcal{IND}_0 = \emptyset$.
		\item For $i=1,\ldots,\ell-1$: \\ If $\deg^{\cN} \left(r(v_i)\right) \leq 9^d 50 \mu_{\beta+\eps}$ and $\cN \left(r(v_i)\right) \nsim \bigcup_{j \in \mathcal{IND}_{i-1}} \cN \left( r(v_j) \right)$, then define $\mathcal{IND}_{i} = \mathcal{IND}_{i-1} \cup \{i\}$. Otherwise set $\mathcal{IND}_{i} = \mathcal{IND}_{i-1}$.
		\item Set $\mathcal{IND} \coloneqq \mathcal{IND}_{\ell-1}$.
	\end{enumerate}
	
	For all $i\in \mathcal{I}$ we have either $i \in \mathcal{IND}$ or $\cN(r(v_i)) \sim \cN(r(v_j))$ for some $j<i$ with $j\in \mathcal{IND}$. In particular, we see that
	\begin{align}\label{eq:inclusion}
		\mathcal{I} = \bigcup_{j\in \mathcal{IND}} \left(\{j\} \cup \left\{ i \in \mathcal{I} : \cN(r(v_i)) \sim \cN(r(v_j)) \right\}  \right).
	\end{align}
	For each fixed $j\in \mathcal{IND}$, there can be at most $\deg^{\cN}(r(v_j)) \leq 9^d 50 \mu_{\beta+\eps}$ many $r(u) \in G^\prime$ with $r(u) \sim \cN(r(v_j))$. Thus, there can also be at most $3^d \deg^{\cN}(r(v_j)) \leq 27^d 50 \mu_{\beta+\eps}$ many $r(u) \in G^\prime$ with $\cN(r(u)) \sim \cN(r(v_j))$. So in particular each of the sets $\left\{ i \in \mathcal{I} : \cN(r(v_i)) \sim \cN(r(v_j)) \right\}$ in the union above has size at most $27^d 50 \mu_{\beta+\eps}$. Taking the size of the sets in \eqref{eq:inclusion} and applying a union bound, we get that
	\begin{align*}
		\frac{\ell}{2} \leq \left|\mathcal{I}\right| & \leq \sum_{j\in \mathcal{IND}} \left|\left(\{j\} \cup \left\{ i \in \mathcal{I} : \cN(r(v_i)) \sim \cN(r(v_j)) \right\}  \right)\right|
		\leq
		\sum_{j\in \mathcal{IND}} \left(1+ 27^d 50 \mu_{\beta+\eps} \right)
		\\
		&
		=
		\left|\mathcal{IND}\right|  \left(1+ 27^d 50 \mu_{\beta+\eps} \right) .
	\end{align*}
	Rearranging this inequality, we see that
	\begin{align*}
		|\mathcal{IND}| \geq \frac{\frac{\ell}{2}}{27^d 50 \mu_{\beta+\eps}+1} \geq \frac{\ell}{30^d 100 \mu_{\beta+1}} \text .
	\end{align*}
	
	Next, we want to lower bound the probability that a block $r(v_i)$ with $i \in \mathcal{IND}$ is good, given the graph $G^\prime$. Remember that we had two different notions what it meant that $r(v_i)$ is good - depending on whether $\|v_i-v_{i+1}\|_\infty \geq 2$ or $\|v_i-v_{i+1}\|_\infty = 1$. 
	
	Assume that $\|v_{i}- v_{i+1} \|_\infty \geq 2$. If $r(v_i)$ is not good, then there exists $r(w) \in V^\prime \setminus \{r(v_i),r(v_{i+1})\}$ for which the event 
	\begin{align*}
		\mathcal{M}_w \coloneqq \bigcup_{\substack{ x \in V_{v_i}^{2^k}: \\ x \sim V_w^{2^k} }} \
		\bigcup_{\substack{ y \in V_{v_i}^{2^k}: \\ y \sim V_{v_{i+1}}^{2^k} }} \left\{D_{v_i}^{2^k} (x,y) < \delta 2^{k\theta(\beta)}  \right\}
	\end{align*}
	holds. By Lemma \ref{lem:goodeventA} and translation invariance, we have for all large enough $k$ and all $r(w) \in V^\prime \setminus \{r(v_i),r(v_{i+1})\}$ with $r(w) \sim r(v_i)$ that
	\begin{align*}
		\p_{\beta < k}^{\beta+\eps \geq k} \left( \mathcal{M}_w \big| G^\prime \right)
		=
		\p_{\beta < k}^{\beta+\eps \geq k} \left( \mathcal{M}_w \big| V_w^{2^k} \sim V_{v_i}^{2^k} \sim V_{v_{i+1}}^{2^k} \right) \leq (1-f_2(\delta)) .
	\end{align*}
	A union bound over all $r(w) \in V^\prime \setminus \{r(v_i),r(v_{i+1})\}$ with $r(w) \sim r(v_i)$ implies that for large enough $k$ the conditional probability that $r(v_i)$ is not good is bounded by
	\begin{align}\label{eq:ungleichung1}
		\notag & \p_{\beta < k}^{\beta+\eps \geq k} \left( r(v_i) \text{ not good} \Big| G^\prime \right)
		\leq
		\sum_{\substack{ r(w) \in V^\prime \setminus \{r(v_i),r(v_{i+1})\} : \\ r(w) \sim r(v_i) }} \p_{\beta < k}^{\beta+\eps \geq k} \left( \mathcal{M}_w \Big| G^\prime \right)
		\\
		&
		\leq
		\sum_{\substack{ r(w) \in V^\prime \setminus \{r(v_i),r(v_{i+1})\} : \\ r(w) \sim r(v_i) }} \left(1-f_2(\delta)\right)
		\leq 
		\deg(r(v_i)) \left( 1-f_2(\delta)\right) \leq 9^d 50 \mu_{\beta+1} \left( 1-f_2(\delta)\right),
	\end{align}
	where for the last inequality we used that $\deg(r(v_i)) \leq \deg^{\cN}(r(v_i)) \leq  9^d 50 \mu_{\beta+1}$, since we assumed that $i \in \mathcal{IND}$.\\
	
	Next, we estimate the probability that the vertex $r(v_i)$ is not good if $\|v_{i}- v_{i+1} \|_\infty = 1$.
	So assume that $\|v_{i}- v_{i+1} \|_\infty = 1$. Since $i \in \mathcal{IND}$, we have that $\deg^{\cN}\left(r(v_i)\right) \leq 9^d 100 \mu_{\beta+1}$. Thus - by translation-invariance - we can apply Lemma \ref{lem:goodeventD}. This implies that for all large enough $k$, the conditional probability that $r(v_i)$ is not good is bounded by
	\begin{align}\label{eq:ungleichung2}
		\notag \p_{\beta < k}^{\beta+\eps \geq k}&  \left(r(v_i) \text{ not good } \big| G^\prime \right)
		=
		\p_{\beta < k}^{\beta+\eps \geq k}   \left( D^{\star}_{V_\mz^{2^n}}\Big(V_{v_{i}}^{2^k} , \bigcup_{r(v) : \|v-v_{i}\|_\infty \geq 2} V_{v}^{2^k} \Big) < \delta 2^{k\theta(\beta)} \ \big| \ G^\prime \right)\\
		&
		\leq (1-f_3(\delta)).
	\end{align}
	
	Combining the results of inequalities \eqref{eq:ungleichung1} and \eqref{eq:ungleichung2} we see that for all $i\in \mathcal{IND}$, no matter whether $\|v_i-v_{i+1}\|_\infty = 1$ or $\|v_i-v_{i+1}\|_\infty \geq 2$, we always have that
	\begin{align*}
		\p_\beta \left( r(v_i) \text{ not good } | \ G^\prime \right) \leq  9^d 50 \mu_{\beta+1} \left( 1-f_2(\delta)\right) + \left(1-f_3(\delta)\right) \eqqcolon \widetilde{f}(\delta) \text .
	\end{align*}

	Whether a block $V_{v_i}^{2^k}$ is good in the path $P^\prime$ depends only on the edges with at least one end in $\cN \left(r(v_i)\right)$. So in particular for different indices $i\in \mathcal{IND}$, it is independent whether the underlying blocks $V_{v_i}^{2^k}$ are good. Using that $\left|\mathcal{IND}\right| \geq \frac{\ell}{30^d 100 \mu_{\beta+1}}$, we get that
	\begin{align}\label{eq:20mubeta}
		& \notag \p_{\beta < k}^{\beta+\eps \geq k} \left( \big|\left\{i \in \mathcal{IND}(P^\prime) : r(v_i) \text{ good }\right\}\big| \leq \frac{\ell}{30^d 200 \mu_{\beta+1}} \ \Big| \ G^\prime  \right)\\
		& \notag \leq \p_{\beta < k}^{\beta+\eps \geq k} \left( \bigcup_{U\subset \mathcal{IND}(P^\prime) : |U| \geq \frac{\left|\mathcal{IND}(P^\prime)\right|}{2} } \left\{ r(v_i) \text{ not good for all } i\in U \right\}  \ \Big| \ G^\prime  \right)
		\\
		& \notag
		\leq \sum_{U\subset \mathcal{IND}(P^\prime) : |U| \geq \frac{\left|\mathcal{IND}(P^\prime)\right|}{2} } \ \prod_{i\in U} \ \p_{\beta < k}^{\beta+\eps \geq k} \left(   r(v_i) \text{ not good }  \ \Big| \ G^\prime  \right)
		\\
		&
		\leq 2^{|\mathcal{IND}(P^\prime)|}
		\left(\widetilde{f}(\delta)\right)^{\frac{|\mathcal{IND}(P^\prime)|}{2}}
		\leq 2^{\ell} \left(\widetilde{f}(\delta)\right)^{\frac{\ell}{30^d 200 \mu_{\beta+1}}}
		\leq \left(20 \mu_{\beta+1}\right)^{-\ell}
	\end{align}
	where the last inequality holds because of our assumption on $\delta$ \eqref{eq:supermult eps }.\\
	
	So far, we only considered one fixed path $P^\prime$ in $G^\prime$ of length $\ell$, assuming that the event $\cH_{\ell}$ holds. Conditioned on the event $\cH_{\ell}$, there can be at most $10^\ell \mu_{\beta+\eps}^\ell$ many paths in $G^\prime$ which have length $\ell$ and start at the origin. For abbreviation, we define the event $\cG_\ell$ by
	\begin{align*}
		\cG_\ell^c = \left\{\exists P^\prime \text{ in } G^\prime \text{ of length } \ell \text{ s.t. } \left|\left\{i \in \mathcal{IND}(P^\prime) : r(v_i) \text{ good for } P^\prime\right\}\right| \leq \frac{\ell}{30^d 200 \mu_{\beta+1}}\right\} ,
	\end{align*}
	where we say $P^\prime \text{ in } G^\prime$ if a path $P^\prime$ starts at $r(\mz)$ and is contained in the graph $G^\prime$.
	So a union bound over the at most $(10\mu_{\beta+\eps})^\ell$ paths $P^\prime$ in $G^\prime$ of length $\ell$ starting at $r(\mz)$ implies that
	\begin{align*}
		&\p_{\beta < k}^{\beta+\eps \geq k} \left( \exists P^\prime \text{ in } G^\prime \text{ of length } \ell \text{ s.t.} \left|\left\{i \in \mathcal{IND}(P^\prime) : r(v_i) \text{ good for  } P^\prime\right\}\right| \leq \frac{\ell}{30^d 200 \mu_{\beta+1}} \ \Big| \ \mathcal{H}_\ell  \right)\\
		&
		= \p_{\beta < k}^{\beta+\eps \geq k} \left( \mathcal{G}_\ell^c \ \Big| \ \mathcal{H}_\ell  \right)
		\overset{\eqref{eq:20mubeta}}{\leq} (10 \mu_{\beta+\eps})^\ell \left(20 \mu_{\beta+1}\right)^{-\ell} \leq 2^{-\ell}.
	\end{align*}
	Using that $\p_{\beta < k}^{\beta+\eps \geq k} \left( \mathcal{H}_\ell^c \right) \leq 2 \cdot 2^{-\ell}$ \eqref{eq:Htbound}, we thus get that
	\begin{align}\label{glc0}
		& \p_{\beta < k}^{\beta+\eps \geq k} \left( \mathcal{G}_\ell^c \right)
		\leq
		\p_{\beta < k}^{\beta+\eps \geq k} \left( \mathcal{G}_\ell^c \ \Big| \ \mathcal{H}_\ell  \right) 
		+ \p_{\beta < k}^{\beta+\eps \geq k} \left( \mathcal{H}_\ell^c \right)
		\leq
		3 \cdot 2^{-\ell}.
	\end{align}
	We also define the event $\mathcal{G}_{\geq \ell}$ by $\mathcal{G}_{\geq \ell}^c = \bigcup_{t=\ell}^{\infty} \mathcal{G}_t^c = \left( \bigcap_{t=\ell}^{\infty} \mathcal{G}_t\right)^c$. A union bound over all $t\geq \ell$ directly implies that
	\begin{equation}\label{glc}
		\p_{\beta < k}^{\beta+\eps \geq k} \left( \mathcal{G}_{\geq \ell}^c \right) 
		\leq 
		\sum_{t=\ell}^{\infty} \p_{\beta < k}^{\beta+\eps \geq k} \left( \mathcal{G}_{t}^c \right)
		\leq
		\sum_{t=\ell}^{\infty} 3 \cdot 2^{-t} \leq 6 \cdot 2^{-\ell}
	\end{equation}
	
	\noindent
	\textbf{Part (D)}.
	Assume that the events $\cG_{\geq \ell}$ and $\left\{D_{G^\prime}\left(r(\mz),r((2^{n-k}-1)\mo)\right) = \ell\right\}$ both hold and that $P$ is a geodesic in $V_\mz^{2^n}$ between $\mz$ and $(2^n-1)\mo$, which goes through the boxes $V_{u_0}^{2^k},\ldots,V_{u_L}^{2^k}$ in this order. Define the path $P^\star = \left(r(\mz)=r(u_0), r(u_1),\ldots,r(u_L) = r((2^{n-k}-1)\mo) \right)$ and let $P^\prime = \left(r(\mz)=r(v_0), r(v_1),\ldots,r(v_{\ell^\prime}) = r((2^{n-k}-1)\mo) \right)$ be the loop-erasure of $P^\star$. The condition $\left\{D_{G^\prime}\left(r(\mz),r((2^{n-k}-1)\mo)\right) = \ell\right\}$ directly implies that $\ell^\prime \geq \ell$. For each index $i\in \mathcal{IND}(P^\prime)$, the path $P$ needs to walk a distance of at least $\delta 2^{k\theta(\beta)}$ in the set $\bigcup_{r(w) \in \cN(r(v_i))} V_{v_i}^{2^k}$, as long as $r(\mz),r((2^{n-k}-1)\mo) \notin \cN(r(v_i))$. This holds for all indices $i\in \mathcal{IND}(P^\prime)$ for which $r(\mz),r((2^{n-k}-1)\mo) \notin \cN(r(v_i))$. The sets $\left(\bigcup_{r(w) \in \cN(r(v_i))} V_{v_i}^{2^k}\right)_{i\in \mathcal{IND(P^\prime)}}$ are disjoint by construction. So in particular, we get that
	\begin{equation}\label{lengthind}
		D_{V_\mz^{2^n}} \left(\mz,(2^n-1)\mo\right) =  \text{length}(P) \geq \left(|\mathcal{IND}(P^\prime)|-2\right) \delta 2^{k\theta(\beta)} 
		.
	\end{equation}
	As we assumed that the event $\cG_{\geq \ell}$ holds, we have that $\left|\mathcal{IND}(P^\prime)\right| \geq \frac{\ell^\prime}{30^d 200 \mu_{\beta+1}} \geq \frac{\ell}{30^d 200 \mu_{\beta+1}}$. Inserting this into \eqref{lengthind} implies that that
	\begin{align*}
		&D_{V_\mz^{2^n}}\left(\mz,(2^n-1)\mo\right) 
		\geq
		\left(\frac{\ell}{30^d 200 \mu_{\beta+1}} - 2 \right) \delta 2^{k\theta(\beta)}
		\geq
		\left(\frac{\ell}{30^d 300 \mu_{\beta+1}}  \right) \delta 2^{k\theta(\beta)}
	\end{align*}
	for all large enough $\ell$. So in total we get that for large enough $\widetilde{\ell} \in \N$ one has
	\begin{align}\label{eq:finalsubmulti}
		\notag &\E_{\beta < k}^{\beta+\eps \geq k} \left[ D_{V_{\mz}^{2^n}}(\mz,(2^n-1)\mo) \right] \geq 
		\sum_{\ell=\widetilde{\ell}}^{\infty} 
		\E_{\beta < k}^{\beta+\eps \geq k} \left[ D_{V_{\mz}^{2^n}}(\mz,(2^n-1)\mo) \mathbbm{1}_{\mathcal{G}_{\geq \ell}} \mathbbm{1}_{\left\{ D_{G^\prime}(r(\mz),r((2^{n-k}-1)\mo)) = \ell \right\}} \right]\\
		& \notag
		\geq 
		\sum_{\ell=\widetilde{\ell}}^{\infty} 
		\E_{\beta < k}^{\beta+\eps \geq k} \left[ \frac{\ell}{30^d 300 \mu_{\beta+1}} \delta 2^{k\theta(\beta)} \mathbbm{1}_{\mathcal{G}_{\geq \ell}} \mathbbm{1}_{\left\{ D_{G^\prime}(r(\mz),r((2^{n-k}-1)\mo)) = \ell \right\}} \right]
		\\
		& =  \frac{\delta 2^{k \theta(\beta)}}{30^d 300 \mu_{\beta+1}} \sum_{\ell=\widetilde{\ell}}^{\infty} \ell \ \E_{\beta < k}^{\beta+\eps \geq k} \left[ \mathbbm{1}_{\mathcal{G}_{\geq \ell}} \mathbbm{1}_{\left\{ D_{G^\prime}(r(\mz),r((2^{n-k}-1)\mo)) = \ell \right\}} \right] .
	\end{align}
	We can further bound the last sum by
	\begin{align}\label{eq:bigineq}
		& \notag \sum_{\ell=\widetilde{\ell}}^{\infty} \ell \ \E_{\beta < k}^{\beta+\eps \geq k} \left[ \mathbbm{1}_{\mathcal{G}_{\geq \ell}} \mathbbm{1}_{\left\{ D_{G^\prime}(r(\mz),r((2^{n-k}-1)\mo)) = \ell \right\}} \right] \\
		& \notag =  
		\sum_{\ell=\widetilde{\ell}}^{\infty} \ell \ \E_{\beta < k}^{\beta+\eps \geq k} \left[ \mathbbm{1}_{\left\{ D_{G^\prime}(r(\mz),r((2^{n-k}-1)\mo)) = \ell \right\}} \right]
		-
		\sum_{\ell=\widetilde{\ell}}^{\infty} \ell \ \E_{\beta < k}^{\beta+\eps \geq k} \left[ \mathbbm{1}_{\left\{\mathcal{G}_{\geq \ell}^c \right\}} \mathbbm{1}_{\left\{ D_{G^\prime}(r(\mz),r((2^{n-k}-1)\mo)) = \ell \right\}} \right]  \\
		& \notag \geq  \sum_{\ell=\widetilde{\ell}}^{\infty} \ell \ \p_{\beta < k}^{\beta+\eps \geq k} \left( D_{G^\prime}(r(\mz),r((2^{n-k}-1)\mo)) = \ell \right)
		-
		\sum_{\ell=\widetilde{\ell}}^{\infty} \ell \ \E_{\beta < k}^{\beta+\eps \geq k} \left[ \mathbbm{1}_{\left\{\mathcal{G}_{\geq \ell}^c \right\}}  \right]  \\
		& \notag \overset{\eqref{glc}}{\geq} \sum_{\ell=\widetilde{\ell}}^{\infty} \ell \ \p_{\beta + \eps} \left(  D_{V_{\mz}^{2^{n-k}}}(\mz,(2^{n-k}-1)\mo) = \ell \right)
		-
		6\sum_{\ell=\widetilde{\ell}}^{\infty} \ell 2^{-\ell} \\
		& \notag \geq \sum_{\ell=1}^{\infty} \ell \ \p_{\beta + \eps} \left(  D_{V_{\mz}^{2^{n-k}}}(\mz,(2^{n-k}-1)\mo) = \ell \right)
		-
		\sum_{\ell=1}^{\widetilde{\ell}-1} \ell \ \p_{\beta + \eps} \left(  D_{V_{\mz}^{2^{n-k}}}(\mz,(2^{n-k}-1)\mo) = \ell \right)
		-
		12 \\
		&
		\geq \E_{\beta+\eps}\left[ D_{V_{\mz}^{2^{n-k}}}(\mz,(2^{n-k}-1)\mo) \right]
		-
		\widetilde{\ell} 
		-
		12 \geq \frac{1}{2} \E_{\beta+\eps}\left[ D_{V_{\mz}^{2^{n-k}}}(\mz,(2^{n-k}-1)\mo) \right],
	\end{align}
	where the last inequality holds for $n-k$ large enough.  Using further that
	\begin{equation*}
		\E_{\beta+\eps}\left[ D_{V_{\mz}^{2^{n-k}}}(\mz,(2^{n-k}-1)\mo) \right] \geq \frac{\Lambda(2^{n-k},\beta+\eps)-1}{36d } \geq \frac{\Lambda(2^{n-k},\beta+\eps)}{72d }
	\end{equation*}
	for $n-k>0$ (see \cite[Remark 4.3]{baeumler2022distances}) and that $2^{k\theta(\beta)} \geq c(\beta) \Lambda \left(2^k, \beta\right)$ for some $c(\beta) > 0$ \eqref{eq:subsuperimpli}, we can insert inequality \eqref{eq:bigineq} into \eqref{eq:finalsubmulti} and get that
	\begin{align*}
		& \E_{\beta < k}^{\beta+\eps \geq k} \left[ D_{V_{\mz}^{2^n}}(\mz,(2^n-1)\mo) \right]
		\overset{\eqref{eq:finalsubmulti}}{\geq}
		\frac{\delta 2^{k \theta(\beta)}}{30^d 300 \mu_{\beta+1}} \sum_{\ell=\widetilde{\ell}}^{\infty} \ell \ \E_{\beta < k}^{\beta+\eps \geq k} \left[ \mathbbm{1}_{\mathcal{G}_{\geq \ell}} \mathbbm{1}_{\left\{ D_{G^\prime}(r(\mz),r((2^{n-k}-1)\mo)) = \ell \right\}} \right]  
		\\
		&
		\overset{\eqref{eq:bigineq}}{\geq}
		\frac{\delta 2^{k \theta(\beta)}}{30^d 300 \mu_{\beta+1}} \cdot \frac{1}{2} \E_{\beta+\eps}\left[ D_{V_{\mz}^{2^{n-k}}}(\mz,(2^{n-k}-1)\mo) \right]
		\geq
		\frac{\delta c(\beta) \Lambda(2^k,\beta)}{30^d 300 \mu_{\beta+1}} \cdot \frac{1}{2} \frac{\Lambda(2^{n-k},\beta+\eps)}{72 d}\\
		&
		=
		c_\beta \Lambda(2^k,\beta) \Lambda(2^{n-k},\beta+\eps),
	\end{align*}
	which shows \eqref{eq:lower bound} and thus finishes the proof of Lemma \ref{lem:uniformovereps}. 
\end{proof}

Finally, we give the proof of Lemma \ref{lem:uniform2ndmomentbound:renormalized}.

\begin{proof}[Proof of Lemma \ref{lem:uniform2ndmomentbound:renormalized}]
	We use a renormalization argument for this proof. We first define the renormalized graph $G^\prime$ where we contract all vertices of the set $V_u^{2^k}$ to one vertex $r(u)$ and do this for all $u \in V_\mz^{2^{n-k}}$. In the graph $G^\prime$, there is an edge between $r(u)$ and $r(v)$ if and only if there is an edge between $V_u^{2^k}$ and $V_v^{2^k}$. Now, let $x,y\in V_\mz^{2^n}$ be arbitrary, say with $x \in V_u^{2^k}$ and $y \in V_v^{2^k}$. If $u=v$, note that the distance $D_{V_u^{2^k}}\left(x,y\right)$ has the same distribution under the measures $\p_{\beta < k}^{\beta+\eps \geq k}$ and $\p_{\beta}$. Thus we get that
	\begin{equation*}
		\E_{\beta < k}^{\beta+\eps \geq k} \left[D_{V_\mz^{2^n}}\left(x,y\right)^2 \right] 
		\leq
		\E_{\beta < k}^{\beta+\eps \geq k} \left[D_{V_u^{2^k}}\left(x,y\right)^2 \right]
		=
		\E_{\beta} \left[D_{V_u^{2^k}}\left(x,y\right)^2 \right]
		\leq
		\tilde{C_\beta} \Lambda(2^k,\beta)^2
	\end{equation*}
	by Lemma \ref{lem:uniform 2nd moment bound}, so inequality \eqref{eq:uniformoverbeta} holds. From here on, we consider the case $u\neq v$. Let $\left(r(u)=r(u_0),\ldots,r(u_K)=r(v)\right)$ be the shortest path between $r(u)$ and $r(v)$ in the graph $G^\prime$, where $K=D_{G^\prime}\left(r(u),r(v)\right)$. In case there are several shortest paths between $r(u)$ and $r(v)$ in the graph $G^\prime$, we choose one by some deterministic rule. Using this geodesic in $G^\prime$, we now construct a new path in the original graph $V_\mz^{2^n}$. Define $x_0=x$ and $y_K=y$ and for $i\in \{0,\ldots,K-1\}$, let $(y_{i},x_{i+1}) \in V_{u_i}^{2^k} \times V_{u_{i+1}}^{2^k}$ be such that $y_{i} \sim x_{i+1}$. For each $i\in \{0,\ldots,K\}$, there exists a path between $x_i$ and $y_i$ that stays entirely within $V_{u_i}^{2^k}$. Concatenating these $K+1$ path, we can inductively build a path via the relations
	\begin{equation*}
		x= x_0 \overset{V_{u_0}^{2^k}}{\longleftrightarrow} y_0
		\sim
		x_1 \overset{V_{u_1}^{2^k}}{\longleftrightarrow} y_1
		\sim
		x_2 \overset{V_{u_2}^{2^k}}{\longleftrightarrow} \ldots \ldots \ldots \ldots
		\overset{V_{u_{K-1}}^{2^k}}{\longleftrightarrow}
		y_{K-1}
		\sim
		x_K \overset{V_{u_K}^{2^k}}{\longleftrightarrow} y_K = y.
	\end{equation*}
	By concatenating the geodesics between $x_i$ and $y_i$ in the sets $V_{u_i}^{2^k}$, we get the upper bound on the graph distance between $x$ and $y$ 
	\begin{align}\label{eq:sum_diabound}
		D_{V_\mz^{2^n}}\left(x,y\right) & \leq \sum_{i=0}^K \left(  \dia\left( V_{u_i}^{2^k} \right) + 1 \right) \text .
	\end{align}
	For $i\in \{0,\ldots,K\}$, define $X_i \coloneqq \dia\left( V_{u_i}^{2^k} \right)$. 
	For $i>K$, let $ \left(X_i \right)_{i>K}$ be i.i.d. copies of $\dia\left( V_{u_0}^{2^k} \right)$ that are also independent of $K=D_{G^\prime}\left(r(u),r(v)\right)$ and of $ \left(\dia\left( V_{u_i}^{2^k} \right)\right)_{i \leq K}$. For $i\leq K$, the diameters $\dia\left( V_{u_i}^{2^k} \right)$ depend only on edges with both endpoints inside $ V_{u_i}^{2^k}$, whereas the distance $K=D_{G^\prime}\left(r(u),r(v)\right)$ depends only on edges that are between two different boxes. 
	So in particular the random variables $\left( X_i \right)_{i\in \N_0}$ are i.i.d. and furthermore independent of  $K=D_{G^\prime}\left(r(u),r(v)\right)$. Thus  we get that
	\begin{align}\label{eq:wald 2nd}
		& \notag \E_{\beta < k}^{\beta+\eps \geq k}\left[\left(\sum_{i=0}^{K} X_i\right)^2 \right] 
		\leq \E_{\beta < k}^{\beta+\eps \geq k}\left[\left(\sum_{i=0}^\infty \mathbbm{1}_{\{i\leq K\}}  X_i\right)^2 \right] 
		\\
		& \notag
		= \E_{\beta < k}^{\beta+\eps \geq k}\left[ \sum_{i=0}^\infty \sum_{j=0}^\infty \mathbbm{1}_{\{i\leq K\}} \mathbbm{1}_{\{j\leq K\}}  X_i X_j \right] 
		=  \sum_{i=0}^\infty \sum_{j=0}^\infty \E_{\beta < k}^{\beta+\eps \geq k}\left[ \mathbbm{1}_{\{i\leq K\}} \mathbbm{1}_{\{j\leq K\}} \right] \E_{\beta < k}^{\beta+\eps \geq k} \left[  X_i X_j \right] 
		\\
		&
		\leq \E_{\beta < k}^{\beta+\eps \geq k} \left[ \sum_{i=0}^\infty \sum_{j=0}^\infty \mathbbm{1}_{\{i\leq K\}} \mathbbm{1}_{\{j\leq K\}}  \right] \E_{\beta < k}^{\beta+\eps \geq k}\left[X_1^2\right] 
		= \E_{\beta < k}^{\beta+\eps \geq k}\left[(K+1)^2\right] \E_{\beta < k}^{\beta+\eps \geq k}\left[X_1^2\right] \text .
	\end{align}
	The distance $D_{G^\prime}\left(r(u),r(v)\right)$ only depends on the occupation status of edges with both ends in $V_\mz^{2^n}$ that have a length of at least $2^k$. Thus $K=D_{G^\prime}\left(r(u),r(v)\right)$ has exactly the same distribution as $D_{V_\mz^{2^{n-k}}}\left(u,v \right)$ under the measure $\p_{\beta+\eps}$, which implies that
	\begin{equation}\label{2nd n-k}
		\E_{\beta < k}^{\beta+\eps \geq k}\left[(K+1)^2\right] \leq 4 \E_{\beta < k}^{\beta+\eps \geq k}\left[K^2\right] = 4 \E_{\beta+\eps} \left [D_{V_\mz^{2^{n-k}}} \left(u,v \right)^2 \right]
		\overset{\eqref{eq:second moment bound}}{\leq} 4 \tilde{C_\beta} \Lambda\left(2^{n-k},\beta+\eps\right)^2.
	\end{equation}
	Further, we know that $\E_{\beta < k}^{\beta+\eps \geq k} \left[X_1^2\right] = \E_\beta \left[ \dia\left( V_{u_1}^{2^k} \right)^2\right] \leq \tilde{C_\beta} \Lambda\left(2^k,\beta\right)^2$, which follows from Lemma \ref{lem:uniform 2nd moment bound} applied to $\eps=0$.
	Together with \eqref{eq:sum_diabound} and \eqref{eq:wald 2nd}, we thus get that
	\begin{align*}
		&\E_{\beta < k}^{\beta+\eps \geq k} \left[D_{V_\mz^{2^n}}\left(x,y\right)^2 \right] 
		\overset{\eqref{eq:sum_diabound}}{\leq} \E_{\beta < k}^{\beta+\eps \geq k} \left[\left(\sum_{i=0}^K \left(  \dia\left( V_{u_i}^{2^k} \right) + 1 \right)\right)^2\right]
		\\
		&
		\leq
		\E_{\beta < k}^{\beta+\eps \geq k} \left[\left(\sum_{i=0}^K \left( X_i + 1 \right)\right)^2\right]
		\leq
		\E_{\beta < k}^{\beta+\eps \geq k} \left[\left( 2\sum_{i=0}^K X_i \right)^2\right]
		=
		4 \E_{\beta < k}^{\beta+\eps \geq k} \left[\left( \sum_{i=0}^K X_i \right)^2\right]\\
		&
		\overset{\eqref{eq:wald 2nd}}{\leq}
		4 \ \E_{\beta < k}^{\beta+\eps \geq k} \left[ (K+1)^2 \right] \E_{\beta < k}^{\beta+\eps \geq k} \left[X_1^2 \right] 
		\leq 4 \ \E_{\beta + \eps} \left[(K+1)^2 \right]  \tilde{C_\beta} \Lambda(2^k,\beta)^2
		\\
		&
		\overset{\eqref{2nd n-k}}{\leq}  \left(4  \tilde{C_\beta}\right)^2 \Lambda(2^{n-k},\beta)^2 \Lambda(2^k,\beta)^2 .
	\end{align*}
\end{proof}







\textbf{Acknowledgments.} I thank Noam Berger for  many useful discussions. I also thank Yuki Tokushige for helpful comments on an earlier version of this paper. This work is supported by TopMath, the graduate program of the Elite Network of Bavaria and the graduate center of TUM Graduate School. I thank two anonymous referees for very useful comments and suggestions.

\end{document}